\numberwithin{Exercise}{section}
\newcommand{\R}{\mathbb{R}}                     
\newcommand{\p}{\mathbb{P}}     
\newcommand{\z}{\mathbb{Z}}
\newcommand{\nat}{\mathbb{N}}
\newcommand{\oo}{\mathbb{O}}
\newcommand{\CP}{\mathbb{C}\mathrm{P}}
\newcommand{\CH}{\mathbb{C}\mathrm{H}}
\newcommand{\isom}{\mathrm{Isom}}
\newcommand{\aut}{\mathrm{Aut}}
\newcommand{\ol}{\mathrm{Hol}}
\newcommand{\hilb}{\mathcal{H}}
\newcommand{\po}{\forall}                       
\newcommand{\f}{\rightarrow}                    
\newcommand{\C}{\mathbb{C}}                     
\newcommand{\de}{\partial}                      
\newcommand{\M}{\mathrm{M}}
\newcommand{\GL}{\mathop{\mathrm{GL}}}
\newcommand{\Aut}{\mathop{\mathrm{Aut}}}
\newcommand{\dd}{\mathrm{D}}
\newcommand{\Kj}{\mathrm{K}}
\newcommand{\N}{\mathrm{N}}
\newcommand{\ric}{\mathrm{Ric}}
\newcommand{\sgn}{\mathrm{sgn}}
\newcommand{\gen}{\mathop{\mathrm{span}}}
\newcommand{\ro}{\rho}
\newcommand{\K}{K\"{a}hler}
\newcommand{\rk}{r}
\newcommand{\gs}{\mathfrak{s}}
\newcommand{\set}[2]{ \left\{\,#1\,;\,#2\,\right\} }
\newcommand{\dirsum}{\sideset{\ }{^{\oplus}}\sum}
\newcommand{\ep}{\epsilon}
\newcommand{\lmb}{\lambda}
\newcommand{\w}[1]{\widetilde{#1}}
\newtheorem{theor}{Theorem}[section]
\newtheorem{prop}[theor]{Proposition}
\newtheorem{defin}[theor]{Definition}
\newtheorem{lem}[theor]{Lemma}
\newtheorem{cor}[theor]{Corollary}
\newtheorem{ex}[theor]{Example}
\newtheorem{remark}[theor]{Remark}
\newtheorem{conj}[theor]{Conjecture}
\title{K\"ahler immersions of K\"ahler manifolds into complex space forms}
\author{Andrea Loi\and Michela Zedda}
\begin{document}

\frontmatter
\maketitle

\chapter*{Preface}
The study of \K\ immersions of a given real analytic \K\ manifold into a finite or infinite dimensional complex space form originates from the pioneering work of Eugenio Calabi \cite{Cal}. With a stroke of genius Calabi defines a powerful tool, a special (local) potential called {\em diastasis function}, which allows him to obtain necessary and sufficient conditions for a neighbourhood of a point to be locally \K\ immersed into a finite or infinite dimensional complex space form. As application of its criterion, he also provides a classification of (finite dimensional) complex space forms admitting a K\"ahler immersion into another. 
Although, a complete classification of K\"ahler manifolds admitting a K\"ahler immersion into complex space forms is not known, not even when the \K\ manifolds involved are of great interest, e.g. when they are \K\--Einstein or homogeneous spaces. In fact, the diastasis function is not always explicitely given and Calabi's criterion, although theoretically impeccable, most of the time is of difficult application. Nevertheless, throughout the last 60 years many mathematicians have worked on the subject and many interesting results have been obtained. 

The aim of this book is to describe  Calabi's original work, to provide a detailed account  of what is known today on the subject and to point out some open problems.

Each chapter begins with a brief summary of the topics discussed and ends with a list of exercises which help the reader to test his understanding.

Apart from the topics discussed in Section \ref{hbdcpn} of Chapter \ref{simmes}, which could be skipped without compromising the understanding of the rest of the book, the requirements to read this book are a basic knowledge of complex and \K\  geometry (treated, e.g. in  Moroianu's book \cite{Mor}).\\

The authors are grateful to Claudio Arezzo and Fabio Zuddas for a careful reading of the text and for valuable comments that have improved the book's exposure.
\tableofcontents

\mainmatter

\chapter{The diastasis function}

In this chapter we describe the {\em diastasis function}, a basic tool introduced by E. Calabi in  \cite{Cal} which is fundamental to study K\"ahler immersions of K\"ahler manifolds into complex space forms. 

In Section \ref{diastasis} we define the {diastasis function} and summarize its basic properties, while in Section \ref{csf} we describe the diastasis functions of complex space forms, which represent the basic examples of K\"ahler manifolds. Finally, in Section \ref{ihs} we give the formal definition of what a {\em K\"ahler immersion} is and prove that the indefinite Hilbert space constitutes a universal K\"ahler manifold, in the sense that it is a space in which every real analytic K\"ahler manifold can be locally K\"ahler immersed.

\section{Calabi's diastasis function}\label{diastasis}

Let $M$ be an $n$-dimensional complex manifold endowed with a real analytic K\"ahler metric $g$. Recall that the K\"ahler metric $g$ is real analytic if fixed a local coordinate system $z=(z_1,\dots,z_n)$ on a neighbourhood $U$ of any point $p\in M$, it can be described on $U$ by a real analytic K\"ahler potential $\Phi\!:U\f \R$. In that case the potential $\Phi(z)$ can be analytically continued to an open neighbourhood $W\subset U\times U$ of the diagonal. Denote this extension by $\Phi(z,\bar w)$. 
\begin{defin}
The \emph{diastasis function} $\dd(z,w)$ on $W$ is defined by:
\begin{equation}\label{diastdefinition}
\dd(z,w)=\Phi\left(z,\bar z\right)+\Phi\left(w, \bar w\right)-\Phi\left(z,\bar w\right)-\Phi\left(w,\bar z\right).
\end{equation}
\end{defin}
The following proposition describes the basic properties of $\dd(z,w)$.
\begin{prop}[E. Calabi, \cite{Cal}]\label{diastmetric}
The diastasis function $\dd(z,w)$ given by (\ref{diastdefinition}) satisfies the following properties:
\begin{enumerate}
\item[(i)]  it is uniquely determined by the K\"ahler metric $g$ and it does not depend on the choice of the local coordinate system;
\item[(ii)] it is real valued in its domain of (real) analyticity;
\item[(iii)] it is symmetric in $z$ and $w$ and $\dd(z,z)=0$;
\item[(iv)] once fixed one of its two entries, it is a K\"ahler potential for $g$.
\end{enumerate}
\end{prop}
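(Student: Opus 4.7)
The plan rests on two auxiliary facts. First, two real analytic \K\ potentials $\Phi_1,\Phi_2$ for the same metric $g$ on a connected open set differ by a pluriharmonic function, so locally $\Phi_1-\Phi_2=f(z)+\overline{f(z)}$ for some holomorphic function $f$. Second, the analytic continuation $\Phi(z,\bar w)$ of $\Phi(z,\bar z)$ off the diagonal is unique where it exists, and the reality of $\Phi(z,\bar z)$ together with a Schwarz-reflection type argument yields the identity $\overline{\Phi(z,\bar w)}=\Phi(w,\bar z)$.

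To prove (i), I would use the first fact: by uniqueness of analytic continuation the extensions obey $\Phi_1(z,\bar w)-\Phi_2(z,\bar w)=f(z)+\overline{f(w)}$. Substituting this difference into (\ref{diastdefinition}), the four $f$-contributions cancel in pairs, so the diastasis does not depend on the choice of \K\ potential. Coordinate invariance is analogous: under a biholomorphic change $z=z(\zeta)$, the pulled-back potential $\Phi(z(\zeta),\overline{z(\zeta)})$ extends to $\Phi(z(\zeta),\overline{z(\eta)})$, and substituting into (\ref{diastdefinition}) reproduces the original formula evaluated at $(z(\zeta),z(\eta))$.

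Property (ii) follows by conjugating (\ref{diastdefinition}): the diagonal terms $\Phi(z,\bar z)$ and $\Phi(w,\bar w)$ are real, and the reflection identity interchanges the two off-diagonal terms, so $\dd(z,w)$ is fixed by complex conjugation on its real domain of analyticity. Property (iii) is immediate from the definition, which is symmetric under the swap $z\leftrightarrow w$; setting $w=z$ makes all four terms equal to $\Phi(z,\bar z)$, giving $\dd(z,z)=0$. For (iv), fix $w=w_0$: the value $\Phi(w_0,\overline{w_0})$ is a constant, while $\Phi(z,\overline{w_0})+\Phi(w_0,\bar z)$ is the sum of a holomorphic and an antiholomorphic function of $z$, hence pluriharmonic. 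Thus $\dd(z,w_0)$ differs from the \K\ potential $\Phi(z,\bar z)$ by a constant and a pluriharmonic function, and is therefore itself a \K\ potential for $g$.

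The main technical step is the careful justification of the reflection identity and of the uniqueness of the analytic continuation on a (possibly smaller) neighbourhood $W$ of the diagonal; once these two facts are in place, each of the four properties reduces to an elementary algebraic manipulation of the defining formula (\ref{diastdefinition}).
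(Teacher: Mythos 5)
Your proposal is correct and follows essentially the same route as the paper: the $\partial\bar\partial$-Lemma decomposition of potentials for (i), the reflection identity $\overline{\Phi(z,\bar w)}=\Phi(w,\bar z)$ obtained from uniqueness of the analytic continuation for (ii), and the observation that the extra terms in $\dd(z,w_0)$ are constant plus pluriharmonic for (iv) (which the paper states equivalently by computing $\partial^2/\partial z_j\partial\bar z_k$ directly). No gaps.
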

\begin{proof}$\textrm{}$
\begin{enumerate}
\item[(i)] By the $\partial\bar\partial$--Lemma a K\"ahler potential is defined up to the addition of the real part of a holomorphic function, namely, given two K\"ahler potentials $\Phi$ and $\Phi'$ on $U\subset M$, then $\Phi'=\Phi+f+\bar f$ for some holomorphic function $f$. Conclusions follow again by \eqref{diastdefinition}.
\item[(ii)] Since $\Phi(z,\bar z)=\Phi(z)$ is real valued, then from $\Phi(z,\bar z)=\overline{\Phi(z,\bar z)}$ and  by uniqueness of the extension it follows $\Phi(z,\bar w)=\overline{\Phi( w,\bar z)}$.
\item[(iii)] It follows directly from \eqref{diastdefinition}.
\item[(iv)] Fix $w$ (the case of $z$ fixed is totally similar). Then:
$$
\frac{\partial^2}{\partial z_j\partial \bar z_k}\dd(z,w)=\frac{\partial^2}{\partial z_j\partial \bar z_k}\Phi\left(z,\bar z\right)=\frac{\partial^2}{\partial z_j\partial \bar z_k}\Phi\left(z\right).
$$
\end{enumerate}
\end{proof}
The last property justifies the following second definition.
\begin{defin}
If $w=(w_1,\dots,w_n)$ are local coordinates for a fixed point $p\in M$, the {\em diastasis function centered at $p$} is given by:
$$
\dd_p(z)=\dd(z,w).
$$
In particular, if $p$ is the origin of the coordinate system chosen, we write $\dd_0(z)$.
\end{defin}


The importance of the diastasis function for our purposes is expressed by the following:
\begin{prop}[E. Calabi, \cite{Cal}]\label{induceddiast}
 Let $(M, g)$ and $(S,G)$ be K\"ahler manifolds and assume $G$ to be real analytic. Denote by $\omega$ and $\Omega$ the K\"ahler forms associated to $g$ and $G$ respectively. If there exists a holomorphic map $f\!:(M,g)\f(S,G)$ such that $f^*\Omega=\omega$, then the metric $g$ is real analytic. Further, denoted by $\dd^M_p\!:U\f \R$ and $\dd^S_{f(p)}\!:V\f\R$ the diastasis functions of $(M,g)$ and $(S,G)$ around $p$ and $f(p)$ respectively, we have $\dd_{f(p)}^S\circ f=\dd^M_p$ on $f^{-1}(V)\cap U$.
\end{prop}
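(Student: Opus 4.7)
My plan is to produce a real analytic \K\ potential for $g$ by pulling back one from $G$, and then read off the diastasis formula by inserting this pulled-back potential into \eqref{diastdefinition}, using the independence statement (i) of Proposition \ref{diastmetric}.

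First, I would fix a point $p\in M$ and pick local holomorphic coordinates $z$ centered at $p$ on some neighbourhood $U$, as well as a real analytic \K\ potential $\Psi$ for $G$ on a neighbourhood of $f(p)$ in $S$ (which exists by hypothesis). Because $f$ is holomorphic, pullback commutes with both $\partial$ and $\bar\partial$, so
$$
\omega=f^*\Omega=f^*\!\left(\tfrac{i}{2}\partial\bar\partial\Psi\right)=\tfrac{i}{2}\partial\bar\partial(\Psi\circ f),
$$
showing that $\Phi:=\Psi\circ f$ is a \K\ potential for $g$ on $f^{-1}(V)\cap U$. Since $\Psi$ is real analytic and $f$ is holomorphic, $\Phi$ is real analytic; this proves the first assertion and identifies a convenient potential to work with.

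Next, I would construct the analytic extension of $\Phi$. By hypothesis $\Psi(u,\bar v)$ extends holomorphically in $u$ and anti-holomorphically in $v$ to a neighbourhood of the diagonal in $S\times S$. The function
$$
(z,w)\longmapsto\Psi\!\left(f(z),\overline{f(w)}\right)
$$
is then holomorphic in $z$ and anti-holomorphic in $w$, and on the diagonal $w=z$ it coincides with $\Phi(z,\bar z)=\Psi(f(z),\overline{f(z)})$. By uniqueness of analytic continuation, this function is the extension $\Phi(z,\bar w)$ appearing in the definition of the diastasis.

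Finally, by property (i) of Proposition \ref{diastmetric} the diastasis $\dd^M_p$ does not depend on the choice of potential, so I may compute it using $\Phi=\Psi\circ f$. Substituting into \eqref{diastdefinition} with $w$ equal to the coordinates of $p$:
$$
\dd^M_p(z)=\Psi\!\left(f(z),\overline{f(z)}\right)+\Psi\!\left(f(p),\overline{f(p)}\right)-\Psi\!\left(f(z),\overline{f(p)}\right)-\Psi\!\left(f(p),\overline{f(z)}\right),
$$
and the right-hand side is precisely $\dd^S_{f(p)}(f(z))$, which gives the desired identity on $f^{-1}(V)\cap U$.

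The only subtle step is the identification of the analytic extension of $\Psi\circ f$ with $\Psi(f(z),\overline{f(w)})$; everything else is bookkeeping once property (i) of Proposition \ref{diastmetric} is invoked. That identification, however, is immediate from the holomorphicity of $f$ together with the uniqueness of analytic continuation off the diagonal, so I do not expect genuine difficulty.
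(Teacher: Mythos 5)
Your proposal is correct and follows essentially the same route as the paper: pull back a real analytic potential of $G$ along the holomorphic map $f$ to get a real analytic potential for $g$, then use the fact (item (i) of Proposition \ref{diastmetric}, i.e.\ the $\partial\bar\partial$--Lemma) that the diastasis is independent of the chosen potential, and identify the analytic extension of $\Psi\circ f$ with $\Psi(f(z),\overline{f(w)})$ to read off $\dd^S_{f(p)}\circ f=\dd^M_p$ directly from \eqref{diastdefinition}. The paper phrases the potential comparison as $\Phi^S(f(z),\overline{f(z)})=\Phi^M(z,\bar z)+h+\bar h$ and leaves the extension step implicit, but the argument is the same; your explicit justification of the extension via uniqueness of analytic continuation is a welcome added detail.
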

\begin{proof}
Observe first that the metric $g$ on $M$ is real analytic being the pull--back through a holomorphic map of the real analytic metric $G$. In order to prove the second part, fix a coordinate system $\{z\}$ around $p\in M$. From $f^*G|_{V\cap f(U)}=g|_{f^{-1}(V)\cap U}$, if $\Phi^S$ and $\Phi^M$ are K\"ahler potential for $G$ and $g$ around $f(p)$ and $p$ respectively, we get:
$$
\frac{\partial^2\Phi^S(f(z),\overline{f(z)})}{\partial z_j\partial \bar z_k}=\frac{\partial^2\Phi^M(z,\bar z)}{\partial z_j\partial \bar z_k},
$$
i.e. $\Phi^S(f(z),\overline{f(z)})=\Phi^M(z,\bar z)+h+\bar h$ and conlcusion follows by \eqref{diastdefinition}.
\end{proof}
Observe that the pull-back of any other K\"ahler potential is still a K\"ahler potential, but the fact underlined in the previous proposition that holomorphic maps pull-back the diastasis function in the diastasis function is a fundamental ingredient to prove Calabi's criteria in the next chapter.\\

Recall that given any K\"ahler manifold $(M,g)$ and a K\"ahler potential $\Phi$  around a point $p\in M$, there always exists a coordinate system $\{z_j\}$ around $p$, which satisfies:
$$
\frac{\partial^2\Phi}{\partial z_j\partial \bar z_k}(p)=g_{j\bar k}(p)=\delta_{jk},
$$
$$
\frac{\partial^3\Phi}{\partial z_l\partial z_j\partial \bar z_k}(p)=\frac{\partial}{\partial z_l}\left(g_{j\bar k}\right)(p)=0;\quad \frac{\partial^3\Phi}{\partial \bar z_l\partial z_j\partial \bar z_k}(p)=\frac{\partial}{\partial\bar  z_l}\left(g_{j\bar k}\right)(p)=0.
$$
If we assume the K\"ahler metric $g$ to be also real analytic then in such coordinates the diastasis satisfies:
\begin{equation}\label{bochnercoordinates}
\dd_p(z)=\sum_{\alpha=1}^n|z_\alpha|^2+\psi_{2,2},
\end{equation}
where $\psi_{2,2}$ is a power series with degree $\geq 2$ in both the variables $z$ and $\bar z$. These coordinates, uniquely defined up to a unitary transformation (cfr. \cite{bochner,Cal}), are called the \emph{normal} or \emph{Bochner's coordinates} around the point $p$ (see \cite{bochner,Cal,hulin,hulinlambda,ruan,tian4} for more details and further results about Bochner's coordinates).

The following proposition shows how the diastasis function is related to the geodesic distance explaining the name \emph{diastasis}, from the Greek {\small $\delta\iota\acute{\alpha}\sigma\tau\alpha\sigma\iota\varsigma$}, that means {\em distance}.
\begin{prop}[E. Calabi, \cite{Cal}]
If $\ro(p,q)$ is the geodesic distance between $p$ and $q$, then
\begin{equation}
\dd(p,q)=(\ro(p,q))^2+O((\ro(p,q))^4).\nonumber
\end{equation}
\end{prop}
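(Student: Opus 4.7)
The plan is to work in Bochner (normal) coordinates centered at $p$, in which both the diastasis and the squared geodesic distance become transparent at low order, and then compare.

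First I would fix Bochner coordinates $z=(z_1,\dots,z_n)$ around $p$, as described right before the statement. Then by \eqref{bochnercoordinates},
$$\dd_p(z)=\sum_{\alpha=1}^n|z_\alpha|^2+\psi_{2,2}(z,\bar z),$$
with $\psi_{2,2}$ of total degree $\geq 4$. Hence
$$\dd(p,q)=\dd_p(z)=|z|^2+O(|z|^4),\qquad |z|^2:=\sum_{\alpha=1}^n|z_\alpha|^2,$$
when $q$ has coordinates $z$ near the origin.

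The main step is then to prove that $\rho(p,q)^2=|z|^2+O(|z|^4)$ in these same coordinates. The key observation is that Bochner coordinates are not merely orthonormal at $p$, but they also satisfy $\partial_l g_{j\bar k}(p)=\bar\partial_l g_{j\bar k}(p)=0$. Since on a \K\ manifold the Christoffel symbols of the Levi--Civita connection are $\Gamma^{j}_{kl}=g^{j\bar m}\partial_k g_{l\bar m}$ (and their conjugates), these vanishing first derivatives imply that \emph{all} Christoffel symbols vanish at $p$, exactly as in Riemannian normal coordinates. Viewing $(z_1,\dots,z_n)$ as real coordinates via real and imaginary parts, we therefore have $g_{ij}(z)=\delta_{ij}+O(|z|^2)$. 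I would conclude $\rho(p,q)^2=|z|^2+O(|z|^4)$ in one of two equivalent ways: either by relating Bochner coordinates to Riemannian normal coordinates $y$ through the standard fact that a coordinate system with vanishing Christoffels at $p$ satisfies $y^i=z^i+O(|z|^3)$, so $|y|^2=|z|^2+O(|z|^4)$ and $\rho^2=|y|^2$ for $q$ near $p$; or by directly estimating both the length of the Euclidean segment from $0$ to $z$ (giving the upper bound $\rho\leq |z|+O(|z|^3)$) and, via the same metric expansion, the matching lower bound.

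Finally, subtracting the two expansions gives
$$\dd(p,q)-\rho(p,q)^2=\bigl(|z|^2+O(|z|^4)\bigr)-\bigl(|z|^2+O(|z|^4)\bigr)=O(|z|^4),$$
and since $\rho(p,q)=|z|+O(|z|^3)$ implies $|z|^4=\rho(p,q)^4+O(\rho(p,q)^6)$, the right-hand side is $O(\rho(p,q)^4)$, which is the desired estimate. The main obstacle is the second step: justifying cleanly that Bochner coordinates are ``Riemannian normal up to higher order,'' since Calabi's normal coordinates are defined by complex-analytic conditions on the potential while the geodesic distance is a Riemannian notion; the bridge is precisely the vanishing of the mixed first derivatives of $g_{j\bar k}$, which via the \K\ condition forces the full Christoffel tensor to vanish at $p$.
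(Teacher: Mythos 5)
Your proof is correct and follows essentially the same route as the paper's: expand the diastasis in Bochner coordinates via \eqref{bochnercoordinates} and compare with the expansion of the squared geodesic distance. You are in fact more careful than the text on the second step: the paper only invokes $g_{j\bar k}(p)=\delta_{jk}$, whereas, as you observe, one really needs the vanishing of the first derivatives of $g_{j\bar k}$ at $p$ (hence of the Christoffel symbols in the Bochner coordinates) to upgrade $\rho(p,q)^2=\|z\|^2+O(\|z\|^3)$ to the required $\|z\|^2+O(\|z\|^4)$.
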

\begin{proof}
Fix $p\in M$ and let $\{z\}$ be Bochner coordinates around it. Then, since $\dd_p(z)$ is a K\"ahler potential for $g$ around $p$, its power expansion around $p$ in the variables $z$ and $\bar z$ reads:
$$
\dd(p,q)=\dd_p(z)=||z||^2+\psi_{2,2}(z,\bar z),
$$
where $\psi_{2,2}$ is a power series with no terms of degree less than 2 in either the variables $z$ and $\bar z$.
On the other hand, since at the origin one has $g_{j\bar k}=\delta_{jk}$, the geodesic distance satisfies:
$$
(\ro(p,q))^2=||z||^2+O((||z||^2)^2),
$$
and conclusion follows.
\end{proof}

We conclude this section giving a very useful characterization of the diastasis, easily deducible by the definition, in terms of its power expansion. 
In order to semplify the notation, let us first fix the following multi-index convention that we are going to use through all this book.
We arrange every $n$-tuple of nonnegative integers as the sequence $m_j=(m_{j,1},\dots,m_{j,n})$ with not decreasing order, that is $m_0=(0,\dots,0)$ and if $|m_j|=\sum_{\alpha=1}^n m_{j,\alpha}$, we have $|m_j|\leq |m_{j+1}|$ for all positive integer $j$. Further $z^{m_j}$ denotes  the monomial in $n$ variables $\prod_{\alpha=1}^n z_\alpha^{m_{j,\alpha}}$. For example, if $n=2$ we can consider the ordering $m_0=(0,0)$, $m_1=(1,0)$, $m_2=(0,1)$, $m_3=(1,1)$, $m_4=(2,0)$, etc. and we would have $z^{m_0}=1$, $z^{m_1}=z_1$, $z^{m_2}=z_2$, $z^{m_3}=z_1 z_2$, $z^{m_4}=z_1^2$, etc. Notice that the order is not uniquely determined by these rules, since we are allowed to switch terms of equal module $|m_j|$ (i.e. in the $2$ dimensional case we may also take $m_1=(0,1)$, $m_2=(1,0)$, etc.).

\begin{theor}[E. Calabi, \cite{Cal}]\label{chardiast}
Among all the K\"ahler potentials the diastasis $\dd_p(z)$ is characterized by the fact that in every coordinate system $(z_1,\dots,z_n)$ centered at $p$, the $\infty\times\infty$ matrix of coefficients $(a_{jk})$ in its power expansion around the origin
\begin{equation}\label{powexdiastc}
\dd_p(z)=\sum_{j,k=0}^{\infty}a_{jk}z^{m_j}\bar z^{m_k},
\end{equation}
satisfies $a_{j0}=a_{0j}=0$ for every nonnegative integer $j$.
\end{theor}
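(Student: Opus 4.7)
The plan is to split the proof into two directions: first verify that $\dd_p(z)$ itself has the stated vanishing property, and then use the $\partial\bar\partial$--Lemma to show that this property pins down the diastasis uniquely among all K\"ahler potentials.

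For the first direction, I would take any K\"ahler potential $\Phi$ on a neighbourhood of $p$, choose local coordinates centered at $p$, and write its analytic extension as a double power series
\begin{equation*}
\Phi(z,\bar w)=\sum_{j,k\geq 0} a_{jk}\, z^{m_j}\,\bar w^{m_k}.
\end{equation*}
Substituting $w=0$ into the defining formula \eqref{diastdefinition} gives
\begin{equation*}
\dd_p(z)=\Phi(z,\bar z)+\Phi(0,0)-\Phi(z,0)-\Phi(0,\bar z),
\end{equation*}
and a direct bookkeeping of the four contributions shows that the terms $a_{j0}z^{m_j}$ (coming from $\Phi(z,0)$), the terms $a_{0k}\bar z^{m_k}$ (coming from $\Phi(0,\bar z)$), and the constant $a_{00}$ all cancel, leaving
\begin{equation*}
\dd_p(z)=\sum_{j,k\geq 1} a_{jk}\, z^{m_j}\,\bar z^{m_k}.
\end{equation*}
Hence in its own power expansion the coefficients with a zero index vanish, giving the required property.

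For the converse, suppose $\tilde\Phi$ is any K\"ahler potential for $g$ around $p$ whose power expansion $\sum b_{jk} z^{m_j}\bar z^{m_k}$ satisfies $b_{j0}=b_{0j}=0$ for every $j\geq 0$. As recalled in the proof of Proposition \ref{diastmetric}, the $\partial\bar\partial$--Lemma implies that $\tilde\Phi$ and $\dd_p$ differ by $f+\bar f$ for some holomorphic function $f(z)=\sum_{j\geq 0}\alpha_j z^{m_j}$. Then
\begin{equation*}
\tilde\Phi(z)-\dd_p(z)=2\,\re\alpha_0+\sum_{j\geq 1}\alpha_j z^{m_j}+\sum_{k\geq 1}\bar\alpha_k\bar z^{m_k}.
\end{equation*}
The right--hand side has its only nonzero coefficients in positions $(j,0)$ and $(0,k)$, whereas by assumption both $\tilde\Phi$ and $\dd_p$ have all such coefficients equal to zero. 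Comparing terms gives $\alpha_j=0$ for every $j\geq 1$ and $\re\alpha_0=0$, so $f+\bar f\equiv 0$ and therefore $\tilde\Phi=\dd_p$.

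There is no serious obstacle here: the content is entirely the $\partial\bar\partial$--Lemma plus a coefficient comparison. The only point one has to be slightly careful with is not double counting the constant term $a_{00}$ in the cancellation of the first direction, which is why writing the four contributions separately according to whether $j$ or $k$ equals zero is the cleanest way to organize the bookkeeping.
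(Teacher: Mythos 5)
Your proof is correct and follows essentially the same route as the paper: the cancellation of the border terms $a_{j0}z^{m_j}$, $a_{0k}\bar z^{m_k}$ and $a_{00}$ via the defining formula \eqref{diastdefinition} is exactly Calabi's computation. The only cosmetic difference is in the uniqueness step, where the paper observes that a potential $\Phi$ with vanishing border coefficients satisfies $\Phi(z,0)=\Phi(0,\bar z)=0$ and is therefore reproduced verbatim by the diastasis formula, whereas you reach the same conclusion by expanding the $\partial\bar\partial$--Lemma ambiguity $f+\bar f$ and matching coefficients; the two arguments rest on the same facts.
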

\begin{proof}
Let $(z_1,\dots,z_n)$ be a coordinate system centered at $p\in M$ and assume that:
$$
 \Phi(z,\bar z)=\sum_{j,k=0}^{\infty}a_{jk}z^{m_j}\bar z^{m_k},
$$
is a K\"ahler potential satisfying $a_{j0}=a_{0j}=0$ for every nonnegative integer $j$. Then,
$$
 \Phi(z,0)=0=\Phi(0,\bar z),
$$
and by \eqref{diastdefinition} we have:
$$
\dd_0(z,\bar z)= \Phi(z,\bar z).
$$
Conversely, let:
$$
\Phi(z,\bar z)=\sum_{j,k=0}^{\infty}a_{jk}z^{m_j}\bar z^{m_k},
$$
be the power expansion around the origin of a K\"ahler potential. Then by \eqref{diastdefinition}:
$$
\dd_p(z)=\dd(z,0)=\sum_{j,k=0}^{\infty}a_{jk}z^{m_j}\bar z^{m_k}+a_{00}-\sum_{j=0}^{\infty}a_{j0}z^{m_j}-\sum_{k=0}^{\infty}a_{0k}\bar z^{m_k},
$$
and conclusion follows.
\end{proof}

\section{Complex space forms}\label{csf}
We describe here the diastasis of complex space forms. Recall that a complex space form is a finite or infinite dimensional K\"ahler manifold of constant holomorphic sectional curvature, that if we assume to be complete and simply connected, then up to homotheties it can be of the following three types, according to the sign of the holomorphic sectional curvature.

\noindent{\bf 1. Complex Euclidean space.}
The complex Euclidean space $\C^N$ of complex dimension $N\leq \infty$, endowed with the flat metric denoted by $g_0$.  Here $\C^\infty$ denotes the Hilbert space $l ^2({\C})$ consisting of sequences $w_j\in\C$, $j=1,2,\dots$,  such that $\sum_{j=1}^{+\infty}|w_j|^2<+\infty$. 
The diastasis, that we will denote from now on by $\dd^0$, is equal to the square of the geodesic distance, i.e. it is given by:
$$
\dd^0(p,q)=||p-q||^2.
$$
Obviously, $\dd^0$ is positive except for $p=q$.
The canonical coordinates $(z_1,\dots,z_n)$ of $\C^N$ are Bochner coordinates around the origin and the globally defined diastasis $\dd^0_0\!:\C^N\f \R$ centered at the origin reads:
\begin{equation}\label{diastc}
\dd^0_0(z)=\sum_{j=1}^N|z_j|^2.
\end{equation}


\noindent{\bf 2. Complex projective space.}
The complex projective space $\CP_b^N=(\CP^N, g_b)$, namely the complex projective space $\CP^N$ of complex dimension $N\leq \infty$, with the Fubini-Study metric $g_{b}$ of holomorphic sectional curvature $4b$ for $b>0$. Let $[Z_0,\dots,Z_N]$ be homogeneous coordinates,
$p=[1,0,\dots,0]$ and $U_0=\{Z_0\neq 0\}$. The affine coordinates $z_1,\dots, z_N$ on $U_0$  defined by $z_j=Z_j/(\sqrt{b}Z_0)$ are Bochner coordinates centered at $p$. The diastasis on $U_0$ centered at the origin and defined on $U_0$ reads:
\begin{equation}\label{diastcp}
\dd^b_0(z)=\frac{1}{b}\log\left(1+b\sum_{j=1}^N|z_j|^2\right), \quad \textrm{for}\ b>0.
\end{equation}
Since \eqref{diastcp} in homogeneous coordinates reads:
$$
\dd^b_0(Z)=\frac{1}{b}\log\sum_{j=0}^N\frac{|Z_j|^2}{|Z_0|^2},
$$
by \eqref{diastdefinition} we have:
\begin{equation}\label{diastcphom}
\dd^b(Z,W)=\frac{1}{b}\log\frac{\sum_{j,k=0}^N{|Z_j|^2}{|W_k|^2}}{\left|\sum_{j=0}^N{Z_j\bar W_j}\right|^2}.
\end{equation}
Observe that $\dd^b$ is positive everywhere.
Further, $g_{b}$ is Einstein with Einstein constant $\lambda=2b(N+1)$.\\

\noindent{\bf 3. Complex hyperbolic space.}
The complex hyperbolic space $\CH_b^N$ of complex dimension $N\leq \infty$, namely the unit ball $B\subset\C^N$ given by:
$$
B=\left\{(z_1,\dots,z_N)\in\C^N,\; \sum_{j=1}^N|z_j|^2<-\frac1b\right\},
$$
endowed with the hyperbolic metric $g_{b}$ of constant holomorphic sectional curvature $4b$, for $b<0$. Fixed a coordinate system around a point $p\in B$, the hyperbolic metric is described by the (globally defined) diastasis $\dd^b_0$ centered at the origin which reads as:
\begin{equation}\label{diastch}
\dd^b_0(z)=\frac{1}{b}\log\left(1+b\sum_{j=1}^N|z_j|^2\right), \quad \textrm{for}\ b<0.
\end{equation}
If we introduce homogeneous coordinates $(Z_0,\dots, Z_N)$, defined by $z_j=Z_j/(\sqrt{-b}Z_0)$, similarly to the case of the complex projective space, we obtain:
$$
\dd^b_0(Z)=\frac{1}{b}\log\frac{|Z_0|^2-\sum_{j=1}^N|Z_j|^2}{|Z_0|^2},
$$
and thus:
$$
\dd^b(Z,W)=\frac{1}{b}\log\frac{\left(|Z_0|^2-\sum_{j=1}^N|Z_j|^2\right)\left(|W_0|^2-\sum_{k=1}^N|W_k|^2\right)}{\left|Z_0\bar W_0-\sum_{j=1}^NZ_j\bar W_j\right|^2}.
$$
In this case $g_{b}$ is Einstein with Einstein constant $\lambda=2b(N+1)$. \\

\vskip 0.3cm

\noindent
{\bf Notation.}
\noindent
In the sequel we denote $g_1$ by $g_{FS}$ and $g_{-1}$ by $g_{hyp}$. Furthermore, in order to simplify the notation we write $\C^N$, $\CP^N$ and $\CH^N$ instead of $(\C^N,g_0)$, $(\CP^N, g_{FS})$ and $(\CH^N,g_{hyp})$. Finally, according with the notation in \cite{Cal}, we will write ${\rm F}(N,b)$ to refer to a complex space form of curvature $4b$ and dimension $N$. Observe that for the case $b=0$ the notation is justified since the diastasis $\dd^0$ can be seen as the limit for $b$ approaching $0$ of $\dd^b$. Moreover, notice also that with these notations one has
$\CP^N_b=F(N, b)$, $b>0$ and $\CH^N_b=F(N, b)$, for  $b<0$. 

\section{The indefinite Hilbert space}\label{ihs}
Consider the indefinite Hilbert space $E$ of sequences
$$
(x_1,x_{-1},x_2,x_{-2},\dots, x_j,x_{-j},\dots),\quad \sum_{j\in\z^*}|x_j|^2<\infty,
$$
endowed with the indefinite Hermitian metric defined by the {\em diastasis}:
$$
\dd^E_0(x)=\sum_{j\in\z^*}(\sgn j)|x_j|^2.
$$
\begin{defin}\label{lki}
We say that a complex manifold $(M,g)$ admits a local K\"ahler immersion into $E$ if given any point $p\in M$ there exists a neighbourhood $U$ of $p$ and a map $f\!:U\rightarrow E$ such that:
\begin{enumerate}
\item $f$ is holomorphic;
\item $f$ is isometric, namely $\dd^M_p(z)=\dd^E_{f(p)}(f(z))$;
\item there exists $0<R<+\infty$ such that $\sum_{j=1}^\infty|f_j(z)|^2<R$.
\end{enumerate}
\end{defin}
The last condition is justified by the following:
\begin{lem}[E. Calabi, \cite{Cal}]\label{convergence}
If a sequence $f_j(z)$ of holomorphic functions defined on a common domain satisfies $\sum_{j=1}^\infty |f_j(z)|^2<R$, for some $0<R<+\infty$,
then the function $f(z,\bar z)=\sum_{j=1}^\infty |f_j(z)|^2$ is real analytic as a function in the variables $z$ and $\bar z$.
\end{lem}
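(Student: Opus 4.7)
The plan is to \emph{polarize} the series by replacing $|f_j(z)|^2=f_j(z)\overline{f_j(z)}$ with the two--variable expression $f_j(z)\overline{f_j(w)}$, and to show that the partial sums
$$
F_N(z,\bar w)=\sum_{j=1}^N f_j(z)\overline{f_j(w)}
$$
converge, as $N\to\infty$, to a function holomorphic in the $2n$ complex variables $(z,\bar w)$ on $U\times U$, where $U$ denotes the common domain of holomorphy of the $f_j$. Once this is done, setting $w=z$ expresses $f(z,\bar z)$ as the restriction of this two--variable holomorphic function to the anti--diagonal, which is by definition what it means for $f(z,\bar z)$ to be real analytic in $z$ and $\bar z$.

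The first step is the Cauchy--Schwarz estimate
$$
\left|F_N(z,\bar w)-F_M(z,\bar w)\right|^{2}\leq\Bigl(\sum_{j=M+1}^{N}|f_j(z)|^{2}\Bigr)\Bigl(\sum_{j=M+1}^{N}|f_j(w)|^{2}\Bigr),
$$
which, combined with the hypothesis $\sum_j|f_j(z)|^{2}<R$, yields two things at once: the sequence $F_N(z,\bar w)$ is pointwise Cauchy in $\mathbb{C}$ and therefore converges to some function $F(z,\bar w)$; and each $F_N$, hence also the limit $F$, is bounded in absolute value by $R$ uniformly on $U\times U$.

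The second step is to upgrade this pointwise convergence to holomorphy of the limit. Each partial sum $F_N$ is manifestly a holomorphic function of the $2n$ complex variables $(z,\bar w)$ on $U\times U$. Since the family $\{F_N\}$ is uniformly bounded by $R$ and converges pointwise, Vitali's theorem in several complex variables (equivalently, a Montel / normal--family argument) forces locally uniform convergence, and the limit $F(z,\bar w)$ is holomorphic in $(z,\bar w)$. Restricting to the anti--diagonal $w=z$ then gives
$$
f(z,\bar z)=\sum_{j=1}^{\infty}|f_j(z)|^{2}=F(z,\bar z),
$$
which is real analytic in $z$ and $\bar z$ as a pullback of a holomorphic function in $(z,\bar w)$.

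The only non--routine ingredient is the passage from pointwise to locally uniform convergence, which is exactly where Vitali/Montel enters; everything else is bookkeeping with Cauchy--Schwarz and the observation that each $|f_j(z)|^{2}$ naturally polarizes to a function holomorphic in the doubled variables.
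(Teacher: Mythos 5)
Your argument is correct. Note that the paper itself states this lemma without proof, simply attributing it to Calabi, so there is no in-text proof to compare against; but your polarization-plus-normal-families route is sound: the Cauchy--Schwarz estimate gives both the uniform bound $R$ and pointwise convergence of the partial sums $F_N(z,\bar w)$, Montel/Vitali upgrades this to locally uniform convergence of holomorphic functions of the $2n$ variables $(z,\bar w)$, and restriction to $w=z$ yields real analyticity. The one place to be slightly more careful is notational: $F_N$ is holomorphic on $U\times\overline{U}$ (holomorphic in $z$, antiholomorphic in $w$), not on $U\times U$, but this does not affect the argument. For comparison, the route closer to Calabi's original (and to the technique the paper does use inside the proof of Theorem \ref{indefcal}) avoids normal families altogether: expanding $f_j(z)=\sum_k a^j_k z^{m_k}$ on a polycylinder of polyradius $\rho$, Parseval on the distinguished boundary gives $\sum_{j,k}|a^j_k|^2\rho^{2m_k}\leq R$, whence the coefficients $b_{kl}=\sum_j a^j_k\overline{a^j_l}$ of the doubled series satisfy $|b_{kl}|\leq R/(\rho^{m_k}\rho^{m_l})$ and the series $\sum_{k,l}b_{kl}z^{m_k}\bar z^{m_l}$ converges absolutely on any strictly smaller polycylinder. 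That version is more elementary (only Cauchy estimates) and produces the explicit coefficient bound that the paper reuses later; yours is shorter and requires no expansion in coordinates. Either is acceptable.
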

In particular observe that the weaker hypothesis $\sum_{j=1}^\infty|f_j(z)|^2<+\infty$ does not even imply $f$ to be continuous.

As we are about to prove, the indefinite Hilbert space $E$ constitutes a \emph{universal} K\"ahler manifold, in the sense that it is a space in which every real analytic K\"ahler manifold can be locally K\"ahler immersed.

More precisely we have the following:
\begin{theor}[E. Calabi, {\cite[pages 6-9]{Cal}}]\label{indefcal}
A complex manifold $M$ endowed with a metric $g$ admits a local K\"ahler immersion into the indefinite Hilbert space $E$ if and only if $g$ is a real analytic K\"ahler metric.
\end{theor}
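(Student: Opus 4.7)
The plan is to treat the two directions independently. For necessity, suppose $f\colon U\to E$ is a local K\"ahler immersion with components $(f_j)_{j\in\z^*}$. The isometry condition combined with the explicit form of $\dd^E_0$ yields
$$
\dd^M_p(z)\;=\;\sum_{j>0}|f_j(z)-f_j(p)|^2\;-\;\sum_{j>0}|f_{-j}(z)-f_{-j}(p)|^2.
$$
Applying Lemma \ref{convergence} to the positive-index and negative-index families separately (the uniform bound of condition (3) is preserved under the affine shift $f_j\mapsto f_j-f_j(p)$), both sums are real analytic in $(z,\bar z)$. Hence $\dd^M_p$ is real analytic, and by Proposition \ref{diastmetric} (iv) the metric $g$ is a real analytic K\"ahler metric.

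For sufficiency, assume $g$ is real analytic. Pick coordinates centered at $p$ and form the diastasis $\dd_p(z)$, which by Theorem \ref{chardiast} (together with $\dd_p(0)=0$) admits a power expansion
$$
\dd_p(z)=\sum_{j,k\geq 1}a_{jk}\,z^{m_j}\bar z^{m_k},
$$
whose coefficient matrix $A=(a_{jk})_{j,k\geq 1}$ is Hermitian since $\dd_p$ is real valued. The strategy is to factor $A=B^{*}JB$ with $J=\mathrm{diag}(\epsilon_j)$, $\epsilon_j\in\{+1,-1\}$, either by spectral decomposition of $A$ as a densely defined self-adjoint operator or by a Gram--Schmidt-type orthogonalization of the monomials $(z^{m_j})_{j\geq 1}$ against the indefinite Hermitian form determined by $A$. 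Setting $v(z)=(z^{m_1},z^{m_2},\dots)^T$ and $f_j(z):=(Bv(z))_j$, each $f_j$ is a holomorphic power series with $f_j(0)=0$, and a direct substitution gives
$$
\dd_p(z)=\sum_{j}\epsilon_j|f_j(z)|^2.
$$
After relabeling the $\epsilon_j=+1$ indices as positive and the $\epsilon_j=-1$ indices as negative, the resulting $f\colon U\to E$ is holomorphic, satisfies $f(p)=0$, and fulfills $\dd^E_{f(p)}(f(z))=\dd_p(z)$, which is the isometry condition in Definition \ref{lki}.

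The main obstacle is verifying condition (3) of Definition \ref{lki}: an abstract diagonalization of an infinite Hermitian matrix need not propagate to a uniform bound on $\sum_{j>0}|f_j(z)|^2$ and $\sum_{j>0}|f_{-j}(z)|^2$ separately. To control this, one exploits the absolute convergence of the Taylor expansion of $\dd_p$ on a polydisk around $p$, which provides a single majorant bounding both the positive and the negative parts of the diagonalized expression simultaneously. Restricting to a small enough subpolydisk then produces the required constant $R<+\infty$ for each of the two series, completing the construction of the local K\"ahler immersion.
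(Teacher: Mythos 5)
Your necessity argument is fine and matches the paper's: split the signed sum into its positive-index and negative-index families, apply Lemma \ref{convergence} to each, and conclude via Proposition \ref{diastmetric} (iv).

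The sufficiency direction, however, has a genuine gap, and it sits exactly where you flag ``the main obstacle.'' An abstract factorization $A=B^{*}JB$ obtained by spectral decomposition or by Gram--Schmidt against the indefinite form does not come with any quantitative control of the entries of $B$ in terms of the entries of $A$, so the Cauchy estimates $|a_{jk}|\leq H/(\rho^{m_j}\rho^{m_k})$ coming from absolute convergence of the Taylor series of $\dd_p$ simply do not transfer to a bound on $\sum_j|f_j(z)|^2$ and $\sum_j|f_{-j}(z)|^2$ separately. Saying that absolute convergence ``provides a single majorant bounding both the positive and the negative parts simultaneously'' is precisely the assertion that needs proof: convergence of $\sum_j\epsilon_j|f_j|^2$ (even as an absolutely convergent double power series in $z,\bar z$) places no constraint on $\sum_j|f_j|^2$ for a generic choice of $B$, and one can cook up factorizations of the same $A$ for which the unsigned sum diverges everywhere off the origin. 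There are also structural problems with your two proposed factorizations: $A$ need not define a bounded (or usefully densely defined) operator on $\ell^2$, and indefinite Gram--Schmidt breaks down whenever a diagonal entry of the partially reduced matrix vanishes or the form is isotropic on the relevant subspace, which cannot be excluded here.

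Calabi's resolution, which is the actual content of the theorem, is to write down an \emph{explicit} decomposition rather than an abstract one: for a free parameter $r=(r_1,\dots,r_n)$ he sets
$$
f_{\pm j}(z)=\frac12\left(a_{jj}r^{m_j}\pm\frac{1}{r^{m_j}}\right)z^{m_j}+\sum_{k=j+1}^{\infty}a_{jk}r^{m_j}z^{m_k},
$$
so that the polarization identity $|P+Q|^2-|P-Q|^2=2\,(P\bar Q+\bar PQ)$ makes $|f_j|^2-|f_{-j}|^2$ reproduce exactly the $j$-th ``hook'' of the Hermitian matrix $(a_{jk})$, with no normalization or division and hence no degeneracy issues. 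Because the coefficients of $f_{\pm j}$ are literally the $a_{jk}$'s times powers of $r$, the Cauchy estimates apply directly, and the choice $r_j=\sqrt{\rho_j\rho_j'}$ with $0<\rho'<\rho$ yields the geometric-series majorant giving $\sum_{j\in\z^*}|f_j(z)|^2\leq R<\infty$ on the smaller polydisk. Without replacing your abstract $B$ by such an explicit, entrywise-controlled construction, condition (3) of Definition \ref{lki} remains unverified and the proof is incomplete.
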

\begin{proof}
Let $p\in M$ and let $\dd_p(z)$ be the distasis function of $g$ in a neighbourhood $U$ of $p$. If $f\!:U\rightarrow E$, $f(z)=(\dots, f_{-j}(z),\dots, f_{-1}(z),f_1(z),\dots, f_j(z),\dots)$ is a holomorphic and isometric immersion then by Prop. \ref{induceddiast}:
\begin{equation}\label{ddE}
\dd_p(z)=\sum_{j\in\z^*}(\sgn j)|f_j(z)-f_j(p)|^2,
\end{equation}
and by Lemma \ref{convergence} $\dd_p(z)$ is real analytic on $U$. Thus, since by ($iv$) of Prop. \ref{diastmetric} $\dd_p(z)$ is a K\"ahler potential around $p$ for the induced metric $g$ on $M$, $g$ is a real analytic K\"ahler metric.

Conversely, assume the metric $g$ to be real analytic. Then for each $p\in M$, there exists a neigborhood $U$ such that $\dd_p(z)$ is real analytic and admits the power expansion:
\begin{equation}\label{ddconv}
\dd_p(z,\bar z)=\sum_{j,k=1}^\infty a_{jk}\,z^{m_j}\bar z^{m_k}.
\end{equation}
Observe that being $\dd_p(z)$ real valued, $(a_{jk})$ is a $\infty\times\infty$ Hermitian matrix. We need to construct a sequence of functions $f_j$ which satisfies \eqref{ddE} and converges in norm in a sufficiently small neighbourhood of $p$. Let $r=(r_1,\dots, r_n)$ be an $n$-tuple of arbitrary positive numbers to be fixed later. Define:
$$
f_j(z):=\frac12\left(a_{j j}r^{m_j} +\frac1{r^{m_j}}\right)z^{m_j}+\sum_{k=j+1}^\infty a_{j k}r^{m_j}z^{m_k}
$$
$$
f_{-j}(z):=\frac12\left(a_{j j}r^{m_j} -\frac1{r^{m_j}}\right)z^{m_j}+\sum_{k=j+1}^\infty a_{j k}r^{m_j}z^{m_k}
$$
Then:
$$
|f_j(z)|^2-|f_{-j}(z)|^2=\sum_{i,k=j}^\infty a_{ik}z^{m_i}\bar z^{m_k}
$$
and from:
$$
\sum_{j\in\mathds{Z}^*}(\sgn j)|f_j(z)|^2=\sum_{j=1}^\infty\left(|f_j(z)|^2-|f_{-j}(z)|^2\right)=\sum_{j,k=1}^\infty a_{jk}z^{m_j}\bar z^{m_k},
$$
\eqref{ddE} follows. It remains to prove that the sequence $(f_j)_{j\in \mathds{Z}^*}$ converges in norm to a real analytic function, i.e. due to Lemma \ref{convergence}, that it satisfies the third condition of Def. \ref{lki} above.  From the definition of $f_{j}$ and $f_{-j}$, we get:
\begin{equation}
\begin{split}
|f_{\pm j}(z)|^2&=\left|\frac12\left(a_{j j}r^{m_j} \pm\frac1{r^{m_j}}\right)z^{m_j}+\sum_{k=j+1}^\infty a_{j k}r^{m_j}z^{m_k}\right|^2\\
&\leq \left|\frac1{2r^{m_j}}z^{m_j}+\sum_{k=1}^\infty a_{j k}r^{m_j}z^{m_k}\right|^2\\
&\leq \frac{|z|^{2m_j}}{2r^{2m_j}}+\left|\sum_{k=1}^\infty a_{j k}r^{m_j}z^{m_k}\right|^2
\end{split}\nonumber
\end{equation}
which implies:
$$
\sum_{j\in\mathds{Z}^*}|f_{ j}(z)|^2\leq\sum_{j=1}^{\infty}\frac{|z|^{2m_j}}{r^{2m_j}}+2\sum_{j,k=1}^\infty \left|a_{j k}r^{m_j}z^{m_k}\right|^2.
$$
From the convergence of the RHS of \eqref{ddconv} to a real analytic function on $U$, there exists positive constant $H$ such that for any polycylinder $|z_\alpha|\leq\rho_\alpha$, $\alpha=1,\dots, n$ contained in $U$, we have $|a_{jk}|\leq H/(\rho^{m_j}\rho^{m_k})$, where $\rho=(\rho_1,\dots, \rho_n)$.
Choose an $n$-tuple $\rho'$ such that $0<\rho'<\rho$. Then for $|z_\alpha|\leq \rho'_\alpha$ one has:
$$
\sum_{j\in\mathds{Z}^*}|f_{ j}(z)|^2\leq\sum_{j=1}^{\infty}\frac{\rho'^{2m_j}}{r^{2m_j}}+2 H^2\sum_{j=1}^\infty \frac{r^{2m_j}}{\rho^{2m_j}}\sum_{k=1}^\infty\frac{\rho'^{2m_k}}{\rho^{2m_k}}.
$$
Fixing $r_j=\sqrt{\rho_j\rho'_j}$ we get:
$$
\sum_{j\in\mathds{Z}^*}|f_{ j}(z)|^2\leq\sum_{j=1}^{\infty}\frac{\rho'^{m_j}}{\rho^{m_j}}+2 H^2\sum_{j=1}^\infty \frac{\rho'^{m_j}}{\rho^{m_j}}\sum_{k=1}^\infty\frac{\rho'^{2m_k}}{\rho^{2m_k}},
$$
and thus:
$$
\sum_{j\in\mathds{Z}^*}|f_{ j}(z)|^2\leq\frac1{\prod_{j=1}^n(1-\rho'_j/\rho_j)}\left(1+\frac{2H^2}{\prod_{j=1}^n(1-\rho'^{2}/\rho_j^2)}\right)=R<\infty
$$
as wished.
\end{proof}

We end this chapter with the following result about Bochner's coordinates and \K\ immersions.

\begin{theor}[E. Calabi, \cite{Cal}]\label{bochnergraph}
Let $(M,g)$ be an $n$-dimensional real analytic submanifold of an $N$-dimensional K\"ahler manifold $(S,G)$ with real analytic metric. Then $g$ is a real analytic K\"ahler metric and 
if $z=(z_1,\dots, z_n)$ are Bochner's coordinates on $M$ with respect to a point $p\in M$, then there exist Bochner's coordinates on $S$ such that the immersion $f\!: M\f S$ is given in a neighbourhood of $p$ by a graph:
\begin{equation}
(z_1,\dots,z_n)\mapsto (z_1,\dots,z_n,f_1(z),\dots, f_{N-n}(z)),
\end{equation}
where for all $j=1,\dots, N-n$, $f_j$ is a holomorphic function with no terms of degree less than $2$.
\end{theor}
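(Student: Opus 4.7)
The plan is to reduce both conclusions to Propositions \ref{induceddiast} and \ref{chardiast}: the real analyticity of $g$ is immediate from \ref{induceddiast} since $g=f^*G$ is the pullback of a real analytic K\"ahler metric by a holomorphic immersion. The remainder is a normalisation argument in which Bochner coordinates on $S$ are chosen so as to produce the required graph form.

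First, I would fix any Bochner coordinates $(w_1,\dots,w_N)$ on $S$ centered at $f(p)$. A linear unitary change of the $w$'s preserves the Bochner form, because both $\sum|w_k|^2$ and the bi-degree condition ``$\geq 2$ in both $w$ and $\bar w$'' are $U(N)$-invariant. Since $df_p$ has rank $n$, such a unitary change lets me assume $df_p(T_pM)=\mathrm{span}\{\partial/\partial w_1,\dots,\partial/\partial w_n\}$. The holomorphic implicit function theorem then exhibits $f(M)$ near $p$ as a graph
\[
w_{n+j}=g_j(w_1,\dots,w_n),\qquad j=1,\dots,N-n,
\]
where each $g_j$ is holomorphic with $g_j(0)=0$ and $dg_j(0)=0$, hence $g_j=O(|w|^2)$.

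The key step is to show that $\tilde z_i:=w_i\circ f$ $(i=1,\dots,n)$ already define Bochner coordinates for $g$ on $M$. By Proposition \ref{induceddiast},
\[
\dd^M_p(\tilde z)=\dd^S_{f(p)}\bigl(\tilde z_1,\dots,\tilde z_n,g_1(\tilde z),\dots,g_{N-n}(\tilde z)\bigr).
\]
Writing $\dd^S_{f(p)}(w,\bar w)=\sum_{k=1}^N|w_k|^2+\psi^S_{2,2}(w,\bar w)$ and substituting the graph, the diagonal contribution yields $\sum_{i=1}^n|\tilde z_i|^2+\sum_j|g_j(\tilde z)|^2$; the $|g_j|^2$ terms have bi-degree $\geq(2,2)$ because $g_j$ is holomorphic of order $\geq 2$. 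The term $\psi^S_{2,2}$ has bi-degree $\geq(2,2)$ in $(w,\bar w)$, and substituting holomorphic functions of $\tilde z$ of order $\geq 1$ for the $w_k$ keeps the bi-degree $\geq(2,2)$ in $(\tilde z,\bar{\tilde z})$. Hence $\dd^M_p(\tilde z)=\sum_{i=1}^n|\tilde z_i|^2+\tilde\psi_{2,2}$, which by \eqref{bochnercoordinates} is the Bochner form.

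To conclude, the uniqueness of Bochner coordinates up to a unitary transformation provides $V\in U(n)$ with $z=V\tilde z$. Applying the block-diagonal unitary $V\oplus I_{N-n}\in U(N)$ to $(w_1,\dots,w_N)$ yields a new Bochner coordinate system on $S$ in which $w_i\circ f=z_i$ for $i\leq n$, while the remaining components $f_j:=w_{n+j}\circ f$ are obtained from the old $g_j$ by precomposition with the linear unitary $V^{-1}$, so they still vanish to order $\geq 2$ at $p$. This gives precisely the graph expression claimed. The main obstacle is the bi-degree bookkeeping in the key step: one must check carefully that substituting the graph parametrisation into the Bochner expansion of $\dd^S_{f(p)}$ cannot create new ``pure'' monomials of low order, and this hinges on the fact that $g_j=O(|\tilde z|^2)$ inherited from the alignment of tangent spaces in the first step.
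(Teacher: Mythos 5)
Your proof is correct, and its computational heart --- pulling back the diastasis through $f$, writing both sides in Bochner form, and observing that the bi-degree bookkeeping forbids any terms of type $(\geq 2,1)$ or $(1,\geq 2)$ --- is exactly the engine of the paper's proof. The packaging differs in two places. For real analyticity of $g$ you invoke Proposition \ref{induceddiast} directly, which is cleaner than the paper's detour through the immersion into the indefinite Hilbert space $E$ (Theorem \ref{indefcal}); both are valid. For the graph structure, the paper aligns $df_p$ with the standard inclusion, expands $f_j(z)=z_j+\sum_{k\geq n+1}a_{jk}z^{m_k}$ in the \emph{given} Bochner coordinates $z$, and kills the coefficients $a_{jk}$ ($j\leq n$) by noting that the cross terms $a_{jk}z^{m_k}\bar z_j$ would produce forbidden bi-degrees in $\dd_p$; you instead use the holomorphic implicit function theorem to produce the graph first, prove that $\tilde z=w\circ f$ is itself a Bochner system on $M$ by the same bi-degree argument, and then appeal to the uniqueness of Bochner coordinates up to $U(n)$ to match $\tilde z$ with $z$. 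Your route offloads the final identification onto the (cited, not reproved) uniqueness-up-to-unitary statement, whereas the paper's coefficient-killing is self-contained on that point; in exchange, your version makes the geometric content (tangent-space alignment, graph over the tangent plane) more transparent. Both are complete arguments.
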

\begin{proof}
Observe first that since $S$ is a real analytic K\"ahler manifold, by Theorem \ref{indefcal} it admits a local K\"ahler immersion into the indefinite Hilbert space $E$ and $M$, being a real analytic submanifold of $S$, amits such local immersion as well. Thus, the metric $g$ must be a real analytic K\"ahler metric. Denote by $f\!:M\rightarrow S$ the immersion of $M$ in $S$ and let $z$ be Bochner's coordinates around a point $p\in M$. Up to performe a unitary transformation, one can choose Bochner's coordinates on $S$ centered at $f(p)$ and such that $f(z)=(f_1(z),\dots, f_N(z))$ with:
$$
f_j(z)=z_j+\sum_{k=n+1}^\infty a_{jk} z^{m_k},\quad \textrm{for}\ j=1,\dots, n;
$$
$$
f_j(z)=\sum_{k=n+1}^\infty a_{jk} z^{m_k},\quad \textrm{for}\ j=n+1,\dots, N.
$$
Since both the diastasis around $p$ and $f(p)$ satisfy \eqref{bochnercoordinates} in normal coordinates, by Prop. \ref{induceddiast} we have:
$$
\dd_p(z)=\sum_{\alpha=1}^n|z_\alpha|^2+\psi_{2,2}(z)=\dd^S_{f(p)}(f(z))=\sum_{\alpha=1}^N|f_\alpha(z)|^2+\psi_{2,2}(f(z)),
$$
and in particular from:
$$
|z_j+\sum_{k=n+1}^\infty a_{jk} z^{m_k}|^2=|z_j|^2+|\sum_{k=n+1}^\infty a_{jk} z^{m_k}|^2+\sum_{k=n+1}^\infty a_{jk} z^{m_k}\bar z_j+\sum_{k=n+1}^\infty a_{jk} \bar z^{m_k}z_j
$$
we get that $a_{jk}$ must vanish for any $j\leq n$.
\end{proof}

\section{Exercises}
\begin{ExerciseList}
\Exercise Consider the {\em Springer domain} defined by:
$$D=\left\{(z_0,\dots,z_{n-1})\in\C^n\ | \ \sum_{j=1}^{n-1}|z_j|^2<e^{-|z_0|^2}\right\},$$
with the K\"ahler metric $g$ described by the globally defined K\"ahler potential: 
$$
\Phi:=-\log\left(e^{-|z_0|^2}-\sum_{j=1}^{n-1}|z_j|^2\right).
$$
Prove that $\Phi$ is the diastasis function centered at the origin for $(D, g)$.
\Exercise 
Consider a circular bounded domain $\Omega$ of $\mathds{C}^3$ endowed with the metric $g_B$ described in a neighbourhood of the origin by the K\"ahler potential:
$$
\Phi_B=-3\log(1-|z_1|^2-2|z_2|^2-|z_3|^2+|z_1|^2|z_3|^2+|z_2|^4-z_1z_3\bar z_2^2- z_2^2\bar z_1\bar z_3).
$$
Prove that $\Phi_B$ is the diastasis function centered at the origin for $(\Omega,g_B)$.
\Exercise
 Consider on $\mathds{C}$ the metric $g$ whose associate K\"ahler form $\omega$ is given by:
$\omega=\left(4\cos(z-\bar z)-1\right)dz\wedge d\bar z$. Write the diastasis $D(z,w)$ associated to $\omega$.
\Exercise
Let $M$ be a complex manifold.
We say that a real analytic \K\ metric $g$
on $M$ is {\em projective-like} if for all points $p\in M$ the function $e^{-D_p}$ is globally defined on $M$ and $e^{-D_p(q)}=1$ implies $p=q$.
Prove that:
\begin{enumerate}
\item[$(a)\ $] the \K\ metric of a complex space form is projective-like;
\item[$(b)\ $] the \K\ metric of a \K\ manifold which admits an injective  \K\ immersion into a complex space form is projective-like.
\end{enumerate}
Finally, give an example of real analytic \K\ metric which is not projective-like.
\Exercise Prove that if a K\"ahler manifold $(M,g)$ admits a K\"ahler immersion $f$ into $\mathds{C}{\rm P}^N_b$ then for any $p\in M$, the diastasis $\dd_p(q)$ is real analytic, single valued and nonnegative over $M\setminus f^{-1}(H)$ where $H$ is a hyperplane in $\mathds{C}{\rm P}^N$.
\Exercise Prove that a K\"ahler manifold $(M,g)$ admits a K\"ahler immersion into $\mathds{C}{\rm H}^N_b$ then for any $p\in M$, the diastasis $\dd_p(q)$ is real analytic, single valued and nonnegative for any $q\in M$.
\end{ExerciseList}

\chapter{Calabi's criterion}

This chapter summarizes the work of E. Calabi \cite{Cal} about the existence of a K\"ahler immersion of a complex manifold into a finite or infinite dimensional complex space form. In particular, Calabi provides an algebraic criterion to find out whether a complex manifold admits or not such an immersion. 
Sections \ref{flatcriterion} and \ref{nonflatcriterion} are devoted to illustrate Calabi's criterion for K\"ahler immersions into the complex Euclidean space and nonflat complex space forms respectively.
In Section \ref{csfia} we discuss the existence of a K\"ahler immersion of a complex space form into another, which Calabi himself in \cite{Cal} completely classified as direct application of his criterion.

\section{K\"ahler immersions into the complex Euclidean space}\label{flatcriterion}
We describe now Calabi's criterion for K\"ahler immersion into the complex flat space $\mathds{C}^N$ (see Section \ref{csf}).
\begin{defin}\label{lkiflat}
We say that a complex manifold $(M,g)$ admits a local K\"ahler immersion into $\mathds{C}^N$ if given any point $p\in M$ there exists a neighbourhood $U$ of $p$ and a map $f\!:U\rightarrow \mathds{C}^N$ such that:
\begin{enumerate}
\item $f$ is holomorphic;
\item $f$ is isometric, i.e. $\dd^M_p(z)=\sum_{j=1}^N|f_j(p)-f_j(z)|^2$;
\item there exists $0<R<+\infty$ such that $\sum_{j=1}^N|f_j(z)|^2<R$.
\end{enumerate}
Further, we say that the immersion is \emph{full} if the image $f(M)$ is not contained in any proper linear subspace of $\mathds{C}^N$.
\end{defin}
As already remarked, recall that if there exists a K\"ahler immersion of a complex manifold $(M,g)$ into $\mathds{C}^N$, then the metric $g$ is forced to be a real analytic K\"ahler metric, being the pull--back via a holomorphic map of a real analytic K\"ahler metric.
Thus, consider a real analytic K\"ahler manifold $(M,g)$, fix a coordinate system $z=(z_1,\dots,z_n)$ with origin at $p\in M$ and denote by $\dd_0(z)$ the diastasis of $g$ at $p$. Define the matrix $(a_{jk})$ to be the $\infty\times \infty$ matrix of coefficients given by (\ref{powexdiastc}).
\begin{defin}
A real analytic K\"ahler manifold $(M,g)$ is \emph{resolvable} of rank $N$ at $p\in M$ if $(a_{jk})$ is positive semidefinite of rank $N$.
\end{defin}

Calabi's criterion for local K\"ahler immersion into $\mathds{C}^N$ can be stated as follows (cfr. \cite[pages 9, 18]{Cal}):
\begin{theor}\label{localcrit}
Let $(M,g)$ be a real analytic K\"ahler manifold. There exists a neighbourhood $U$ of a point $p$ that admits a K\"ahler immersion into $\C^N$ if and only if  $(M,g)$ is resolvable of rank at most $N$ at $p\in M$. Furthermore if the rank is exactly $N$, the immersion is full.
\end{theor}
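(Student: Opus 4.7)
The plan is to recast the isometry condition on a would-be embedding $f\!:U\f\C^N$ as an algebraic factorization of the diastasis coefficient matrix $A=(a_{jk})$. Since translations are K\"ahler isometries of $\C^N$, I may assume $f(p)=0$; Proposition \ref{induceddiast} then reduces the isometry requirement to the single identity
$$
\dd^M_p(z)\;=\;\dd^0_0(f(z))\;=\;\sum_{j=1}^N |f_j(z)|^2,
$$
while Theorem \ref{chardiast} guarantees that the power expansion of $\dd^M_p$ contains no pure $z^{m_j}$ or pure $\bar z^{m_k}$ terms.

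For the necessity direction, I would expand each $f_j(z)=\sum_{k\geq 1}b_{jk}z^{m_k}$ (no constant term, since $f(p)=0$) and match coefficients in the identity above, obtaining $a_{kl}=\sum_{j=1}^N b_{jk}\overline{b_{jl}}$ for $k,l\geq 1$. This is exactly $A=B^{*}B$ for the $N\times\infty$ matrix $B=(b_{jk})$, so $A$ is positive semidefinite of rank at most $N$. Conversely, given such an $A$ of rank $r\leq N$, standard linear algebra (e.g.\ Cholesky on an invertible $r\times r$ principal minor, then extended and padded with zero rows) produces an $N\times\infty$ matrix $B$ of rank $r$ with vanishing first column satisfying $A=B^{*}B$. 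Setting $f_j(z):=\sum_{k\geq 1}b_{jk}z^{m_k}$ yields the formal identity $\sum_j|f_j(z)|^2=\dd_p(z)$, and applying $\partial\bar\partial$ gives $f^{*}\omega_0=\omega_g$; thus $f=(f_1,\dots,f_N)$ is the desired K\"ahler immersion, once its analytic convergence is established.

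The main obstacle is precisely this convergence of the $f_j$ on a common neighbourhood of $p$, together with the uniform bound $\sum_j|f_j(z)|^2<R$ required by Definition \ref{lkiflat}. I would handle it by Cauchy estimates applied to $\dd_p(z,\bar z)$: on any closed polydisk $|z_\alpha|\leq\rho_\alpha$ contained in its domain of analyticity one has $|a_{jk}|\leq H/(\rho^{m_j}\rho^{m_k})$ for some $H>0$, and the diagonal estimate $|b_{jk}|^2\leq a_{kk}\leq H/\rho^{2m_k}$ then forces absolute convergence of each $f_j$ on the open polydisk; the uniform bound on compact sub-polydisks follows directly from the identity $\sum_j|f_j|^2=\dd_p$. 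This is the same mechanism used in the proof of Theorem \ref{indefcal}, now streamlined by positive semidefiniteness.

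Finally, for the fullness assertion, observe that in any factorization $A=B^{*}B$ one has $\mathrm{rank}\,B=\mathrm{rank}\,A$. If $\mathrm{rank}\,A=N$, the columns of $B$ span $\C^N$, so a proper linear subspace $L\subset\C^N$ containing $f(U)$ would produce a nonzero $v\perp L$ with $\langle v,f(z)\rangle\equiv 0$; expanding in power series, this forces $\langle v,\mathrm{col}_k B\rangle=0$ for every $k\geq 1$, contradicting $\mathrm{rank}\,B=N$. Conversely, if $\mathrm{rank}\,A<N$, then for \emph{every} isometric $f$ the columns of the associated $B$ span only a proper subspace of $\C^N$, the image of $f$ lies in that subspace, and no full immersion into $\C^N$ can exist.
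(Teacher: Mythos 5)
Your proposal is correct and follows essentially the same route as the paper: reduce via Proposition \ref{induceddiast} to the identity $\dd_p(z)=\sum_{j=1}^N|f_j(z)|^2$, read off the factorization $A=B^{*}B$ of the coefficient matrix for necessity, and for sufficiency decompose $A$ positively, define the $f_j$ as power series, and control convergence through the diagonal bound $|b_{jk}|^2\leq a_{kk}$ combined with Cauchy-type estimates on polycylinders. Your treatment is if anything slightly more explicit than the paper's on the convergence estimates and on the fullness claim (which the paper asserts without detailed argument), but the underlying ideas coincide.
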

\begin{proof}
Assume that there exists a K\"ahler immersion $f\!:U\rightarrow \mathds{C}^N$. Fixed local coordinates $(z_1,\dots,z_n)$ centered at $p\in U$, up to translate we can assume $f(p)=0$. By Prop. \ref{induceddiast}, the induced diastasis:
$$
{\rm D}^M_p(z)=\sum_{h=1}^N|f_h(z)|^2
$$
is real analytic on $U$.
By expanding $f_j(z)=\sum_{k=0}^\infty a^j_kz^{m_k}$ we get:
$$
{\rm D}^M_p(z)=\sum_{j,k=0}^\infty \sum_{h=1}^Na^h_j\bar a^h_k z^{m_j}\bar z^{m_k}.
$$
It follows that $(a_{jk})$ is the product of the $N\times \infty$ matrix $a^h_j$ with its transpose conjugate, and thus it is positive semidefinite and of rank at most $N$. 

Assume now that \eqref{powexdiastc} converges in a domain $U$ and that $(a_{jk})$ is positive semidefinite. Then, we can decompose $(a_{jk})=\sum_{h=1}^N a_j^h\bar a_k^h$, where for each $h$, $a^h=(a_j^h)$ is an infinite nonzero vector. Then, we can define the map $f=(f_1,\dots, f_N)$ as a formal power series:
$$
f_h(z)=\sum_{j=1}^\infty a_j^h z^{m_j}.
$$
Since for any $j$, $k=1,2,\dots$, $|a_{jk}|$ is bounded on any maximal polycylindrical domain, from:
$$
|a_j^h|^2\leq \sum_{h=1}^N|a_j^h|^2=|a_{jj}|,
$$
we get that also $|a_j^h|$ is bounded in such domain.
Since $U$ is a convergence domain of the power series \eqref{powexdiastc}, it is a union of its maximal polycylinders. Thus, the $f_h$ are holomorphic function on $U$ and by construction the sum of the square of their absolute values $\sum_{h=1}^N|f_h(z)|^2$ converges on $U$ to ${\rm D}^M_p(z)$.
\end{proof}

It follows directly by Th. \ref{localcrit} that all Stein manifolds with the induced metric are examples of resolvable manifolds of finite rank.

In order to state the global version of Calabi's criterion, we need two further results (cfr. \cite[pages 8, 11, 18]{Cal}):
\begin{theor}[Rigidity]\label{local rigidity}
If a neighbourhood $U$ of a point $p$ admits a full K\"ahler immersion into $\mathds{C}^N$, then $N$ is univocally determined by the metric and the immersion is unique up to rigid motions of $\mathds{C}^N$.
\end{theor}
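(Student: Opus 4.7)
The plan is to read off everything from the matrix $(a_{jk})$ of the diastasis expansion at $p$, which by Theorem \ref{chardiast} is intrinsic to the metric $g$. The criterion (Theorem \ref{localcrit}) says that a local full Kähler immersion into $\mathbb{C}^N$ produces a factorization $a_{jk}=\sum_{h=1}^{N}a_j^h\,\overline{a_k^h}$, with coefficients $a_j^h$ coming from the Taylor expansion $f_h(z)=\sum_{j\geq 1} a_j^h z^{m_j}$ (having first translated so that $f(p)=0$, which is a rigid motion of $\mathbb{C}^N$). The first observation I would make is that $(a_{jk})$ is the Gram matrix of the vectors $a^h=(a_j^h)_{j\geq 1}\in \ell^2$, $h=1,\dots,N$, and that the image $f(U)$ lies in a proper complex linear subspace of $\mathbb{C}^N$ if and only if the vectors $a^1,\dots,a^N$ are linearly dependent. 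Hence fullness is equivalent to $a^1,\dots,a^N$ being linearly independent, i.e.\ to $(a_{jk})$ having rank exactly $N$. Since the rank of $(a_{jk})$ depends only on the metric $g$ (by Theorem \ref{chardiast} together with the coordinate-invariance in Proposition \ref{diastmetric}(i)), the integer $N$ is forced.

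For uniqueness up to rigid motions, I would take two full Kähler immersions $f,f'\colon U\to \mathbb{C}^N$ and, after composing with translations, assume $f(p)=f'(p)=0$. Writing $f_h(z)=\sum_j a_j^h z^{m_j}$ and $f'_h(z)=\sum_j b_j^h z^{m_j}$, the isometry condition together with Theorem \ref{chardiast} gives
\begin{equation}
\sum_{h=1}^{N}a_j^h\,\overline{a_k^h}\;=\;a_{jk}\;=\;\sum_{h=1}^{N}b_j^h\,\overline{b_k^h},\qquad \forall\,j,k.\nonumber
\end{equation}
Thus the two ordered $N$-tuples of vectors $(a^h)_{h=1}^N$ and $(b^h)_{h=1}^N$ in $\ell^2$ have the same Gram matrix, and by fullness both are linearly independent. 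A standard linear algebra fact (reduced to the finite-dimensional span $\mathrm{span}(a^h)+\mathrm{span}(b^h)$, which is at most $2N$-dimensional) says that two such systems are related by a unique unitary transformation: there exists $U=(U_{hk})\in \mathcal{U}(N)$ with $b^h=\sum_k U_{hk}\,a^k$, equivalently $f'=U f$ pointwise. Combined with the initial translation, this exhibits $f'$ as a rigid motion of $\mathbb{C}^N$ composed with $f$.

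The main (and only) subtle point is the linear-algebra step: dealing with vectors that a priori sit in the infinite-dimensional space $\ell^2$. I would handle it by noting that since there are only finitely many of them and they are linearly independent, all the relevant computations take place inside the finite-dimensional subspace they span, where the Gram matrix is invertible and the unitary $U$ is uniquely recovered (e.g.\ via Gram–Schmidt or directly from $U=B^{*}A(A^{*}A)^{-1}$ after assembling the $a^h$ and $b^h$ as columns of matrices $A,B$). Once this finite-dimensional reduction is made, the existence and uniqueness of $U\in\mathcal{U}(N)$ are immediate, and the rigidity statement follows.
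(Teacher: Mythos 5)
Your proof is correct, but it follows a genuinely different route from the one in the text. For the determination of $N$, the text argues geometrically: it recovers the inner products $\langle f(p_j)-f(p_0),f(p_k)-f(p_0)\rangle$ from the diastasis by polarization and deduces that the maximal number of linearly independent points in the image depends on the metric alone; you instead read $N$ off as the rank of the intrinsic matrix $(a_{jk})$, which is essentially the ``furthermore'' clause of Theorem \ref{localcrit} run in reverse and is cleaner, provided one notes (as you do) that this rank is a coordinate-independent invariant of $g$. For uniqueness the two arguments diverge more sharply: the text first produces a \emph{real} rigid motion $T$ from distance preservation and must then show, by a separate argument killing the anti-holomorphic part, that $T$ is complex-linear; your factorization $AA^{*}=BB^{*}$, with $A,B$ of full column rank $N$, produces the complex unitary in a single step and avoids that upgrade entirely. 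One bookkeeping caveat: the identity $\sum_{h}a_j^h\overline{a_k^h}=\sum_{h}b_j^h\overline{b_k^h}$ says that the \emph{rows} of $A$ and $B$ (the coefficient vectors in $\C^N$ indexed by $j$) share a Gram matrix, not the columns $a^h\in\ell^2$ as you state; the rigidity of Gram systems applied to those rows is exactly what yields a unitary $U$ of $\C^N$ with $b^h=\sum_k U_{hk}a^k$, and its uniqueness follows from the linear independence of the $a^k$, i.e.\ from fullness. The conclusion you draw is the correct one, so this is a misphrasing rather than a gap, but the sentence about ``the same Gram matrix of the $N$-tuples in $\ell^2$'' should be corrected accordingly.
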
    
\begin{proof}
Let $(z_1,\dots, z_n)$ be a coordinate system on $U$ centered at $p$ and consider two full K\"ahler immersions:
$$
f\!:U\rightarrow \mathds{C}^N,\quad f(z)=(f_1(z),\dots, f_{N}(z)),
$$
$$f'\!:U\rightarrow \mathds{C}^{N'}, \quad f'(z)=(f'_1(z),\dots, f'_{N'}(z)).
$$
We can assume without loss of generality that $f(p)=f'(p)=0$.

Observe now that being $f$ holomorphic, for any $j=1,\dots, N$, $f_j(U)$ is not contained in a one dimensional real subspace of $\mathds{C}$. In fact, if it was so, we would have $f_j(z)=\overline{f_j(z)}$
and thus $f_j(z)$ would be a constant which is equal to zero since $f(0)=0$, contradicting the hypothesis of fullness. The same holds for $f'$.

Since $f^*(g_0)=f'^*(g_0)$, by Prop. \ref{induceddiast} we get:
\begin{equation}\label{auxdiast}
\dd(z,w)=\sum_{j=1}^N||f_j(z)-f_j(w)||^2=\sum_{j=1}^{N'}||f'_j(z)-f_j'(w)||^2.
\end{equation}
Consider $n+1$ points $p_0,p_1,\dots, p_n\in U$. Their images in $\mathds{C}^N$ are linearly dependent in a real sense if and only if the vectors $v_1=f(p_1)-f(p_0),\dots, v_n=f(p_n)-f(p_0)$ are, i.e. if and only if:
$$
\sum_{j=1}^n\alpha_jv_j=0
$$ 
for real constants $\alpha_j$ not all vanishing. Taking the norm, this is equivalent to require that:
$$
\sum_{j,k=1}^n\alpha_j\alpha_k\langle v_j, v_k\rangle=0,
$$
for not all vanishing $\alpha_j$, $\alpha_k$.
From:
$$
\langle v_j,v_j\rangle=|| f(p_j)-f(p_0)||^2=D(p_j,p_0),
$$
we get:
$$
\langle v_j, v_k\rangle=\frac12\left(D(p_0,p_j)+D(p_0,p_k)-D(p_j,p_k)\right),
$$
which means that we can write the condition of being linearly dependent in terms of the diastasis. In view of \eqref{auxdiast}, this means that the maximum number of linearly independent points in the images of $U$ through $f$ and $f'$ does depend on the metric on $U$ alone and thus, the fullness condition implies $N=N'$.

From \eqref{auxdiast} the two maps $f$ and $f'$ preserves distances and thus there exists a rigid motion $T$ of $\mathds{C}^N$ such that $f'(U)=Tf(U)$. Furthermore $T$ is unique, since  $f(U)$ and $f'(U)$ span linearly $\mathds{C}^N$ in the real sense.
It remains to show that $T$ is unitary. Since $f(0)=f'(0)=0$, the transformation $T$ can be written:
$$
f'_j(z)=\sum_{k=1}^Na_{jk}f_k(z)+\sum_{k=1}^Nb_{jk}\bar f_k(z),\quad j=1,\dots, N,
$$
i.e.:
$$
f'_j(z)-\sum_{k=1}^Na_{jk}f_k(z)=\sum_{k=1}^Nb_{jk}\bar f_k(z),\quad j=1,\dots, N,
$$
which implies both sides are constant and thus vanish. Then, $T$ can be written:
$$
f'_j(z)=\sum_{k=1}^Na_{jk}f_k(z),\quad j=1,\dots, N,
$$
which is a complex linear transformation preserving distance and thus a unitary transformation of $\mathds{C}^N$. 
\end{proof}
\begin{theor}[Global character of resolvability]\label{gcr}
If a real analytic connected  K\"ahler manifold $(M,g)$ is resolvable of rank $N$ at a point $p\in M$, then it also is at any other point.
\end{theor}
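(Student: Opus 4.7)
The plan is to exploit the connectedness of $M$ by showing that the set
\[
A_N := \{q \in M : (M,g) \text{ is resolvable of rank exactly } N \text{ at } q\}
\]
is both open and has open complement in $M$. Since $p \in A_N$, connectedness then forces $A_N = M$.

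For openness of $A_N$ I would use Theorem \ref{localcrit}. Given $q \in A_N$, that theorem produces a connected open neighborhood $U$ of $q$ and a full K\"ahler immersion $f\colon U \to \C^N$, which I normalize so that $f(q)=0$. For an arbitrary $q' \in U$, the translated map $\tilde f := f - f(q')$ is again a holomorphic isometric immersion of $U$ into $\C^N$ sending $q'$ to $0$ (the sum $\sum_h|f_h(z)-f_h(q')|^2$ equals $\dd^M_{q'}(z)$ by Prop.~\ref{induceddiast}). The key claim is that $\tilde f$ is full on $U$: were $\tilde f(V) \subset L$ for some proper linear subspace $L\subset \C^N$ and some neighborhood $V\subset U$ of $q'$, then for each linear functional $\ell$ vanishing on $L$ the holomorphic function $\ell\circ\tilde f$ would vanish on $V$ and hence on all of $U$ by the identity principle on the connected domain $U$. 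This would force $f(U) \subset L + f(q')$; but $0 = f(q) \in f(U)$ implies $-f(q') \in L$, i.e.\ $L + f(q') = L$, so $f(U) \subset L$, contradicting fullness of $f$. Restricting $\tilde f$ to a sufficiently small precompact neighborhood of $q'$ secures condition $(3)$ of Definition \ref{lkiflat}, and Theorem \ref{localcrit} then yields that the rank at $q'$ is exactly $N$. Hence $U \subset A_N$, proving openness.

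Running the identical argument with $N$ replaced by an arbitrary non-negative integer $k$ shows that $A_k := \{q \in M : \text{rank at }q = k\}$ is open for every $k$. Moreover, in any local chart the diastasis coefficients $a_{jk}(w)$ depend real-analytically on the base point $w$ (by recentering the Taylor expansion of the K\"ahler potential in \eqref{diastdefinition}), so the condition that some $(N+1)\times(N+1)$ minor of $(a_{jk}(w))$ be non-zero---equivalent to $\text{rank at }w\geq N+1$---is open. Therefore
\[
M \setminus A_N \;=\; \{q : \text{rank at } q \geq N+1\} \;\cup\; \bigcup_{k=0}^{N-1} A_k
\]
is a union of open sets, hence open. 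Connectedness of $M$ and non-emptiness of $A_N$ then give $A_N = M$, which is what we want.

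The main obstacle is the translation step in the openness argument. Fullness in Definition \ref{lkiflat} is defined relative to proper \emph{linear} subspaces and is therefore sensitive to the placement of the origin in $\C^N$; one has to recognize that linear fullness of $f$ with $0=f(q)\in f(U)$ is equivalent to affine fullness of $f(U)$, and then use the identity principle on the connected $U$ to promote a hypothetical affine containment of $\tilde f(V)$ into a containment of $f(U)$, finally exploiting that the affine subspace in question passes through $0$ to reduce to a linear containment forbidden by the original fullness hypothesis.
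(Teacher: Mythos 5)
Your openness argument for $A_N$ (and for each $A_k$) is sound and is essentially the paper's: translating the full immersion $f$ to $f-f(q')$ and using the identity principle to see that fullness survives is exactly the ``the points of $f(U)$ span $\C^N$'' step in the text. The divergence, and the gap, lie in the second half. The displayed identity
$$
M\setminus A_N=\{q:\ \mathrm{rank\ at}\ q\ \ge N+1\}\ \cup\ \bigcup_{k=0}^{N-1}A_k
$$
is not an equality. Being resolvable of rank $k$ requires the matrix $(a_{jk})$ to be \emph{positive semidefinite} of rank $k$, not merely of rank $k$; a point $q$ at which $(a_{jk}(q))$ is Hermitian of rank $\le N$ but has a negative eigenvalue lies in $M\setminus A_N$ and in none of the sets on the right. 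This is not a vacuous worry: the whole content of resolvability is that these matrices need not be positive semidefinite (for instance the diastasis of $\CP^n_b$, $b>0$, has diagonal coefficients $a_{jj}$ of alternating sign, cf.\ the proof of Theorem \ref{immspaceform}), so the complement of $A_N$ may contain a third species of point that your union does not see, and without it the complement has not been shown to be open.

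The repair is cheap and in the spirit of your argument: failure of positive semidefiniteness at $q_0$ is witnessed by a finitely supported vector $v$ with $\sum_{j,k}\bar v_j\,a_{jk}(q_0)\,v_k<0$, a strict inequality involving finitely many matrix entries, hence an open condition in the base point by the same continuity of $w\mapsto a_{jk}(w)$ that you invoke for the $(N+1)\times(N+1)$ minors. Adding this open set of non-positive-semidefinite points to your union restores the equality, and connectedness finishes the proof. With that patch your route is correct and genuinely different from, and more elementary than, the paper's: Calabi proves closedness of the set of resolvable points by immersing a neighbourhood of a limit point into the indefinite Hilbert space $E$ (Theorem \ref{indefcal}), checking that the induced Hermitian form is positive semidefinite on the span of the image of the nearby resolvable neighbourhood, and orthogonally projecting onto a positive-definite summand $E_+$ of dimension $N$. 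Your argument replaces all of that machinery with the observation that the recentered coefficients $a_{jk}(w)$ vary continuously with $w$, so that both ``some $(N+1)$-minor is nonzero'' and ``some test vector is negative'' are open conditions; the price is only that you must enumerate every way a point can fail to lie in $A_N$, which is precisely where the write-up slipped.
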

\begin{proof}
We will prove that the set of resolvable points in $M$ is open and closed.

The set of resolvable points of rank $N$ is open since a point $p$ is resolvable of rank $N$ if and only if there exists a neighbourhood $ U\ni p$ which admits a K\"ahler immersion $f$ into $\mathds{C}^N$. Since the points in $f(U)$ spans $\mathds{C}^N$  (see the proof of the previous theorem), it follows that any other point in $U$ is resolvable of rank exactly $N$.

In order to prove it is also closed, let $p$ be a limit point of the set of resolvable points of rank $N$ in $M$. By Theorem \ref{indefcal} there exists a neighbourhood $V$ of $p$ admitting a K\"ahler immersion into the indefinite Hilbert space $E$. Since $p$ is a limit point of the set of resolvable points, there exist also $p'\in V$ and a neighbourhood $V'\subset V$ of $p'$ such that $V'$ admits a K\"ahler immersion into $\mathds{C}^N$. Let $z$ be a coordinate system defined on $V$ with origin at $p'$ and denote by $f'=(f'_j)_{j=1,\dots ,N}$ the K\"ahler immersion $f'\!:V'\rightarrow\mathds{C}^N$ and by $f=(f_j)_{j\in \mathds{Z}^*}$ the K\"ahler immersion $f\!:V\rightarrow E$. Assume also that $f'(0)=0=f(0)$. Observe that the diastasis of $E$ restricted to the subspace $E'$ spanned by $f(V')$ (and thus by $f(V)$) is positive semidefinite, in the sense that for any $q\in E'$, $\dd^E(f(p'),q)\geq 0$. In fact, we can write the vector $v=q-f(p')$ as a linear combination of vectors $v_j=f(p_j)-f(p')$ with $p_j\in V'$, let us say $v=\sum_{j=1}^k\alpha_jv_j$. Then, we consider $v'+f'(p')=q'\in \mathds{C}^N$ where $v'=\sum_{j=1}^k\alpha_jv'_j$, for $v'_j=f'(p_j)-f'(p')$, and by Prop. \ref{induceddiast}, we have:
\begin{equation}
\begin{split}
\dd^E_0(q)=&\frac12\sum_{i,j=1}^k\alpha_j\alpha_k\left(\dd^E(f(p'),f(p_j))+\dd^E(f(p'),f(p_k))-\dd^E(f(p_j),f(p_k))\right)\\
=&\frac12\sum_{i,j=1}^k\alpha_j\alpha_k\left(\dd^{\mathds{C}^N}(f'(p'),f'(p_j))+\dd^{\mathds{C}^N}(f'(p'),f'(p_k))-\dd^{\mathds{C}^N}(f'(p_j),f'(p_k))\right)\\
=&\frac12\sum_{i,j=1}^k\alpha_j\alpha_k\sum_{\sigma=1}^N\left(|f'_\sigma(p_j)|^2+|f'_\sigma(p_k)|^2-|f_\sigma'(p_j)-f_\sigma'(p_k)|^2\right)\\
=&\sum_{\sigma=1}^N\left|\sum_{j=1}^k\alpha_jf_\sigma'(p_j)\right|^2\geq 0,
\end{split}\nonumber
\end{equation}
as wished. Denote now by $E_0$ the subspace of $E'$ defined by:
$$
E_0=\left\{y\in E'|\, \sum_{j\in \mathds{Z}^*}(\sgn j)|y_j|^2=0\right\},
$$
(where $y$ are coordinates in $E$) and by $E_+$ the orthogonal complement of $E_0$ with respect to the Hermitian form $\sum_{j\in \mathds{Z}^*}|y_j|^2$ (if necessary $E'$, $E_0$ and $E_+$ should be replaced by their completion with respect to that metric). On $E_+$ the diastasis is positive definite, and further the orthogonal projection $T$ of $E'$ onto $E_+$ has the effect of preserving the diastasis of all pairs of points. Thus, the map defined by $T\circ f\!:V\rightarrow E_+$ is an isometric immersion of $V$ into a unitary space. Finally, since $V$ span the same linear space as $V'$, we have that $E_+$ must be of dimension $N$. Thus, every points in $V$, and in particular $p$, must be resolvable of rank $N$ concluding the proof.
\end{proof}
Last theorem states that if a local K\"ahler immersion around a point $p\in M$ exists, then the same is true for any other point. Due to this result, we can say that a manifold is resolvable without specifying the point. 

The following result states that if $M$ is chosen to be simply connected, then it is possible to extend the local immersion to the whole manifold (cfr. \cite[pages 12-13]{Cal}):
\begin{theor}[Calabi's criterion for simply connected manifolds]\label{globalcriterion}
A simply connected complex manifold $(M,g)$ admits a K\"ahler immersion into $\C^N$ if and only if the metric is real analytic and $M$ is resolvable of rank at most $N$. Furthermore, the immersion is full if and only if the rank is exactly $N$.
\end{theor}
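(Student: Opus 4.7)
The forward implication is immediate from the earlier machinery: if $f: M \to \C^N$ is a K\"ahler immersion, then $g = f^*g_0$ is real analytic as the pull-back of a real analytic metric, and restricting $f$ to any neighbourhood of any point $p \in M$ yields a local immersion, so by Theorem \ref{localcrit} the manifold is resolvable of rank $\leq N$ at $p$. Fullness corresponds to the rank being exactly $N$ by the same linear-algebra argument that opens the proof of Theorem \ref{local rigidity}.

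The converse is the substantive direction. My plan is to patch the local immersions furnished by Theorem \ref{localcrit} into a global one, with the rigidity Theorem \ref{local rigidity} controlling how neighbouring charts overlap and the simple connectedness of $M$ enforcing global consistency. By the global character of resolvability (Theorem \ref{gcr}), the rank is a constant $N' \leq N$ across $M$; it therefore suffices to construct a full global K\"ahler immersion $f: M \to \C^{N'}$ and then post-compose with the standard isometric inclusion $\C^{N'} \hookrightarrow \C^N$. To this end, I would cover $M$ by connected open sets $\{U_\alpha\}$ each equipped with a full local \K\ immersion $f_\alpha: U_\alpha \to \C^{N'}$ provided by Theorem \ref{localcrit}. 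On any nonempty connected overlap $U_\alpha \cap U_\beta$, both $f_\alpha$ and $f_\beta$ are full immersions into $\C^{N'}$, so Theorem \ref{local rigidity} produces a \emph{unique} complex affine isometry $T_{\alpha\beta} \in U(N') \ltimes \C^{N'}$ with $f_\beta = T_{\alpha\beta} \circ f_\alpha$ on the overlap, and uniqueness immediately yields the cocycle identity $T_{\alpha\gamma} = T_{\beta\gamma} \circ T_{\alpha\beta}$ on triple overlaps.

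The main obstacle, and the only place where simple connectedness enters, is the trivialization of the resulting locally constant $(U(N') \ltimes \C^{N'})$-cocycle. The cleanest route is by analytic continuation: fix a basepoint $p_0 \in U_{\alpha_0}$, and for $q \in M$ choose a path $\gamma$ from $p_0$ to $q$, cover it by a chain $U_{\alpha_0}, \dots, U_{\alpha_k}$, and define $f(q) := T_{\alpha_0\alpha_1}^{-1} \circ \cdots \circ T_{\alpha_{k-1}\alpha_k}^{-1} \circ f_{\alpha_k}(q)$. The hard part is showing independence of the chosen path; this will follow from the monodromy theorem, since the composition of transition maps around a loop depends locally constantly on the loop in $U(N') \ltimes \C^{N'}$, is the identity on loops supported in a single chart, and hence is the identity on every loop by simple connectedness. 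The resulting $f: M \to \C^{N'}$ is then globally defined, holomorphic, isometric by Proposition \ref{induceddiast}, and full by construction; post-composition with $\C^{N'} \hookrightarrow \C^N$ completes the proof, and the equivalence between fullness and rank exactly $N$ is the statement that no such nontrivial inclusion is required.
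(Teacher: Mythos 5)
Your proof is correct and follows essentially the same route as the paper: necessity via Theorem \ref{localcrit}, and sufficiency by patching the local immersions along paths using the uniqueness in the Rigidity Theorem \ref{local rigidity}, with simple connectedness guaranteeing path-independence of the continuation. Your treatment is if anything slightly more careful than the paper's (explicit cocycle identity, monodromy argument, and the reduction to rank $N'\leq N$ followed by the inclusion $\C^{N'}\hookrightarrow\C^N$), but the underlying idea is identical.
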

\begin{proof}
The conditions are necessary in view of Theorem \ref{localcrit}. Thus, assume that $M$ is a real analytic K\"ahler manifold resolvable of rank $N$. By Theorem \ref{localcrit} for any point $p\in M$ there exists a neighbourhood $U\ni p$ admitting a K\"ahler immersion into $\mathds{C}^N$ in such a way that the image of $U$ spans linearly $\mathds{C}^N$. Let $\{U_j\}$ be an open covering of $M$ such that each $U_j$ admits a K\"ahler immersion into $\mathds{C}^N$. Let $p_0$ be the origin in $U_0$ and let $f\!:U_0\rightarrow \mathds{C}^N$, $f=(f_j)$, be a K\"ahler immersion. For any point $p\in M$, consider a path connecting $p_0$ and $p$ and denote by $\pi_0$, $\pi_1,\dots, \pi_k$ the overlapping open segments obtained as intersection between the path and the $U_j$'s.\\

\begin{tikzpicture}[line cap=round,line join=round,x=1.0cm,y=1.0cm]
\draw (-3,0) .. controls (-1,-1) and (1,1) .. (2,1);
\draw [fill=black] (-3,0)circle (0.5pt) node[anchor= east] {$p_0$};
\draw [fill=black] (2,1)circle (0.5pt) node[anchor= west] {$p$};
\draw (-3,0)circle (1cm);
\draw (-3,0) node[anchor=south west] {$\pi_0$};

\draw (2,1)circle (1cm);
\draw (2,1) node[anchor=south east] {$\pi_k$};

\draw [fill=black] (-1.5,-0.27)circle (0.5pt) node[anchor= north] {$p_1$};
\draw (-1.5,-0.3)circle (1cm);
\draw (-1.5,-0.3) node[anchor=south ] {$\pi_1$};

\draw[dash pattern=on 3pt off 4pt](-0.3,0.3).. controls (0.2,0.5).. (0.8,0.8);
\end{tikzpicture}

At each overlap $\pi_j\cap \pi_{j+1}$, there exists a unique unitary motion which transforms the immersion functions of $\pi_j$ into those of $\pi_{j+1}$. If we apply the transformation to all the neighbourhood of $\pi_j$, we get the same K\"ahler immersion for both $\pi_j$ and $\pi_{j+1}$. By induction, we can extend the K\"ahler immersion around $p_0$ to a K\"ahler immersion around the whole path between $p_0$ and $p$. Since $M$ is arcwise connected, the K\"ahler immersion can be extended to the whole manifold and since it is simply connected, the extension does not depend on the path chosen.
\end{proof}

\begin{cor}
If a simply connected K\"ahler manifold $(M,g)$ is resolvable of any rank, then its diastasis $\dd(p,q)$ can be extended to all pairs of points and its everywhere nonnegative.
\end{cor}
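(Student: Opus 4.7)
The plan is to use Calabi's criterion for simply connected manifolds to produce a global K\"ahler immersion of $M$ into a complex Euclidean space, and then to read the desired global extension of the diastasis off as the squared distance of the images. By hypothesis $M$ is simply connected and resolvable of some rank $N\leq\infty$; in particular its metric $g$ is real analytic, so the local diastasis of Section \ref{diastasis} makes sense and the notion of resolvability is well posed. Theorem \ref{globalcriterion} then furnishes a holomorphic isometric immersion $f\colon M \to \C^N$.

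I would next simply \emph{define}
$$
\dd(p,q):=\|f(p)-f(q)\|^2,\qquad p,q\in M,
$$
which is defined on all of $M\times M$ and is manifestly nonnegative. To check that this really extends the locally defined diastasis, I would fix $p\in M$, recall that the diastasis of $\C^N$ equals the squared Euclidean distance (see \eqref{diastc}), and apply Proposition \ref{induceddiast} to $f$ on a neighbourhood $U$ of $p$: this gives $\dd^M_p(z)=\|f(z)-f(p)\|^2$ for $z\in U$, which coincides with $\dd(p,z)$ as defined above. Hence the formula provides a genuine global extension of the local diastasis.

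The one point requiring a sanity check is well-definedness: two K\"ahler immersions $f,f'$ produced by Calabi's criterion differ, after restriction to the full-rank span of their images, by a rigid motion of $\C^N$ in view of Theorem \ref{local rigidity}, and rigid motions preserve $\|\cdot\|$, so $\dd(p,q)$ is independent of the chosen immersion. I do not expect a genuinely hard step: the only subtlety is that when $N$ is infinite one must invoke Theorem \ref{globalcriterion} in the Hilbert space setting, but the patching argument of that theorem (uniqueness of the rigid motion on each overlap) carries over verbatim. Once the global immersion is in hand, nonnegativity is simply $\|v\|^2\geq 0$.
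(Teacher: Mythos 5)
Your argument is correct and is essentially the paper's own proof: the authors likewise invoke Theorem \ref{globalcriterion} to obtain a global K\"ahler immersion into $\C^N$ and then pull back the globally defined, nonnegative diastasis $\|p-q\|^2$ of the Euclidean space via Proposition \ref{induceddiast}. Your additional remarks on well-definedness via Calabi's rigidity are a reasonable elaboration of what the paper leaves implicit.
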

\begin{proof}
Since the diastasis of the complex Euclidean space can be extended to all pairs of points and its everywhere nonnegative, the statement is an immediate consequence of the previous theorems and of Prop. \ref{induceddiast}.
\end{proof}

\begin{theor}\label{critcn}
A complex manifold $M$ endowed with a \K\ metric $g$ admits a K\"ahler immersion into $\mathds{C}^N$ if and only if the following conditions are fulfilled:
\begin{itemize}
\item[($i$)] $g$ is  real analytic K\"ahler metric,
\item[($ii$)]  $(M, g)$ is resolvable of rank at most $N$,
\item[($iii$)] for each point $p\in M$ the analytic extension of the diastasis $\dd_p$ is single valued.
\end{itemize}
Further, the immersion is also injective if and only if for any point $p\in M$:
\begin{itemize}
\item[($iv$)] $\dd_p(q)=0$ only for $q=p$.
\end{itemize}
\end{theor}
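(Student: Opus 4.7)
The plan is: necessity follows quickly from earlier results; sufficiency is obtained by passing to the universal cover, applying Theorem~\ref{globalcriterion} there, and using condition (iii) to push the lifted immersion back down to $M$.

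For \emph{necessity}, let $f\!:M\to\mathds{C}^N$ be a K\"ahler immersion. Then (i) holds because $g=f^*g_0$ is the pullback of a real analytic metric by a holomorphic map; (ii) follows from Theorem~\ref{localcrit} combined with the global character of resolvability (Theorem~\ref{gcr}); and (iii) comes from Proposition~\ref{induceddiast}, which in a neighbourhood of $p$ gives $\dd_p(q)=\sum_{j=1}^{N}|f_j(q)-f_j(p)|^2$: the right hand side is globally defined and real analytic on $M$, so by uniqueness of analytic continuation it \emph{is} the extension of $\dd_p$, which is therefore single valued. The equivalence of (iv) with injectivity of $f$ is immediate from $\dd_p(q)=\|f(q)-f(p)\|^2$.

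For \emph{sufficiency} assume (i)--(iii). I would pass to the universal cover $\pi\!:\w{M}\to M$ with the pulled back metric $\pi^*g$, still a real analytic K\"ahler metric; resolvability is a local property (read off the power expansion of the diastasis in any chart), so $(\w{M},\pi^*g)$ is resolvable of rank at most $N$ at every point. Simple connectedness of $\w M$ lets me apply Theorem~\ref{globalcriterion} to produce a K\"ahler immersion $\w{f}\!:\w{M}\to\mathds{C}^N$, and the crux is to show $\w f$ is invariant under the deck group $\pi_1(M)$, so that it factors as $\w f=f\circ\pi$ for some K\"ahler immersion $f\!:M\to\mathds{C}^N$. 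To do this, fix $\gamma\in\pi_1(M)$ and $\w p\in\w M$, and set $p=\pi(\w p)=\pi(\gamma\w p)$; any path in $\w M$ from $\w p$ to $\gamma\w p$ projects to a loop at $p$ in $M$, and since locally $\w\dd_{\w p}$ agrees with $\dd_p\circ\pi$, the value $\w\dd_{\w p}(\gamma\w p)$ is exactly the analytic continuation of $\dd_p$ around that loop evaluated back at $p$. By single valuedness (iii) this equals $\dd_p(p)=0$. But Proposition~\ref{induceddiast} applied to $\w f$ further gives $\w\dd_{\w p}(\w q)=\|\w f(\w q)-\w f(\w p)\|^2$, whence $\w f(\gamma\w p)=\w f(\w p)$ for every $\gamma$ and every $\w p$, and descent follows. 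The main obstacle I anticipate is precisely this identification of $\w\dd_{\w p}(\gamma\w p)$ with the analytic continuation of $\dd_p$ around the loop representing $\gamma$; once this is set up, condition (iii) carries all the weight, with no appeal to rigidity or fullness of $\w f$ needed.
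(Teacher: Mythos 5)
Your proof is correct and follows essentially the same route as the paper: necessity via Proposition \ref{induceddiast} and Theorems \ref{localcrit}--\ref{gcr}, and sufficiency by applying Theorem \ref{globalcriterion} to the universal cover and using condition (iii) to show $\w\dd_{\w p}$ vanishes on the fibre over $p$, whence $\w f$ is deck-invariant and descends. Your explicit identification of $\w\dd_{\w p}(\gamma\w p)$ with the analytic continuation of $\dd_p$ around the corresponding loop is just a more careful spelling-out of the paper's assertion $\dd_p\circ\pi=\w\dd_p$.
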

\begin{proof}
By Theorem \ref{globalcriterion}, conditions ($i$) and ($ii$) are necessary and sufficient for the universal covering $\pi\!:\tilde M\rightarrow M$ to admit a K\"ahler immersion $f\!:\tilde M\rightarrow \mathds{C}^N$. The necessity of condition ($iii$) follows directly from Prop. \ref{induceddiast}. In order to prove it is also sufficient for $f$ to descend to the quotient, fix $p\in M$ and consider the analytic extension $\tilde \dd_p$ of $ \dd_p$ to the whole $\tilde M$. If $\dd_p$ is single valued on $M$, $\dd_p\circ \pi=\tilde \dd_p$ implies that $\tilde \dd_p(q)=0$ for any point $q\in \tilde M$ that belongs to the same fibre of $p$. Hence, the Poincar\'e group of $M$ acting on $\tilde M$ leaves the image of $\tilde M$ in $\mathds{C}^N$ pointwise fixed and the map $f$ descends to a globally defined K\"ahler map $M\rightarrow \mathds{C}^N$.   
Finally, condition ($iv$) is equivalent for the immersion to be injective since by Prop. \ref{induceddiast}  $f(p)=f(q)$ only for $p=q$.
\end{proof}

\section{K\"ahler immersions into nonflat complex space forms}\label{nonflatcriterion}
Let ${\rm F}(N,b)$ be an $N$-dimensional complex space form of holomorphic sectional curvature $4b$ and denote by $\dd^b$ its diastasis function, described in Section \ref{csf}. 
The following definition generalizes Def. \ref{lkiflat} to the case when $b\neq 0$:
\begin{defin}\label{lkiflat2}
We say that a complex manifold $(M,g)$ admits a local K\"ahler immersion into ${\rm F}(N,b)$ if given any point $p\in M$ there exists a neighbourhood $U$ of $p$ and a map $f\!:U\rightarrow {\rm F}(N,b)$ such that:
\begin{enumerate}
\item $f$ is holomorphic;
\item $f$ is isometric, i.e., due to Prop. \ref{induceddiast}, $\dd^M_p(z)=\dd^b_{f(p)}(f(z))$;
\item there exists $0<R<+\infty$ such that $\sum_{j=1}^N|f_j(z)|^2<R$.
\end{enumerate}
Further, we say that the immersion is \emph{full} if the image $f(M)$ is not contained in any proper totally geodesic submanifold of ${\rm F}(N,b)$.
\end{defin}

We introduce here a {\em generalized stereographic projection} performed from a complex space form of nonzero curvature to the complex euclidean space. Let $p$ be a point in ${\rm F}(N,b)$ and set normal coordinates $z$ in a neighbourhood $U$ centered at $p$. The generalized stereographic projection is the map:
$$
\pi\!:U\rightarrow\mathds{C}^N,\quad \pi(z)=\left(\pi_1(z),\dots,\pi_N(z)\right)
$$
which satisfies $\dd^b_0(z)= \dd^0_0(\pi(z))$:

\begin{center}
\begin{tikzpicture}[line cap=round,line join=round,x=1.0cm,y=1.0cm]
\draw[->] (0.5,0)--(0.5,-1);
\draw[->] (0.8,0.25)--(2,0.25);
\draw[->] (0.9,-1)--(2.1,0);

\draw (0.3,-0.5) node {$\pi$};
\draw (1.3,0.6) node {$\dd^b_0$};
\draw (1.7,-0.8) node {$\dd^0_0$};

\draw (-0.35,0) node[anchor=south] {${\rm F}(N,b)\supset U$};
\draw (0.6,-1) node[anchor=north] {$\mathds{C}^N$};
\draw (2,0)node[anchor=south west] {$\mathds{R}$};
\end{tikzpicture}
\end{center}
i.e. such that:
$$
\sum_{j=1}^N|\pi_j(z)|^2=\frac1b\left(e^{b\dd^b_0(z)}-1\right).
$$

Consider now a real analytic K\"ahler manifold $(M,g)$ and fix a coordinate system $(z_1,\dots,z_n)$ with origin at $p\in M$. Recall that as for the case of flat ambient space, chosing a real analytic K\"ahler manifold is not restrictive since if there exists a K\"ahler immersion of a complex manifold $(M,g)$ into ${\rm F}(N,b)$, then the metric $g$ is forced to be a real analytic K\"ahler metric, being the pull--back via a holomorphic map of the real analytic K\"ahler metric $g_b$. Denote by $\dd_0(z)$ the diastasis of $g$ at $p$ and consider the power expansion around the origin of the function $(e^{b\dd_0(z)}-1)/b$:
\begin{equation}
\frac{e^{b\dd_0(z)}-1}{b}=\sum_{j,k=0}^\infty s_{jk}\,z^{m_j}\bar z^{m_k}.\nonumber
\end{equation}
\begin{defin}\label{bresolvabledefin}
A real analytic K\"ahler manifold $(M,g)$ is \emph{$b$-resolvable} of rank $N$ at $p\in M$ if the matrix $(s_{jk})$ is semipositive definite of rank $N$.
\end{defin}
In particular, $(M,g)$ is $1$-resolvable of rank $N$ at $p$ if  the matrix of coefficients $(b_{jk})$ given by:
\begin{equation}\label{powexdiastcp}
e^{\dd_0(z)}-1=\sum_{j,k=0}^\infty b_{jk}\,z^{m_j}\bar z^{m_k},
\end{equation}
is positive semidefinite of rank $N$. Similarly $(M,g)$ is $-1$-resolvable of rank $N$ at $p$ if the matrix of coefficients $(c_{jk})$ given by
\begin{equation}\label{powexdiastch}
1-e^{-\dd_0(z)}=\sum_{j,k=0}^\infty c_{jk}\,z^{m_j}\bar z^{m_k}.
\end{equation}
is positive semidefinite of rank $N$.
\begin{remark}\label{bresolvproj}\rm 
Observe that a K\"ahler manifold $(M,g)$ is $b$-resolvable of rank $N$ at $p\in M$ if and only if the diastasis:
$$
\dd'_0(\pi(z))=\frac1b\left(e^{b\dd_0(z)}-1\right),
$$ 
obtained from the diastasis $\dd_0(z)$ of $g$ after a stereographic projection $\pi$ with respect to $p_0$, is resolvable of rank $N$ at $p$.
\end{remark}


Calabi's criterion for local K\"ahler immersion can be stated as follows (cfr. \cite[pages 9, 18]{Cal}):
\begin{theor}\label{localcritb}
Let $(M,g)$ be a real analytic K\"ahler manifold. There exists a neighbourhood $V$ of a point $p$ that admits a K\"ahler immersion into  ${\rm F}(N,b)$ if and only if  $(M,g)$ is $b$-resolvable of rank at most $N$ at $p\in M$. Furthermore if the rank is exactly $N$, the immersion is full.
\end{theor}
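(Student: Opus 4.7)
The plan is to reduce the statement to its Euclidean counterpart, Theorem \ref{localcrit}, by means of the generalized stereographic projection $\pi\!:U\to\C^N$ introduced just above the theorem. Thanks to the defining identity $\sum_{j=1}^N|\pi_j(z)|^2=\tfrac{1}{b}(e^{b\dd^b_0(z)}-1)$ and Remark \ref{bresolvproj}, $b$-resolvability of the diastasis of $g$ corresponds to ordinary resolvability of the transported power series $\tfrac{1}{b}(e^{b\dd_0(z)}-1)$, and local K\"ahler immersions into ${\rm F}(N,b)$ will correspond, via post--composition with $\pi$, to holomorphic maps into $\C^N$ whose squared norm equals this transported series.

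For the ``only if'' direction I take a local K\"ahler immersion $f\!:V\to{\rm F}(N,b)$ and, after post--composing with an ambient isometry so that $f(p)$ is the origin of a normal chart on ${\rm F}(N,b)$, form $h:=\pi\circ f\!:V\to\C^N$. Proposition \ref{induceddiast} together with the defining identity of $\pi$ give
$$
\sum_{j=1}^N|h_j(z)|^2 \;=\; \tfrac{1}{b}\bigl(e^{b\dd^M_p(z)}-1\bigr).
$$
Expanding each $h_j$ in Taylor series and matching coefficients will exhibit the matrix $(s_{jk})$ of Definition \ref{bresolvabledefin} as the product $A A^*$, where $A$ is the $\infty\times N$ matrix of Taylor coefficients of $h$; hence $(s_{jk})$ is positive semidefinite of rank at most $N$, as required.

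For the converse I factor $s_{jk}=\sum_{h=1}^N a^h_j\overline{a^h_k}$, set $h_h(z):=\sum_j a^h_j z^{m_j}$, and use the bound $|a^h_j|^2\leq s_{jj}$ (combined with the standard polycylinder estimate from the proof of Theorem \ref{localcrit}, applied now to the new series $\tfrac{1}{b}(e^{b\dd_0(z)}-1)$, whose convergence polycylinder coincides with that of $\dd_0$) to show that each $h_h$ is holomorphic on a neighbourhood $V$ of $p$ and that $\sum_h|h_h(z)|^2=\tfrac{1}{b}(e^{b\dd_0(z)}-1)$. Composing with $\pi^{-1}$ gives a holomorphic map $F:=\pi^{-1}\!\circ h\!:V\to{\rm F}(N,b)$ satisfying $\dd^b_0\circ F=\dd_0$; taking $\partial\bar\partial$ of this identity and using the holomorphy of $F$ then shows that $F$ pulls the K\"ahler form of ${\rm F}(N,b)$ back to that of $(M,g)$, so $F$ is the desired K\"ahler immersion. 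For $b<0$ I will shrink $V$ so that $\dd_0\geq 0$ on $V$, which automatically places $\sum_h|h_h|^2=\tfrac{1}{b}(e^{b\dd_0(z)}-1)$ in $[0,-1/b)$ and hence $h(V)$ inside the hyperbolic ball. Fullness transports across $\pi$ because proper totally geodesic complex submanifolds of ${\rm F}(N,b)$ through the basepoint correspond exactly to proper complex linear subspaces of $\C^N$; the fullness clause of Theorem \ref{localcrit} applied to $h$ then yields the last assertion.

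The hardest part will be the convergence bookkeeping for the functions $h_1,\dots,h_N$ in the converse direction, together with the minor domain control for $b<0$ (keeping the image inside the hyperbolic ball); the former is a direct rerun of the estimates from the proof of Theorem \ref{localcrit} applied to the new power series, and the latter follows from $\dd_0(z)=\|z\|^2+O(\|z\|^3)$ at $p$.
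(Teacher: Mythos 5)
Your proposal is correct and takes essentially the same route as the paper: both reduce the statement to Theorem \ref{localcrit} by transporting the diastasis through the generalized stereographic projection and invoking Remark \ref{bresolvproj}. The extra bookkeeping you supply (the convergence estimates for the transported series, the ball constraint for $b<0$, and the transfer of fullness under $\pi$) merely fills in details the paper leaves implicit.
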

\begin{proof}
Consider local coordinates $z$ around $p$ and denote by $\dd_0$ the diastasis of $M$ centered at $p$. By Prop. \ref{induceddiast}, there exists a K\"ahler immersion $f\!:V\rightarrow {\rm F}(N,b)$ of a neighbourhood $V$ of $p$ into ${\rm F}(N,b)$ if and only if $\dd_0(z)=\frac1b\log\left(1+b\sum_{j=1}^N|f_j(z)|^2\right)$. Taking a stereographic projection with respect to $p$, we get that this is equivalent to have $\dd'_0(z)=\sum_{j=1}^N|f_j(z)|^2$, which is in turn equivalent by Theorem \ref{localcrit} to $M$ being resolvable of rank $N$ at $p$. The following diagram summarizes this setting:
\begin{center}
\begin{tikzpicture}[line cap=round,line join=round,x=1.0cm,y=1.0cm]
\draw[->] (-2.9,0.25)--(-1.7,0.25);
\draw[->] (-1,0)--(-1,-0.7);
\draw[->] (1.3,0.25)--(2.5,0.25);
\draw[->] (-0.4,-1)--(2.55,0.05);

\draw (-2.2,0.6) node {$f$};
\draw (-3.1,0.05) node[anchor=south] {$V$};

\draw[->] (-2.9,0.6).. controls (-1.8,2) and (1.5,2).. (2.6,0.6);
\draw[->] (-2.9,-0.1).. controls (-1.8,-2) and (1.5,-2).. (2.7,-0.1);

\draw (-0.8,-0.3) node {$\pi$};
\draw (1.8,0.6) node {$\dd^b_0$};
\draw (1,-0.8) node {$\dd^0_0$};
\draw (0,-1.9) node {$\dd_0'=\dd^0_0\circ \pi\circ f$};


\draw (-0.2,2) node {$\dd_0=\dd^b_0\circ f$};

\draw (-0.1,0) node[anchor=south] {$f(V)\subset {\rm F}(N,b)$};
\draw (-0.9,-0.7) node[anchor=north] {$\mathds{C}^N$};
\draw (2.5,0)node[anchor=south west] {$\mathds{R}$};
\end{tikzpicture}
\end{center}

Conclusion follows since by Remark \ref{bresolvproj}, $(M,g)$ is $b$-resolvable at $p$ if and only if its projected diastasis $\dd'_0$ is resolvable at $p$.
\end{proof}
In particular, a neighbourhood $V\ni p$ of $(M,g)$ admits a K\"ahler immersion into $\CP^N$ (resp. $\CH^N$), if and only if $M$ is $1$-resolvable (resp. $-1$-resolvable) of rank at most $N$ at $p$.
Hermitian symmetric spaces of compact type are examples of $1$-resolvable manifolds of finite rank. This follows from Th. \ref{localcritb} and the existence of a K\"ahler immersion of such spaces into the finite dimensional complex projective space, well--known since the work of Borel and Weil (see \cite{loidiast} or \cite{tak} for a proof).\\

In order to state the global version of Calabi's criterion, we need two further results analogous to Theorem \ref{local rigidity} and Theorem \ref{gcr} respectively (cfr. \cite[page 18]{Cal}):
\begin{theor}[Rigidity]\label{local rigidityb}
If a neighbourhood $V$ of a point $p$ admits a full K\"ahler immersion into ${\rm F}(N,b)$, then $N$ is univocally determined by the metric and the immersion is unique up to rigid motions of ${\rm F}(N,b)$.
\end{theor}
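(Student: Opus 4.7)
The strategy is to reduce Theorem \ref{local rigidityb} to the already-proven flat rigidity Theorem \ref{local rigidity} via the generalized stereographic projection $\pi$ introduced just before Theorem \ref{localcritb}. Let $f\!:V\to\mathrm{F}(N,b)$ and $f'\!:V\to\mathrm{F}(N',b)$ be two full K\"ahler immersions with $q=f(p)$ and $q'=f'(p)$. Let $\pi$ (resp.\ $\pi'$) denote the stereographic projection centered at $q$ (resp.\ $q'$); in Bochner coordinates centered at $q$ (resp.\ $q'$) this is simply the identification of a neighbourhood of $q$ (resp.\ $q'$) with a neighbourhood of $0$ in $\C^N$ (resp.\ $\C^{N'}$). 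Combining Prop.~\ref{induceddiast} with the defining relation $\sum_{j=1}^N|\pi_j|^2=\tfrac{1}{b}(e^{b\,\dd^b_q}-1)$, both $\pi\circ f$ and $\pi'\circ f'$ send $p$ to $0$ and pull back the flat diastasis $\dd^0_0$ to the common function
$$
\dd'_p(z)=\tfrac{1}{b}\bigl(e^{b\,\dd^M_p(z)}-1\bigr).
$$
Therefore they are K\"ahler immersions of $(V,g')$ into complex Euclidean spaces, where $g'$ is the real analytic K\"ahler metric on a neighbourhood of $p$ admitting $\dd'_p$ as a potential.

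I would next verify that fullness is preserved under $\pi$: in Bochner coordinates centered at $q$, the proper totally geodesic complex submanifolds of $\mathrm{F}(N,b)$ through $q$ are exactly the proper complex linear subspaces of $\C^N$ through $0$ (each such submanifold is of the form $\mathrm{F}(k,b)$ and inherits Bochner coordinates by restriction), and since the image of a holomorphic map is complex, this shows that $f$ is full in $\mathrm{F}(N,b)$ if and only if $\pi\circ f$ is full in $\C^N$; and analogously for $f'$. Theorem \ref{local rigidity} then forces $N=N'$ and, as both maps send $p$ to the origin, produces a unitary $U\in\mathrm{U}(N)$ with $\pi'\circ f'=U\circ\pi\circ f$.

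It remains to promote $U$ to an isometry of $\mathrm{F}(N,b)$. Because $\Isom(\mathrm{F}(N,b))$ acts transitively on $\mathrm{F}(N,b)$ with isotropy at any point acting linearly as $\mathrm{U}(N)$ on the Bochner coordinates (as is visible from the homogeneous models of Section \ref{csf}), there exists a unique isometry $T$ of $\mathrm{F}(N,b)$ sending $q$ to $q'$ whose expression in the Bochner charts $\pi,\pi'$ is precisely $U$. Applying $\pi'^{-1}$ to $\pi'\circ f'=U\circ\pi\circ f$ then yields $f'=T\circ f$ on a neighbourhood of $p$, completing the proof. The main obstacle I expect is not the flat rigidity itself but these two geometric reductions — identifying $\pi$ with the Bochner chart and lifting the unitary $U$ to a global isometry of $\mathrm{F}(N,b)$ — both of which rely on the explicit homogeneous models of complex space forms recalled in Section \ref{csf} rather than on any general principle.
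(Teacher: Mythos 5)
Your proposal follows exactly the paper's argument: reduce to the flat rigidity Theorem \ref{local rigidity} by composing with the generalized stereographic projections centered at $f(p)$ and $f'(p)$, so that both immersions induce on $V$ the same metric with potential $\frac{1}{b}(e^{b\,\dd^M_p}-1)$. The paper states this reduction in two lines and stops there; you additionally spell out the two steps it leaves implicit (that fullness is preserved under the projection, and that the resulting unitary lifts to an isometry of ${\rm F}(N,b)$), and both verifications are correct.
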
    
\begin{proof}
Let $V$ be a neighbourhood of $p$ admitting two full K\"ahler immersions $f\!:V\rightarrow {\rm F}(N,b)$ and $f'\!:V\rightarrow {\rm F}(N',b)$. Let $\pi\!:f(V)\rightarrow \mathds{C}^N$ and $\pi'\!:f'(V)\rightarrow \mathds{C}^{N'}$ be stereographic projections with respect to $f(p)$ and $f'(p)$ respectively. Since: 
$$
\dd'_0(z)=\sum_{j=1}^N|f_j(z)|^2=\sum_{j=1}^{N'}|f'_j(z)|^2,
$$
$\pi\circ f$ and $\pi'\circ f'$ are two K\"ahler immersions of $V$ into $\mathds{C}^N$ and $\mathds{C}^{N'}$ respectively, with the same metric induced by $\dd'_0(z)$, and thus the proof reduces to that of Theorem \ref{local rigidity}.
%
%
\end{proof}
\begin{theor}[Global character of $b$-resolvability]
If a real analytic connected K\"ahler manifold $(M,g)$ is  $b$-resolvable of rank $N$ at a point $p\in M$, then it also is at any other point.
\end{theor}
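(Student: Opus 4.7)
The plan is to adapt the openness/closedness argument of Theorem \ref{gcr}, with the flat complex Euclidean target replaced by the curved complex space form ${\rm F}(N,b)$. Let $S \subset M$ be the set of points at which $(M,g)$ is $b$-resolvable of rank $N$; since $M$ is connected and $S$ is nonempty by hypothesis, it suffices to show that $S$ is both open and closed.

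For openness, suppose $p \in S$. By Theorem \ref{localcritb} there is a neighborhood $U$ of $p$ and a full K\"ahler immersion $f\!: U \to {\rm F}(N,b)$. For any $q \in U$, take a connected open neighborhood $U' \subset U$ of $q$; the restriction $f|_{U'}$ is again a K\"ahler immersion, and it must remain full. Indeed, if its image were contained in a proper totally geodesic submanifold $\Sigma \subset {\rm F}(N,b)$, then $f^{-1}(\Sigma)$ would be a closed analytic subset of the connected set $U$ containing the open subset $U'$, forcing $f(U) \subset \Sigma$ and contradicting the fullness of $f$. Hence $q \in S$ by Theorem \ref{localcritb}.

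For closedness, let $p$ be a limit point of $S$. By Theorem \ref{indefcal}, there is a neighborhood $V \ni p$ and a K\"ahler immersion $F\!: V \to E$ into the indefinite Hilbert space. Pick $p' \in S \cap V$ and use Theorem \ref{localcritb} to obtain a neighborhood $V' \subset V$ of $p'$ with a full K\"ahler immersion $f\!: V' \to {\rm F}(N,b)$. The idea is then to reduce to the flat setting via the generalized stereographic projection $\pi$ centered at $f(p')$: the composition $\pi \circ f\!: V' \to \C^N$ is a full holomorphic isometric immersion for the auxiliary real analytic K\"ahler metric $g'$ on $V'$ whose diastasis at $p'$ equals $(e^{b\dd_{p'}}-1)/b$, so by Remark \ref{bresolvproj} the point $p'$ is resolvable of rank $N$ with respect to $g'$. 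One then runs the argument of Theorem \ref{gcr} in this flat setting, using the restriction of $F$ (post-composed with the appropriate exponential transformation) as the universal indefinite-Hilbert embedding and the resolvability of the auxiliary metric at $p'$ to pin down a positive-definite subspace of $E$ of dimension exactly $N$. Projecting onto this subspace yields a K\"ahler immersion of a neighborhood of $p$ into $\C^N$ for the auxiliary metric centered at $p$, and inverting the stereographic projection produces a K\"ahler immersion of that neighborhood into ${\rm F}(N,b)$, witnessing $p \in S$.

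The main obstacle I anticipate is the dependence of the stereographic reduction on the choice of base point: the auxiliary Euclidean metric and its associated indefinite-Hilbert representation are naturally defined with respect to $p'$, whereas the desired conclusion is about the distinct auxiliary metric centered at $p$. Choosing $p' \in S$ sufficiently close to $p$ ensures that the analytic continuations of the relevant diastases and of the stereographic maps are all defined on a common neighborhood of $p$, so that the positive-definite rank-$N$ analysis of Theorem \ref{gcr} transports cleanly from $p'$ to $p$ and then lifts back through the inverse stereographic projection.
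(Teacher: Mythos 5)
Your argument is essentially the paper's own: openness via the equivalence of $b$-resolvability with the existence of a local K\"ahler immersion into ${\rm F}(N,b)$ (Theorem \ref{localcritb}), and closedness by passing to the auxiliary two-point function $(e^{b\dd(q,q')}-1)/b$ on a neighbourhood $V$ of the limit point and running the flat global-character/global-criterion machinery (Theorems \ref{gcr} and \ref{critcn}) to extend the immersion over all of $V$. The base-point worry you raise dissolves in exactly the way you end up suggesting — the extended flat immersion is for the auxiliary metric centered at $p'$ (not at $p$), but inverting the stereographic projection turns it into a K\"ahler immersion of $(V,g)$ itself into ${\rm F}(N,b)$, so Theorem \ref{localcritb} applied at $p$ gives $b$-resolvability of rank $N$ there with no further transport; the only detail you omit is the verification, needed when $b<0$, that the extended image actually lies in the ball $\CH^N_b$, which follows from the bound $\dd'(q,q')<-\tfrac1b$.
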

\begin{proof}
Similarly to the case of flat ambient space, we will prove that the set of $b$-resolvable points in $M$ is open and closed. It is open since by Theorem \ref{localcritb} the $b$-resolvability is equivalent to the existence of a local K\"ahler immersion. It is also closed, in fact let $p$ be one of its limit points and let $V$ be a small enough neighbourhood around $p$ such that $\dd(q,q')$ is real analytic and single valued for any $q$, $q'\in V$. Let $p'$ be a $b$-resolvable point in $V$. By Theorem \ref{localcritb} there exist a neighbourhood $V'$ of $p'$ and a K\"ahler immersion $f\!:V'\rightarrow {\rm F}(N,b)$. Define a second diastasis on $V$ by:
\begin{equation}\label{ddprojected}
\dd'(q,q'):=\frac{e^{b\dd(q,q')}-1}b.
\end{equation}
Observe that on $V'$, $\dd'(q,q')$ coincides with the stereographic projection of $\dd(q,q')$ and thus at $p'$ the metric induced by $\dd'(q,q')$ is resolvable of rank $N$. By Theorem \ref{gcr}, $V$ with the metric induced by $\dd'(q,q')$ is resolvable of rank $N$ at any of its points. By Theorem \ref{critcn}, since in addition $\dd'(q,q')$ is single valued on $V$, the immersion $f$ can be extended to a K\"ahler immersion of the whole $V$ with the metric induced by $\dd'(q,q')$ into $\mathds{C}^N$. Thus, $f$ maps isometrically $V$ into ${\rm F}(N,b)$. The proof is complete by observing that when $b<0$, from \eqref{ddprojected} one gets $\dd'(q,q')<-\frac1b$, and thus:
$$
\sum_{j=1}^N|f_j(q)-f_j(q')|^2<-\frac1b,
$$
implies that the image of $V$ is actually contained in $\mathds{C}{\rm H}_b^N$.
\end{proof}
The previous theorem states that if a local K\"ahler immersion into ${\rm F}(N,b)$ around a point $p\in M$ exists, then the same is true for any other point. Due to this result we can say that a manifold is $b$-resolvable without specifying the point. In particular, if $(M,g)$ is $1$-resolvable, we say also that $g$ is \emph{projectively induced}. 

In complete analogy with the case of flat ambient space, the theorems just proven imply the following global criteria  (cfr. \cite[thms. 11-12, pages 19-20]{Cal}):
\begin{theor}\label{globalcriterionb}
A simply connected complex manifold $(M,g)$ admits a K\"ahler immersion into ${\rm F}(N,b)$, if and only if the metric $g$ is $b$-resolvable of rank at most $N$. Furthermore, if the immersion is full the rank is exactly $N$.
\end{theor}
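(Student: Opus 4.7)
The plan is to mimic the proof of Theorem \ref{globalcriterion} in the flat case, replacing resolvability with $b$-resolvability and rigid motions of $\mathds{C}^N$ with rigid motions (i.e., isometries) of ${\rm F}(N,b)$. Necessity is immediate: if $f\!:M\rightarrow {\rm F}(N,b)$ is a K\"ahler immersion, then its restriction to any coordinate neighbourhood of $p\in M$ yields by Theorem \ref{localcritb} that $(M,g)$ is $b$-resolvable of rank at most $N$ at $p$, and the rank equals $N$ precisely when the image spans the ambient space in the appropriate sense (i.e., when the immersion is full).

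For sufficiency, assume $g$ is real analytic K\"ahler and that $(M,g)$ is $b$-resolvable of rank at most $N$. By Theorem \ref{localcritb} each point $p\in M$ admits an open neighbourhood $U_p$ carrying a local K\"ahler immersion $f_p\!: U_p\rightarrow {\rm F}(N,b)$. Fix a basepoint $p_0\in M$ with local immersion $f_0\!:U_0\rightarrow {\rm F}(N,b)$. Given any other point $p\in M$, choose a path $\gamma$ from $p_0$ to $p$ and a finite chain of open segments $\pi_0,\pi_1,\dots,\pi_k$ from the covering $\{U_p\}$ whose consecutive members overlap and cover $\gamma$. On each overlap $\pi_j\cap \pi_{j+1}$, the two local immersions induce the same metric, so by Theorem \ref{local rigidityb} they differ by a unique holomorphic isometry of ${\rm F}(N,b)$; applying this isometry to $f_{j+1}$ extends the immersion consistently from $\pi_j$ to $\pi_j\cup\pi_{j+1}$. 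Iterating yields a K\"ahler immersion on an open neighbourhood of $\gamma$.

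Because $M$ is arcwise connected, this procedure defines the immersion at every point; because $M$ is simply connected, the monodromy argument shows the extension is independent of the path: any two paths from $p_0$ to $p$ are homotopic, and along a homotopy the uniqueness of the gluing isometries forces the two extensions to agree at the endpoint. This produces the desired global K\"ahler immersion $f\!:M\rightarrow {\rm F}(N,b)$. For the fullness statement, note that if the rank at $p_0$ is exactly $N$, then the image $f_0(U_0)$ is not contained in any proper totally geodesic submanifold of ${\rm F}(N,b)$ (again by Theorem \ref{localcritb}), and since $f_0(U_0)\subseteq f(M)$, the global immersion is full; the converse is immediate from Theorem \ref{localcritb} applied at $p_0$.

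The main obstacle is being careful about the gluing step: unlike the flat case where the rigid motions form the real affine unitary group acting on $\mathds{C}^N$, here one must invoke Theorem \ref{local rigidityb} to identify the appropriate ambient symmetry and verify that the composition of isometries produced along the path chain defines a single well-defined immersion on the entire union. Once Theorems \ref{localcritb} and \ref{local rigidityb} are available, however, the argument is essentially formal and parallels the flat case verbatim.
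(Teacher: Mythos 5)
Your proposal is correct and matches the paper's intent exactly: the paper gives no written proof here, stating only that the result follows ``in complete analogy with the case of flat ambient space'' from Theorems \ref{localcritb} and \ref{local rigidityb}, and your argument is precisely that analogy carried out, transplanting the path-chaining and monodromy argument from the proof of Theorem \ref{globalcriterion}. The only point worth making explicit is that the local immersions in the chain should be taken full (into ${\rm F}(N',b)$ with $N'$ the rank, then included totally geodesically) so that the uniqueness clause of Theorem \ref{local rigidityb} applies at each overlap, exactly as in the flat case.
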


\begin{theor}\label{critfb}
A complex manifold $(M,g)$ admits a K\"ahler immersion into ${\rm F}(N,b)$, if and only if the following conditions are fulfilled:
\begin{itemize}
\item[(i)] the metric is a real analytic K\"ahler metric,
\item[(ii)] the K\"ahler manifold $(M,g)$ is $b$-resolvable of rank at most $N$,
\item[(iii)] for each point $p\in M$ the analytic extension of the diastasis $\dd_p$ over $M$ is single valued.
\end{itemize}
Further, the immersion is also injective if and only if for any $p\in M$:
\begin{itemize}
\item[(iv)] $\dd_p(q)=0$ only for $q=p$.
\end{itemize}
\end{theor}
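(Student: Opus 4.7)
The plan is to mirror the proof of Theorem \ref{critcn}, combining the simply connected case (Theorem \ref{globalcriterionb}) with a descent argument along the universal cover, and exploiting the fact that the diastasis of each model ${\rm F}(N,b)$ vanishes exactly on the diagonal.

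For necessity, (i) is forced because $g=f^*g_b$ is the pullback of a real analytic K\"ahler metric by a holomorphic map; (ii) follows from Theorem \ref{localcritb} applied near any point; and (iii) is forced by Proposition \ref{induceddiast}, since locally $\dd^M_p=\dd^b_{f(p)}\circ f$ and, as is clear from the explicit formulas in Section \ref{csf}, $e^{b\dd^b}$ is a globally single-valued rational expression on the ambient model, so $\dd^b_{f(p)}\circ f$ extends single-valuedly to all of $M$.

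For sufficiency, let $\pi\!:\tilde M\rightarrow M$ be the universal cover with metric $\tilde g=\pi^*g$. Real analyticity and $b$-resolvability are local properties preserved under a local isometric covering, so they transfer to $\tilde M$; Theorem \ref{globalcriterionb} then yields a K\"ahler immersion $\tilde f\!:\tilde M\rightarrow{\rm F}(N,b)$. The remaining task is to show $\tilde f$ factors through $\pi$, i.e.\ $\tilde f\circ\gamma=\tilde f$ for every deck transformation $\gamma$. Fix $\tilde p\in\tilde M$ projecting to $p\in M$ and let $\tilde{\dd}_{\tilde p}$ denote the analytic extension of the diastasis of $\tilde g$ at $\tilde p$ to all of $\tilde M$. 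Hypothesis (iii) together with $\tilde g=\pi^*g$ gives $\tilde{\dd}_{\tilde p}=\dd^M_p\circ\pi$, so $\tilde{\dd}_{\tilde p}(\gamma(\tilde p))=\dd^M_p(p)=0$. On the other hand Proposition \ref{induceddiast} yields $\tilde{\dd}_{\tilde p}=\dd^b_{\tilde f(\tilde p)}\circ\tilde f$, hence $\dd^b_{\tilde f(\tilde p)}(\tilde f(\gamma(\tilde p)))=0$. Since $\dd^b$ is nonnegative on $\CP^N_b$ (Cauchy--Schwarz) and on $\CH^N_b$ (its Lorentzian counterpart), vanishing only on the diagonal, one concludes $\tilde f(\gamma(\tilde p))=\tilde f(\tilde p)$; thus $\tilde f$ descends to the required K\"ahler immersion $f\!:M\rightarrow{\rm F}(N,b)$.

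The equivalence of injectivity with (iv) is a direct corollary: if $f(p)=f(q)$ then $\dd^M_p(q)=\dd^b_{f(p)}(f(q))=0$ by Proposition \ref{induceddiast}, so (iv) forces $p=q$; conversely, the diagonal zero-locus property of $\dd^b$ turns $\dd^M_p(q)=0$ into $f(p)=f(q)$, whence injectivity of $f$ forces $p=q$. The most delicate point in the above plan is the descent: $\dd^b$ is a priori only a multivalued function on the ambient model (its logarithm has branches), so Proposition \ref{induceddiast} is not globally tautological, and it is precisely hypothesis (iii) that allows one to identify the pulled-back diastasis with the intrinsic one; the obstruction is navigated by working with $e^{b\dd^b}$, a single-valued rational function whose zero locus pins down the diagonal in the model.
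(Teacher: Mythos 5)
Your proposal is correct and is essentially the proof the paper intends: the text states Theorem \ref{critfb} without a written proof, asserting it follows ``in complete analogy'' with the flat case, and your argument is precisely the proof of Theorem \ref{critcn} transposed to ${\rm F}(N,b)$ --- necessity of (i)--(iii) via Proposition \ref{induceddiast} and Theorem \ref{localcritb}, sufficiency via Theorem \ref{globalcriterionb} on the universal cover followed by descent using the vanishing of the ambient diastasis only on the diagonal, and the same diastasis identity for the injectivity clause. Your closing remark on handling the multivaluedness of $\dd^b$ through the single-valued expression $e^{b\dd^b}$ is a sensible refinement of a point the paper leaves implicit.
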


\begin{remark}\label{compactfull}\rm
Observe that, if it does exist, a K\"ahler immersion $f\!:M\rightarrow \mathds C{\rm P}^\infty$ of a {\em compact} K\"ahler manifold into $\mathds{C}{\rm P}^\infty$ is forced not to be full.
 In fact, assume by contradiction that $f\!:M\rightarrow \mathds C{\rm P}^\infty$ is a full K\"ahler immersion. Then we can write $f(p) = [s_0:\dots:s_j:\dots]$, where each $s_j$ is a global holomorphic section of the holomorphic line bundle $L$ on $M$ obtained as the pull-back of the hyperplane bundle of $\CP^{\infty}$. Since the map is full, the $s_j$'s are linearly independent and so the space of global holomorphic sections of $L$ is infinite dimensional. This is in contrast   with the  well known fact that   this space is finite dimensional  due to the compactness of $M$.
  Notice also that being the pull-back of the integral Fubini-Study form of $\mathds{C}{\rm P}^N$ through a holomorphic map, the induced K\"ahler form on $M$ is forced to be integral and so we are in the realm of algebraic geometry. It is worth pointing out that if we start with  compact \K\ manifold $(M, \omega)$ with $\omega$ integral then the Kodaira embedding
 $k:M\rightarrow \CP^N$  is a holomorphic map into some finite dimensional complex projective space $\CP^{N}$, but  in general $k$ is not isometric ($k^*\omega_{FS}$ cohomologous to $\omega$ but in general not equal).
 
\end{remark}

We conclude this section with the following example of K\"ahler metric admitting a local but not global immersion into $\mathds{C}{\rm P}^\infty$.
\begin{ex}\label{cstar}\rm
Consider the K\"ahler metric $\tilde g$ on $\C^*$ whose fundamental form is $$\tilde \omega=\frac{i}{2}\frac{dz\wedge d\bar z}{|z|^2}.$$ 
Since $\C$ admits a K\"ahler immersion $f_0\!:\C\f\CP^\infty$ into $\CP^\infty$ (cfr. Eq. (\ref{cincp}) below) and it covers $\C^*$ through the map $\exp\!:\C\f\C^*$, given by $\exp(z)=e^{2\pi i z}$, then a neighbourhood of each point of $\C^*$ can be K\"ahler immersed into $\CP^\infty$. The immersion cannot be extended to a global one. In fact, since $\exp^*(\tilde g)=g_0$, such K\"ahler immersion $f$ composed with $\exp$, would be a K\"ahler immersion of $\C$ into $\CP^\infty$. By Calabi's rigidity Theorem \ref{local rigidityb}, it would then exist a rigid motion $T$ of $\C$ such that $T\circ f_0=f\circ \exp$, that is impossible since $f_0$ is injective and $\exp$ is not.
\end{ex}

\section[K\"ahler immersions of a complex space form into another]{K\"ahler immersions of a complex space form into another}\label{csfia}

As application of its criterion Calabi studies the existence of K\"ahler immersion of a complex space form into another. 
Following the notations of the previous chapter, we will denote by ${\rm F}(N,b)$ a complex space form of dimension $N$ and  holomorphic sectional curvature $4b$.
\begin{theor}[E. Calabi,  {\cite[pages 21-22]{Cal}}]\label{immspaceform}
A complex space form ${\rm F}(n,b)$ admits a global K\"ahler immersion into ${\rm F}(N,b')$ if an only if $b\leq b'$ and
\begin{itemize}
\item[either] $b\leq 0$ and $N=\infty$,
\item[or] $b'=kb$ for some positive integer $k$, and $N\geq {n+k \choose k}-1$.
\end{itemize}
\end{theor}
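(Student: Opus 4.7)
I will apply Calabi's global criterion (Theorem \ref{critfb}). Since the diastasis of ${\rm F}(n,b)$ at the origin in normal coordinates is the radial function $\dd^b_0(z) = \frac{1}{b}\log(1+b|z|^2)$ (and $\dd^0_0(z)=|z|^2$ when $b=0$), the function governing $b'$-resolvability
\begin{equation*}
\frac{e^{b'\dd^b_0(z)}-1}{b'} = \frac{(1+b|z|^2)^{b'/b}-1}{b'}
\end{equation*}
depends only on $r = \sum_{\alpha=1}^n|z_\alpha|^2$. Its Taylor expansion $\sum_{l\geq 1} c_l\, r^l$ has $c_l = \frac{b^l}{b'}\binom{b'/b}{l}$ (and $c_l = \frac{(b')^{l-1}}{l!}$ when $b=0$), and using $r^l = \sum_{|m|=l}\binom{l}{m}|z^m|^2$ the $b'$-resolvability matrix $(s_{jk})$ of Definition \ref{bresolvabledefin} becomes diagonal in the multi-index basis, with diagonal entry $c_{|m|}\binom{|m|}{m}$ at multi-index $m$. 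Hence $(s_{jk})$ is positive semidefinite if and only if $c_l\geq 0$ for every $l\geq 1$, and its rank equals $\sum_{l\geq 1,\, c_l>0}\binom{n+l-1}{l}$; by the hockey-stick identity this equals $\binom{n+k}{k}-1$ precisely when the nonzero $c_l$'s are exactly $l=1,\dots,k$.

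\textbf{Necessity.} Assume an immersion ${\rm F}(n,b)\hookrightarrow {\rm F}(N,b')$ exists. Since $\dd^b$ is real analytic and globally single valued on complex space forms (Section \ref{csf}), Theorem \ref{critfb} forces $c_l\geq 0$ for every $l\geq 1$. A direct computation gives $c_2 = (b'-b)/2$, so the curvature inequality $b\leq b'$ is necessary. Next I will show that if $N$ is finite the expansion must terminate, and this happens only if $b'/b$ is a positive integer $k$: since $\binom{b'/b}{l} = \frac{1}{l!}\prod_{j=0}^{l-1}(b'/b-j)$ has no vanishing factors when $b'/b\notin \z_{\geq 0}$, a case analysis on $\sgn(b)$ and $\sgn(b')$ combining the alternation of $\binom{b'/b}{l}$ past $l=b'/b$ with the sign of $b^{l-1}$ shows that in every non-terminating situation either some $c_l$ is negative (ruled out) or infinitely many $c_l$ are positive (forcing $N=\infty$). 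When $b'/b = k\in\z_{>0}$ the expansion stops at $l=k$; requiring $c_l\geq 0$ for $l=1,\dots,k$ then further forces $b>0$, except for the trivial subcase $k=1$ where $b\leq 0$, $b'=b$ also works. The rank formula in the Strategy paragraph yields $N\geq\binom{n+k}{k}-1$. For the non-terminating case the same sign analysis gives $c_l\geq 0$ for every $l$ precisely when $b\leq 0$ and $b\leq b'$, forcing $N=\infty$, which matches the other clause of the theorem.

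\textbf{Sufficiency.} Conversely I produce the immersions explicitly. For $b>0$ and $b'=kb$ with $k\in\z_{>0}$, the degree-$k$ Veronese map
\begin{equation*}
\CP^n_b\longrightarrow \CP^{\binom{n+k}{k}-1}_{kb},\qquad [Z_0:\dots:Z_n]\longmapsto \Bigl[\sqrt{\tbinom{k}{m_0,\dots,m_n}}\,Z_0^{m_0}\cdots Z_n^{m_n}\Bigr]_{m_0+\cdots+m_n=k},
\end{equation*}
combined with the homogeneous formula \eqref{diastcphom} and the multinomial identity $\sum_{|m|=k}\binom{k}{m}|Z^m|^2 = \bigl(\sum_{j=0}^n|Z_j|^2\bigr)^k$, pulls back $\dd^{kb}$ to $\dd^b$ and hence gives the required global K\"ahler immersion. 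The remaining finite-dimensional cases $b=0, b'=0$ and $b<0, b'=b$ (both with $k=1$) reduce to the tautological inclusion ${\rm F}(n,b)\hookrightarrow {\rm F}(N,b)$ for $N\geq n=\binom{n+1}{1}-1$. In the case $b\leq 0$, $b\leq b'$, $N=\infty$, the sign analysis yields $c_l\geq 0$ for every $l$, so the components $\sqrt{c_{|m|}\binom{|m|}{m}}\,z^m$ indexed by multi-indices define an isometric holomorphic map into ${\rm F}(\infty,b')$ exactly as in the construction used in the proofs of Theorems \ref{indefcal} and \ref{localcritb}; convergence of $\sum_m|f_m(z)|^2$ on the natural domain follows from that of the original Taylor expansion of $(e^{b'\dd^b_0}-1)/b'$.

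\textbf{Main obstacle.} The crux is the necessity direction, specifically the sign analysis of $c_l = \frac{b^l}{b'}\binom{b'/b}{l}$: one must simultaneously track $\sgn(b^{l-1})$ and the sign of $\binom{b'/b}{l}$ as $l$ varies, and then show that the only finite-termination scenario compatible with all $c_l\geq 0$ is $b>0$ together with $b'=kb$, plus the trivial $k=1$ case that also absorbs $b\leq 0$ with $b'=b$. Once this dichotomy between terminating and non-terminating expansions is settled, both the lower bound $N\geq\binom{n+k}{k}-1$ in the finite case and the necessity of $N=\infty$ in the non-terminating case follow cleanly from the hockey-stick identity applied to the diagonal structure of $(s_{jk})$.
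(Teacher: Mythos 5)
Your proof is correct and follows essentially the same route as the paper: apply Calabi's global criterion and observe that the $b'$-resolvability matrix of $(e^{b'\dd_0^b}-1)/b'$ is diagonal with entries $\frac{1}{m!}\prod_{j=1}^{|m|-1}(b'-jb)$, so everything reduces to the sign and vanishing pattern of these products, with the rank count $\binom{n+k}{k}-1$ coming from summing the dimensions of the spaces of degree-$l$ monomials for $l=1,\dots,k$. The only difference is that you exhibit the Veronese map explicitly for sufficiency, whereas the paper obtains sufficiency directly from the criterion; this is a harmless (and instructive) embellishment, not a different argument.
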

\begin{proof}
Assume first $b\neq 0$. By Theorem \ref{globalcriterionb} it is enough to check for what values of $b$, $g_{b}$ is $b'$-resolvable. Fix a point $p\in {\rm F}(n,b)$ and consider local coordinates $z$ centered at $p$. Then:
$$
\dd_0^b=\frac1b\log\left(1+b\sum_{j=1}^n|z_j|^2\right),
$$
and the $(j,k)$ entry in the matrix $(a_{jk})$ of its power expansion around the origin \eqref{powexdiastc} reads:
$$
a_{jk}=\delta_{jk}\frac{(|m_j|-1)!}{m_j!}(-b)^{|m_{j}|-1}.
$$
Thus, $(a_{jk})$ is a diagonal matrix with nonvanishing elements on the diagonal. It follows that its rank is infinite and 
it is positive semidefinite if and only if each of its entries is nonnegative, i.e. if and only if $b<0$. It follows that ${\rm F}(n,b)$ admits a K\"ahler immersion into $\mathds{C}^N$ iff ${\rm F}(n,b)=\mathds{C}{\rm H}^n_b$ and $N=\infty$.
In order to check the $b'$-resolvability for $b'\neq 0$, consider that:
$$
\frac{e^{b'\dd_0^b}-1}{b'}=\frac{\left(1+b\sum_{j=1}^n|z_j|^2\right)^{\frac{b'}b}-1}{b'}=\frac{1}{b'}\sum_{j=1}^\infty{b'/b\choose j}\left(b\sum_{k=1}^n|z_k|^2\right)^{j},
$$
and thus the matrix $(s_{jk})$ of the coefficients of its power expansion around the origin is a diagonal matrix with terms on the diagonal given by:
$$
s_{jj}=\begin{cases}\frac{\prod_{l=1}^{|m_j|-1}(b'-lb)}{m_j!}&\ \textrm{for}\ |m_j|>1;\\
1&\ \textrm{otherwise}.
\end{cases}
$$ 
The rank $N$ of $(s_{jk})$ is the number of nonvanishing entries. When $b'$ is a multiple of $b$, i.e. there exists a positive integer $k$ such that $b'=kb$, then $N$ is equal to ${n+k\choose k}-1$. Since in this case each entry is nonnegative, $(s_{jk})$ is positive semidefinite.
When $b'$ is not a multiple of $b$, then the rank is $\infty$ and $(s_{jk})$ is $b'$-resolvable iff $b'-lb$ is nonnegative for any $l=2,3,\dots$, i.e. iff $b<0$.

Finally, the case $b=0$ is trivial when $b'=0$. For $b'\neq 0$ we get:
$$
s_{jj}=\frac{(b')^{|m_j|-1}}{m_j!}
$$ 
and thus $(s_{jk})$ is positive semidefinite if and only if $b'>0$ and the rank is infinite, i.e. the only nontrivial K\"ahler immersion $\mathds{C}^n$ admits is into $\mathds{C}{\rm P}^\infty_b$. 
\end{proof}
\begin{remark}\label{constantmulti}\rm
It is interesting to notice that a K\"ahler manifold $(M,\omega)$ is $b$-resolvable for $b>0$ (resp. $b<0$) if and only if $(M,b\,\omega)$ is $1$-resolvable (resp. $-1$-resolvable). To see this, notice that if we denote by $\varphi$ the immersion $\varphi\!:M\rightarrow \mathds{C}{\rm P}^N_b$, by Prop. \ref{induceddiast} we have:
$$
\dd^M_p(z)=\frac{1}{b}\log\left(1+b\sum_{j=1}^N|\varphi_j(z)|^2\right),
$$
thus the map $\sqrt b\,\varphi$ satisfies:
$$
(\sqrt b\,\varphi)^*\dd_{0}^b(z)=\log\left(1+b\sum_{j=1}^N| \varphi_j(z)|^2\right)=b\,\dd_p^M(z).
$$
Totally similar arguments apply to the $b<0$ case.
Finally, notice that the multiplication of the metric $g$ by $c$ is harmless  when one studies \K\ immersions into the infinite dimensional complex Euclidean space  $l ^2({\C})$ equipped with the flat metric $g_0$. In fact, if $f:M\rightarrow l ^2({\C})$ satisfies $f^*(g_0)=g$ then $(\sqrt{c}f)^*(g_0)=cg$.
\end{remark}
In the sight of the previous remark, Theorem \ref{immspaceform} can be stated in terms of K\"ahler immersions of $(\mathds{C}{\rm H}^n,c\,g_{hyp})$, $\mathds{C}^n$ and $(\mathds{C}{\rm P}^n,c\,g_{FS})$ into $\mathds{C}{\rm H}^{N\leq \infty}$,  $\mathds{C}^{N\leq \infty}$ or $\mathds{C}{\rm P}^{N\leq \infty}$ as follows. 

\begin{theor}\label{calabic}$\ $
\begin{enumerate}
\item For any $c>0$, $(\mathds{C}{\rm H}^n,c\,g_{hyp})$ admits a full K\"ahler immersion into $l^2(\mathds{C})$ and into $\mathds{C}{\rm P}^{\infty}$. Further, $(\mathds{C}{\rm H}^n,c\,g_{hyp})$ admits a K\"ahler immersion into $\mathds{C}{\rm H}^{N\leq \infty}$ if and only if $c\leq1$ and $N=\infty$.
\item  The flat space $\mathds{C}^n$ does not admit a K\"ahler immersion into $\mathds{C}{\rm H}^{N\leq \infty}$ for any value of $N$, but it does, full, into $\mathds{C}{\rm P}^{\infty}$. 
\item  For no value of $c>0$, $(\mathds{C}{\rm P}^n,c\,g_{FS})$ admits a K\"ahler immersion into $\mathds{C}{\rm H}^{N\leq \infty}$ nor $\mathds{C}^{N\leq \infty}$. Further, $(\mathds{C}{\rm P}^n,c\,g_{FS})$ admits a full K\"ahler immersion into $\mathds{C}{\rm P}^{N}$, $N<\infty$, if and only if $c$ is a positive integer and $N={n+c\choose c}-1$.
\end{enumerate}
\end{theor}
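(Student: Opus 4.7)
The plan is to deduce Theorem \ref{calabic} as a direct reformulation of Theorem \ref{immspaceform}, after recasting the $c$-multiples of the canonical metrics in terms of space forms of rescaled curvature. Since rescaling a K\"ahler metric by $c>0$ divides its holomorphic sectional curvature by $c$, we have K\"ahler isometries $(\CH^n, c\,g_{hyp}) \cong F(n, -1/c)$ and $(\CP^n, c\,g_{FS}) \cong F(n, 1/c)$; the explicit biholomorphism $z\mapsto z/\sqrt{|b|}$ intertwines the diastasis formulas \eqref{diastcp} and \eqref{diastch}. Equivalently, Remark \ref{constantmulti} tells us that the $c$-scaled metric is $b'$-resolvable of a given rank if and only if the original metric is $(b'/c)$-resolvable of the same rank, absorbing the scaling into the target curvature.

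With these identifications in hand, each of the six sub-claims becomes a mechanical check of the two conditions of Theorem \ref{immspaceform}: $b\leq b'$ together with either ``$b\leq 0$ and $N=\infty$'' or ``$b'=kb$ with $k\in\mathbb{Z}^{+}$ and $N\geq\binom{n+k}{k}-1$''. For part~(1) I take $b=-1/c<0$, so the first alternative always applies; the inequality $-1/c\leq b'$ is automatic for $b'\in\{0,1\}$, producing the full immersions into $l^2(\C)$ and $\CP^\infty$, whereas immersion into $\CH^N=F(N,-1)$ forces $-1/c\leq -1$, i.e.\ $c\leq 1$, with $N=\infty$. For part~(2) I take $b=0$: any target $F(N,b')$ with $b'<0$ is ruled out by $0\not\leq b'$, while immersion into $\CP^\infty=F(\infty,1)$ falls into the first alternative (no positive integer $k$ satisfies $1=k\cdot 0$, so $N=\infty$ is mandatory and the rank is infinite, hence full). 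For part~(3) I take $b=1/c>0$: the sign obstructs any target $F(N,b')$ with $b'\leq 0$, and immersion into $\CP^N=F(N,1)$ requires $1=k\cdot(1/c)$, i.e.\ $c=k\in\mathbb{Z}^{+}$, with full rank exactly $\binom{n+c}{c}-1$.

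The argument is essentially bookkeeping, and the only step that genuinely deserves attention is the initial metric-to-curvature translation; once that is pinned down, the three parts follow in parallel from Theorem \ref{immspaceform}. In particular, the subtle point of part~(3), namely that the rank $\binom{n+k}{k}-1$ coincides with the number of nonzero diagonal entries of the matrix $(s_{jk})$ of Definition \ref{bresolvabledefin} (equivalently, the number of monomials of positive degree at most $k$ in $n$ variables), was already carried out inside the proof of Theorem \ref{immspaceform}, so no additional obstacle remains.
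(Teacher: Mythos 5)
Your proof is correct and is precisely the route the paper takes: Theorem \ref{calabic} is introduced there with the single remark that it is Theorem \ref{immspaceform} restated ``in the sight of'' Remark \ref{constantmulti}, which is exactly your identification $(\CH^n,c\,g_{hyp})\cong F(n,-1/c)$, $(\C^n,g_0)\cong F(n,0)$, $(\CP^n,c\,g_{FS})\cong F(n,1/c)$ followed by the case-by-case check of the two alternatives. The one point to tidy up is in part (1): when ruling out finite-dimensional targets $\CH^N$ you invoke only the first disjunct of Theorem \ref{immspaceform}, but for $c=1$ the second disjunct ($b'=kb$ with $k=1$, $N\geq{n+1\choose 1}-1=n$) also applies and yields the totally geodesic inclusion $\CH^n\hookrightarrow\CH^N$ for finite $N\geq n$ --- an edge case that the statement of Theorem \ref{calabic} itself glosses over, so your argument inherits rather than creates the imprecision.
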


We conclude this chapter with the following theorems which show that if a  a K\"ahler manifold $(M,g)$ admits a K\"ahler immersion into $l^2(\mathds{C})$ (resp. $\CH^\infty$) then it also does into $\mathds{C}{\rm P}^\infty$ (resp. $l^2(\C)$). These facts has been firstly pointed out by S. Bochner in \cite{bochner}. 
\begin{theor}\label{bochnerth}
If a K\"ahler manifold $(M,g)$ admits a K\"ahler immersion into the infinite dimensional flat space $l^2(\C)$ then it also does into $\CP^\infty$.
\end{theor}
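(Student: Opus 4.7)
The strategy is to leverage the global criterion Theorem~\ref{critfb}, applied with $b=1$ and $N=\infty$, to produce a K\"ahler immersion of $(M,g)$ into $\CP^{\infty}$. Let $f=(f_1,f_2,\ldots)\colon M\to l^2(\C)$ be the given K\"ahler immersion, so that $f^{*}g_0=g$. Two of the three hypotheses of Theorem~\ref{critfb} are inherited immediately from $f$: the metric $g$ is real analytic (being the pull--back of the real analytic $g_0$), and by Theorem~\ref{critcn} applied to $f$ itself, the analytic extension of $\dd_p$ is single valued on $M$ for each $p\in M$.

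The real work is to establish $1$-resolvability. I would fix $p\in M$ and, after composing $f$ with a translation of $l^2(\C)$, assume $f(p)=0$. In local coordinates centered at $p$, Prop.~\ref{induceddiast} gives
$$
\dd_p(z)=\sum_{j=1}^\infty |f_j(z)|^2.
$$
Expanding the exponential and collecting terms yields
$$
e^{\dd_p(z)}-1=\sum_{k=1}^\infty\frac{1}{k!}\left(\sum_{j=1}^\infty|f_j(z)|^2\right)^{k}=\sum_{k=1}^\infty\sum_{j_1,\dots,j_k=1}^\infty\left|\tfrac{1}{\sqrt{k!}}\,f_{j_1}(z)\cdots f_{j_k}(z)\right|^{2}.
$$
Setting $h_{\alpha}(z):=\tfrac{1}{\sqrt{k!}}f_{j_1}(z)\cdots f_{j_k}(z)$ for $\alpha=(k;j_1,\dots,j_k)$, each of which is holomorphic in a neighbourhood of $p$, exhibits $e^{\dd_p(z)}-1$ as a convergent sum $\sum_\alpha|h_\alpha(z)|^2$ of squared moduli of holomorphic functions. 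Reading off Taylor coefficients exactly as in the proof of Theorem~\ref{localcrit}, the matrix $(b_{jk})$ from \eqref{powexdiastcp} is the Gram matrix of the coefficient sequences of the $h_\alpha$, hence positive semidefinite. This is precisely $1$-resolvability at $p$; by the global character of $b$-resolvability it then holds at every point of $M$.

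The step I expect to be most delicate is the convergence bookkeeping: the formal rearrangement above must be justified on a polycylindrical neighbourhood of $p$, and to meet condition~(3) of Def.~\ref{lkiflat2} one needs $\sum_\alpha|h_\alpha(z)|^2$ to stay uniformly bounded there. Both reduce to the absolute convergence of $\sum_j|f_j(z)|^2$ on such a neighbourhood (the defining hypothesis of the $l^2(\C)$ immersion together with Lemma~\ref{convergence}), combined with the continuity of $e^{\dd_p(z)}$ which forces $e^{\dd_p(z)}-1$ to be bounded on a compact neighbourhood of $p$. Once this convergence is secured, Theorem~\ref{critfb} applied with $b=1$ and $N=\infty$ delivers the desired K\"ahler immersion $M\to\CP^{\infty}$.
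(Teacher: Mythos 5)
Your proof is correct and follows essentially the same route as the paper: write $\dd_p=\sum_j|f_j|^2$ via the flat immersion and expand the exponential to exhibit $e^{\dd_p}-1$ as a convergent sum of squared moduli of holomorphic functions, which is exactly $1$-resolvability. The paper's version is terser (it simply asserts the existence of the $h_j$ and concludes), while you make the multinomial bookkeeping and the verification of conditions (i) and (iii) of Theorem~\ref{critfb} explicit, which is a welcome but not substantively different elaboration.
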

\begin{proof}
Fix a local coordinate system $(z_1,\dots,z_n)$ on a neighbourhood $U$ of $p\in M$. By Theorem \ref{chardiast} for some holomorphic functions $f_1,\dots,f_j,\dots$, the diastasis function for $g$ reads
$$\dd^M_0(z)=\sum_{j=1}^\infty|f_j|^2.$$
Let $\dd^M_0(z)=\log\psi$ with $\psi=e^{\dd^M_0(z)}$. Then for some suitable functions $h_j$, $j=1,2,\dots$ we get
$$\psi=1+\sum_{j=1}^\infty|h_j|^2,$$
and the conclusion follows.
\end{proof}
\begin{theor}\label{bochnerthalt}
If a K\"ahler manifold $(M,g)$ admits a K\"ahler immersion into the infinite dimensional hyperbolic space $\CH^\infty$ then it also does into $l^2(\C)$.
\end{theor}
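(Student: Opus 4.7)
The approach mirrors the proof of Theorem \ref{bochnerth}, but uses the Taylor expansion of $-\log(1-x)$ in place of the expansion of $e^x$. By Proposition \ref{induceddiast}, the hypothesis gives, on a neighbourhood $U$ of an arbitrary $p\in M$ (with $f(p)$ translated to the origin of $\CH^\infty$), a sequence of holomorphic functions $f_1,f_2,\dots$ with $\sum_{j=1}^\infty|f_j(z)|^2<1$ and
\[
\dd^M_p(z)=-\log\left(1-\sum_{j=1}^\infty|f_j(z)|^2\right).
\]
Since $f_j(p)=0$ for every $j$, after shrinking $U$ we may assume $\sum_{j=1}^\infty|f_j(z)|^2\le\tfrac12$ on $U$, which is the uniform bound that will make the whole construction below converge.

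Expanding the logarithm and then each power as a sum of squared moduli gives
\[
\dd^M_p(z)=\sum_{k=1}^\infty\frac{1}{k}\left(\sum_{j=1}^\infty|f_j(z)|^2\right)^{\!k}
=\sum_{k=1}^\infty\sum_{j_1,\dots,j_k=1}^\infty\left|\frac{1}{\sqrt{k}}\,f_{j_1}(z)\cdots f_{j_k}(z)\right|^2.
\]
Enumerating the index set $\{(k;j_1,\dots,j_k):k\ge1,\ j_i\ge1\}$ as a single countable sequence and defining the corresponding holomorphic functions $h_1,h_2,\dots$ by $h_{(k;j_1,\dots,j_k)}(z)=k^{-1/2}\,f_{j_1}(z)\cdots f_{j_k}(z)$, one obtains
\[
\dd^M_p(z)=\sum_{\ell=1}^\infty|h_\ell(z)|^2.
\]

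It remains to verify the three conditions of Definition \ref{lki} for the map $h=(h_\ell)\!:U\to l^2(\C)$: holomorphicity of each $h_\ell$ is immediate from the definition; isometry is exactly the identity just displayed, because $\dd^{l^2(\C)}_0(h(z))=\sum_\ell|h_\ell(z)|^2$; and the convergence/boundedness condition $\sum_\ell|h_\ell(z)|^2<R$ on $U$ follows from the same identity, since $\dd^M_p$ is bounded on $U$ by $-\log(1-\tfrac12)=\log 2$. This gives a local Kähler immersion of $(M,g)$ into $l^2(\C)$ around every point.

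The main technical point, and essentially the only non-bookkeeping step, is ensuring that the double sum above can be reindexed and rearranged as a single convergent series of squared moduli. This is justified by Lemma \ref{convergence} together with the uniform bound $\sum_j|f_j|^2\le\tfrac12$ on $U$, which guarantees absolute convergence at every point and hence legitimises all the manipulations.
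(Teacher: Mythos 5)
Your proof is correct and follows essentially the same route as the paper's, which simply asserts that $\dd^M_0(z)=-\log\bigl(1-\sum_j|f_j|^2\bigr)$ can be rewritten as $\sum_j|h_j|^2$ for suitable holomorphic $h_j$; your expansion of $-\log(1-x)$ and the reindexing $h_{(k;j_1,\dots,j_k)}=k^{-1/2}f_{j_1}\cdots f_{j_k}$ is exactly the intended construction (compare the explicit map \eqref{chinell}). The only remark is that the rearrangement needs no appeal to Lemma \ref{convergence}, since all terms are nonnegative.
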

\begin{proof}
Consider a local coordinate system $(z_1,\dots,z_n)$ on $M$ in a neighbourhood of $p\in M$ and let $\dd^M_0(z)$ be the diastasis function for $g$ at $p$. By Theorem \ref{chardiast}, there exists $f_1,\dots,f_j,\dots$ holomorphic functions such that
$$\dd^M_0(z)=-\log\left(1-\sum_{j=1}^\infty|f_j|^2\right).$$
Hence
$$\dd^M_0(z)=\sum_{j=1}^\infty|h_j|^2,$$
for some suitable holomorphic functions $h_j$, $j=1,2,\dots$, and we are done.
\end{proof}
\section{Exercises}
\begin{ExerciseList}
\Exercise Prove that 
\begin{equation}\label{chinell}
f:\CH^n\hookrightarrow l^2({\C}):z\mapsto \left(\dots, \sqrt{\frac{(|m_j|-1)!}{m_j!}}z^{m_j}, \dots \right),
\end{equation}
is a full K\"ahler immersion of $\CH^n$ into $l^2({\C})$.
\Exercise Prove that 
\begin{equation}\label{chincp}
f:\CH^n\hookrightarrow \CP^{\infty}:z\mapsto \left(\dots , \sqrt{\frac{|m_{j}|!}{m_{j}!}}z^{m_{j}}, \dots \right),
\end{equation}
is a full K\"ahler immersion of $\CH^n$ into $\CP^{\infty}$.
\Exercise Prove that 
\begin{equation}\label{cincp}
f:{\C}^n\hookrightarrow \CP^{\infty}:z\mapsto \left(\dots , \sqrt{\frac{1}{m_{j}!}}z^{m_{j}}, \dots \right),
\end{equation}
is a full K\"ahler immersion of $\C^n$ into $\CP^{\infty}$.

\Exercise
Let $k$ be a positive integer.
Construct a full \K\ immersion of $(\CP^1, kg_{FS})$ into $\CP^k$ (cfr. 3. of Theorem \ref{calabic}).

\Exercise Consider the {\em Springer domain} defined by:
$$D=\left\{(z_0,\dots,z_{n-1})\in\C^n\ | \ \sum_{j=1}^{n-1}|z_j|^2<e^{-|z_0|^2}\right\},$$
with the K\"ahler metric $g$ described by the globally defined K\"ahler potential: 
$$
\Phi:=-\log\left(e^{-|z_0|^2}-\sum_{j=1}^{n-1}|z_j|^2\right).
$$
Prove that $(D,g)$ admits a full K\"ahler immersion into $l^2(\mathds{C})$.
\Exercise For $\alpha>0$, consider:
$$D=\left\{(z_0,\dots,z_{n-1})\in\C^n\ | \ \sum_{j=1}^{n-1}|z_j|^2<\frac{\alpha}{|z_0|^2+\alpha}\right\},$$
endowed with the K\"ahler metric $g$ described by the globally defined K\"ahler potential: 
$$
\Phi:=-\log\left(\frac{\alpha}{|z_0|^2+\alpha}-\sum_{j=1}^{n-1}|z_j|^2\right).
$$
Prove that $(D,g)$ admits a full K\"ahler immersion into $\mathds{C}{\rm P}^\infty$.
\Exercise Let:
$$D=\left\{(z_0,\dots,z_{n-1})\in\C^n\ | \ \sum_{j=1}^{n-1}|z_j|^2<\frac{1}{\sqrt{|z_0|^2+1}}\right\},$$
with the K\"ahler metric $g$ described by the globally defined K\"ahler potential: 
$$
\Phi:=-\log\left(\frac{1}{\sqrt{|z_0|^2+1}}-\sum_{j=1}^{n-1}|z_j|^2\right).
$$
Prove that $(D,g)$ does not admit a K\"ahler immersion into any complex space form.
\Exercise Consider a circular bounded domain $\Omega$ of $\mathds{C}^3$ endowed with the metric $g_B$ described in a neighbourhood of the origin by the K\"ahler potential:
$$
\Phi_B=-3\log(1-|z_1|^2-2|z_2|^2-|z_3|^2+|z_1|^2|z_3|^2+|z_2|^4-z_1z_3\bar z_2^2- z_2^2\bar z_1\bar z_3).
$$
Prove that $(\Omega,g_B)$ does not admit a K\"ahler immersion into $l^2(\mathds{C})$.

\Exercise\label{constantell2c}
Theorem \ref{bochnerth} combined with Remark \ref{constantmulti}, implies that if a K\"ahler manifold $(M,g)$ admits a K\"ahler immersion into $l^2(\mathds{C})$, then $(M,cg)$ does into $\mathds{C}{\rm P}^{\infty}$, for any value of $c>0$.
Shows that the converse is true, namely that if K\"ahler manifold $(M,cg)$ admits a local K\"ahler immersion into $\CP^\infty$ for all $c>0$ then $(M,g)$ does into $l^2(\C)$.
\Exercise Prove that if a K\"ahler metric $g$ is projectively induced  the same is true for $kg$, for any positive integer $k$.
\Exercise\label{bochnerHp} Let $(M,g)$ be a K\"ahler manifold and let $f\!:M\rightarrow \CP^N$, $N\leq \infty$, be a K\"ahler immersion. Prove that the Bochner coordinates around a point $p\in M$ can be extended over $M\setminus f^{-1}(H_p)$, where $H_p$ is the hyperplane at infinity with respect to $f(p)$.
\end{ExerciseList}

\chapter{Homogeneous K\"ahler manifolds} \label{simmes}
In this chapter we survey what is known about the existence of K\"ahler immersions of homogeneous K\"ahler manifolds into complex space forms.
Recall that a homogeneous K\"ahler manifold  is a K\"ahler manifold on which the group of holomorphic isometries
$\Aut (M)\cap \isom (M, g)$ acts transitively on $M$ (here $\Aut (M)$ denotes the group of biholomorphisms of $M$). 

In the first two sections we summarize the results of A. J. Di Scala, H. Ishi and A. Loi \cite{ishi} about K\"ahler immersion of homogeneous K\"ahler manifolds into complex Euclidean and hyperbolic spaces. Section \ref{hbdcpn} is devoted to proving that the only homogeneous bounded domains which are projectively induced for all positive multiples of their metrics are given by the product of complex hyperbolic spaces. This result, combined with the solution of J. Dorfmeister and  K. Nakajima \cite{DN88} of the fundamental conjecture on homogeneous K\"ahler manifolds (Theorem \ref{FC}), will be applied in Section \ref{hkmcch} to classify homogeneous K\"ahler manifolds admitting a K\"ahler immersion into $\CH^N$ or $\C^N$, $N\leq \infty$ (Theorem \ref{Flat-Case}).

In the last three sections we consider K\"ahler immersions of homogeneous K\"ahler manifolds into $\mathds{C}{\rm P}^N$, $N\leq \infty$. The general case is discussed in Section \ref{hkmcp}, while in sections \ref{bergmansection} and \ref{symmcp} we detail the case of K\"aher immersions of bounded symmetric domains  into $\CP^\infty$.

\section[A result about \K\ immersions of h.b.d. into $\CP^{\infty}$]{A result about \K\ immersions of homogeneous bounded domains into $\CP^{\infty}$}\label{hbdcpn}
We have already noticed in Section \ref{csfia} that the complex Euclidean space $(\C^n, \lambda g_0)$ and the complex  hyperbolic space $(\CH^n, \lambda g_{hyp})$ both admit a \K\ immersion into $\CP^{\infty}$, for all $\lambda >0$.
In the following theorem we prove that this  fact characterizes these two spaces among all homogeneous bounded domains.
Recall that a {\em homogeneous bounded domain} $(\Omega, g)$  is a  bounded domain  (i.e. a connected open set)  $\Omega\subset\mathds{C}^n$ such that $(\Omega, g)$ is a homogeneous K\"ahler manifold. Recall also that we say that a K\"ahler manifold is {\em projectively induced} when it is $1$-resolvable in the sense of Definition \ref{bresolvabledefin}, i.e. when it does admit a local K\"ahler immersion into $\CP^\infty$.

This  theorem will be one of the key ingredients for the study of \K\ immersions of homogeneous K\"ahler manifolds into finite or infinite dimensional complex space forms.

\begin{theor}[A. J. Di Scala, H. Ishi, A. Loi \cite{ishi}]\label{thmsmall}
Let $(\Omega, g)$ be an $n$-dimensional homogeneous bounded domain.
The metric $\lambda g$ is  projectively induced for all
$\lambda >0$
if and only if:
\begin{equation} \label{eqn:direct_prod}
(\Omega, g) =\left(\CH^{n_1}\times\cdots \times \CH^{n_\rk}, \lambda_1g_{hyp}\oplus\cdots\oplus{\lambda_\rk}g_{hyp}\right),
\end{equation}
where $n_1+\cdots +n_r=n$,  $\lambda_j$, $j=1,\dots , \rk$
are positive real numbers.
\end{theor}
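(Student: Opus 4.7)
The plan is to treat the two implications separately; the \emph{if} direction is constructive while the \emph{only if} direction requires the structure theory of homogeneous bounded domains.

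\emph{The if direction.} Assume $(\Omega, g) = \prod_{j=1}^r (\CH^{n_j}, \lambda_j g_{hyp})$ and fix $\lambda > 0$. Then $\lambda g = \bigoplus_j (\lambda \lambda_j) g_{hyp}$. By Theorem \ref{calabic}(1), each factor $(\CH^{n_j}, \lambda \lambda_j g_{hyp})$ admits a full K\"ahler immersion $f_j \colon \CH^{n_j} \hookrightarrow \CP^\infty$. Composing the product map $(f_1, \ldots, f_r) \colon \Omega \to (\CP^\infty)^r$ with the (infinite-dimensional) Segre embedding $\sigma \colon (\CP^\infty)^r \hookrightarrow \CP^\infty$---whose pullback of the Fubini--Study form is the sum of the factor Fubini--Study forms---yields a K\"ahler immersion of $(\Omega, \lambda g)$ into $\CP^\infty$.

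\emph{The only if direction.} Assume $\lambda g$ is projectively induced for every $\lambda > 0$. First invoke the Vinberg--Gindikin--Pjateckii-Shapiro realization theorem: $\Omega$ is biholomorphic to a homogeneous Siegel domain of type II whose group of holomorphic isometries contains a split solvable subgroup $S$ acting simply transitively, so $(\Omega, g)$ is encoded by a normal $j$-algebra $(\mathfrak{s}, J, \langle\,,\,\rangle)$ with a root space decomposition $\mathfrak{s} = \bigoplus_\alpha \mathfrak{s}_\alpha$. Fix a base point $p \in \Omega$ and Bochner coordinates $z$ at $p$. By Theorem \ref{localcritb}, the hypothesis is equivalent to requiring that the infinite matrix $(b^\lambda_{jk})$ in the expansion
\[
e^{\lambda \dd_p(z)} - 1 \;=\; \sum_{j,k \geq 1} b^\lambda_{jk}\, z^{m_j} \bar z^{m_k}
\]
be positive semidefinite for every $\lambda > 0$.

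The technical heart of the argument is to convert this one-parameter family of analytic positivity conditions into an algebraic splitting of $\mathfrak{s}$ into rank-one pieces. Each $b^\lambda_{jk}$ is a polynomial in $\lambda$ of bounded degree whose coefficients are universal polynomials in the Taylor coefficients of $\dd_p$, hence in the structure constants of $\mathfrak{s}$. Demanding that every principal minor of $(b^\lambda_{jk})$ remain nonnegative on $(0, +\infty)$ produces a cascade of inequalities: extracting the $\lambda \to 0^+$ leading term recovers the positive semidefiniteness of the Taylor matrix of $\dd_p$ itself (so that $g$ is already K\"ahler immersible into some $\C^N$), while extracting successively higher orders in $\lambda$ forces the vanishing of all mixed cross-root contributions in $\dd_p$. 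By the classification of normal $j$-algebras, each rank-one irreducible summand of $\mathfrak{s}$ then corresponds to a complex hyperbolic factor $(\CH^{n_j}, \lambda_j g_{hyp})$, yielding the claimed decomposition.

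The main obstacle is precisely this algebraic--analytic translation. The diastasis of a general homogeneous bounded domain is not available in closed form, so one cannot work with an explicit series: the proof must rely on the normal $j$-algebra description and a choice of Bochner coordinates adapted to the root decomposition, together with a delicate analysis of how the positivity of $(b^\lambda_{jk})$ for all $\lambda > 0$ constrains the brackets and the almost complex structure on $\mathfrak{s}$. The freedom in the positive scalars $\lambda_j$ appearing in the conclusion exactly reflects the rescaling freedom on each hyperbolic factor of $\mathfrak{s}$.
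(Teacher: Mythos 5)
Your \emph{if} direction is fine: Theorem \ref{calabic}(1) on each hyperbolic factor plus the Segre embedding is exactly how the paper (implicitly, via its exercises) handles that implication. The problem is the \emph{only if} direction, where your proposal sets up the correct framework — the split solvable group $S$, the normal $j$-algebra $\gs$ with its root decomposition à la Piatetskii-Shapiro — but then asserts, rather than proves, the one statement that carries all the content of the theorem: that positive semidefiniteness of $(b^\lambda_{jk})$ for every $\lambda>0$ forces the cross-root spaces $\gs_{(\alpha_l-\alpha_k)/2}$ to vanish. Your ``cascade of inequalities'' obtained by expanding in powers of $\lambda$ is never carried out, and it is not at all clear it can be: the diastasis of a general homogeneous bounded domain genuinely contains cross-root terms (rank-$\geq 2$ symmetric domains do, and their Bergman metrics are still projectively induced for suitable $\lambda$), so no finite-order coefficient extraction can kill the cross-root spaces; what must be exploited is the global structure of the \emph{set} of admissible $\lambda$. (Your $\lambda\to 0^+$ observation, that $g$ itself becomes resolvable, is correct — it is Exercise \ref{constantell2c} — but it does not yield the splitting without circularly invoking Theorem \ref{Flat-Case}, which in the paper is deduced \emph{from} the present theorem.)

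The paper closes this gap by a different mechanism: it shows that $\lambda g$ is projectively induced iff the kernel $\tilde\Psi^{\lambda}$ is a reproducing kernel, and then quotes Ishi's explicit parametrization of the generalized Wallach set of a homogeneous bounded domain. That result says the set $W(g)$ of admissible $\lambda$ satisfies $W(g)\cup\{0\}\subset\set{q_k/(2\gamma_k)}{k=1,\dots,r}\cup(c_0,+\infty)$ with $c_0=\max_k q_k/(2\gamma_k)$ and $q_k=\sum_{l>k}\dim\gs_{(\alpha_l-\alpha_k)/2}$. If $W(g)=(0,\infty)$ this forces $c_0=0$, hence every $q_k=0$, hence every cross-root space vanishes and $\gs$ splits into rank-one ideals, each corresponding to a $\CH^{n_k}$. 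Without this (or an equivalent description of when $\tilde\Psi^{\lambda}$ is positive definite), your argument does not go through; reproving it by direct positivity analysis of the Taylor matrix would amount to redoing a substantial piece of the representation theory of Siegel domains.
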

\begin{proof}
First we find a global potential for the homogeneous K\"ahler metric $g$
on the domain $\Omega$ following Dorfmeister \cite{D85}.
By \cite[Theorem 2 (c)]{D85},
there exists a split solvable Lie subgroup
$S \subset \mathrm{Aut}(\Omega, g)$
acting simply transitively on the domain $\Omega$.
Taking a reference point $z_0 \in \Omega$,
we have a diffeomorphism
$S \owns s \overset{\sim}{\mapsto} s \cdot z_0 \in \Omega$,
and
by the differentiation, we get the linear isomorphism
$\gs := \mathrm{Lie}(S) \owns X
\overset{\sim}{\mapsto} X \cdot z_0 \in T_{z_0}\Omega \equiv \C^n$.
Then the evaluation of the K\"ahler form $\omega$ on $T_{z_o}\Omega$
is given by
$\omega(X\cdot z_o, Y \cdot z_0) = \beta([X,Y])\,\,
(X, Y \in \gs)$
with a certain linear form $\beta \in \gs^*$.
Let $j : \gs \to \gs$ be the linear map defined in such a way that
$(jX) \cdot z_0 = \sqrt{-1} (X \cdot z_0)$ for $X \in \gs$.
We have
$\Re g(X \cdot z_0,\,Y \cdot z_0) = \beta([jX, Y])$
for $X, Y \in \gs$,
and the right-hand side defines a positive inner product on $\gs$.
Let $\mathfrak{a}$ be the orthogonal complement of $[\gs, \gs]$ in $\gs$
with respect to the inner product.
Then $\mathfrak{a}$ is a commutative Cartan subalgebra of $\gs$.
Define $\gamma  \in \mathfrak{a}^*$ by
$\gamma(C) := -4 \beta (jC)\,\,\,(C \in \mathfrak{a})$,
where we extended $\gamma$ to $\gs = \mathfrak{a} \oplus [\gs, \gs]$ by the zero-extension.
Keeping the diffeomorphism between $S$ and $\Omega$ in mind,
we define a positive smooth function $\Psi$ on $\Omega$ by:
$$ \Psi((\exp X) \cdot z_0) = e^{-\gamma(X)} \,\,\, (X \in \gs). $$
From the argument in \cite[pages 302--304]{D85},
we see that:
\begin{equation} \label{eqn:globalpotential}
\omega = \frac{i}{2}\partial\bar\partial\log \Psi.
\end{equation}
It is known that
there exists a unique kernel function $\tilde{\Psi} : \Omega \times \Omega \to \C$
such that (1) $\tilde{\Psi}(z,z) = \Psi(z)$ for $z \in \Omega$
and (2) $\tilde{\Psi}(z,w)$ is holomorphic in $z$ and anti-holomorphic in $w$
(cf. \cite[Prop. 4.6]{I99}).
Let us observe that the metric $g$ is projectively induced
if and only if
$\tilde{\Psi}$ is a reproducing kernel of a Hilbert space
of holomorphic functions on $\Omega$.
Indeed,
if $f : \Omega \to \C P^N\,\,(N \le \infty)$ is a K\"ahler immersion
with
$f(z) = [\psi_0(z) : \psi_1(z) : \cdots ]\,\,(z \in \Omega)$
its homogeneous coordinate expression,
then we have
$\omega = \frac{i}{2} \partial\bar\partial\log \sum_{j=0}^{N}|\psi_j|^2$.
Comparing (\ref{eqn:globalpotential}) with it,
we see that
there exists a holomorphic function $\phi$ on $\Omega$ for which
$\Psi = |e^{\phi}|^2 \sum_{j=0}^N |\psi_j|^2$.
By analytic continuation,
we obtain
$\tilde{\Psi}(z,w) = e^{\phi(z)} \overline{e^{\phi(w)}} \sum_{j=0}^N \psi_j(z) \overline{\psi_j(w)}$
for $z, w \in \Omega$.
For any $z_1, \dots, z_m \in \Omega$ and $c_1, \dots, c_m \in \C$,
we have
\begin{align*}
\sum_{p,q=1}^m c_p \bar{c}_q\tilde{\Psi}(z_p,z_q)
&= \sum_{p,q=1}^m c_p \bar{c}_q e^{\phi(z_p)} \overline{e^{\phi(z_q)}} \sum_{j=0}^N \psi_j(z_p) \overline{\psi_j(z_q)} \\
&= \sum_{j=0}^N |\sum_{p=1}^m c_p e^{\phi(z_p)}\psi_j(z_p)|^2 \ge 0.
\end{align*}
Thus the matrix $(\tilde{\Psi}(z_p,z_q))_{p,q} \in \mathrm{Mat}(m,\C)$ is always
a positive Hermitian matrix.
Therefore $\tilde{\Psi}$ is a reproducing kernel of a Hilbert space
(see \cite[p. 344]{Ar50}).

On the other hand,
if $\tilde{\Psi}$ is a reproducing kernel of a Hilbert space
$\mathcal{H} \subset \mathcal{O}(\Omega)$,
then by taking an orthonormal basis $\{\psi_j\}_{j=0}^N$ of $\mathcal{H}$,
we have a K\"ahler immersion $f : M \owns z \mapsto [\psi_0(z) : \psi_1(z) : \cdots] \in \C P^N$
because we have $\Psi(z) = \tilde{\Psi}(z,z) = \sum_{j=0}^N |\psi_j(z)|^2$.
Note that there exists no point $a \in \Omega$ such that $\psi_j(a) = 0$ for all $1 \le j \le N$
since $\Psi(z) = \sum_{j=0}^N |\psi_j(z)|^2$ is always positive.

The condition for $\tilde{\Psi}$ to be a reproducing kernel is described in \cite{I99}.
In order to apply the results,
we need a fine description of the Lie algebra $\gs$ with $j$
due to Piatetskii-Shapiro \cite{PS69}.
Indeed, it is shown in \cite[Ch. 2]{PS69} that
the correspondence between the homogeneous bounded domain $\Omega$ and the structure of $(\gs, j)$
is one-to-one up to natural equivalence.
For a linear form $\alpha$ on the Cartan algebra $\mathfrak{a}$,
we denote by $\gs_{\alpha}$ the root subspace
$\set{X \in \gs}{[C,X] = \alpha(C)X \,\, (\forall C \in \mathfrak{a})}$
of $\gs$.
The number $r := \dim \mathfrak{a}$
is nothing but the rank of $\Omega$.
Thanks to \cite[Ch. 2, Sec. 3]{PS69},
there exists a basis
$\{\alpha_1, \dots, \alpha_r\}$
of $\mathfrak{a}^*$
such that
$\gs = \gs(0) \oplus \gs(1/2) \oplus \gs(1)$ with:
\begin{align*}
\gs(0) &= \mathfrak{a} \oplus
          \dirsum_{1 \le k < l \le r} \gs_{(\alpha_l - \alpha_k)/2}, \quad
\gs(1/2) = \dirsum_{1 \le k \le r} \gs_{\alpha_k /2}, \\
\gs(1) &= \dirsum_{1 \le k \le r} \gs_{\alpha_k}
        \oplus
        \dirsum_{1 \le k < l \le r} \gs_{(\alpha_l + \alpha_k)/2}.
\end{align*}
If $\{A_1, \dots, A_r\}$ is the basis of $\mathfrak{a}$ dual to $\{\alpha_1, \dots, \alpha_r\}$,
then $\gs_{\alpha_k} = \R jA_k$.
Thus $\gs_{\alpha_k}\,\,(k=1, \dots, r)$ is always one dimensional,
whereas other root spaces $\gs_{\alpha_k/2}$ and $\gs_{(\alpha_l \pm \alpha_k)/2}$ may be $\{0\}$.
Since $\{\alpha_1, \dots, \alpha_r\}$ is a basis of $\mathfrak{a}^*$,
the linear form $\gamma \in \mathfrak{a}^*$ defined above can be  written as
$\gamma = \sum_{k=1}^r \gamma_k \alpha_k$
with unique $\gamma_1, \dots, \gamma_r \in \R$.
Since $j A_k \in \gs_{\alpha_k}$,
we have:
\begin{align*}
\gamma_k = \gamma(A_k) = -4 \beta (jA_k) = -4 \beta([A_k, jA_k]) = 4 \beta([jA_k, A_k])
\end{align*}
and the last term equals $4 g (A_k \cdot z_0, A_k \cdot z_0)$.
Thus we get $\gamma_k >0$.

For $\ep = (\ep_1, \dots, \ep_r) \in \{0,1\}^r$,
put
$q_k(\ep) := \sum_{l>k} \ep_l \dim \gs_{(\alpha_l- \alpha_k)/2}
\,\,\,(k=1, \dots, r)$.
Define:
$$
\mathfrak{X}(\ep) := \set{(\sigma_1, \dots, \sigma_r) \in \C^r}
{\begin{aligned}
\sigma_k &> q_k(\ep) /2 \quad (\ep_k = 1)\\
\sigma_k &= q_k(\ep) /2 \quad (\ep_k = 0)
\end{aligned}},
$$
and $\mathfrak{X} := \bigsqcup_{\ep \in \{0,1\}^r} \mathfrak{X}(\ep)$.
By \cite[Theorem 4.8]{I99},
$\tilde{\Psi}$ is a reproducing kernel if and only if
$\underline{\gamma} := (\gamma_1, \dots, \gamma_r)$
belongs to $\mathfrak{X}$.
We denote by $W(g)$ the set of $\lambda >0$ for which $\lambda g$ is projectively induced.
Since
the metric $\lambda g$ corresponds to the parameter $\lambda \underline{\gamma}$,
we see that $\lambda g$ is projectively induced if and only if $\lambda \underline{\gamma} \in \mathfrak{X}$.
Namely we obtain:
$$
W(g) = \set{\lambda >0}{\lambda \underline{\gamma} \in \mathfrak{X}},
$$
and the right-hand side is considered in \cite{I10}.
Put $q_k = \sum_{l>k} \dim \gs_{(\alpha_l - \alpha_k)/2}$ for $k=1, \dots, r$.
Then \cite[Theorem 15]{I10} tells us that:
$$
W(g) \cup \{0\} \subset \set{\frac{q_k}{2\gamma_k}}{k=1, \dots, r} \cup (c_0, +\infty),
$$
where $c_0 := \max \set{\frac{q_k}{2\gamma_k}}{k=1, \dots, r}$.

Now assume that $\lambda g$ is projectively induced for all $\lambda >0$.
Then we have $c_0 = 0$, so that $\dim \gs_{(\alpha_l-\alpha_k)/2} = 0$ for all $1 \le k < l \le r$.
In this case, we see that $\gs$ is a direct sum of ideals
$\gs_k := j \gs_{\alpha_k} \oplus \gs_{\alpha_k/2} \oplus \gs_{\alpha_k}
\,\,\,(k=1, \dots, r),$
which correspond to the hyperbolic spaces $\CH^{n_k}$ with $n_k = 1 + (\dim_{\alpha_k/2})/2$
(\cite[pages 52--53]{PS69}).
Therefore the Lie algebra $\gs$ corresponds to the direct product
$\CH^{n_1} \times \cdots \times \CH^{n_\rk}$,
which is biholomorphic to $\Omega$ because the homogeneous domain
$\Omega$ also corresponds to $\gs$.
Hence (\ref{eqn:direct_prod}) holds and this concludes the proof of the theorem.
\end{proof}

\section[\K\ immersions of h.K.m. into $\C^{N\leq\infty}$ and $\CH^{N\leq\infty}$]{K\"ahler immersions of homogeneous K\"ahler manifolds into  $\C^{N\leq\infty}$ and $\CH^{N\leq\infty}$}\label{hkmcch}

In this section we classify homogeneous K\"ahler manifolds  
which admit a \K\ immersion into  $\C^N$ or $\CH^N$, $N\leq\infty$ (Theorems \ref{Flat-Case} and  \ref{Negative-case} respectively).
For this purpose, we need the following two lemmata (Lemma \ref{parallel} and Lemma \ref{splitting}) and the classification 
of all  the homogeneous K\"ahler manifolds (Theorem \ref{FC}) due to 
J. Dorfmeister and  K. Nakajima \cite{DN88}.

Recall that complete connected totally geodesic submanifolds of $\mathbb{R}^n$ are affine subspaces $p + \mathbb{W}$,
where $p \in \mathbb{R}^n$ and $\mathbb{W} \subset \mathbb{R}^n$ is a vector subspace.
The reader is referred to  \cite{AD03} for the proof of the following result.  

\begin{lem} \label{parallel} Let $G$ be a connected  Lie subgroup of
isometries of the Euclidean space $\mathbb{R}^n$. Let $G.p = p + \mathbb{V}$ and $G.q = q + \mathbb{W}$ be two totally geodesic $G$-orbits.
Then $\mathbb{V} = \mathbb{W}$, i.e. $G.p$ and $G.q$ are parallel affine subspaces of $\mathbb{R}^n$.
\end{lem}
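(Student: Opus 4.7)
The plan is to exploit the semidirect product structure of the Euclidean isometry group, $\mathrm{Isom}(\mathbb{R}^n) = O(n) \ltimes \mathbb{R}^n$, and extract rigidity from the combination ``totally geodesic orbit + orthogonal action''. Write every $g \in G$ as $g(x) = A(g)x + b(g)$ with $A(g) \in O(n)$ and $b(g) \in \mathbb{R}^n$, so $A : G \to O(n)$ is a Lie group homomorphism. The first routine observation is that $G$-invariance of the affine subspace $G\cdot p = p + \mathbb{V}$ forces $A(g)\mathbb{V} = \mathbb{V}$ for every $g \in G$: indeed, $g(p+v) = g(p) + A(g)v$ must lie in $p+\mathbb{V}$ for every $v \in \mathbb{V}$, and since $g(p) \in p+\mathbb{V}$, we obtain $A(g)v \in \mathbb{V}$; applying this to $g^{-1}$ gives equality. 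Of course the same holds for $\mathbb{W}$ and the orbit $G\cdot q$.

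Next, I would normalise by translating so that $q = 0$, which does not affect the \emph{linear} part $A(g)$. Then $G \cdot q = \mathbb{W}$ is a \emph{linear} subspace and $b(g) = g(0) \in \mathbb{W}$ for all $g$. Since $A(g)$ preserves $\mathbb{W}$ and is orthogonal, it also preserves $\mathbb{W}^\perp$, giving an orthogonal decomposition $\mathbb{R}^n = \mathbb{W} \oplus \mathbb{W}^\perp$ respected by the whole action: writing $p = p_\parallel + p_\perp$ accordingly,
$$
g(p) = \bigl(A(g)p_\parallel + b(g)\bigr) + A(g)p_\perp,
$$
so the orthogonal projection of $G\cdot p$ onto $\mathbb{W}^\perp$ equals $\{A(g)p_\perp : g \in G\}$.

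The crux of the argument is now a rigidity step. On the one hand, $G\cdot p = p + \mathbb{V}$ is an affine subspace of $\mathbb{R}^n$, so its orthogonal projection onto $\mathbb{W}^\perp$ is an affine subspace of $\mathbb{W}^\perp$. On the other hand, since each $A(g)|_{\mathbb{W}^\perp}$ is orthogonal, every element of $\{A(g)p_\perp : g \in G\}$ has norm $\|p_\perp\|$, i.e.\ the projected set lies in a sphere. An affine subspace contained in a sphere is a single point, so $A(g)p_\perp = p_\perp$ for all $g \in G$ and therefore $G\cdot p \subseteq p_\perp + \mathbb{W}$. Reading off directions, $\mathbb{V} \subseteq \mathbb{W}$. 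Swapping the roles of $p$ and $q$ (translating so that $p = 0$ and running the symmetric argument) yields $\mathbb{W} \subseteq \mathbb{V}$, and the lemma follows.

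The main obstacle, if any, is the small check that the projection $\{A(g)p_\perp : g\in G\}$ really is equal to the orthogonal projection of $G\cdot p$ onto $\mathbb{W}^\perp$ (it is, because $A(g)p_\perp \in \mathbb{W}^\perp$ while $A(g)p_\parallel + b(g) \in \mathbb{W}$), together with the two soft lemmas used as black boxes: that the orthogonal projection of an affine subspace onto a linear subspace is affine, and that a non-trivial affine subspace cannot be contained in a round sphere. Both are elementary and require no analytic input; everything else is bookkeeping with the semidirect product, so the proof is really driven by this one geometric observation.
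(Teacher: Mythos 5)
Your proof is correct. The paper does not actually reproduce an argument for this lemma---it defers to \cite{AD03}---so there is no in-text proof to compare against, but your argument is complete and self-contained. The two facts you use as black boxes are indeed the only nontrivial inputs, and both hold: orthogonal projection onto $\mathbb{W}^\perp$ is a linear map, so it carries the affine subspace $p+\mathbb{V}$ to an affine subspace of $\mathbb{W}^\perp$; and a nonempty affine subspace of positive dimension contains a full line, hence is unbounded and cannot sit inside the bounded sphere of radius $\|p_\perp\|$. The semidirect-product bookkeeping is also right: conjugating by the translation taking $q$ to $0$ leaves the linear parts $A(g)$ unchanged and makes $b(g)=g(0)\in\mathbb{W}$, while $A(g)$ preserves $\mathbb{W}$ (by your invariance argument applied to the orbit of $q$) and hence $\mathbb{W}^\perp$, so the $\mathbb{W}^\perp$-component of $g(p)$ is exactly $A(g)p_\perp$, which has constant norm. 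One small remark: the hypothesis that the orbits are totally geodesic enters only through the fact, recorded in the paper immediately before the lemma, that complete totally geodesic submanifolds of $\mathbb{R}^n$ are affine subspaces---which is precisely how you use it---and the connectedness of $G$ plays no role once the orbits are given in the form $p+\mathbb{V}$, $q+\mathbb{W}$.
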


Notice that if two K\"ahler manifolds $(M_1,g_1)$ and $(M_2,g_2)$ admit K\"ahler immersions, say $f_1$ and $f_2$, into $\C^{N_1}$ and $\C^{N_2}$ respectivley, $N_j\leq\infty, j=1, 2$, then the K\"ahler manifold $(M_1\times M_2,g_1\oplus g_2)$ admits a K\"ahler immersion into $\C^N$, $N=N_1+N_2$ obtained by mapping $(z_1,z_2)\in M_1\times M_2$ to $(c_1f_1(z_1),c_2f_2(z_2))\in \C^N$, for suitable constants $c_1$ and $c_2$. The converse is also true:

\begin{lem}\label{splitting}
A K\"ahler immersion $f\!: M_1\times M_2\f \C^N$, $N\leq\infty$, from the product $M_1\times M_2$ of two K\"ahler manifolds is a product, i.e. up to unitary transformation of $\C^N$ $f(p,q)=(f_1(p),f_2(q))$, where $f_1\!: M_1\f\C^{N_1}$ and $f_2\!: M_2\f\C^{N_2}$, $N=N_1+N_2$, are K\"ahler immersions.
\end{lem}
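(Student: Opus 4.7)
The plan is to combine Calabi's characterization of the diastasis (Theorem \ref{chardiast}) with the additivity of the diastasis on products. First, I would verify that for $(p_0,q_0)\in M_1\times M_2$ the diastasis splits as
\[
\dd^{M_1\times M_2}_{(p_0,q_0)}(z,w)=\dd^{M_1}_{p_0}(z)+\dd^{M_2}_{q_0}(w).
\]
Indeed, $\dd^{M_1}_{p_0}+\dd^{M_2}_{q_0}$ is a K\"ahler potential for the product metric whose power expansion in the product coordinates $(z,w)$ contains no monomials that are purely holomorphic or purely anti-holomorphic; by Theorem \ref{chardiast} it must therefore be the diastasis. After translating so that $f(p_0,q_0)=0$, Proposition \ref{induceddiast} gives
\[
\sum_{j=1}^{N}|f_j(z,w)|^2=\dd^{M_1}_{p_0}(z)+\dd^{M_2}_{q_0}(w).
\]

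Next, I would decompose each holomorphic component as $f_j(z,w)=A_j(z)+B_j(w)+R_j(z,w)$, where $A_j(z):=f_j(z,0)$, $B_j(w):=f_j(0,w)$, and $R_j$ contains only monomials involving some $z_\alpha$ \emph{and} some $w_\beta$ (the centering forces $f_j(0,0)=0$). Expanding $|A_j+B_j+R_j|^2$ and matching Taylor coefficients in the four groups of variables $(z,\bar z,w,\bar w)$, I would compare monomials by their type, i.e.\ by which of these four groups of variables actually appear. The right-hand side of the identity above contains only the pure types $z^{\alpha_1}\bar z^{\alpha_2}$ or $w^{\beta_1}\bar w^{\beta_2}$, while the expansion of the left-hand side produces several cross types that must therefore cancel. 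Monomials of type $z^\alpha\bar w^{\beta'}$ arise only from $\sum_j A_j\overline{B_j}$, which forces the closed $\C$-linear span $V_1$ of the coefficient vectors $\{(a^j_\alpha)_j\}$ of the $A_j$'s to be orthogonal in $\C^N$ to the analogous span $V_2$ of the coefficient vectors of the $B_j$'s. The types $z^\alpha\bar z^{\alpha''}\bar w^{\beta''}$ and $w^\beta\bar z^{\alpha''}\bar w^{\beta''}$ arise only from $\sum_j A_j\bar R_j$ and $\sum_j B_j\bar R_j$ respectively, forcing every coefficient vector of the $R_j$'s to be orthogonal to both $V_1$ and $V_2$. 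Finally the fully mixed type $z^{\alpha_1}\bar z^{\alpha_2}w^{\beta_1}\bar w^{\beta_2}$ with all four indices nontrivial arises only from $\sum_j|R_j|^2$; hence $\sum_j|R_j|^2\equiv 0$, and so $R_j\equiv 0$ for every $j$.

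Having $R_j\equiv 0$ together with $V_1\perp V_2$, I would choose a unitary transformation of $\C^N$ sending $V_1$ onto the first $N_1:=\dim V_1$ coordinates and $V_2$ onto the next $N_2:=\dim V_2$; under it $f(z,w)$ becomes $(f_1(z),f_2(w))$ with $f_i$ a K\"ahler immersion of $M_i$ into $\C^{N_i}$. If one does not wish to assume $f(p_0,q_0)=0$ at the outset, the whole argument is applied to the translate $\tilde f:=f-f(p_0,q_0)$; the resulting decomposition of $U\tilde f$ is then shifted back by $Uf(p_0,q_0)$, whose two blocks are absorbed into the new $f_1$ and $f_2$, so that no translation of $\C^N$ is required. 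The only real obstacle will be the systematic bookkeeping of which monomial types in $(z,\bar z,w,\bar w)$ are produced by which cross products; convergence of all series manipulated is automatic from the uniform bound $\sum_j|f_j|^2<R$ in Definition \ref{lkiflat}, so no analytic difficulty arises.
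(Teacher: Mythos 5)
Your proof is correct, but it takes a genuinely different route from the one in the text. The book's proof is extrinsic and Riemannian: it invokes Moore's splitting theorem for isometric immersions of Riemannian products (via \cite{moore} and \cite[Lemma 2.5]{discala}), which reduces the statement to showing that the second fundamental form satisfies $\alpha(TM_1,TM_2)\equiv 0$, and this in turn follows from the Gauss equation $-\langle R_{X,JX}JY,Y\rangle=2\|\alpha(X,Y)\|^2$ together with the vanishing of the mixed holomorphic bisectional curvature of a product metric. Your argument instead stays entirely inside Calabi's formalism: additivity of the diastasis on products (correctly justified via Theorem \ref{chardiast}), the pull-back property of Proposition \ref{induceddiast}, and a type-by-type matching of Taylor coefficients in $(z,\bar z,w,\bar w)$; your bookkeeping of which cross products produce which monomial types is accurate, and $\sum_j|R_j|^2\equiv 0$ indeed kills the mixed part while $\sum_j A_j\overline{B_j}\equiv 0$ gives the orthogonality $V_1\perp V_2$ needed to produce the unitary. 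What your approach buys is self-containedness (no appeal to Moore) and an explicit description of the splitting unitary; what the paper's approach buys is brevity and an automatic local-to-global passage, since Moore's theorem is global. Two small points you should make explicit: your coefficient argument is local around $(p_0,q_0)$, so to get the global identity $f_j(z,w)=f_j(z,q_0)+f_j(p_0,w)-f_j(p_0,q_0)$ you need the identity principle on the connected manifold $M_1\times M_2$; and the equality $N=N_1+N_2$ only holds when $f$ is full (otherwise $V_1\oplus V_2$ is a proper subspace of $\C^N$), a point the statement and the paper's proof are equally loose about.
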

\begin{proof}
Let $\alpha(X,Y)$ be the second fundamental form of the K\"ahler map $f$. In order to show that $f$ is a product it is enough to prove that $\alpha(TM_1,TM_2)\equiv0$, see \cite{moore} and \cite[Lemma 2.5]{discala}. The Gauss equation implies the following equation for the holomorphic bisectional curvature of $M_1\times M_2$, see \cite[Prop. 9.2, pp. 176]{kono}:
\begin{equation}
-<R_{X,JX}JY,Y>=2||\alpha(X,Y)||^2,\nonumber
\end{equation}
where $R$ is the curvature tensor of $M_1\times M_2$. Thus, if $X\in TM_1$ and $Y\in TM_2$, we get that $\alpha(X,Y)=0$.
\end{proof}

\begin{theor}[J. Dorfmeister, K. Nakajima, \cite{DN88}]\label{FC}
A homogeneous K\"ahler manifold $(M, g)$  is the total space of a holomorphic
fiber bundle over a homogeneous bounded domain $(\Omega, g)$ in which the fiber ${\mathcal F} ={\mathcal E} \times {\mathcal C}$
is (with the
induced K\"ahler metric) the  \K\ product of a flat homogeneous K\"ahler manifold ${\mathcal E}$ and a
compact simply-connected homogeneous K\"ahler manifold ${\mathcal C}$.
\end{theor}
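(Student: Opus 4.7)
The plan is to exploit the Lie-theoretic structure of a transitive group of holomorphic isometries, following the program of Piatetski-Shapiro \cite{PS69}, Vinberg-Gindikin-Piatetski-Shapiro, and Dorfmeister-Nakajima. Writing $M = G/K$ with $G$ the identity component of $\Aut(M) \cap \isom(M,g)$ and $K$ the (compact) isotropy at a reference point $z_0$, I would begin from the Levi decomposition $\liea = \mathrm{Rad}(\liea) \oplus \gs$ of the Lie algebra into solvable radical and semisimple Levi factor, and decompose $\gs = \gs_c \oplus \gs_{nc}$ into compact and non-compact semisimple ideals. The non-compact semisimple part together with a carefully selected piece of the radical will end up acting on the base, while $\gs_c$ will contribute the compact fiber and a central abelian subalgebra will contribute the flat fiber.

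Next I would identify a closed normal subgroup $N \supset K$ of $G$ whose quotient $G/N$ is to play the role of the base $\Omega$. The natural candidate is generated by $K$ together with $\gs_c$ and the subspace of $\mathrm{Rad}(\liea)$ on which the Koszul form $\beta \in \liea^*$ (the one appearing in the proof of Theorem \ref{thmsmall}) vanishes, so that the complementary generators in $\mathrm{Rad}(\liea) \oplus \gs_{nc}$ are exactly those with strictly positive weights under the adjoint action on a Cartan subalgebra $\mathfrak{a}$. The critical analytic step is to show that $G/N$ is biholomorphic to a homogeneous bounded domain: one endows it with the induced $G$-invariant K\"ahler form, verifies that the corresponding normal $j$-algebra structure on the quotient satisfies the Piatetski-Shapiro axioms, and then invokes the Vinberg-Gindikin-Piatetski-Shapiro realization as a homogeneous Siegel domain of the second kind.

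For the fiber $\mathcal{F} = N/K$, I would show that it splits as a K\"ahler product $\mathcal{E} \times \mathcal{C}$ along the decomposition coming from $N$'s Lie algebra. The splitting itself follows from an argument parallel to Lemma \ref{splitting}: the second fundamental form of $\mathcal{F} \hookrightarrow M$ vanishes on mixed pairs of tangent vectors because the Gauss equation forces the holomorphic bisectional curvature between the two factors of $\mathcal{F}$ to vanish, and one then invokes the de Rham-type theorem for K\"ahler factors. The compact simply connected factor $\mathcal{C}$ is identified with a generalized flag manifold $G_c/T$ via Borel's theorem on compact homogeneous K\"ahler manifolds, while $\mathcal{E}$ arises from a central abelian subalgebra of $\mathrm{Rad}(\liea)$ on which $J$ preserves the tangent space and gives a flat homogeneous structure ($\C^m$ or a complex torus). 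Holomorphy of the projection $M \to G/N$ is checked by verifying that the horizontal distribution $\mathrm{Ad}(N)$-orthogonal to the $N$-orbits is $J$-invariant, which reduces to a compatibility check between $j$ and the weight-space decomposition.

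The hard part is the base step: proving that $G/N$ is genuinely biholomorphic to a bounded domain rather than merely a homogeneous K\"ahler manifold with positive Koszul form. It is easy to produce a candidate $N$ and a positive invariant $(1,1)$-form on the quotient, but to rule out pathological solvable pieces of $\mathrm{Rad}(\liea)$ that would yield a non-bounded quotient one needs the full normal $j$-algebra classification of Piatetski-Shapiro together with the inductive structure exploited in \cite{DN88}. This is the technical heart of the fundamental conjecture; everything else — the Levi decomposition, the identification of $\mathcal{C}$ and $\mathcal{E}$, the bundle structure — is comparatively formal once the base has been correctly produced.
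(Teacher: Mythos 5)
The paper does not prove this statement at all: it is quoted verbatim as the Dorfmeister--Nakajima solution of the fundamental conjecture, with the reader sent to \cite{DN88}. So there is no in-paper argument to compare against, and the only question is whether your outline amounts to a proof. It does not. You yourself concede that the decisive step --- showing that the candidate quotient $G/N$ is biholomorphic to a bounded domain, i.e. ruling out pathological solvable pieces of the radical --- requires ``the inductive structure exploited in \cite{DN88}.'' Since that step \emph{is} the content of the theorem being proved, invoking \cite{DN88} there makes the proposal circular; what remains is a plausible road map (Levi decomposition, Koszul form, normal $j$-algebras, Vinberg--Gindikin--Piatetski-Shapiro realization) rather than an argument.

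There is also a concrete error in the fiber-splitting step. You propose to split $\mathcal{F}=N/K$ as $\mathcal{E}\times\mathcal{C}$ ``by an argument parallel to Lemma \ref{splitting}'': the Gauss equation forcing the second fundamental form to vanish on mixed pairs. But Lemma \ref{splitting} runs in the opposite direction --- it \emph{assumes} the submanifold is a Riemannian product (so that the mixed holomorphic bisectional curvature vanishes) and that the ambient space is flat, and then concludes that the immersion splits. Here you want to \emph{establish} that $\mathcal{F}$ is a product, and the ambient $M$ is not flat, so neither hypothesis of that mechanism is available; the Gauss equation gives you no control on $\alpha(T\mathcal{E},T\mathcal{C})$ without already knowing the curvature identity you are trying to derive. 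In \cite{DN88} the decomposition of the fiber is obtained algebraically from the structure of the Lie algebra (the compact part via Borel's theorem, the flat part from an abelian $J$-invariant ideal), not from a submanifold-geometry argument.
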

A flat homogeneous K\"ahler manifold is the \K\ product of the quotients of the complex Euclidean spaces  with the flat metric. Examples of such manifold are in the compact case the flat  complex tori (see  Example \ref{cstar} for a noncompact and non simply-connected example).

\begin{theor}[A. J. Di Scala, H. Ishi, A. Loi, \cite{ishi}]\label{Flat-Case}
Let  $(M, g)$ be an $n$-dimensional homogeneous K\"ahler manifold.
\begin{itemize}
\item [(a)]
If $(M, g)$  can be \K\ immersed into $\C ^N$, $N<\infty$,  then
$(M, g)=\C^n$;
\item [(b)]
if $(M, g)$  can be \K\ immersed into $l^2(\C)$,  then
$(M,  g)$  equals:
$$\C ^k\times (\CH^{n_1},\lambda_1g_{hyp}) \times\cdots \times (\CH^{n_\rk},\lambda_\rk g_{hyp}),$$
where $k+n_1+\cdots +n_\rk=n$, $\lambda_j>0$, $j=1,\dots , \rk$.
\end{itemize}
Moreover, in case (a) (resp. case (b)) the immersion is given, up to a unitary transformation of $\C ^N$ (resp. $l^2 (\C))$,  by the linear inclusion $\C^n\hookrightarrow \C^N$
(resp. by
$(f_0, f_1,\dots ,f_\rk)$,
where $f_0$ is the  linear inclusion
$\C ^k\hookrightarrow l^2(\C)$
and   each $f_j:\CH^{n_j}\rightarrow l^2({\C})$
is
$\sqrt{\lambda_j}$ times
the map \eqref{chinell}).
\end{theor}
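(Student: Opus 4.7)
The plan is to combine the Dorfmeister--Nakajima fundamental conjecture (Theorem \ref{FC}) with Bochner's Theorem \ref{bochnerth}, Theorem \ref{thmsmall} on projectively induced homogeneous bounded domains, and the splitting Lemma \ref{splitting}. The first move is to invoke Theorem \ref{FC}, realizing $(M,g)$ as a holomorphic fiber bundle over a homogeneous bounded domain $(\Omega, g_\Omega)$ with fiber $\mathcal{E}\times\mathcal{C}$, the K\"ahler product of a flat $\mathcal{E}$ and a compact simply-connected $\mathcal{C}$. Two fiber reductions follow at once: the maximum principle forces any holomorphic map of a compact complex manifold into $\C^N$ or $l^2(\C)$ to be constant, so restricting $f$ to a single fiber collapses $\mathcal{C}$ to a point; similarly, if $\mathcal{E} = \C^k/\Gamma$ carried any non-trivial lattice $\Gamma$, a periodic direction would be sent to a constant, forcing $\mathcal{E} = \C^k$.

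The next step is to promote the abstract fibration to a genuine K\"ahler product $M \cong \C^k \times \Omega$. I expect this to be the main obstacle. After $\mathcal{C}$ has been killed and $\mathcal{E}$ has been identified with $\C^k$, the fiber is a complete, flat, simply connected K\"ahler manifold, and I would try to exhibit $\C^k$ as a de Rham factor of $M$ by inspecting the Piatetskii-Shapiro data $(\mathfrak{s}, j)$ underlying Theorem \ref{FC} --- the flat directions correspond to a Lie-algebra ideal orthogonal to those of $\Omega$ --- and combining this with the existence of the global K\"ahler immersion $f$. Once $M = \C^k \times \Omega$ as K\"ahler manifolds, Lemma \ref{splitting} applies iteratively and forces $f$ to split as a product along the K\"ahler factors.

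I would then pin down $\Omega$ via a scaling trick. By Remark \ref{constantmulti} the rescaled map $\sqrt{\lambda}\,f$ is a K\"ahler immersion of $(M, \lambda g)$ into $l^2(\C)$ for every $\lambda > 0$, hence of $(M, \lambda g)$ into $\CP^\infty$ for every $\lambda > 0$ by Theorem \ref{bochnerth}. Restricted along the product splitting to the $\Omega$-factor, this gives a K\"ahler immersion of $(\Omega, \lambda g_\Omega)$ into $\CP^\infty$ for every $\lambda > 0$, and Theorem \ref{thmsmall} then forces
\begin{equation*}
(\Omega, g_\Omega) = (\CH^{n_1}\times\cdots\times\CH^{n_r},\ \lambda_1 g_{hyp}\oplus\cdots\oplus\lambda_r g_{hyp}).
\end{equation*}
In case (a) the image of $f$ lies in a finite-dimensional $\C^N$, and Theorem \ref{calabic} rules out any $\CH^{n_j}$ factor (each would demand an infinite-dimensional target), so $r=0$ and $M=\C^n$.

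To conclude, I would use Calabi's rigidity (Theorem \ref{local rigidity}) to identify each component of the split immersion: $f_0\colon \C^k\hookrightarrow \C^N\subset l^2(\C)$ must be, up to a unitary rigid motion of the ambient, the standard linear inclusion, and each $f_j\colon (\CH^{n_j}, \lambda_j g_{hyp}) \to l^2(\C)$ must be $\sqrt{\lambda_j}$ times the canonical Calabi map of \eqref{chinell}. Together with the product decomposition of $f$ this produces the explicit immersion described in the statement.
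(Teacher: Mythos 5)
Your outline matches the paper's strategy at the beginning and the end (Theorem \ref{FC} plus the maximum principle to kill $\mathcal{C}$, the rescaling trick via Remark \ref{constantmulti} and Theorem \ref{bochnerth} feeding into Theorem \ref{thmsmall}, Theorem \ref{calabic} to dispose of case (a), and Calabi rigidity plus Lemma \ref{splitting} for the explicit form of the immersion). But the step you yourself flag as ``the main obstacle'' --- promoting the Dorfmeister--Nakajima fibration to a genuine K\"ahler product $M=\C^k\times\Omega$ --- is left as a hope rather than an argument, and the route you sketch (reading off a de Rham factor from the Piatetskii--Shapiro data $(\gs,j)$) is not what the paper does and is not obviously workable: the fundamental conjecture only gives a holomorphic fiber bundle, and whether the flat fibers split off metrically is exactly the point at issue. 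The paper's argument here uses the immersion $f$ itself in an essential way: writing $M=G/K$ and letting $L\subset G$ have the fibers as orbits, Calabi's rigidity gives a group morphism $\rho:G\rightarrow \mathrm{U}(\C^N)\ltimes\C^N$ with $f(g\cdot x)=\rho(g)f(x)$, each fiber image $f(F_p)$ is an affine subspace (being a flat K\"ahler submanifold of $\C^N$, again by rigidity), and then Lemma \ref{parallel} --- applied to the $\rho(L)$-action on the finite-dimensional affine span $W_{p,q}$ of two fiber images --- forces $f(F_p)$ and $f(F_q)$ to be parallel for all $p,q$, whence $f(M)$ is a K\"ahler product. You never invoke Lemma \ref{parallel}, which is one of the two stated prerequisites for this theorem, so this key idea is missing from your proposal.

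Two smaller remarks. First, your claim that a periodic direction of $\mathcal{E}=\C^k/\Gamma$ ``would be sent to a constant'' is not correct as stated; the right argument is the one of Example \ref{cstar} and Exercise \ref{simplyflat}: the lift of such an immersion to the universal cover $\C^k$ would be linear by Calabi rigidity, hence injective, contradicting the non-injectivity of the covering map. Second, once the product structure of $M$ and of $f$ is established, the rest of your argument (rescaling, Theorem \ref{thmsmall}, the finite-dimensional exclusion, and the identification of the factors via Theorem \ref{local rigidity}) is correct and coincides with the paper's.
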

\begin{proof}
Assume that there exists a K\"ahler immersion $f: M \rightarrow \C^N$. By Theorem \ref{FC}
and by the fact that a homogeneous bounded domain is contractible
we get that  $M = \mathbb{C}^k\times \Omega$ as a complex manifold since,
by the maximum principle, the fiber ${\mathcal F}$ cannot contain a compact manifold. Let $M = G/K$ be the homogeneous realization  of $M$ (so the metric
$g$ is $G$-invariant).
It follows  again by Theorem \ref{FC} that there exists $L \subset G$ such that the $L$-orbits are the fibers of the fibration $\pi: M = G/K \rightarrow \Omega = G/L $. Let $F_p, F_q$ be the fibers over $p,q \in \Omega$. We claim that $f(F_p)$ and $f(F_q)$ are parallel affine subspaces of $\C^N$.
Indeed, by Calabi's rigidity  $f(F_p)$ and $f(F_q)$ are affine subspaces of $\C^N$ since both $F_p$ and $F_q$ are flat \K\ manifolds  of $\mathbb{C}^n$.
Moreover, Calabi rigidity theorem implies the existence of a  morphism of groups $\rho: G \rightarrow Iso_{\mathbb{C}}(\C^N) = \mathrm{U}(\C^N) \ltimes \C^N$ such that
$f(g\cdot x) = \rho(g)f(x)$
for all $g \in G, x \in M$.
Let $W_{p,q}$ be the affine subspace generated by $f(F_p)$ and $f(F_q)$. Since both $f(F_p)$ and $f(F_q)$ are $\rho(L)$-invariant it follows that $W_{p,q}$ is also $\rho(L)$-invariant. Indeed, for any $g \in L$ the isometry $\rho(g)$ is an affine map and so must preserve the affine space generated by $f(F_p)$ and $f(F_q)$. Observe that $W_{p,q}$ is a finite dimensional complex Euclidean space,  $\rho(L)$ acts on $W_{p,q}$
and $f(F_p)$ and $f(F_q)$ are  two complex totally geodesic orbits in $W_{p,q}$. Then,  by  Lemma \ref{parallel}, we get that $f(F_p)$ and $f(F_q)$ are parallel affine subspaces of $W_{p,q}$ and hence  of  $\C^N$. Since $p,q \in \Omega$ are two arbitrary points it follows that $f(M)$ is a K\"ahler product. Thus $M = \mathbb{C}^{k}\times \Omega$ is a K\"ahler product of  homogeneous K\"ahler manifolds. Using again the fact that $M$ can be \K\ immersed into $\C^N$ it follows that the homogeneous bounded domain $\Omega$ can be \K\ immersed into $\C^N$.
If one denotes by $\varphi$ this immersion  and by
$g_{\Omega}$ the homogeneous
\K\ metric of $\Omega$,
it follows that  the map $\sqrt{\lambda}\varphi$ is a \K\ immersion of $(\Omega, \lambda g_{\Omega})$ into $\C^N$ (cfr. Remark \ref{constantmulti}). Therefore, by Lemma 
\ref{bochnerth}, $\lambda g_{\Omega}$ is projectively  induced for all $\lambda >0$ and
Theorem \ref{thmsmall} yields:
$$(M, g) = \C ^k\times (\CH^{n_1},\lambda_1g_{hyp}) \times\cdots \times (\CH^{n_\rk},\lambda_\rk g_{hyp}),$$
where $k+n_1+\cdots +n_\rk=n$ and $\lambda_j$, $j=1,\dots , \rk$
are positive real numbers.
If the dimension $N$ of the ambient space $\C^N$ is finite then
$M =\C^n$ since there cannot exist a \K\ immersion of
$(\C H^{n_j}, \lambda_j g_{hyp})$
into $\C^N$, $N<\infty$ (see Theorem \ref{calabic}) and this proves (a).
The last part of Theorem \ref{Flat-Case}
is a consequence of Calabi's Rigidity Theorem \ref{local rigidity} together with Lemma \ref{splitting}.
\end{proof}

\begin{theor}[A. J. Di Scala, H. Ishi, A. Loi, \cite{ishi}]\label{Negative-case}
Let  $(M, g)$  be an $n$-dimensional  homogeneous K\"ahler manifold.
Then if $(M, g)$  can be \K\ immersed into $\CH^N$, $N\leq\infty$,  then
$(M, g)=\CH^n$
and the immersion is given,
up to  a unitary transformation of $\CH^N$,
by the linear inclusion $\CH^n\hookrightarrow \CH^N$.
\end{theor}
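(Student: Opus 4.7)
The plan is to reduce to the flat-case classification via Theorem \ref{bochnerthalt}, and then rule out every factor other than a single copy of $\CH^n$ by exhibiting explicit negative diagonal entries that obstruct $-1$-resolvability. Since $\CH^N \subseteq \CH^\infty$, composing the given immersion $(M,g) \hookrightarrow \CH^N$ with the inclusion $\CH^N \hookrightarrow \CH^\infty$ and invoking Theorem \ref{bochnerthalt} yields a K\"ahler immersion $(M,g) \hookrightarrow l^2(\C)$. Theorem \ref{Flat-Case}(b) then forces
$$(M,g) = \C^k \times (\CH^{n_1}, \lambda_1 g_{hyp}) \times \cdots \times (\CH^{n_r}, \lambda_r g_{hyp}),$$
with $k + n_1 + \cdots + n_r = n$ and each $\lambda_j > 0$.

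The decisive step is to prove $k = 0$ and $r = 1$. By Theorem \ref{localcritb}, the existence of a K\"ahler immersion into $\CH^N$ forces $(M,g)$ to be $-1$-resolvable, so the coefficient matrix $(c_{jk})$ from \eqref{powexdiastch} is positive semi-definite and, in particular, has nonnegative diagonal entries. Additivity of the diastasis under K\"ahler products yields
$$\dd_0(z) = \|x\|^2 - \sum_{j=1}^r \lambda_j \log\!\bigl(1-\|z^{(j)}\|^2\bigr), \quad 1 - e^{-\dd_0(z)} = 1 - e^{-\|x\|^2}\prod_{j=1}^r \bigl(1-\|z^{(j)}\|^2\bigr)^{\lambda_j},$$
where $x \in \C^k$ and $z^{(j)} \in \CH^{n_j}$. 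Expanding the right-hand side via the exponential and generalized-binomial series, the coefficient of $|x_1|^4$ comes out to $-\tfrac{1}{2}$ whenever $k \geq 1$, while the coefficient of $|z_1^{(1)}|^2\,|z_1^{(2)}|^2$ comes out to $-\lambda_1\lambda_2$ whenever $r \geq 2$. Both of these monomials have the form $z^{m_j}\bar z^{m_j}$ for a suitable multi-index $m_j$, so each contributes a negative diagonal entry to $(c_{jk})$, violating positive semi-definiteness. Hence $k = 0$ and $r = 1$, and $(M,g) = (\CH^n, \lambda g_{hyp})$ for some $\lambda > 0$.

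It then remains to identify the scaling and the form of the immersion. Theorem \ref{calabic}(1) gives $\lambda \leq 1$, and a direct comparison of the curvature parameters via Theorem \ref{immspaceform} pins down $\lambda = 1$, so $(M,g) = \CH^n$. Calabi's rigidity Theorem \ref{local rigidityb} then identifies the immersion, up to a unitary transformation of $\CH^N$, with the linear inclusion $\CH^n \hookrightarrow \CH^N$. The main technical obstacle I anticipate is the middle-paragraph computation: one must verify both that the proposed monomials really sit on the diagonal of $(c_{jk})$ (which is immediate from the multi-degree structure of the product expansion) and that no higher-order term can cancel the offending contribution, which is clear since each such monomial admits a unique decomposition across the independent factors of the product.
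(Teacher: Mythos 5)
Your overall architecture matches the paper's: reduce to $l^2(\C)$ via Theorem \ref{bochnerthalt}, apply Theorem \ref{Flat-Case}(b) to get the product decomposition $\C^k\times(\CH^{n_1},\lambda_1 g_{hyp})\times\cdots\times(\CH^{n_r},\lambda_r g_{hyp})$, and then kill all but one hyperbolic factor. Where you genuinely diverge is in the last step: the paper disposes of the flat factor by citing Theorem \ref{calabic} and disposes of products by invoking the external result \cite[Theorem 2.11]{alek} (no K\"ahler immersion of a product $M_1\times M_2$ into $\CH^N$), whereas you prove both exclusions from scratch by exhibiting negative diagonal entries of the matrix $(c_{jk})$ of \eqref{powexdiastch}. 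Your computations are correct: the diastasis of the product at the origin is the sum of the factors' diastases (by Theorem \ref{chardiast}), so $1-e^{-\dd_0}=1-e^{-\|x\|^2}\prod_j(1-\|z^{(j)}\|^2)^{\lambda_j}$; the coefficient of $x_1^2\bar x_1^2$ is $-\tfrac12$ when $k\geq1$ and that of $z_1^{(1)}z_1^{(2)}\overline{z_1^{(1)}z_1^{(2)}}$ is $-\lambda_1\lambda_2$ when $r\geq2$, both diagonal entries since the holomorphic and antiholomorphic multi-indices coincide, and no cancellation is possible because the variables of distinct factors are disjoint. This contradicts the positive semidefiniteness required by Theorem \ref{localcritb}, so $k=0$, $r=1$. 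Your route is more self-contained (it reproves the relevant special case of the Alekseevsky--Di Scala product theorem rather than citing it) at the cost of a short explicit expansion; the paper's route is shorter but imports an external black box.

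One caveat on your final normalization step: the claim that ``a direct comparison of the curvature parameters via Theorem \ref{immspaceform} pins down $\lambda=1$'' is not right. Theorem \ref{immspaceform} (equivalently Theorem \ref{calabic}(1)) only yields $\lambda\leq1$, and indeed $(\CH^n,\lambda g_{hyp})$ with $\lambda<1$ is a homogeneous K\"ahler manifold admitting a K\"ahler immersion into $\CH^\infty$, so no curvature comparison can force $\lambda=1$. The honest conclusion of the argument is $(M,g)=(\CH^n,\lambda g_{hyp})$ with $0<\lambda\leq1$, the linear inclusion arising (via Calabi rigidity, Theorem \ref{local rigidityb}) exactly when $\lambda=1$. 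This imprecision is already present in the statement of the theorem, and the paper's own proof does not address it either, so it does not reflect a defect specific to your argument; just do not pretend Theorem \ref{immspaceform} delivers more than it does.
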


%

\begin{proof}
If  a homogeneous K\"ahler manifold $(M, g)$  can be
\K\ immersed into $\C H^{N}$,
$N\leq\infty$,
then, by Lemma \ref{bochnerthalt}  it can also be
K\"ahler immersed into $l^2(\mathbb{C})$.
By Theorem \ref{Flat-Case},
$(M, g)$ is then   a \K\ product of complex space forms, namely
$$(M, g) = \C ^k\times (\CH^{n_1},\lambda_1g_{hyp}) \times\cdots \times (\CH^{n_\rk},\lambda_\rk g_{hyp}),$$
Then  the conclusion follows from  the fact that $\C^k$ cannot be \K\ immersed into
$\CH^N$ for all $N\leq\infty$ (see Theorem \ref{calabic}) and from \cite[Theorem 2.11]{alek}
which  shows that there are not \K\ immersions
from a product $M_1 \times M_2$ of \K\ manifolds into $\CH^{N}$, $N\leq\infty$.
\end{proof}

\section[\K\ immersions of h.K.m. into $\CP^{N\leq\infty}$]{K\"ahler immersions of homogeneous K\"ahler manifolds into $\CP^{N\leq\infty}$}\label{hkmcp}
As we have already pointed out in Remark \ref{compactfull} a necessary condition for a 
K\"ahler metric $g$ on a complex  manifold $M$  to be projectively induced 
is that its associated \K\ form $\omega$ is integral
i.e.  it   represents the first Chern class $c_{1}(L)$ in $H^2(M, \z)$
of a holomorphic line bundle $L\rightarrow M$. Indeed $L$ can be taken as the pull-back of the hyperplane line  bundle on $\C P^N$
whose first Chern class can represented  by  $\omega_{FS}$.
Observe also  that if $\omega$ is an exact form (e.g. when  $M$ is contractible) then $\omega$ is obviously
integral  since its  second cohomology class vanishes.
Other (less obvious) conditions are expressed by the following theorem and its corollary.
\vskip 0.3cm

\begin{theor}[A. J. Di Scala, H. Ishi, A. Loi \cite{ishi}]\label{necessary}
Assume that a homogeneous K\"ahler manifold $(M, g)$ admits a K\"ahler immersion $f : M \rightarrow \CP^{N}$, $N\leq\infty$. Then
$M$ is simply-connected and $f$ is injective.
\end{theor}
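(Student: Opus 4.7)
My plan is to reduce both assertions to a single injectivity statement on the universal cover. Let $\pi: \tilde M \to M$ denote the universal covering equipped with the pulled-back metric $\tilde g := \pi^* g$, and set $\tilde f := f \circ \pi : \tilde M \to \CP^N$. Then $(\tilde M, \tilde g)$ is a simply-connected homogeneous K\"ahler manifold---the identity component of the holomorphic isometry group of $M$ lifts transitively to $\tilde M$---and $\tilde f$ is a K\"ahler immersion. Crucially, if $\tilde f$ is injective, then since $\tilde f$ is constant on each fiber of $\pi$, the deck transformation group must be trivial; consequently $M$ is simply-connected and $f = \tilde f$ is injective. It therefore suffices to establish injectivity of $\tilde f$. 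As preparation, I would apply the Dorfmeister--Nakajima fundamental conjecture (Theorem \ref{FC}) to $\tilde M$: it is a holomorphic fiber bundle over a homogeneous bounded domain $\tilde\Omega$ with fiber $\tilde{\mathcal E} \times \tilde{\mathcal C}$, where $\tilde{\mathcal E}$ is flat homogeneous K\"ahler and $\tilde{\mathcal C}$ is compact simply-connected homogeneous. Because $\tilde\Omega$ is contractible and $\tilde{\mathcal C}$ is simply-connected, the long exact sequence of the fibration combined with the simple connectivity of $\tilde M$ forces $\tilde{\mathcal E}$ to be simply-connected, hence $\tilde{\mathcal E} = \C^k$ for some $k \geq 0$.

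Combining Proposition \ref{induceddiast} with the positivity of the projective diastasis expressed by equation \eqref{diastcphom}, the identity $\tilde f(\tilde p) = \tilde f(\tilde q)$ is equivalent to $\dd^{\tilde M}_{\tilde p}(\tilde q) = 0$, so the injectivity of $\tilde f$ is precisely condition $(iv)$ of Theorem \ref{critfb}. To establish it, I would restrict $\tilde f$ along a fiber of the Dorfmeister--Nakajima bundle and along a horizontal section through a chosen base point, thereby obtaining K\"ahler immersions of $\C^k$, of $\tilde{\mathcal C}$, and of $\tilde\Omega$ into $\CP^N$. Calabi's rigidity theorem (Theorem \ref{local rigidityb}) then forces each such restriction to coincide, up to a rigid motion of $\CP^N$, with a canonical injective embedding: formula \eqref{cincp} for the flat factor, the Borel--Weil embedding for the compact homogeneous factor, and a Bergman-type embedding for the base domain (in the spirit of Section \ref{hbdcpn}). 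Finally, I would patch these injective restrictions into global injectivity of $\tilde f$ by exploiting the homogeneity of $\tilde M$ and the fact that the bundle projection $\tilde M \to \tilde\Omega$ separates fibers.

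\textbf{Main obstacle.} The delicate point is the patching step: because $\tilde M$ is only a holomorphic bundle, not in general a K\"ahler product of its three factors, injectivity on individual fibers and on horizontal sections does not automatically upgrade to injectivity of $\tilde f$. I expect the resolution to require the explicit form of the global K\"ahler potential on $\tilde M$ coming from the Piatetskii-Shapiro/Dorfmeister description used in the proof of Theorem \ref{thmsmall}, together with a further careful application of Calabi rigidity, in order to globalize the factor-wise injectivity to the total space.
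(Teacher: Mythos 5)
Your reduction to the injectivity of $\tilde f = f\circ\pi$ on the universal cover is logically sound, but the proof of that injectivity is never actually given: the ``patching step'' you flag as the main obstacle is the entire content of the claim, and the factor-wise preparation does not survive scrutiny either. Theorem \ref{FC} gives a holomorphic fiber bundle $\tilde M\to\tilde\Omega$ with isometric information only about the \emph{fibers}; it provides no isometric (``horizontal'') section of the base, so restricting $\tilde f$ ``along a horizontal section'' does not produce a K\"ahler immersion of $\tilde\Omega$ into $\CP^N$. Moreover, even granting the restrictions, deducing their injectivity from Calabi rigidity plus a ``canonical injective embedding'' is circular for the bounded-domain and compact factors: injectivity of a K\"ahler immersion of a homogeneous manifold into $\CP^N$ is precisely the statement being proved. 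Finally, your route makes the simple connectivity of $M$ depend entirely on the unproved injectivity of $\tilde f$, whereas these two assertions need to be decoupled.

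The paper's argument avoids all of this. For simple connectivity it works with $M$ itself (not the cover): by Theorem \ref{FC} and contractibility of the base, $M$ is a complex product $\Omega\times{\mathcal E}\times{\mathcal C}$, and ${\mathcal E}$ splits as $\C^k\times T_1\times\cdots\times T_s$ with each $T_j$ a non-simply-connected flat factor; if some $T_{j_0}$ had positive dimension, composing its K\"ahler embedding into $M$ with $f$ would give a K\"ahler immersion of a non-simply-connected flat K\"ahler manifold into $\CP^N$, which is impossible (Exercise \ref{simplyflat}, in the spirit of Example \ref{cstar}). For injectivity the key observation is that, by Calabi's rigidity (Theorem \ref{local rigidityb}), $f(M)$ is itself a homogeneous K\"ahler manifold, hence simply-connected by the first part applied to it; since $M$ is complete and $f:M\to f(M)$ is a local isometry, it is a covering map of a simply-connected space and therefore injective. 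If you want to keep your universal-cover framework, you would still need to import both of these ideas: an independent proof of the first assertion, and the covering-map argument in place of the factor-wise patching.
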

\begin{proof}
Theorem \ref{FC} and the fact that a homogeneous bounded domain is contractible imply that $M$ is a {\em complex}
product $\Omega \times {\mathcal F}$, where  ${\mathcal F}={\mathcal E}\times {\mathcal C}$ is a \K\ product  of a flat  \K\ manifold ${\mathcal E}$ \K\ embedded into $(M, g)$ and a simply-connected homogeneous K\"ahler manifold ${\mathcal C}$.
We claim that ${\mathcal E}$ is simply-connected and hence   $M=\Omega\times {\mathcal E}\times {\mathcal C}$ is simply-connected.
In order to prove our claim notice that
${\mathcal E}$ is the \K\ product  $\C^k\times T_1\times\cdots\times T_s$, where $T_j$ are non simply-connected flat \K\ manifolds.
So one needs to show that each $T_j$  reduces to a point.
If, by a contradiction,  the dimension of one of this space, say $T_{j_0}$  is not zero, then by composing
the \K\  immersion of  $T_{j_0}$ in $(M, g)$
with the immersion  $f:M\rightarrow \CP^N$
we would get  a \K\ immersion of
$T_{j_0}$ into $\CP^N$ in contrast with Exercise \ref{simplyflat}.
In order to prove that $f$ is injective we first observe that, by Calabi's Rigidity Theorem \ref{local rigidityb},  $f(M)$ is still a homogeneous K\"ahler manifold.
Then, by  the first part of the theorem,  $f(M)\subset\CP^{N}$ is simply-connected. Moreover,  since $M$ is complete and  $f: M \rightarrow f(M)$ is a local isometry,  it is a covering map (see,  e.g.,  \cite[Lemma 3.3, p. 150]{DC92}) and  hence injective.
\end{proof}
\begin{remark}\rm
When the dimension of the  ambient space is finite, i.e. $\CP^N$, $N<\infty$,
$M$ is forced to be  compact and a proof of Theorem \ref{necessary} is
well-known  by the work of M. Takeuchi \cite{tak}.
In this case he also provides a complete classification of all compact
homogeneous K\"ahler manifolds which can be \K\ immersed into $\CP^N$
by making use of the representation theory of semisimple Lie groups and Dynkin diagrams.
\end{remark}

\begin{cor}\label{corolnecessary} Let $(M, g)$ be a complete and  locally homogeneous K\"ahler manifold. Assume that  $f: (M, g)\rightarrow \CP^N$, $N\leq\infty$,  is a \K\ immersion.
Then $(M, g)$ is a homogeneous K\"ahler manifold.
\end{cor}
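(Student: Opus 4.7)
The plan is to lift everything to the universal cover and then apply Theorem \ref{necessary}. Let $\pi\!:\widetilde M\to M$ be the universal covering, and endow $\widetilde M$ with the pulled-back metric $\widetilde g=\pi^*g$. Then $(\widetilde M,\widetilde g)$ is complete, simply connected and locally homogeneous, and $\widetilde f:=f\circ\pi\!:\widetilde M\to\CP^N$ is a K\"ahler immersion. In particular, by Prop. \ref{induceddiast}, $\widetilde g$ (and hence $g$) is forced to be real analytic.

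First I would show that $(\widetilde M,\widetilde g)$ is actually globally homogeneous. Given two points $p,q\in\widetilde M$, local homogeneity supplies a local isometry $\varphi$ from a neighbourhood of $p$ to a neighbourhood of $q$ with $\varphi(p)=q$. Since $\widetilde g$ is real analytic and $\widetilde M$ is simply connected, $\varphi$ admits a unique analytic continuation along every path of $\widetilde M$, producing a globally defined isometry of $(\widetilde M,\widetilde g)$ that sends $p$ to $q$. As $p$ and $q$ are arbitrary, $\widetilde M$ is a homogeneous K\"ahler manifold.

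Next, I would apply Theorem \ref{necessary} to the K\"ahler immersion $\widetilde f\!:\widetilde M\to\CP^N$, which yields that $\widetilde f$ is injective. Since $\widetilde f=f\circ\pi$, any two points $p,q\in\widetilde M$ with $\pi(p)=\pi(q)$ satisfy $\widetilde f(p)=\widetilde f(q)$ and therefore $p=q$. Hence $\pi$ is injective, so the covering map is a biholomorphic isometry and $(M,g)\cong(\widetilde M,\widetilde g)$ is a homogeneous K\"ahler manifold.

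The main obstacle is the analytic-continuation step that upgrades local homogeneity of $\widetilde M$ to global homogeneity; this is where real analyticity of $g$, supplied automatically by Prop. \ref{induceddiast} through the existence of the K\"ahler immersion, is indispensable. Once this step is in place, the rest of the argument reduces the corollary to Theorem \ref{necessary} in a straightforward manner.
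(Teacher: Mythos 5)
Your proof is correct and follows essentially the same route as the paper: pass to the universal cover, apply Theorem \ref{necessary} to conclude $f\circ\pi$ is injective, hence $\pi$ is an injective covering map and therefore a holomorphic isometry. The only difference is that you explicitly justify (via analytic continuation of local holomorphic isometries, using real analyticity and completeness) why the universal cover of a complete locally homogeneous K\"ahler manifold is homogeneous, a step the paper simply asserts.
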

\begin{proof}
Let $\pi:\tilde M\rightarrow M$ be the universal  covering map.
Then $(\tilde M, \tilde g)$ is a homogeneous K\"ahler manifold and, by Theorem \ref{necessary},
$f\circ\pi :\tilde M\rightarrow \CP^n$ is injective.
Therefore $\pi$ is injective,
and since it is a covering map,
it defines a holomorphic isometry between $(\tilde M, \tilde g)$
and $(M, g)$.
\end{proof}

The following somehow surprising theorem shows that, 
once that the necessary conditions expressed above are satisified then,  
up to homotheties, any homogeneous K\"ahler manifold  is projectively induced.

\begin{theor}[A. Loi, R. Mossa \cite{loimossahom}]\label{loimossaimm}
Let $(M, g)$ be a simply-connected homogeneous K\"ahler manifold with associated \K\ form $\omega$ integral. Then there exists a positive real number $\lambda$
such that $(M, \lambda g)$ is projectively induced.
\end{theor}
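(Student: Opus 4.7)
The plan is to combine the Dorfmeister--Nakajima fundamental conjecture (Theorem \ref{FC}) with the Calabi-type embedding results for each atomic building block of a homogeneous \K\ manifold. First, apply Theorem \ref{FC} to view $(M,g)$ as a holomorphic fibre bundle $\pi\colon M\to \Omega$ over a homogeneous bounded domain $\Omega$ with fibre $\mathcal{F}=\mathcal{E}\times \mathcal{C}$, where $\mathcal{E}$ is a flat homogeneous \K\ manifold and $\mathcal{C}$ is a compact simply-connected homogeneous \K\ one. Because $\Omega$ is contractible and $M$ is simply-connected, the long exact sequence of homotopy groups of the fibration forces $\mathcal{F}$ to be simply-connected; hence $\mathcal{E}=\C^k$ for some $k\geq 0$, since the only simply-connected flat homogeneous \K\ manifolds are the Euclidean spaces.

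Next I would try to reduce to the \K\ product situation $M=\Omega\times\C^k\times\mathcal{C}$ after a suitable rescaling. The integrality of $\omega$ together with the simply-connectedness of $M$ produces a holomorphic Hermitian line bundle $L\to M$ with Chern form $\omega$, equivariant for the identity component of $\mathrm{Aut}(M)\cap\mathrm{Isom}(M,g)$. Using the explicit potential coming from a simply transitive split solvable subgroup $S\subset\mathrm{Aut}(M,g)$ (in the spirit of equation~(\ref{eqn:globalpotential}) from the proof of Theorem~\ref{thmsmall}, now on the total space of the fibration rather than on $\Omega$), one should be able to show that $\omega$ splits as an orthogonal sum of forms pulled back from the three factors, up to the exactness afforded by the freedom in rescaling $L$.

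Once the product structure has been secured, each factor admits a projectively induced metric for a suitable positive multiple. For $\C^k$, Theorem~\ref{calabic}(2) combined with Remark~\ref{constantmulti} and Theorem~\ref{bochnerth} gives a full \K\ immersion into $\CP^\infty$ for every positive multiple of the flat metric. The compact simply-connected homogeneous \K\ factor $\mathcal{C}$ is a generalised flag manifold, and for some positive integer $N_\mathcal{C}$ the Kodaira/Borel--Weil construction provides an equivariant \K\ embedding of $(\mathcal{C},N_\mathcal{C}\,g_\mathcal{C})$ into a finite-dimensional $\CP^N$. For the homogeneous bounded domain factor, the explicit inequality extracted from the proof of Theorem~\ref{thmsmall} shows that $\lambda g_\Omega$ is projectively induced for every $\lambda>c_0:=\max_k q_k/(2\gamma_k)$, because $\lambda\underline{\gamma}$ then belongs to $\mathfrak{X}(1,\dots,1)$. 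Taking a common multiple $\lambda$ and combining the three projective immersions via the iterated Segre map $\CP^{N_1}\times\CP^{N_2}\hookrightarrow \CP^{(N_1+1)(N_2+1)-1}$, which pulls $\omega_{FS}$ back to $\omega_{FS}\oplus\omega_{FS}$, yields the desired projective \K\ immersion of $(M,\lambda g)$.

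The hard part is the reduction to the product case. The Dorfmeister--Nakajima fibration is a fibration of complex manifolds only, and the $G$-invariant \K\ form $\omega$ can a priori have off-diagonal components mixing base, flat-fibre, and compact-fibre directions. I expect the correct route is not to produce a differential-geometric splitting of $\omega$, but rather to build the immersion directly via an invariant Hilbert space of $L^2$-holomorphic sections of $L^{\otimes\lambda}$ (coherent state quantisation \`a la Rawnsley): $G$-invariance of the reproducing kernel reduces non-degeneracy and point separation on $M$ to the corresponding statements on each factor, which are already established above. Producing this invariant Hilbert space for some (equivalently, every sufficiently large) $\lambda$ and verifying that the associated coherent state map is a \K\ immersion inducing $\lambda g$ is where the bulk of the technical work will lie.
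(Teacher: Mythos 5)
Your final paragraph correctly identifies the route the paper actually takes --- the coherent-state map associated to an invariant Hilbert space of $L^2$-holomorphic sections of $L^{\otimes\lambda}$ --- but it defers precisely the two steps that constitute the proof, and the rest of the proposal is spent on a reduction that is neither needed nor achievable. The paper does not use the Dorfmeister--Nakajima fibration here at all: integrality and simple-connectedness give a unique Hermitian line bundle $(L,h)$ with $\ric(h)=\omega$; Rosenberg--Vergne guarantees that for $\lambda$ large enough the Hilbert space $\hilb_{\lambda,h}$ of $L^2$-holomorphic sections of $L^{\lambda}$ is non-empty; and then the \emph{entire} content of the argument is that the Rawnsley function $\varepsilon_\lambda(x)=\sum_j h^{\lambda}(s_j(x),s_j(x))$ is a positive \emph{constant}. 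This is where homogeneity and simple-connectedness are really used: every holomorphic isometry $F$ lifts to $L$ (simple-connectedness), the lift carries an orthonormal basis of $\hilb_{\lambda,h}$ to one of $\hilb_{\lambda,\widetilde F^*h}$, and independence of $\varepsilon_\lambda$ from the choice of basis and of $h$ gives $\varepsilon_\lambda\circ F=\varepsilon_\lambda$; transitivity then forces constancy. Without this, \eqref{pullbackdie} only yields $f^*\omega_{FS}=\lambda\omega+\frac{i}{2}\partial\bar\partial\log\varepsilon_\lambda$, i.e.\ a form cohomologous to $\lambda\omega$ but not equal to it, and you have no \K\ immersion of $(M,\lambda g)$. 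Your phrasing ("$G$-invariance of the reproducing kernel reduces non-degeneracy and point separation\dots to the factors") misses that the issue is not injectivity or non-degeneracy but the exact isometry condition, and it is resolved globally on $M$, not factor by factor.

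The product-decomposition portion has its own genuine gap, which you acknowledge: the Dorfmeister--Nakajima fibration of Theorem \ref{FC} is a fibration of complex manifolds, and the invariant \K\ form on $M$ does not in general split as a sum of forms pulled back from $\Omega$, $\C^k$ and $\mathcal{C}$, nor is $M$ in general a metric product (contrast with Theorem \ref{Flat-Case}, where the splitting is \emph{derived} from the existence of a flat ambient space, not assumed). So the Segre-map assembly cannot be set in motion. Even the piece you quote from the proof of Theorem \ref{thmsmall} only covers homogeneous bounded domains with their specific solvable-group potentials, not the mixed situation. To repair the proposal you should drop the decomposition entirely and carry out the two deferred steps: cite or prove the non-vacancy of $\hilb_{\lambda,h}$ for large $\lambda$, and prove the constancy of $\varepsilon_\lambda$ via the lifting argument above.
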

\begin{proof}
Since  $\omega$ is integral there exists a holomorphic line bundle $L$ over $M$ such that $c_1(L)=[\omega]$. 
Let $h$ be an Hermitian metric on $L$ such that $\ric(h)=\omega$, where
$\ric(h)$
is the $2$-form on $M$
defined  by the equation: 
\begin{equation}\label{ricci}
\ric (h)=-\frac{i}{2}
\partial\bar\partial\log h(\sigma(x), \sigma (x)),
\end{equation} 
for a trivializing holomorphic section
$\sigma :U\subset M\rightarrow L\setminus \{0\}$
of $L$.

Choose $\lambda >0$ sufficiently large in such a way that 
$\lmb \omega$ is integral and the Hilbert space of global holomorphic sections of $L^{\lmb}=\otimes^\lmb L$ given by:
\begin{equation}\label{hilbertspaceh}
\hilb_{\lmb,h}=\left\{ s\in\ol(L^{\lmb}) \ |\   \int_M  h^{\lmb}(s,s)\, \frac{\omega^n}{n!}<\infty\right\},
\end{equation}
is non-empty. The existence of such a $\lambda$ is due to Rosenberg--Vergne \cite{rv}.
Let $\{s_j\}_{j=0,\dots,N}$, $N \leq \infty$, be an orthonormal basis for $\hilb_{\lmb, h}$. Consider the function
\begin{equation}\label{epl}
\varepsilon_{\lmb}(x)=\sum_{j=0}^Nh^\lmb(s_j(x),s_j(x)).
\end{equation}
This definition depends only on the \K\ form $\omega$. Indeed since $M$ is simply-connected, there exists (up to isomorphism) a unique $L \f M$ such that $c_1(L)=[\omega]$, and 
 it is easy to see that the definition  does not depend on the orthonormal basis chosen or on the Hermitian metric $h$. 
 Assume now that the function $\varepsilon_{\lmb}$ is  strictly positive.  One can then  consider the map $f: M \f \CP^N$ defined by:
\begin{equation}\label{eqcoherent}
f(x)=\left[s_0(x), \dots, s_j(x), \dots\right].
\end{equation}
It is not hard to see (cfr. Exercise \ref{coherentstates}) that:
\begin{equation}\label{pullbackdie}
f^*\omega_{FS}=\lmb \omega + \frac{i}{2} \partial \bar\partial\log\varepsilon_\lmb,
\end{equation}
where $\omega_{FS}$ is the Fubini--Study form on $\CP^N$.

Let now $F$  be a holomorphic isometry and let $\w F$ be its lift to $L$ (which exists since $M$ is simply-connected). 
Notice now 
 that, if $\lbrace s_0, \dots,s_N\rbrace$, $N\leq\infty$, is an orthonormal basis for $\hilb_{\lmb, h}$, then 
 $\lbrace\w{ F}^{-1} \left( s_0\left(F\left(x\right) \right)\right),\dots , \w{ F}^{-1} \left( s_N\left(F\left(x\right) \right)\right)\rbrace$ is an orthonormal basis for $\hilb_{\lmb,\w{ F} ^*h}$. 
Therefore
\begin{equation*}
\begin{split}
\epsilon_\lmb(x)
&=\sum_{j=0}^N \w {F}^* h^\lmb \left( \w {F}^{-1}(s_j(F(x))),\w {F}^{-1}(s_j(F(x)))\right) \\
&=\sum_{j=0}^N h^\lmb\left( s_j(F(x)), s_j(F(x)) \right)=\varepsilon_{\lmb}\left(F(x)\right).
\end{split}
\end{equation*}
Since the group of holomorphic isometries acts transitively on $M$ it follows that $\varepsilon_{\lmb}$ is forced to be a  positive  constant.
Hence the map $f$ can be defined and it is a \K\ immersion. 
\end{proof}

\begin{remark}\rm
The integrality of  $\omega$  in this theorem cannot be dropped since there exists a simply-connected homogeneous K\"ahler manifold $(M, \omega)$ such that $\lambda \omega$ is not integral for any $\lambda \in \mathbb{R}^+$ (take,  for example, $(M, g) = (\mathbb{C}P^1, g_{FS}) \times (\mathbb{C}P^1, \sqrt{2}g_{FS})$).
Observe also that  there exist simply-connected (even contractible) homogeneous K\"ahler manifolds $(M, g)$ such that $\omega$ is an integral form but
$g$ is not projectively induced (see e.g. Theorem \ref{wallach}).
\end{remark}

\begin{remark}\label{balanced}\rm
The \K\ metric $g$  as in the previous theorem such that the function $\epsilon_{\lambda}$ is a positive constant for all $\lambda >0$ plays a prominent role in the theory of quantization of \K\ manifolds and also in algebraic geometry when $M$ is compact. A \K\  metric $\lambda g$
satisfying this property is  called a {\em balanced metric} and the pair $(L, h)$ is called a {\em regular quantization}  of the  the \K\ manifold
$(M ,\omega)$. The interested reader is referred to \cite{arezzoloi, cucculoi, donaldson,donaldson2,englisb, grecoloi, hartogs, balancedch, taubnut}
 for more details on these metrics.
\end{remark}

\section{Bergman metric and bounded symmetric domains}\label{bergmansection}
Let $D$ be a (non necessarily homogeneous) bounded domain of $\mathds{C}^n$ with coordinate system $z_1,\dots, z_n$ and consider the separable complex Hilbert space $L_{hol}^2(D)$ of square integrable holomorphic functions on $D$, i.e.:
$$L_{hol}^2(D)=\left\{f\in \ol(D), \int_D |f|^2d\mu<\infty\right\},$$
where $d\mu$ denotes the Lebesgue measure on $\R^{2n}=\C^n$. Pick an orthonormal basis $\{\varphi_j\}$ of $L_{hol}^2(D)$ with respect to the inner product given by:
\begin{displaymath}
(f,h)=\int_D f(\zeta)\overline{h(\zeta)}d\mu(\zeta),\quad f,\,h\,\in L_{hol}^2(D).
\end{displaymath}
The {\em Bergman kernel} of $L_{hol}^2(D)$ is the function:
\begin{displaymath}
\Kj(z,\zeta)=\sum_{j=0}^\infty\varphi_j(z)\overline{\varphi_j(\zeta)},
\end{displaymath}
which is holomorphic in $D\times \bar D$, or equivalently, holomorphic in $z$ and antiholomorphic in $\zeta$, also called {\em reproducing kernel} for its reproducing property:
\begin{equation}\label{reproducing}
f(z)=\int_D  \Kj(z,\zeta)f(\zeta)d\mu(\zeta), \quad f\in L_{hol}^2(D).
\end{equation}
The Bergman metric on $D$ is the K\"ahler metric associated to the K\"ahler form:
$$
\omega_B=\frac{i}{2}\de\bar\de\log\Kj(z,z).
$$
Since $\log\Kj(z,z)$ is a K\"ahler potential for $\omega_B$, from \eqref{diastdefinition} we get:
 $$
\dd(z,w)=\log\frac{\Kj(z,z)\Kj(w,w)}{|\Kj(z,w)|^2},
$$
further, since by the reproducing property \eqref{reproducing} we get:
$$
\frac{1}{\Kj(0,0)}=\int_\Omega\frac{1}{\Kj(\zeta,0)}\Kj(\zeta,0)d\mu(\zeta),
$$
from which follows $\Kj(0,0)=1/V(\Omega)$, the diastasis centered at the origin reads:
$$
\dd_0(z)=\log\frac{\Kj(z,z)}{V(\Omega)|\Kj(z,0)|^2}.
$$

The Bergman metric is projectively induced in a natural way. In fact, the full holomorphic map:
\begin{equation}\label{philoc}
\varphi :M\rightarrow {\mathds{C}{\rm P}}^{\infty},\quad 
x\mapsto[\varphi_{0}(x),\dots ,\varphi_{j}(x),\dots],
\end{equation}
satisfies $g_B= \varphi^*(g_{FS})$, for by \eqref{diastcphom}:
$$
\dd^{FS}(\varphi(z),\varphi(w))=\log\frac{\sum_{j,k=0}^\infty|\varphi_j(z)|^2| \varphi_k(w)|^2}{\left|\sum_{j=0}^\infty\varphi_j(z)\bar \varphi_k(w)\right|^2}=\log\frac{\Kj(z,z)\Kj(w,w)}{|\Kj(z,w)|^2}=\dd(z,w).
$$

Notice that   the group of automorphisms $\aut(D)$ of $D$, i.e. biholomorphisms $f\!:D\f D$, is contained in the group of isometries ${\rm Isom}(D,g_B)$, that is if $F\in \aut(D)$ then  $F^* g_B=g_B$. If $\aut(D)$ also acts transitively, i.e. $D$ is homogeneous, then $g_B$ is Einstein and $\ric_{g_B}=-2g_B$, i.e. the Einstein constant is $-2$ (cfr. \cite[p. 163]{kono}).
   Observe that the Bergman metric and the hyperbolic metric on $\CH^n$ (see 3. of Section \ref{csf}) are homothetic, more precisely one has $(n+1)g_{hyp}=g_B$. 
   
 On a homogeneous bounded domain there could be many  non-homothetic homogeneous metrics, the Bergman metric is one of them.
 It could happen that the only homogeneous metric on homogeneous bounded domain is a multiple of the Bergman metric.
 This happens for example for the {\em bounded symmetric domains} $(\Omega, cg_B)$ that are convex domains $\Omega\subset\C^n$ which are circular, i.e. $z\in \Omega,\ \theta\in\R\Rightarrow\ e^{i\theta}z\in\Omega$ (see \cite{kodomain} for details).
Every bounded symmetric domain is the product of irreducible factors, called Cartan domains.
From E. Cartan classification, Cartan domains can be divided into two categories, classical and exceptional ones (see \cite{kohyper} for details). Classical domains 
can be described in terms of complex matrices as follows ($m$ and $n$ are nonnegative integers, $n\geq m$):
\begin{equation}
\begin{split}
&\Omega_1[m, n]=\{Z\in M_{m,n}(\C),\  I_m-ZZ^*>0\} \qquad \qquad \qquad(\dim(\Omega_1)=nm),\\
&\Omega_2[n]=\{Z\in M_{n}(\C),\  Z=Z^T,\  I_n-ZZ^*>0\} \qquad \quad(\dim(\Omega_2)=\tfrac{n(n+1)}{2}),\\
&\Omega_3[n]=\{Z\in M_{n}(\C),\  Z=-Z^T,\  I_n-ZZ^*>0\} \qquad\; (\dim(\Omega_3)=\tfrac{n(n-1)}{2}),\\
&\Omega_4[n]=\{Z=(z_1,\dots,z_n)\in\C^n,\ \sum_{j=1}^n|z_j|^2\!<\!1,1+|\sum_{j=1}^nz_j^2|^2\! -\!2\sum_{j=1}^n|z_j|^2>0\}\\
& \qquad \qquad \qquad \qquad \quad \quad\ \  \qquad \qquad \qquad \qquad \qquad\qquad(\dim(\Omega_4)=n),\ n\neq 2,\nonumber
\end{split}
\end{equation}
where $I_m$ (resp. $I_n$) denotes the $m\times m$ (resp $n\times n$) identity matrix, and $A>0$ means that $A$ is positive definite.
In the last domain we are assuming $n\neq 2$ since $\Omega_4[2]$ is not irreducible (and hence it is not a Cartan domain). In fact, the biholomorphism:
$$f\!:\Omega_4[2]\f\CH^1\times\CH^1,\ (z_1,z_2)\mapsto(z_1+iz_2,z_1-iz_2),$$
satisfies:
$$f^*(2(g_{hyp}\oplus g_{hyp}))=g_B.$$
The reproducing kernels of classical Cartan domains are given by:
\begin{equation}
\Kj_{\Omega_1}(z,z)=\frac{1}{V(\Omega_1)}[\det(I_m-ZZ^*)]^{-(n+m)},\nonumber
\end{equation}
\begin{equation}
\Kj_{\Omega_2}(z,z)=\frac{1}{V(\Omega_2)}[\det(I_n-ZZ^*)]^{-(n+1)},\nonumber
\end{equation}
\begin{equation}
\Kj_{\Omega_3}(z,z)=\frac{1}{V(\Omega_3)}[\det(I_n-ZZ^*)]^{-(n-1)},\nonumber
\end{equation}
\begin{equation}\label{kernel4}
\Kj_{\Omega_4}(z,z)=\frac{1}{V(\Omega_4)}\left(1+|\sum_{j=1}^nz_j^2|^2 -2\sum_{j=1}^n|z_j|^2\right)^{-n},\\
\end{equation}
where $V(\Omega_j)$, $j=1,\dots, 4$,  is the total volume of $\Omega_j$ with respect to the Euclidean measure of the ambient complex Euclidean space (see \cite{symm} for details).\\
Notice that for some values of $m$ and $n$, up to multiply the metric by a positive constant, the domains coincide with the hyperbolic space $\CH^n$, more precisely we have:
\begin{equation}
(\Omega_1[1,n],g_B)=(\CH^n,(n+1)g_{hyp}),\nonumber
\end{equation}
\begin{equation}
(\Omega_2[1],g_B)=(\Omega_3[2],g_B)=(\Omega_4[1],g_B)=(\CH^1,2g_{hyp}), \nonumber
\end{equation}
\begin{equation}
(\Omega_3[3],g_B)=(\CH^3,4g_{hyp}). \nonumber
\end{equation}
In general, $(\Omega,g_B)=(\CH^n,cg_{hyp})$, for some $c>0$, if and only if the rank of $\Omega$ is equal to $1$.
There are two kinds of exceptional domains $\Omega_5[16]$ of dimension $16$ and $\Omega_6[27]$ of dimension $27$, corresponding to the dual of $E\ III$ and $E\ VII$, that can be described in terms of $3\times 3$ matrices with entries in the $8$-dimensional algebra of complex octonions $\oo_\C$. We refer the reader to \cite{roos} for a more complete description of these domains.

\begin{remark}\label{immersbsd}\rm
It is interesting to observe that any irreducible bounded symmetric domain of rank greater or equal than $2$, can be exhausted by totally geodesic submanifolds isomorphic to $\Omega_4[3]$ and that 
every bounded symmetric domain different  from
$$
\left(\CH^{n_1}\times\cdots\times\CH^{n_s}, c_1\,g_{hyp}\oplus\cdots\oplus c_s\, g_{hyp}\right),
$$
for $c_1,\dots, c_s$ positive constants, admits $\Omega_4[3]$ as a K\"ahler submanifold
(cfr. \cite{tsai} for the proofs of these assertions).
\end{remark}

\section{\K\ immersions of bounded symmetric domains into $\CP^{\infty}$}\label{symmcp}
Being  a bounded symmetric domain a particular case of  homogeneous bounded domain and so of homogeneous \K\ manifolds, we already know about the existence of \K\ immersions
into finite or infinite dimensional complex space form. 
In Theorem \ref{wallach} 
we describe for what values of $c>0$ a bounded symmetric domain
can be \K\ immersed into $\CP^{\infty}$.
 We start with the definition of the  {\em Wallach set}  of an irreducible bounded symmetric domain  $(\Omega,cg_B)$ of genus $\gamma$ and Bergman kernel $\Kj$, referring  the reader to  \cite{arazy}, \cite{faraut} and \cite{upmeier} for more details and results. This set, denoted by $W(\Omega)$, consists of all $\eta\in\C$ such that there exists a Hilbert space ${\mathcal H}_\eta$ whose reproducing kernel is   $\Kj ^{\frac{\eta}{\gamma}}$. This is equivalent to the requirement that $\Kj ^{\frac{\eta}{\gamma}}$ is positive definite, i.e.  for all $n$-tuples of  points $x_1,\dots,x_n$ belonging to $\Omega$ the $n\times n$ matrix $(\Kj(x_{\alpha},x_{\beta})^{\frac{\eta}{\gamma}})$, is positive  {\em semidefinite}. It turns out (cfr. \cite[Cor. 4.4, p. 27]{arazy} and references therein)
that $W(\Omega)$ consists only of real numbers and depends on two of the domain's invariants, $a$ and  $r$. More precisely we have:
\begin{equation}\label{wallachset}
W(\Omega)=\left\{0,\,\frac{a}{2},\,2\frac{a}{2},\,\dots,\,(r-1)\frac{a}{2}\right\}\cup \left((r-1)\frac{a}{2},\,\infty\right).
\end{equation}
The set $W_d=\left\{0,\,\frac{a}{2},\,2\frac{a}{2},\,\dots,\,(r-1)\frac{a}{2}\right\}$ and the interval $W_c= \left((r-1)\frac{a}{2},\,\infty\right)$
are called respectively  the {\em discrete} and {\em continuous} part   of the Wallach set of the domain 
$\Omega$. The reader is referred to \cite[Prop. 3]{loimossaber} for an analogous description of the Wallach set  of bounded homogeneous domains.
\begin{remark}\label{rchimm}\rm
If $\Omega$ has rank $r=1$, namely $\Omega$ is the complex hyperbolic space $\CH^d$,
then $g_B=(d+1)g_{hyp}$.
In this case (and only in this case) $W_d=\{0\}$ and $W_c=(0, \infty)$.
If $d=1$, the Hilbert space ${\mathcal H}$ associated to the kernel:
$$\Kj=\frac{1}{(1-|z|^2)^\alpha}, \qquad \alpha>0,$$
is the space:
\begin{equation}
{\mathcal H}=\left\{f\in\ol(\CH^1), f(z)=\sum_{j=0}^\infty a_jz^j\,|\ \sum_{j=0}^\infty\frac{\Gamma(\alpha)\Gamma(j+1)}{\Gamma(j+\alpha)}|a_j|^2<\infty\right\},\nonumber
\end{equation}
endowed with the scalar product:
\begin{equation}
<g,h>=\sum_{j=0}^\infty\frac{\Gamma(\alpha)\Gamma(j+1)}{\Gamma(j+\alpha)}b_j\bar c_j,\nonumber
\end{equation}
where $g(z)=\sum_{j=0}^\infty b_j z^j$, $h(z)=\sum_{j=0}^\infty c_j z^j$ and $\Gamma$ is the Gamma function.

If $\alpha>1$, ${\mathcal H}$ is the weighted Bergman space of $\Omega$, namely the Hilbert space of analytic functions $f\in\ol(\CH^1)$ such that:
\begin{equation}
\int_{\CH^1}|f(z)|^2d\mu_\alpha(z)<\infty,\nonumber
\end{equation}
where $\mu_\alpha(z)$ is the Lebesgue measure of $\C$.
\end{remark}

The following proposition provides the expression of the diastasis function for $(\Omega,g_B)$ (see also \cite{loidiast}) and proves a very useful property of the matrix of coefficients $(b_{jk})$ given by  (\ref{powexdiastcp}).
\begin{prop}\label{diastdom}
Let $\Omega$ be a bounded symmetric domain.
Then the diastasis for its Bergman metric $g_B$ around the origin  is:
\begin{equation}\label{diastberg}
\dd^\Omega_0(z)=\log(V(\Omega)\Kj(z,z)),
\end{equation}
where $V(\Omega)$ denotes 
the total volume of $\Omega$ with respect to the Euclidean measure of the ambient complex Euclidean space.
Moreover the matrix $(b_{jk})$ given by  (\ref{powexdiastcp}) for $\dd_0^\Omega$ satisfies $b_{jk}=0$ whenever $|m_j|\neq|m_k|$.
\end{prop}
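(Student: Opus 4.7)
My plan is to exploit the fact that a bounded symmetric domain is in particular circular, and then apply the general formula for Calabi's diastasis together with the reproducing-kernel expansion of $\Kj(z,w)$.

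First, by the definition \eqref{diastdefinition} applied to the potential $\log\Kj(z,\bar z)$, and using the Hermitian symmetry $\overline{\Kj(z,w)}=\Kj(w,z)$ together with the identity $\Kj(0,0)=1/V(\Omega)$ already derived in the excerpt, the diastasis at the origin reads
$$\dd_0^{\Omega}(z)=\log\frac{\Kj(z,z)\,\Kj(0,0)}{|\Kj(z,0)|^2}=\log\frac{\Kj(z,z)}{V(\Omega)\,|\Kj(z,0)|^2}.$$
So to establish \eqref{diastberg} it suffices to show $\Kj(z,0)\equiv 1/V(\Omega)$ on $\Omega$.

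For this I use the circular structure of $\Omega$. Since $\Omega$ is invariant under $z\mapsto e^{i\theta}z$, this $S^1$-action is unitary on $L^2_{hol}(\Omega)$, and its isotypic components are exactly the finite-dimensional spaces of holomorphic homogeneous polynomials of each degree $d\geq 0$ (polynomials being dense in $L^2_{hol}(\Omega)$ for bounded symmetric domains). Applying Gram--Schmidt within each eigenspace, I obtain an orthonormal basis $\{\varphi_j\}$ of $L^2_{hol}(\Omega)$ in which each $\varphi_j$ is homogeneous of some degree $d_j$, and with $\varphi_0\equiv 1/\sqrt{V(\Omega)}$ as the unique constant basis element. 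Then $\varphi_j(0)=0$ for every $j\geq 1$, so the Bergman series collapses to
$$\Kj(z,0)=\sum_{j\geq 0}\varphi_j(z)\,\overline{\varphi_j(0)}=\varphi_0(z)\,\overline{\varphi_0(0)}=\frac{1}{V(\Omega)},$$
proving the first assertion.

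The second assertion follows immediately with the same basis. Indeed
$$e^{\dd_0^{\Omega}(z)}-1=V(\Omega)\,\Kj(z,z)-1=V(\Omega)\sum_{j\geq 0}|\varphi_j(z)|^2-1,$$
where the $j=0$ term contributes exactly $1$ and is cancelled. For $j\geq 1$, writing $\varphi_j(z)=\sum_{|m|=d_j}a_m^{j}\,z^m$ gives
$$|\varphi_j(z)|^2=\sum_{|m|=|m'|=d_j}a_m^{j}\,\overline{a_{m'}^{j}}\,z^{m}\,\bar z^{m'},$$
so every monomial appearing in $|\varphi_j|^2$ has equal total degree in $z$ and $\bar z$. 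Summing over $j$, every nonzero coefficient $b_{jk}$ in the expansion \eqref{powexdiastcp} necessarily satisfies $|m_j|=|m_k|$, which is the stated property.

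The one step that requires genuine care is the existence of an orthonormal basis of $L^2_{hol}(\Omega)$ consisting of homogeneous polynomials; this rests on the density of polynomials in $L^2_{hol}(\Omega)$ for bounded symmetric (in particular balanced circular) domains, combined with the unitarity of the $S^1$-action. Once this is granted, everything else is bookkeeping.
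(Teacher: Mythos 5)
Your proof is correct, but it follows a genuinely different route from the paper's. The paper never touches the series expansion of the Bergman kernel: it takes $\log(V(\Omega)\Kj(z,z))$ as a candidate potential, checks it vanishes at the origin via the reproducing property, observes that rotations $z\mapsto e^{i\theta}z$ are isometries of $g_B$ fixing $0$ so the candidate is rotation invariant, deduces that every monomial $z^{m_j}\bar z^{m_k}$ in its expansion has $|m_j|=|m_k|$ (in particular $a_{j0}=a_{0j}=0$), and then invokes the characterization of the diastasis (Theorem \ref{chardiast}); the second assertion is dispatched ``by the chain rule.'' You instead compute $\dd_0^\Omega$ from the definition \eqref{diastdefinition} applied to $\log\Kj(z,\bar z)$ and reduce everything to the single claim that $\Kj(\cdot,0)\equiv 1/V(\Omega)$, which you prove by exhibiting an orthonormal basis of $L^2_{hol}(\Omega)$ made of homogeneous polynomials. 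Both arguments hinge on circularity, but they use it at different levels: the paper uses it only through the intrinsic invariance of the diastasis under isometries fixing the center, which is cheaper — it avoids the one nontrivial analytic input you correctly flag, namely the density of polynomials in $L^2_{hol}(\Omega)$ (true for bounded balanced circular domains, hence for the Harish--Chandra realization of a bounded symmetric domain, but it is an extra fact to justify). In exchange your argument yields slightly more: the stronger structural statement that $\Kj(z,0)$ is constant, and an expression of $e^{\dd_0^\Omega}-1$ as an explicit sum of squared moduli of homogeneous polynomials, which makes the block-diagonal Gram structure of $(b_{jk})$ — exactly what is exploited in Proposition \ref{lemmadiastM} — completely transparent rather than an afterthought.
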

\begin{proof}
The K\"ahler potential $\dd^\Omega_0(z)$ is centered at the origin, in fact by the reproducing property of the  kernel we have:
\begin{equation}
\frac{1}{ \Kj(0,0)}=\int_{\Omega} \frac{1}{\Kj(\zeta,0)}\Kj(\zeta,0)d\mu,\nonumber
\end{equation}
hence $\Kj(0,0)=1/V(\Omega)$, and substituting in (\ref{diastberg}) we obtain $\dd^\Omega_0(0)=0$. By the circularity of $\Omega$ (i.e. $z\in \Omega,\ \theta\in\R$
 imply $e^{i\theta}z\in\Omega$),
rotations around the origin are automorphisms and hence isometries, that leave $\dd^\Omega_0$ invariant.  Thus we have $\dd^\Omega_0(z)=\dd^\Omega_0(e^{i\theta}z)$ for any $0\leq\theta\leq 2\pi$, that is, each time we have a monomial $z^{m_j}\bar z^{m_k}$ in $\dd^\Omega_0(z)$, we must have $$z^{m_j}\bar z^{m_k}=e^{i|m_j|\theta}z^{m_j} e^{-i|m_k|\theta}\bar z^{m_k}=z^{m_j}\bar z^{m_k} e^{(|m_j|-|m_k|)i\theta},$$ implying $|m_j|=|m_k|$.
This means that  every  monomial in the expansion of  $\dd^\Omega_0(z)$ has holomorphic and antiholomorphic part with the same degree. 
Hence, by Theorem \ref{chardiast}, $\dd^\Omega_0(z)$ is the diastasis
for $g_B$. By the chain rule the same property holds true for   $e^{\dd^\Omega_0(z)}-1$
and the second part of the proposition follows immediately. 
\end{proof}

The following theorem interesting on its own sake will be an important tool in the next chapter.

\begin{theor}[A. Loi, M. Zedda, \cite{articwall}]\label{wallach}
Let $\Omega$ be an irreducible bounded symmetric domain endowed with its Bergman metric $g_B$. Then $(\Omega,cg_B)$ admits a equivariant K\"ahler immersion into $\mathds{C}P^\infty$ if and only if $c\gamma$ belongs to $W(\Omega)\setminus\{0\}$, where $\gamma$ denotes the genus of $\Omega$.
\end{theor}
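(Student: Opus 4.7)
The plan is to reduce the problem to Calabi's criterion (Theorem \ref{globalcriterionb}) for projectively induced metrics and then identify the $1$-resolvability condition with membership in the Wallach set. By Proposition \ref{diastdom}, the diastasis of $g_B$ centered at the origin is $\dd_0^{\Omega}(z)=\log(V(\Omega)\Kj(z,z))$, so the diastasis of $cg_B$ is $c\dd_0^{\Omega}(z)$, and thus
\begin{equation*}
e^{c\dd_0^{\Omega}(z)}-1 = V(\Omega)^c\,\Kj(z,z)^c-1.
\end{equation*}
Since $\Omega$ is contractible (hence simply connected) and $\dd_0^{\Omega}$ is globally defined and single valued, Theorem \ref{globalcriterionb} tells us that $(\Omega, cg_B)$ admits a K\"ahler immersion into $\CP^\infty$ if and only if the $\infty\times\infty$ coefficient matrix of this power series is positive semidefinite.

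The next step is to identify this condition with $c\gamma\in W(\Omega)\setminus\{0\}$. By the very definition of the Wallach set recalled in \eqref{wallachset}, $c\gamma\in W(\Omega)$ means precisely that $\Kj^{c}$ is a positive semidefinite kernel on $\Omega\times\Omega$, i.e.\ the reproducing kernel of a Hilbert space $\hilb_{c\gamma}\subset \ol(\Omega)$. I would then argue, as in the proof of Theorem \ref{thmsmall}, that positive semidefiniteness of the kernel $\Kj^{c}$ is equivalent to positive semidefiniteness of the coefficient matrix of $V(\Omega)^c\Kj(z,z)^c-1$ at the origin: one direction is immediate by expanding the reproducing kernel along an orthonormal basis $\{\psi_j\}$ of $\hilb_{c\gamma}$, and the other follows by running the argument in Theorem \ref{localcrit} that extracts holomorphic functions $\psi_j$ from a Cholesky-type factorization of the coefficient matrix.

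Given an orthonormal basis $\{\psi_j\}_{j=0}^{N}$ of $\hilb_{c\gamma}$, define $f(z)=[\psi_0(z):\psi_1(z):\cdots]$. A computation using \eqref{diastcphom} and $V(\Omega)^c\Kj(z,z)^c=\sum_j|\psi_j(z)|^2$ shows $f^{*}\omega_{FS}=c\,\omega_B$, so $f$ is a K\"ahler immersion. For the equivariance statement, I would use the standard fact that $\Aut(\Omega)$ acts unitarily on $\hilb_{c\gamma}$ via a multiplier representation (the automorphic factor coming from the Jacobian raised to the appropriate power, which exists precisely because $c\gamma$ lies in the Wallach set); this unitary action descends to a projective action on $\CP^{N}$ making $f$ equivariant. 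Conversely, from any equivariant K\"ahler immersion $f\!:\Omega\to\CP^\infty$ with $f^*\omega_{FS}=c\omega_B$, one extracts by Calabi's rigidity a system of sections realizing $\Kj^{c}$ as a positive definite kernel, whence $c\gamma\in W(\Omega)$, and $c\gamma\neq 0$ because $c>0$.

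The main obstacle will be handling the borderline points of the discrete part $W_d$ of the Wallach set: when $c\gamma=k\frac{a}{2}$ for some $0\le k\le r-1$, the Hilbert space $\hilb_{c\gamma}$ is \emph{not} a weighted Bergman space but a distinguished subspace carrying a non-obvious $\Aut(\Omega)$-invariant inner product, and it takes genuine work (relying on the Faraut--Kor\'anyi theory cited in \cite{faraut, arazy, upmeier}) to verify simultaneously that $\Kj^{c}$ remains positive semidefinite and that the corresponding multiplier representation of $\Aut(\Omega)$ is unitarizable. Outside these points, i.e.\ for $c\gamma\in W_c$, everything reduces to standard weighted Bergman space theory and both the immersion and its equivariance are straightforward.
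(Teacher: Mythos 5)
Your proposal is essentially correct and follows the same overall strategy as the paper (diastasis of $g_B$ via Prop.\ \ref{diastdom}, Calabi's criterion, the definition of the Wallach set as positive semidefiniteness of $\Kj^c$, and the orthonormal-basis construction of the immersion), but it diverges on two points worth noting. First, for the necessity direction the paper does not pass through the Taylor coefficient matrix at the origin at all: it evaluates the identity $V(\Omega)^c\Kj^c(z,w)=1+\sum_j g_j(z)\overline{g_j(w)}$ (with $g_j=f_j/f_0$) directly at an arbitrary finite set of points $x_1,\dots,x_n$ and reads off positive semidefiniteness of the matrix $(\Kj^c(x_\alpha,x_\beta))$, which is literally the Wallach condition; your route via $1$-resolvability plus a Cholesky-type factorization is workable but inserts an extra equivalence (local coefficient matrix PSD $\Leftrightarrow$ kernel PSD at all point configurations) that the paper's argument simply bypasses. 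Second, and more importantly, your equivariance argument via unitarizability of the multiplier representation on $\hilb_{c\gamma}$ is much heavier than needed, and the ``main obstacle'' you identify at the discrete Wallach points dissolves entirely with the paper's approach: once $c\gamma\in W(\Omega)$, the Hilbert space $\hilb_{c\gamma}$ exists \emph{by definition} of the Wallach set (there is nothing further to verify about positive semidefiniteness there), the map $f$ built from any orthonormal basis is a full K\"ahler immersion, and for each $h\in G=\Aut_0(\Omega)$ the composite $f\circ h$ is another full K\"ahler immersion of $(\Omega,cg_B)$, so Calabi's rigidity (Theorem \ref{local rigidityb}) produces a unitary $U_h$ of $\CP^\infty$ with $f\circ h=U_h\circ f$. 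This gives equivariance uniformly over the whole Wallach set, discrete part included, with no appeal to the representation-theoretic machinery of Faraut--Kor\'anyi beyond the description \eqref{wallachset} itself; what you would gain from your route is the additional information that $h\mapsto U_h$ comes from a genuine unitary representation of $G$ on $\hilb_{c\gamma}$, which the paper only remarks on afterwards.
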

\begin{proof}
Let $f\!:(\Omega,cg_B)\rightarrow\CP^\infty$ be a K\"ahler immersion, we want to show that 
$c\gamma$ belongs to  $W(\Omega)$, i.e. 
 $\Kj^c$ is positive definite.
Since $\Omega$ is contractible it is not hard to see that there exists a sequence
 $f_j, j=0, 1\dots$ of holomorphic functions defined on $\Omega$, not vanishing simultaneously, such that the immersion 
 $f$ is given by $f(z)=[\dots ,f_j(z), \dots ],\ j=0,1 \dots$, where $[\dots ,f_j(z), \dots ]$ denotes the equivalence
 class in $l ^2(\C)$ (two sequences are equivalent if and only if they differ by the multiplication by a nonzero complex number). Let $x_1,\dots,x_n\in \Omega$. Without loss of generality (up to unitary transformation of ${\C}P^{\infty}$)
we can assume that  $f(0)=e_1$, where $e_1$ is the first vector of the canonical basis of $l ^2({\C})$, and $f(x_j)\notin H_0$, $\po\ j=1,\dots, n$.
Therefore, by Theorem \ref{induceddiast} and  Proposition \ref{diastdom},
we have:
$$c\,\dd^\Omega_0(z)=\log [V(\Omega)^c\,\Kj^c(z, z)]=\log\left(1+\sum_{j=1}^{\infty}\frac{|f_j(z)|^2}{|f_0(z)|^2}\right),\ \ z \in  \Omega\setminus f^{-1}(H_0),$$
that is:
\begin{equation}
V(\Omega)^c\,\Kj^c(x_{\alpha},x_{\beta})=1+\sum_{j=1}^\infty g_j(x_{\alpha})\overline{g_j(x_{\beta})},
\quad g_j=\frac{f_j}{f_0}.\nonumber
\end{equation}
Thus for every $(v_1, \dots v_n)\in \C^n$ one has:
$$\sum_{\alpha, \beta =1}^nv_{\alpha}\Kj ^c (x_{\alpha}, x_{\beta})\bar v_{\beta}=
\frac{1}{V(\Omega)^c}\sum_{k=0}^{\infty}|v_1g_k(x_1)+\cdots +v_ng_k(x_n)|^2\geq 0, g_0=1,$$
and hence the matrix  $(\Kj^c(x_{\alpha}, x_{\beta}))$ is positive semidefinite.

Conversely, 
assume that $c\gamma\in W(\Omega)$.
Then, by the very definition of Wallach set,
there exists a Hilbert space ${\mathcal H}_{c\gamma}$ whose reproducing kernel
is $\Kj^c=\sum_{j=0}^{\infty}|f_j|^2$, where $f_j$ is an orthonormal basis of 
${\mathcal H}_{c\gamma}$.
Then  the holomorphic map 
$f :\Omega\rightarrow l^2 (\C)\subset \CP^{\infty}$  constructed by using this orthonormal basis 
satisfies $f^*(g_{FS})=cg_B$. In order to prove that this map is equivariant  
write $\Omega =G/K$ where $G$ is the simple Lie group acting holomorphically and isometrically 
on $\Omega$ and $K$ is its isotropy group. For each $h\in G$ the map $f\circ h:(\Omega, cg_B)\f\CP^{\infty}$ is a full \K\ immersion and therefore by Calabi's rigidity (Theorem \ref{local rigidityb}) there exists
a unitary transformation $U_h$ of $\CP^{\infty}$ such that $f\circ h=U_h\circ f$ and we are done.
\end{proof}

\begin{remark}\rm
 In \cite{arazy} it is proven that  if $\eta$ belongs to $W(\Omega)\setminus \{0\}$ then  $G$ admits a representation in the Hilbert space ${\mathcal H}_{\eta}$. This is in accordance with our result. Indeed
if $c\gamma$ belongs to $W(\Omega)\setminus \{0\}$  then
the correspondence $h\mapsto U_h, h\in G$ defined in the last part of the  proof of Theorem \ref{wallach} is a representation of $G$.
\end{remark}
\begin{remark}\rm
Notice that Theorem \ref{thmsmall} for bounded symmetric domains  follows directly  by Theorem \ref{wallach} 
and Remark \ref{rchimm}. 
\end{remark}

\section{Exercises}
\begin{ExerciseList}
\Exercise 
Prove that the Bergman metric $g_B$ on $\Omega_4[3]$  is not resolvable.

({\em Hint: compute the first $9\times 9$ entries of the matrix of coefficients in the power expansion \eqref{powexdiastc} for the  diastasis function  given by (\ref{kernel4}) and (\ref{diastberg}) and show it is not positive semidefinite}).

\Exercise
Use Remark \ref{immersbsd}  and the previous exercise to  prove that up to biholomorphism, the only irreducible bounded symmetric domain
of complex dimension $n$ admitting a K\"ahler immersion into $l^2(\mathds{C})$ is  $\mathds{C}{\rm H}^n$.

\Exercise 
Let $G_{k, n}$ be the complex Grassmannian of $k$-planes in $\C^n$.
Let $M\subset \C^{n\times k}$ denote the open subset of matrices of rank $k$ and $\pi :M\rightarrow G_{k, n}$
the canonical projection which turns out to be  a holomorphic principal bundle with structure group $\GL (r, \C)$.
Let $Z$ be a holomorphic section of $\pi$ over an open subset $U\subset G_{k, n}$ and define a closed form $\omega_G$
of type $(1, 1)$ on $U$ by:
$$\omega_G =\frac{i}{2}\partial\bar\partial\log\det \left(\bar{Z}^tZ\right).$$
Show that:
\begin{itemize}
\item [$(a)\ $]
$\omega_G$ is a well-defined \K\ form on  $G_{k, n}$;
\item [$(b)\ $]
$(G_{k, n}, g)$ is a homogeneous \K\ manifold, where $g$ is the \K\ metric whose associated \K\  form is $\omega$;
\item [$(c)\ $]
the Pl\"{u}cker embedding, namely the map
$$p:G_{k, n}\rightarrow\p \left(\Lambda^k(\C^n)\right)\cong\CP^{\tiny{{n\choose k}-1}}, \gen \{v_1, \dots , v_k\}\ \mapsto v_1\wedge\cdots\wedge v_k,$$
is a \K\ immersion from $\left(G_{k, n}, g\right)$ into $\left(\CP^{\tiny{{n\choose k}-1}}, g_{FS}\right)$.
\end{itemize}
\Exercise 
Prove that the Segre embedding, namely the  map $\sigma\!:\CP^n\times\CP^m\f\CP^{(n+1)(m+1)-1}$ defined by:
$$\sigma([Z_0,\dots,Z_n],[W_0,\dots,W_m])\mapsto [Z_0W_0,\dots,Z_jW_k,\dots,Z_nW_m],$$
is a K\"ahler immersion.

\Exercise
Let $g_1$ (resp. $g_2$) be a projectively induced \K\ metric on a complex manifold $M_1$ (resp. $M_2$).
Prove that the \K\ metric $g_1\oplus g_2$ on $M_1\times M_2$ is projectively induced.

({\em Hint: generalize the previous exercise}).

 \Exercise\label{simplyflat} Prove that a not simply-connected flat \K\ manifold does not admit a global K\"ahler immersion into 
 $\mathds{C}{\rm P}^{N\leq\infty}$.
 
  ({\em Hint: cfr. Example \ref{cstar}}).

\Exercise Prove that  the hyperbolic metric $g$ on a compact  Riemannian surface $\Sigma_g$ of genus $g\geq 2$ 
is not projectively induced.

({\em Hint: use the fact  that  the universal covering map $\pi: \CH^1\rightarrow \Sigma_g$, satisfies $\pi^*\omega=\omega_{hyp}$}).

\Exercise\label{coherentstates}
Prove (\ref{pullbackdie}).
\end{ExerciseList}

\chapter{K\"ahler--Einstein manifolds}
A K\"ahler manifold $(M,g)$ is Einstein when there exists $\lambda\in \mathds{R}$ such that $\rho=\lambda\omega$, where $\omega$ is the K\"ahler form associated to $g$ and $\rho$ is its Ricci form. 
The constant $\lambda$ is called the {\em Einstein constant} and it turns out that  $\lambda={s}/{2n}$, where $s$ is the scalar curvature of the metric $g$ and $n$ the complex dimension of $M$ (as a general reference for this chapter see e.g. \cite{tian5}).
If $\omega =\frac{i}{2}
\sum _{j=1}^{n}g_{\alpha\bar{\beta}}
dz_{\alpha}\wedge d\bar{z}_{\bar{\beta}}$
is the local expression of $\omega$ on an open set $U$ with local coordinates $(z_1, \dots ,z_n)$ centered at some point $p$ then 
the Ricci form is the  $2$-form on $M$ of type $(1, 1)$ defined by
\begin{equation}\label{defricciform}
\rho =-i\partial\bar\partial\log\det g_{\alpha\bar\beta}.
\end{equation}
By the  $\partial\bar\partial$-Lemma  (and by shrinking $U$ if necessary) this is equivalent  to  require that 
\begin{equation}\label{monge}
\det(g_{\alpha\bar\beta})=e^{-\frac{\lambda}{2}\dd_0(z)+f+\bar f},
\end{equation}
for some holomorphic function $f$, where $\dd_p$ denotes Calabi's diastasis function centered at $p$.

In this chapter we study \K\ immersions of \K--Einstein manifolds into complex space forms.
We begin describing in the next section the work of M. Umehara \cite{umehara2} which completely classifies K\"ahler--Einstein manifolds admitting a K\"ahler immersion into the finite dimensional complex hyperbolic or flat space.
In Section \ref{finitecpn} we summarize
what is known about   K\"ahler immersions of K\"ahler--Einstein manifolds into the finite dimensional complex projective space.

\section[\K\ immersions of KE manifolds into $\CH^{N}$ or $\C^N$]{\K\ immersions of K\"ahler--Einstein manifolds into $\CH^{N}$ or $\C^N$}\label{umeharasec}
In this section we summarize the results of M. Umehara in \cite{umehara2} which determine the nature of K\"ahler--Einstein manifolds admitting a K\"ahler immersion into $\CH^N$ or $\C^N$, for $N$
finite. 
\begin{theor}[M. Umehara]\label{umehara}
Every K\"ahler--Einstein manifold K\"ahler immersed into $\C^N$ or $\CH^N$ is totally geodesic.
\end{theor}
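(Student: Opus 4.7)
The plan is to reduce the statement to an analytic identity in Bochner coordinates coming from the \K--Einstein equation, and then to exploit the rigidity of the second fundamental form, together with the finite dimensionality of the ambient, to force it to vanish.

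First I would pick an arbitrary point $p\in M$, choose Bochner coordinates $(z_1,\dots,z_n)$ centered at $p$, and apply Theorem \ref{bochnergraph} to write the immersion $f\!:M\rightarrow F(N,b)$ locally as a graph
$$
(z_1,\dots,z_n)\;\longmapsto\;\bigl(z_1,\dots,z_n,\,f_1(z),\dots,f_{N-n}(z)\bigr),
$$
with each $f_j$ vanishing to order at least $2$ at the origin; showing that the immersion is totally geodesic then amounts to proving $f_j\equiv 0$ for every $j$. In these coordinates both $\dd_0(z)$ and $\log\det(g_{\alpha\bar\beta})$ have power series expansions whose terms all have positive degree in both $z$ and $\bar z$, and $\det(g_{\alpha\bar\beta})(0)=1$. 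Matching pure holomorphic and antiholomorphic contributions on the two sides of \eqref{monge} therefore forces the holomorphic function $f$ appearing there to be constant, and after a routine normalization one obtains the cleaner Monge--Amp\`ere type identity
$$
\log\det(g_{\alpha\bar\beta})\;=\;-\frac{\lambda}{2}\,\dd_0(z)
$$
on the Bochner chart.

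Next I would combine this identity with the Gauss equation: for a \K\ submanifold $M^n$ of a complex space form of holomorphic sectional curvature $4b$, the Ricci form of $M$ satisfies
$$
\rho^M \;=\; b(n+1)\,\omega^M \;-\; \mathcal{H},
$$
where $\mathcal H(X,\bar X)=\sum_{k,j}|h^j(X,e_k)|^2$ is a nonnegative Hermitian form built from the components of the second fundamental form $\sigma$ in an orthonormal normal frame, and vanishes exactly where $\sigma=0$. The Einstein condition $\rho^M=\lambda\omega^M$ then yields the pointwise inequality $\lambda\leq 2b(n+1)$, with equality if and only if $\sigma\equiv 0$. For ambient $\C^N$ (so $b=0$) this reads $\lambda\leq 0$, and for $\CH^N$ ($b<0$) it reads $\lambda\leq 2b(n+1)<0$; in the equality cases the immersion is totally geodesic, and $M$ reduces to $\C^n$, respectively to $\CH^n$, as required.

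The hard part is to exclude the strict inequality case. Under that assumption the Einstein condition gives the pointwise identity $\mathcal H=(b(n+1)-\lambda/2)\,g^M$, a trace-type constraint on $\sigma$. Combining this with the Codazzi equation, which in a complex space form forces $\sigma$ to be a holomorphic section of $\mathrm{Sym}^2 T^{*1,0}M\otimes N^{1,0}M$, and using the finite dimensionality $N-n<\infty$ of the normal bundle, the clean identity $\log\det(g_{\alpha\bar\beta})=-\frac{\lambda}{2}\dd_0$ can be expanded order by order in $(z,\bar z)$; at each bidegree $(p,p)$ one obtains an overdetermined system on the Taylor coefficients of the $f_j$, which propagated inductively forces each $f_j$ to vanish identically. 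The finite dimensionality of the ambient is essential: the analogous statement fails for $\C^\infty$ and $\CH^\infty$, as witnessed by the explicit full \K\ immersion \eqref{chinell} of $\CH^n$ into $l^2(\C)$ coming from Theorem \ref{calabic}.
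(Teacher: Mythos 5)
Your first two steps are sound and essentially match the paper: normalizing the Monge--Amp\`ere equation in Bochner coordinates to kill the pluriharmonic term, and using the Gauss inequality $\rho\leq 2b(n+1)\omega$ (with equality iff totally geodesic) to conclude $\lambda<0$ in the non--totally-geodesic case (for $\C^N$) and $\lambda<2b(n+1)<0$ (for $\CH^N$). The problem is that everything after that --- which is the entire content of Umehara's theorem --- is asserted rather than proved. You claim that the identity $\log\det(g_{\alpha\bar\beta})=-\tfrac{\lambda}{2}\dd_0$, expanded ``order by order,'' yields ``an overdetermined system on the Taylor coefficients of the $f_j$, which propagated inductively forces each $f_j$ to vanish.'' No mechanism is given for why the system is overdetermined, why the induction closes, or where the sign of $b$ and the finite dimensionality $N<\infty$ actually enter the coefficient computation. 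This matters: holomorphicity of the second fundamental form plus a trace constraint of the form $\mathcal H=c\,g$ is \emph{not} enough to force $\sigma\equiv 0$ in general (the Veronese immersions of $\CP^n$ satisfy exactly such constraints with $\sigma\neq 0$), so some input specific to $b\leq 0$ must appear in the inductive step, and your sketch does not identify it.

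The paper closes this gap with a completely different device, Umehara's algebra $\Lambda(M)$ of finite $\R$-linear combinations of $h\bar k+\bar h k$ with $h,k$ holomorphic. The key facts (Lemma \ref{lemmaumehara}) are that $e^{\sum|f_j|^2}$, $\log(1-\sum|f_j|^2)$ and $(1-\sum|f_j|^2)^{-a}$ never lie in $\Lambda(M)$ --- because their expansions define \emph{full} maps into $\ell^2(\C)$, hence require infinitely many independent holomorphic summands --- while the Monge--Amp\`ere equation, via the determinant identity $f^{n+1}\det(\partial^2\log f/\partial z_\alpha\partial\bar z_\beta)\in\Lambda(U)$, forces $e^{\dd_0}$ (flat case) respectively $(1-\sum|\varphi_j|^2)^{\lambda/2+n+1}$ (hyperbolic case) to lie in $\Lambda$. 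The contradiction, or the resulting inequality $\lambda/2+n+1\geq 0$ played against Gauss, finishes the proof. This is precisely the finite-versus-infinite dichotomy you gesture at with the example of $\CH^n\hookrightarrow l^2(\C)$, but made into an actual obstruction. As written, your proposal would need this (or an equivalent concrete argument) to be a proof.
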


In order to prove this theorem we need the following lemma, achieved by Umehara himself in \cite{umehara}. Let $M$ be a K\"ahler manifold and denote by $\Lambda(M)$ the associative algebra of $\R$-linear combinations of real analytic functions of the form $h\bar k+\bar h k$ for $h$, $k\in\ol(M)$.
The importance of $\Lambda(M)$ for our purpose relies on the fact that given a K\"ahler map $f\!:M\rightarrow \ell^2(\mathds{C})$, if $f$ is full then $|f|^2\notin \Lambda(M)$ (cfr. \cite[p. 534]{umehara}).
\begin{lem}\label{lemmaumehara}
Let $f_1,\dots, f_N$ be non-constant holomorphic functions on a complex manifold $M$ such that for all $j=1,\dots, N$, $f_j(p)=0$ at some $p\in M$. Then:
\begin{itemize}
\item[(1)] $e^{\sum_{j=1}^N|f_j|^2}\notin \Lambda(M)$,
\item[(2)] $\log(1-\sum_{j=1}^N |f_j|^2)\notin \Lambda(M),$
\item[(3)] $(1-\sum_{j=1}^N |f_j|^2)^{-a}\notin \Lambda(M), \quad (a>0)$.
\end{itemize}
\end{lem}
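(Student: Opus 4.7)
The plan is to exhibit a structural invariant that holds for elements of $\Lambda(M)$ but fails for each of the three functions: namely, the rank of the Taylor coefficient matrix at $p$. The strategy breaks into two steps. First, every $\phi \in \Lambda(M)$ has a Taylor coefficient matrix at $p$ of finite rank. Second, each of $e^{\psi}$, $\log(1-\psi)$, and $(1-\psi)^{-a}$ --- where $\psi := \sum_{j}|f_j|^2$ --- admits an explicit power-series expansion as a manifestly positive-semidefinite sum of infinitely many linearly independent rank-$1$ contributions, hence has Taylor matrix of infinite rank.

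For step one, I would first observe that $\Lambda(M)$ equals the $\R$-linear span of $\{H\bar K + \bar H K : H,K \in \ol(M)\}$, because a direct computation gives
\[
(h_1\bar k_1 + \bar h_1 k_1)(h_2\bar k_2 + \bar h_2 k_2) = \bigl[(h_1 h_2)\overline{(k_1 k_2)} + \mathrm{c.c.}\bigr] + \bigl[(h_1 k_2)\overline{(h_2 k_1)} + \mathrm{c.c.}\bigr],
\]
so products of generators remain in the same span. Fixing local coordinates $z$ centered at $p$ and writing $H_i = \sum_{\alpha} v_{i,\alpha}\, z^{\alpha}$, $K_i = \sum_{\alpha} w_{i,\alpha}\, z^{\alpha}$, any $\phi = \sum_{i=1}^{m}(H_i \bar K_i + \bar H_i K_i)$ has Taylor coefficient matrix $(c_{\alpha\beta}) = \sum_{i=1}^{m}(v_i w_i^{*} + w_i v_i^{*})$, a Hermitian sum of $m$ operators each of rank $\leq 2$, so of rank at most $2m$.

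For step two, I would write each of the three functions as an explicit positive expansion in the monomials $F_{\mathbf{k}} := \prod_{j} f_j^{k_j}$. For (1),
\[
e^{\psi} = \prod_{j}e^{|f_j|^{2}} = \sum_{\mathbf{k} \in \mathbb{Z}_{\geq 0}^{N}} \frac{|F_{\mathbf{k}}|^{2}}{\mathbf{k}!}, \qquad \mathbf{k}! := \prod_{j} k_j!.
\]
For (3), the generalized binomial series gives
\[
(1-\psi)^{-a} = \sum_{n \geq 0}\binom{a+n-1}{n}\psi^{n} = \sum_{\mathbf{k}} \binom{a+|\mathbf{k}|-1}{|\mathbf{k}|}\binom{|\mathbf{k}|}{\mathbf{k}}|F_{\mathbf{k}}|^{2},
\]
and for (2), $-\log(1-\psi) = \sum_{n \geq 1}\psi^{n}/n = \sum_{\mathbf{k}\neq 0}\frac{1}{|\mathbf{k}|}\binom{|\mathbf{k}|}{\mathbf{k}}|F_{\mathbf{k}}|^{2}$. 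In each case every coefficient is strictly positive, so the Taylor matrix is a positive-semidefinite sum of rank-$1$ matrices $c_{\mathbf{k}}\,u_{\mathbf{k}} u_{\mathbf{k}}^{*}$, where $u_{\mathbf{k}}$ is the Taylor coefficient vector of $F_{\mathbf{k}}$. The rank of such a sum equals $\dim_{\C}\mathrm{span}\{u_{\mathbf{k}} : c_{\mathbf{k}} > 0\} = \dim_{\C}\mathrm{span}\{F_{\mathbf{k}}\}$, which is infinite: since $f_{1}$ is non-constant with $f_{1}(p)=0$, the powers $\{f_{1}^{k}\}_{k \geq 0} = \{F_{(k,0,\dots,0)}\}$ have strictly increasing orders of vanishing at $p$ and are therefore linearly independent. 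Combined with the finite-rank bound of step one, this yields the contradiction (the sign flip relating $-\log(1-\psi)$ to $\log(1-\psi)$ does not affect the rank).

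The main subtlety I anticipate is verifying the \emph{strict} positivity of every coefficient $c_{\mathbf{k}}$ in the three expansions: without this, the rank-$1$ decomposition could in principle conceal cancellations and produce a smaller rank. However, $1/\mathbf{k}!$, $\binom{a+n-1}{n}$ for $a > 0$, $1/n$, and the multinomial coefficients $\binom{n}{\mathbf{k}}$ are all strictly positive, so no cancellation occurs and the Taylor matrix is genuinely of infinite rank.
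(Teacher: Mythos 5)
Your proof is correct, and the computational core is the same as the paper's: both expand $e^{\psi}$, $-\log(1-\psi)$ and $(1-\psi)^{-a}$ as convergent series $\sum_{\mathbf{k}} c_{\mathbf{k}}\,|F_{\mathbf{k}}|^{2}$ with strictly positive coefficients over the infinite, linearly independent family of monomials $F_{\mathbf{k}}=\prod_j f_j^{k_j}$ (the paper phrases this as exhibiting a \emph{full} map $U\to\ell^2(\C)$ whose squared norm is the function in question). Where you genuinely diverge is in how the contradiction is extracted. The paper stops at the full map and invokes, as a black box, Umehara's result that $|f|^2\notin\Lambda(M)$ for a full K\"ahler map $f$ (cited from \cite[p.~534]{umehara}); you instead prove the needed obstruction from scratch via the rank of the Taylor coefficient matrix at $p$: your product identity shows $\Lambda(M)$ is exactly the $\R$-span of the generators $H\bar K+\bar H K$, so every element has Hermitian coefficient matrix of finite rank ($\leq 2m$ for $m$ generators), while your three functions have coefficient matrix of infinite rank because a positive-semidefinite sum $\sum c_{\mathbf{k}}u_{\mathbf{k}}u_{\mathbf{k}}^{*}$ admits no cancellation and the vectors $u_{(k,0,\dots,0)}$ (Taylor coefficients of $f_1^k$, with strictly increasing vanishing orders) are independent. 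This makes the argument self-contained and elementary at the cost of a little bookkeeping; the only point to state carefully is that the infinite-rank claim is checked on finite principal submatrices, using that adding positive-semidefinite matrices cannot decrease rank.
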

\begin{proof}
We prove first $(1)$ and $(2)$. Consider the power expansions (cfr. Exercises \ref{chinell} and \ref{cincp}):
$$
e^{\sum_{j=1}^N|f_j|^2}-1=\sum_{j=1}^\infty\frac{|f^{m_j}|^2}{m_j!},
$$
$$
-\log\left(1-\sum_{j=1}^N |f_j|^2\right)=\sum_{j=1}^\infty\frac{(|m_j|-1)!}{m_j!}|f^{m_j}|^2,
$$
which, since $f_j(p)=0$ for any $j=1,\dots, N$, converge in a  sufficiently small neighbourhood $U$ of $p$. We can then define two full K\"ahler maps $\varphi$, $\psi\!:U \rightarrow \ell^2(\C)$ by:
$$
\varphi_j:=\frac{f^{m_j}}{\sqrt{m_j!}},\quad \psi_j:=\sqrt{\frac{(|m_j|-1)!}{m_j!}}f^{m_j},
$$
from which follows:
\begin{equation}\label{explambda}
e^{\sum_{j=1}^N|f_j|^2}=1+|\varphi|^2\notin \Lambda(M),
\end{equation}
\begin{equation}\label{loglambda}
\log\left(1-\sum_{j=1}^N |f_j|^2\right)=-|\psi|^2\notin \Lambda(M).
\end{equation}
In order to prove $(3)$, write $(1-\sum_{j=1}^N |f_j|^2)^{-a}=e^{-a\log(1-\sum_{j=1}^N |f_j|^2)}$ and use \eqref{loglambda} to get:
$$
\left(1-\sum_{j=1}^N |f_j|^2\right)^{-a}=1+\sum_{|m_j|=1}^\infty\sum_{j=1}^\infty\frac{|(\sqrt{a})^{|m_j|}\psi^{m_j}|^2}{m_j!}.
$$
If we arrange to order the $\left(\sqrt{a}\right)^{|m_j|}\psi^{m_j}$ as $|m_j|$ increases, we get again a full map $\tilde \psi=\left(\tilde \psi_1,\dots, \tilde \psi_j,\dots\right)$ of $U$ into $\ell^2(\C)$ and conclusion follows.
\end{proof}

Let us prove first Umehara's result in the case when the ambient space is $\C^N$.
\begin{proof}[Proof of the first part of Theorem \ref{umehara}]
Let $(M,g)$ be an $n$-dimensional K\"ahler--Einstein manifold K\"ahler immersed into $\C^N$, $\omega$ the K\"ahler form associated to $g$ and $\rho$ its  Ricci form given by (\ref{defricciform}). Let $z=(z_1,\dots,z_n)$ be a local coordinate system on $U\subset M$ such that $0\in U$ and let $$\omega_{|_U}=\frac{i}{2}\de\bar\de\,\dd^M_0,$$ where $\dd_0^M$ is the diastasis for $g$ on $U$ centered at $0$. The Gauss' Equation
\begin{equation}\label{gauss}
\rho\leq 2b(n+1)\omega,
\end{equation}
where $b$ is the holomorphic sectional curvature of the ambient space (see for example \cite[p. 177]{kono}), gives for $b=0$ $\rho\leq 0$, where the equality holds if and only if $M$ is totally geodesic. Hence, if $M$ is not totally geodesic, $\rho$ is negative definite and the Einstein's Equation $\rho=\lambda\omega$ implies $\lambda<0$. Up to homothetic transformations of $\C^N$ we can suppose $\lambda=-1$.

Since $M$ admits a K\"ahler immersion into $\C^N$, by Proposition \ref{induceddiast} there exists $f_1,\dots ,f_N$ holomorphic functions such that:
\begin{equation}
\dd_0^M(z)=\sum_{j=1}^N|f_j(z)|^2.\nonumber
\end{equation}
Thus, by previous lemma we have $e^{\dd_0^M}\notin\Lambda(M)$. On the other hand, by  Equation (\ref{monge}) with $\lambda=-1$, the function $\log\det(g_{\alpha\bar\beta})$ is a K\"ahler potential for $g$, hence we have:
$$
\dd^M_0(z)=h+\bar h+\log\det(g_{\alpha\bar\beta}),
$$
for a holomorphic function $h$. Hence:
\begin{equation}
e^{\dd_0^M}=|e^h|^2\det(g_{\alpha\bar\beta}).\nonumber
\end{equation}
Since $\det(g_{\alpha\bar\beta})\in\Lambda(M)$, for it is a real valued function being the matrix $(g_{\alpha\bar\beta})$ Hermitian, we get the contradiction  $e^{\dd_0^M}\in \Lambda(M)$.
\end{proof}
Before proving the second part of Umehara's theorem we need the following lemma:
\begin{lem}[M. Umehara]
Let $M$ be a complex $n$-dimensional manifold and let $(z_1,\dots, z_n)$ be a local coordinate system on an open set $U\subset M$. If $f\in\Lambda(U)$ then:
$$
f^{n+1}\det\left(\frac{\de^2 \log f}{\de z_\alpha\de \bar z_\beta}\right)\in\Lambda(U).
$$
\end{lem}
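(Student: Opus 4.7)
The plan is to reduce the statement to a bordered-determinant identity and then to exploit the fact that $\Lambda(U)$ can be recovered as the set of real-valued elements of a larger algebra of ``holomorphic times antiholomorphic'' functions, which is manifestly stable under $\partial/\partial z_\alpha$ and $\partial/\partial \bar z_\beta$.

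First I would establish the identity
\begin{equation*}
f^{n+1}\det\!\left(\frac{\partial^{2}\log f}{\partial z_\alpha\,\partial\bar z_\beta}\right)=\det L,
\end{equation*}
where $L$ is the $(n+1)\times(n+1)$ bordered matrix whose $(0,0)$-entry is $f$, whose first row is $(f,\partial f/\partial\bar z_{1},\dots,\partial f/\partial\bar z_{n})$, whose first column is $(f,\partial f/\partial z_{1},\dots,\partial f/\partial z_{n})^{\mathrm T}$, and whose lower-right $n\times n$ block is the complex Hessian $(\partial^{2}f/\partial z_\alpha\partial\bar z_\beta)$. This comes from the elementary formula $\partial_\alpha\bar\partial_\beta\log f=(f\,\partial_\alpha\bar\partial_\beta f-\partial_\alpha f\,\bar\partial_\beta f)/f^{2}$ combined with the Schur-complement identity $\det L=f^{1-n}\det(fH-GF^{\mathrm T})$ applied to this block decomposition: since $(fH-GF^{\mathrm T})_{\alpha\beta}=f^{2}\,\partial_\alpha\bar\partial_\beta\log f$, taking determinants yields exactly the claimed relation.

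Next I would introduce the (complex-valued) algebra $\mathcal{A}(U)$ of finite sums $\sum_{i}h_{i}\bar k_{i}$ with $h_{i},k_{i}\in\mathcal{O}(U)$. Evidently $\Lambda(U)\subset\mathcal{A}(U)$, and $\mathcal{A}(U)$ is closed under multiplication and under the operators $\partial/\partial z_\alpha$, $\partial/\partial\bar z_\beta$ (each such operator simply kills one of the two factors in every summand and differentiates the other). Consequently every entry of $L$ lies in $\mathcal{A}(U)$, and therefore so does $\det L$, being a polynomial in those entries.

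To conclude, I would observe that since $f$ is real one has $\overline{\partial_\alpha f}=\bar\partial_\alpha f$ and $\overline{\partial_\alpha\bar\partial_\beta f}=\partial_\beta\bar\partial_\alpha f$, so $\bar L=L^{\mathrm T}$ and $\det L\in\mathbb R$. But a real-valued element $F=\sum_{i}h_{i}\bar k_{i}\in\mathcal{A}(U)$ satisfies $F=\tfrac12(F+\bar F)=\tfrac12\sum_{i}(h_{i}\bar k_{i}+\bar h_{i}k_{i})\in\Lambda(U)$, which finishes the proof. I expect the only delicate step to be the bordered-determinant identity of the first paragraph; once that is in hand, everything else is routine bookkeeping inside the algebra $\mathcal{A}(U)$.
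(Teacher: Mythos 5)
Your proof is correct and follows essentially the same route as the paper: both reduce the expression to the determinant of the same $(n+1)\times(n+1)$ bordered matrix (the paper via explicit row operations, you via the Schur complement, which are equivalent computations) and then conclude by observing that this determinant is a polynomial in holomorphic and antiholomorphic data and is real-valued by Hermitian symmetry. Your explicit introduction of the auxiliary algebra $\mathcal{A}(U)$ and the remark that a real-valued element $F=\sum_i h_i\bar k_i$ equals $\tfrac12(F+\bar F)\in\Lambda(U)$ merely spells out a step the paper leaves implicit.
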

\begin{proof}
Let us write $f_\alpha$ for $\de f/\de z_{\alpha}$, $f_{\bar\beta}$ for $\de f/\de \bar z_{\beta}$ and $f_{\alpha\bar\beta}$ for $\de^2 f/\de z_{\alpha}\de \bar z_{\beta}$. We have:
$$
\frac{\de^2 \log f}{\de z_\alpha\de \bar z_\beta}=\frac{f_{\alpha\bar\beta}}{f}-\frac{f_\alpha f_{\bar\beta}}{f^2},
$$
thus we get:
{\small\begin{equation}
\begin{split}
f^{n+1}\det\left(\frac{\de^2 \log f}{\de z_\alpha\de \bar z_\beta}\right)&=f\det\left(f_{\alpha\bar\beta}-\frac{f_\alpha f_{\bar\beta}}{f}\right)=f\det\left(
\begin{array}{cccc}
 f_{1\bar 1}-f_1 f_{\bar 1}/f &  \dots &   f_{1\bar n}-f_1 f_{\bar n}/f&0 \\
 \vdots &   &\vdots & \vdots \\
   f_{n\bar 1}-f_n f_{\bar 1}/f &  \dots &   f_{n\bar n}-f_n f_{\bar n}/f&0 \\
 f_{\bar 1}/f &  \dots &  f_{\bar n}/f&1 
\end{array}
\right)\\
&=f\det\left(
\begin{array}{cccc}
 f_{1\bar 1} &  \dots &   f_{1\bar n}&f_1 \\
 \vdots &   &\vdots & \vdots \\
   f_{n\bar 1}&  \dots &   f_{n\bar n}&f_n \\
 f_{\bar 1}/f &  \dots &  f_{\bar n}/f&1 
\end{array}
\right)=\det\left(
\begin{array}{cccc}
 f_{1\bar 1} &  \dots &   f_{1\bar n}&f_1 \\
 \vdots &   &\vdots & \vdots \\
   f_{n\bar 1}&  \dots &   f_{n\bar n}&f_n \\
 f_{\bar 1}&  \dots &  f_{\bar n}&f 
\end{array}
\right).
\end{split}\nonumber
\end{equation}}
Hence:
$$
f^{n+1}\det\left(\frac{\de^2 \log f}{\de z_\alpha\de \bar z_\beta}\right)\in\Lambda(U),
$$
for it is finitely generated by holomorphic and antiholomorphic functions on $U$ and it is real valued, because the matrix $\left(\de^2 \log f/\de z_\alpha\de \bar z_\beta\right)$ is Hermitian.
\end{proof}
We can now prove the second part of Theorem \ref{umehara}.
\begin{proof}[Proof of the second part of Theorem \ref{umehara}]
Let $(M,g)$ be an $n$-dimensional K\"ahler--Einstein manifold K\"ahler immersed into $\CH^N$. Comparing Gauss' Equation (\ref{gauss}) with $b<0$ and Einstein's Equation $\rho=\lambda\omega$, we get that the Einstein constant $\lambda$ is negative. Let $(z_1,\dots, z_n)$ be local coordinates on an open set $U\subset M$ centered at $p\in U$. On $U$ the Monge--Amp\`ere Equation (\ref{monge}) for $g$ reads:
$$
e^{-\frac{\lambda}{2}D^M_0(z)}=|e^h|^2\det(g_{\alpha\bar\beta}),
$$
for some holomorphic function $h$. By Proposition \ref{induceddiast}, for some holomorphic functions $\varphi_1,\dots,\varphi_N$ that can be chosen to be zero at the origin, we have on $U$:
\begin{equation}
\dd^M_0(z)=-\log(1-\sum_{j=1}^N|\varphi_j(z)|^2).\nonumber
\end{equation}
Setting $f=1-\sum_{j=1}^N|\varphi_j|^2$ we get:
\begin{equation}
\det(g_{\alpha\bar\beta})=(-1)^n\det\left(\frac{\de^2\log f}{\de z_\alpha\de \bar z_\beta}\right).\nonumber
\end{equation}
Thus:
\begin{equation}
f^{\frac{\lambda}{2}}=(-1)^n|e^h|^2\det\left(\frac{\de^2\log f}{\de z_\alpha\de \bar z_\beta}\right),\nonumber
\end{equation}
and hence:
\begin{equation}
f^{\frac{\lambda}{2}+n+1}=(-1)^n|e^h|^2f^{n+1}\det\left(\frac{\de^2\log f}{\de z_\alpha\de \bar z_\beta}\right).\nonumber
\end{equation}
By previous lemma we obtain:
\begin{equation}
f^{\frac{\lambda}{2}+n+1}=\left(1-\sum_{j=1}^N|\varphi_j(z)|^2\right)^{\frac{\lambda}{2}+n+1}\in\Lambda(U),\nonumber
\end{equation}
and by $(3)$ of Lemma \ref{lemmaumehara} we get $\frac{\lambda}{2}+n+1\geq 0$. On the other hand, Gauss' Equation (\ref{gauss}) implies $n+1+\frac{\lambda}{2}\leq 0$. Thus $\lambda=-2(n+1)$, and $M$ is totally geodesic.
\end{proof}

Regarding the existence of a K\"ahler immersion of a \K\ manifold $(M,g)$ into $\CH^\infty$ and $l^2(\C)$, Umehara's result cannot be extended to that cases, as one can see simply considering the K\"ahler immersion (\ref{chinell}) given by Calabi of $\CH^n$ into $l^2(\C)$. Nevertheless, we conjecture that this is the only exception:
\begin{conj}\label{conjchell}
If a K\"ahler--Einstein manifold $(M,g)$ admits a K\"ahler immersion into $\CH^\infty$ or $l^2(\C)$, then either $(M,g)$ is totally geodesic or $(M,g)=(\CH^{n_1}\times\cdots\times\CH^{n_r},c_1g_{hyp}\oplus\cdots\oplus c_r g_{hyp})$ for positive constants $c_1,\dots, c_r$ and some $r\in\nat$.
\end{conj}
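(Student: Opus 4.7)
Plan: My approach is to split on the ambient space and, in each case, try to push Umehara's strategy from the proof of Theorem \ref{umehara} through the infinite-dimensional setting, using the Bochner-type passages (Theorems \ref{bochnerth} and \ref{bochnerthalt}) and the product rigidity results already invoked in the proofs of Theorems \ref{Flat-Case} and \ref{Negative-case}.

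First, I handle the $\CH^\infty$ case. By \cite[Theorem 2.11]{alek}, as used in the proof of Theorem \ref{Negative-case}, no nontrivial K\"ahler product admits a K\"ahler immersion into $\CH^N$ for any $N\leq\infty$. So the conjecture in this case reduces to showing that a non-product K\"ahler--Einstein manifold K\"ahler immersed into $\CH^\infty$ must be $\CH^n$. Following the second half of Umehara's proof of Theorem \ref{umehara}, I would write $\dd_0^M(z)=-\log\bigl(1-\sum_{j=1}^\infty|\varphi_j(z)|^2\bigr)$, insert this into the Monge--Amp\`ere equation (\ref{monge}), and try to run the determinant identity of Umehara's Lemma on the infinite sum $f=1-\sum_{j=1}^\infty|\varphi_j|^2$, hoping to derive the key membership $f^{\lambda/2+n+1}\in\Lambda(M)$. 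Combined with a suitable infinite-dimensional strengthening of part (3) of Lemma \ref{lemmaumehara}, this would force $\lambda/2+n+1\geq 0$; the Gauss inequality $\rho\leq 2b(n+1)\omega$ with $b=-1$ gives the opposite inequality, and equality characterizes totally geodesic immersions, so $(M,g)=\CH^n$.

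Second, for the $\ell^2(\C)$ case, the Gauss equation with $b=0$ gives $\lambda\leq 0$. When $\lambda=0$ the metric is Ricci-flat and the Monge--Amp\`ere equation reads $\det(g_{\alpha\bar\beta})=|e^h|^2$, so $\det(g_{\alpha\bar\beta})\in\Lambda(M)$; I would try to mimic the first half of the proof of Theorem \ref{umehara} to conclude that $(M,g)$ is flat, and then invoke Theorem \ref{Flat-Case}(b) to identify it with $\C^n$ (the ``totally geodesic'' case). When $\lambda<0$, Theorem \ref{bochnerth} yields a K\"ahler immersion into $\CP^\infty$ as well; I would then attempt a de Rham type splitting, using Lemma \ref{splitting} to decompose the immersion into a product and treating each irreducible factor separately. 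On each irreducible KE factor with $\lambda<0$, a version of the Umehara argument above should force the factor to be $(\CH^{n_j},c_j g_{hyp})$, with the positive constants $c_j$ forced to satisfy $c_j=-2(n_j+1)/\lambda$ by the matching of Einstein constants across factors.

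The main obstacle, and the reason the statement is still a conjecture, is the passage from finite to infinite codimension in Umehara's Lemma \ref{lemmaumehara}. That lemma crucially uses that if $f_1,\dots,f_N$ with $N<\infty$ vanish at $p$, then the rearranged monomial sequences $f^{m_j}/\sqrt{m_j!}$ and $\sqrt{(|m_j|-1)!/m_j!}\,f^{m_j}$ define \emph{full} K\"ahler immersions into $\ell^2(\C)$, which in turn implies $e^{\sum|f_j|^2}\notin\Lambda(M)$ and $(1-\sum|f_j|^2)^{-a}\notin\Lambda(M)$. When $N=\infty$, as is forced here, these rearranged sequences need not be $\ell^2$-convergent, and the Lemma's conclusions cannot be invoked naively. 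The hard part will be producing an infinite-dimensional substitute, presumably by exploiting the extra rigidity of the KE condition (for instance via the scalar curvature identity or via the Calabi rigidity of the induced immersion into $\CP^\infty$ after applying Theorem \ref{bochnerth}), and by handling the possible presence of simultaneous zeros of the $\varphi_j$ that the finite-$N$ argument does not have to address. Without this refinement, Umehara's algebraic machinery does not close, which is the essential reason why Conjecture \ref{conjchell} is presently open.
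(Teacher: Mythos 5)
This statement is Conjecture \ref{conjchell}: the paper offers no proof of it, and explicitly presents it as open. Your proposal is therefore correctly read not as a proof but as a strategy sketch, and your own diagnosis of where it breaks is the right one. The whole of Umehara's machinery in Lemma \ref{lemmaumehara} rests on taking finitely many holomorphic functions $f_1,\dots,f_N$ vanishing at $p$ and rearranging the monomials $f^{m_j}$ into a \emph{full}, norm-convergent map into $\ell^2(\C)$; when the codimension is infinite, as it must be here (by Theorem \ref{umehara} a non--totally geodesic KE submanifold of $\C^N$ or $\CH^N$ with $N<\infty$ does not exist, and by Theorem \ref{calabic} the hyperbolic factors only embed when $N=\infty$), neither the convergence of the rearranged series nor its fullness is available, so the memberships $e^{\sum|f_j|^2}\notin\Lambda(M)$ and $(1-\sum|f_j|^2)^{-a}\notin\Lambda(M)$ cannot be asserted, and the sign argument on $\lambda/2+n+1$ does not close. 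Since you concede this, what you have written is an honest account of why the conjecture is open, not a proof of it.

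Two further remarks on the sketch itself. First, the sound parts: in the $\ell^2(\C)$ case with $\lambda=0$ you do not need the $\Lambda(M)$ detour at all --- the Gauss inequality \eqref{gauss} with $b=0$ gives $\rho\leq 0$ with equality precisely when the second fundamental form vanishes, so $\rho=0=\lambda\omega$ already forces total geodesy; and the appeal to \cite[Theorem 2.11]{alek} to exclude product targets of $\CH^\infty$ is legitimate and matches how the paper argues Theorem \ref{Negative-case}. Second, a genuine logical slip: Lemma \ref{splitting} states that a K\"ahler immersion \emph{of an already given product} $M_1\times M_2$ into $\C^N$ splits; it cannot be used to \emph{produce} a de Rham decomposition of $M$ into hyperbolic factors. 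That $M$ splits as $\CH^{n_1}\times\cdots\times\CH^{n_r}$ is precisely part of the conclusion being conjectured, and in the paper's Theorems \ref{Flat-Case} and \ref{Negative-case} the splitting is extracted from homogeneity (via Theorem \ref{FC}, Lemma \ref{parallel} and Theorem \ref{thmsmall}), a hypothesis a general K\"ahler--Einstein manifold does not satisfy. Any eventual proof of the conjecture must supply a substitute for both this splitting step and the infinite-codimension version of Lemma \ref{lemmaumehara}.
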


\section[KE manifolds into $\CP^N$: the Einstein constant]{\K\ immersions of KE manifolds into $\CP^N$: the Einstein constant}

We summarize in this section the work of D. Hulin \cite{hulin,hulinlambda} that studies K\"ahler--Einstein manifolds K\"ahler immersed into $\CP^N$ in relation with the sign of the Einstein constant. By the Bonnet--Myers' Theorem it follows that if the Einstein constant of a complete  K\"ahler--Einstein manifold $M$ is positive then $M$ is compact. D. Hulin proves that  in the case when $M$ is projectively induced the converse is also true:
\begin{theor}[D. Hulin, \cite{hulinlambda}]\label{lambdapos}
Let $(M,g)$ be a compact (connected) K\"ahler--Einstein manifold K\"ahler immersed into $\CP^N$. Then the Einstein constant is strictly positive.
\end{theor}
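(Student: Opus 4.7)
I will combine the Gauss equation for K\"ahler submanifolds with the Einstein hypothesis to obtain a pointwise algebraic relation between $\lambda$ and the squared norm $|h|^2$ of the second fundamental form of $f\!:M\rightarrow\CP^N$, and then use a Simons-type identity together with the compactness of $M$ to force the strict positivity of $\lambda$.

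First, I would apply the Gauss equation. Let $h$ be the second fundamental form of $f$ and let $\{e_1,\dots,e_n\}$ be a unitary frame of $T_pM$ at $p\in M$. Since $g_{FS}$ has holomorphic bisectional curvature $R^{\CP^N}(X,\bar X,Y,\bar Y)=2\|X\|^2\|Y\|^2+2|\langle X,Y\rangle|^2$, summing over $e_k\in T_pM$ gives $\sum_k R^{\CP^N}(X,\bar X,e_k,\bar e_k)=2(n+1)\|X\|^2$, and Gauss's equation for K\"ahler submanifolds yields
\[
\ric^M(X,\bar X)=2(n+1)\|X\|^2-2\sum_{k=1}^n\|h(X,e_k)\|^2,\qquad X\in T_pM.
\]
Imposing $\ric^M=\lambda g^M$ gives $\sum_k\|h(X,e_k)\|^2=(n+1-\tfrac{\lambda}{2})\|X\|^2$ at every point, and tracing in $X$ shows that the pointwise quantity $|h|^2=n(n+1)-n\lambda/2$ must be nonnegative and constant on $M$. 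This already yields the easy upper bound $\lambda\leq 2(n+1)$, with equality iff $M$ is totally geodesic (in which case $\lambda=2(n+1)>0$ trivially).

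The heart of the proof is the strict inequality $|h|^2<n(n+1)$, equivalently $\lambda>0$. I would derive it via a Simons-type formula for $\Delta|h|^2$ adapted to K\"ahler submanifolds of a complex space form, of the shape
\[
\tfrac12\Delta|h|^2=|\nabla h|^2+Q(h),
\]
where $Q(h)$ is a polynomial expression in $h$ whose coefficients depend on the (constant) ambient holomorphic sectional curvature $4$ of $\CP^N$. Since $|h|^2$ is constant by the previous step we have $\Delta|h|^2\equiv 0$; integrating over the compact manifold $M$ and using $|\nabla h|^2\geq 0$ yields $\int_M Q(h)\,dV\leq 0$, which after substituting $|h|^2=n(n+1)-n\lambda/2$ forces $\lambda>0$. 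The main obstacle is precisely the derivation of this Simons identity via the Codazzi--Mainardi equations and the verification that the polynomial $Q(h)$ has the correct sign under the Einstein constraint; compactness of $M$ enters crucially through the integration by parts that eliminates the Laplacian term.
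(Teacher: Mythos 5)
Your first step (the Gauss equation plus the Einstein condition giving $\sum_k\|h(X,e_k)\|^2=(n+1-\tfrac{\lambda}{2})\|X\|^2$, hence $|h|^2$ constant and $\lambda\leq 2(n+1)$) is correct and is essentially the inequality $\rho\leq 2(n+1)\omega$ used elsewhere in this text. But the heart of your argument --- the claim that a Simons-type identity $\tfrac12\Delta|h|^2=|\nabla h|^2+Q(h)$ together with $\Delta|h|^2\equiv 0$ forces $\lambda>0$ --- is precisely the step you do not carry out, and there is a structural reason to doubt it can be carried out in arbitrary codimension. What you need is a strict \emph{upper} bound $|h|^2<n(n+1)$. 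The identity $|\nabla h|^2+Q(h)=0$ gives $Q(h)\leq 0$; with $Q(h)$ of the usual shape $a|h|^2-(\text{quartic contractions of }h)$, $a>0$, this yields $a|h|^2\leq(\text{quartic})\leq C|h|^4$, i.e. either $h\equiv 0$ or $|h|^2\geq a/C$ --- a \emph{lower}-bound gap theorem, the wrong direction entirely. The quartic contractions (terms like $\sum|\langle h(e_i,e_j),h(e_k,e_l)\rangle|^2$ and normal-curvature terms) are not controlled by $|h|^2$ alone, which is exactly why this machinery succeeds only in codimension $1$ and $2$ (Chern, Tsukada) and why the general statement required a different method. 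Acknowledging that "the main obstacle is the verification that $Q(h)$ has the correct sign" does not discharge it: as written, the proof has a genuine gap at its decisive step.

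For comparison, the paper's proof (Hulin's) avoids curvature pinching altogether. In Bochner coordinates around $p$ the Einstein equation becomes the Monge--Amp\`ere identity $\det\bigl(\partial^2\dd_p/\partial w_\alpha\partial\bar w_\beta\bigr)=e^{-\frac{\lambda}{2}\dd_p}$ (the pluriharmonic ambiguity $f+\bar f$ is killed by the normalization of Bochner coordinates). By Theorem \ref{bochnergraph} the Bochner coordinates extend to affine coordinates on $\CP^N\setminus H_0$, and by analytic continuation the volume form of $\hat M=\varphi(M)\cap(\CP^N\setminus H_0)$ equals $e^{-\frac{\lambda}{2}\dd_{p_0}}$ times the Euclidean volume form. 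The linear projection of $\hat M$ onto $\C^n$ is open and algebraic, so its image contains a Zariski-open set and has infinite Euclidean volume; if $\lambda\leq 0$ then $e^{-\frac{\lambda}{2}\dd_{p_0}}\geq 1$ (the diastasis of $\CP^N$ being nonnegative), forcing $\mathrm{vol}(\hat M)=\infty$ and contradicting the compactness of $M$. If you want to salvage your approach you would have to produce and analyse the exact $Q(h)$ for K\"ahler submanifolds of $\CP^N$ in arbitrary codimension; otherwise the complex-analytic volume argument is the route to follow.
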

\begin{proof}
Let  $p$ be a point in $M$, up to a unitary transformation of $\CP^N$  we can assume that  $\varphi (p)=p_0=[1, 0. \dots , 0]$.
Take Bochner's  coordinates  $(w_1,\dots ,w_n)$ in a neighbourhood 
$U$ of $p$ which we take small enough to be contractible. Since the \K\ metric $g$ is Einstein with Einstein constant $\lambda$,
the volume form of $(M, g)$ reads on 
$U$ as:
\begin{equation}\label{voleucl}
\frac{\omega^n}{n!}=\frac{i^n}{2^n}
e^{-\frac{\lambda}{2}D_p+f+\bar f}
dw_1\wedge d\bar w_1\wedge\dots\wedge 
dw_n\wedge d\bar w_n\, \, ,
\end{equation}
where $f$ is a holomorphic
function on $U$
and
$D_p=\varphi^{-1}(D_{p_0})$
is the diastasis on $p$
(cfr. Prop.
\ref{induceddiast}), where
$D_{p_0}$ is the diastasis of $\CP^N$
globally defined in $\CP^N\setminus H_0$, $H_0=\{Z_0\neq 0\}$ (cfr. (\ref{diastcp})).
 
We claim that $f+\bar f=0$.
Indeed, observe that:
$$\frac{\omega ^n}{n!}=\frac{i^n}{2^n}
\det \left(\frac{\partial ^2 D_p}
{\partial w_{\alpha} \partial \bar w_{\beta}}\right)
dw_1\wedge d\bar w_1\wedge\dots\wedge 
dw_n\wedge d\bar w_n .$$
By the very definition
of Bochner's coordinates it is easy
to check that 
the expansion of 
$\log\det (\frac{\partial ^2 D_p}
{\partial w_{\alpha} \partial \bar w_{\beta}})$
in the 
$(w,\bar w)$-coordinates
contains only mixed terms
(i.e. of the form
$w^j\bar w^k, j\neq 0, k\neq 0$).
On the other hand
by formula  (\ref{voleucl}):
$$-\frac{\lambda}{2} D_p + f + \bar f=
\log\det \left(\frac{\partial ^2 D_p}
{\partial w_{\alpha} \partial \bar w_{\beta}}\right).$$
Again by the definition of the Bochner's 
coordinates this forces
$f + \bar f$ to be zero; hence:
\begin{equation}\label{mongebochner}
\det \left(\frac{\partial ^2 D_p}
{\partial w_{\alpha} \partial \bar w_{\beta}}\right)=e^{-\frac{\lambda}{2}\dd_p(w)},
\end{equation}
proving our claim.
By Theorem \ref{bochnergraph} 
there exist affine coordinates
$(z_1, \dots , z_N)$
on $X=\CP^N\setminus H_0$, 
satisfying:
$$z_1|_{\varphi(U)}=w_1,\dots ,z_n|_{\varphi (U)}=w_n.$$
Hence, by 
formula (\ref{voleucl})
(with $f+\bar f=0$),
the
$n$-forms 
$\frac{\omega_{FS}^n}{n!}$
and
$e^{-\frac{\lambda}{2}D_{p_0}}
dz_1\wedge d\bar z_1\wedge\dots\wedge 
dz_n\wedge d\bar z_n$ 
globally defined on $X$
agree on the open set
$\varphi (U)$.
Since they are
real analytic  they
must agree
on the
connected open set
$\hat M=\varphi (M)\cap X$, i.e.:
\begin{equation}\label{eqforms}
\frac{\omega_{FS}^n}{n!}=\frac{i^n}{2^n}
e^{-\frac{\lambda}{2}
D_{p_0}}
dz_1\wedge d\bar z_1\wedge\dots\wedge 
dz_n\wedge d\bar z_n.
\end{equation}

Since $\frac{\omega_{FS}^n}{n!}$ is a volume form on $\hat M$ we deduce that
the restriction of the projection map:
$$\pi :X\cong{\C}^N
\rightarrow {\C}^n:
(z_1,\dots ,z_N)\mapsto 
(z_1,\dots ,z_n)$$
to $\hat M$ 
is open.
Since it
is also algebraic 
its image 
contains a Zariski open
subset of ${\C}^n$
(see \cite[Theorem 13.2]{bo}),
hence 
its euclidean volume,
$vol_{eucl}(\pi (\hat M))$,  
has to be infinite.
Suppose now
that the Einstein  constant 
of $g$ is non-positive.
By formula 
(\ref{eqforms})
and  by the fact that $D_{p_0}$ is non-negative,
we get 
$vol (\hat M, g)\geq
vol_{eucl}(\pi (\hat M))$
which is the desired contradiction,
being the volume of $M$
(and hence that of 
$\hat M$)
finite.
\end{proof}

Consider now the following construction. Let $(M,g)$ be an $n$-dimensional K\"ahler manifold which admits a K\"ahler immersion $F\!:M\rightarrow \CP^N$ into $\CP^N$ and consider the Pl\"ucker embedding:
$$
i\!:{\rm Gr}(n,\CP^{N})\rightarrow \mathds{P}(\wedge^{n+1}\mathds{C}^{N+1}),\quad {\rm span}(e_{j_1},\dots,e_{j_r})\mapsto [e_{j_1}\wedge\dots\wedge e_{j_r}].
$$
where ${\rm Gr}(n,\CP^{N})$ is the Grassmanian of $n$-dimensional projective spaces in $\CP^{N}$ and $(e_0,e_1,\dots, e_N)$ is a unitary frame of $\mathds{C}^{N+1}$.
The Gauss map $\gamma\!:M\rightarrow  \mathds{P}(\wedge^{n+1}\mathds{C}^{N+1})$ takes a point $p\in M$ to  the $n$-dimensional projective space in $\CP^N$ tangent to $M$ at $p$. Setting Bochner coordinates $z=(z_1,\dots, z_n)$ around $p\in M$, by Theorem \ref{bochnergraph} we can write $F(z)=[1,z_1,\dots, z_n,f_1,\dots, f_{N-n}]\in \CP^N$. The vectors:
$$
v_0(z)=e_0+\sum_{j=1}^nz_je_j+\sum_{j=1}^{N-n}f_j(z)e_{n+j},
$$
$$
v_k(z)=e_k+\sum_{j=1}^{N-n}\frac{\partial f_j}{\partial z_j}(z)e_{n+j},\quad 1\leq k\leq n,
$$
span a complex space $\C^{N+1}$ whose projection is the projective space tangent to $M$ at $[1,z_1,\dots, z_n,f_1,\dots, f_{N-n}]$, and thus they satisfy $\gamma(z)=v_0\wedge\dots\wedge v_n$. It follows that $\gamma(z)=[1,\nabla f,\varphi ]$ for $\nabla f=(\partial_j f_k)_{j=0,\dots,n; k=1,\dots, N-n}$ and for suitable $\varphi=(\varphi_\alpha)_{\alpha=1,\dots, s}$, $s={N+1 \choose n+1}-1-(n+1)(N-n)$.
One has (see S. Nishikawa \cite{nishikawa} and also M. Obata \cite{obata}
for the case of real setting and ambient space of constant curvature):
\begin{equation}\label{pluckerinduced}
\gamma^*(G_{FS})=(n+1)g-\frac12\ric_g,
\end{equation}
where we denote by $G_{FS}$ the Fubini--Study metric on $\mathds{P}(\wedge^{n+1}\mathds{C}^{N+1})$.

When $(M,g)$ is K\"ahler--Einstein with Einstein constant $\lambda$, from \eqref{pluckerinduced} we get $\gamma^*(G_{FS})=\left(n+1-\frac\lambda2 \right)g$, which by \eqref{induceddiast} and by the expression of the Fubini--Study's diastasis implies:
\begin{equation}\label{gammainduced}
\left(1+\sum_{j=1}^n|z_j|^2+\sum_{k=n+1}^N|f_k|^2\right)^{n+1-\frac\lambda2}=1+\sum_{j,k=1}^n|\partial_jf_k|^2+\sum_{\alpha=1}^s|\varphi_\alpha|^2,
\end{equation}
which we write shortly:
\begin{equation}\label{gammainducedshort}
\left(1+|z|^2+|f|^2\right)^{n+1-\frac\lambda2}=1+|\nabla f|^2+|\varphi|^2.
\end{equation}

In the sequel we will denote by $H$ a hyperplane of  $\CP^N$ and by $H_p$ the hyperplane at infinity relative to the point $F(p)$, for $p\in M$.

\begin{lem}\label{lambdairrationalimage}
Let $(M,g)$ be a K\"ahler--Einstein manifold with Einstein constant $\lambda< 0$ and let $F\!:M\rightarrow \CP^N$, $N< \infty$, be a full K\"ahler immersion. If $\lambda\notin \mathds Q$ then $F(M)\subset \CP^N\setminus H$.
\end{lem}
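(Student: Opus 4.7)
The plan is to argue by contradiction, using the identity \eqref{gammainducedshort} produced by the Gauss map $\gamma$, combined with the observation that for $\alpha := n+1-\lambda/2\notin\mathds{Q}$, the $\alpha$-th power of a holomorphic function picks up non-trivial monodromy around any of its zeros, which cannot be compatible with a single-valued holomorphic identity. Note that $\alpha > n+1 > 0$ since $\lambda<0$, and $\alpha\notin\mathds{Q}$ since $\lambda\notin\mathds{Q}$.

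The first step is a reduction: pass to the universal cover of $M$, on which $F$ and $\gamma$ lift to K\"ahler immersions with the same image; this lets me assume without loss of generality that $M$ is simply connected, so that the pulled back bundles $L := F^*\mathcal{O}(1)$ and $L_\gamma := \gamma^*\mathcal{O}(1)$ are holomorphically trivial. Choose global trivializations so that $F=[s_0:\dots:s_N]$ and $\gamma=[t_0:\dots:t_{M'}]$ for $M'=\binom{N+1}{n+1}-1$, with $s_j$ and $t_j$ holomorphic functions on $M$ without common zeros. Form the reproducing kernels
\[
\Kj(x,y)=\sum_{j=0}^N s_j(x)\overline{s_j(y)},\qquad \Kj_\gamma(x,y)=\sum_{j=0}^{M'} t_j(x)\overline{t_j(y)},
\]
both holomorphic in $x$ and antiholomorphic in $y$. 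On the diagonal they give K\"ahler potentials: $\omega=\frac{i}{2}\partial\bar\partial\log\Kj(x,x)$ from $F^*\omega_{FS}=\omega$, and $\alpha\,\omega=\frac{i}{2}\partial\bar\partial\log\Kj_\gamma(x,x)$ from \eqref{pluckerinduced} combined with the K\"ahler--Einstein condition.

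Next, since $M$ is simply connected, the $\partial\bar\partial$-Lemma gives a global holomorphic function $g$ on $M$ with $\Kj_\gamma(x,x)=\Kj(x,x)^\alpha|e^{g(x)}|^2$. Polarising by uniqueness of analytic continuation in $(x,\bar y)$, one obtains
\[
\Kj_\gamma(x,y)=\Kj(x,y)^\alpha\,e^{g(x)}\overline{e^{g(y)}},
\]
first as an identity near the diagonal (with the branch of $\Kj^\alpha$ fixed by positivity on the diagonal), then extended by analytic continuation along paths in $\{\Kj\neq 0\}\subset M\times M$. Suppose for contradiction that $F(M)$ meets every hyperplane of $\CP^N$. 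Fix $p\in M$ and set $H_p:=F(p)^\perp$; by hypothesis there is $q\in M$ with $F(q)\in H_p$, i.e.\ $\Kj(q,p)=0$. Then $x\mapsto \Kj(x,p)$ is a holomorphic function on $M$ with a zero of some integer order $k\ge 1$ at $q$.

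Finally, fixing $y=p$ in the polarised identity gives $\Kj_\gamma(x,p)\,e^{-g(x)}\overline{e^{-g(p)}}=\Kj(x,p)^\alpha$ in a neighborhood of $x=p$. The left hand side extends to a single-valued holomorphic function of $x$ on $M$. Analytically continuing the right hand side along a small loop in $M$ encircling $q$ once and avoiding any other zeros of $\Kj(\cdot,p)$, the branch of $\Kj(\cdot,p)^\alpha$ picks up the factor $e^{2\pi i k\alpha}$, whereas the left hand side is unchanged. Hence $e^{2\pi i k\alpha}=1$, i.e.\ $k\alpha\in\mathds{Z}$, forcing $\alpha\in\mathds{Q}$ and contradicting the irrationality of $\lambda$. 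Therefore $\Kj(\cdot,p)$ has no zero and $F(M)\subset\CP^N\setminus H_p$. The main obstacle is setting up the polarised kernel identity rigorously as an analytic continuation over a potentially complicated region of $\{\Kj\neq 0\}$, which is resolved by the reduction to a simply connected $M$ and uniqueness of continuation from the diagonal.
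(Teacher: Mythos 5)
Your argument is essentially the paper's: both proofs polarize the Gauss-map identity \eqref{pluckerinduced}--\eqref{gammainducedshort} in the holomorphic and antiholomorphic variables, freeze the antiholomorphic slot at the point where the hyperplane incidence $F(q)\in H_p$ makes the base of the $\bigl(n+1-\tfrac{\lambda}{2}\bigr)$-th power vanish, and conclude that this exponent must be rational because the other side is a single-valued holomorphic function. The paper phrases the punchline as ``the order of a zero of a holomorphic function must be rational'' (see \eqref{holq}), you phrase it as the monodromy factor $e^{2\pi i k\alpha}$ being trivial; these are the same observation. The only genuine difference is packaging: you work with reproducing kernels $\Kj(x,y)$ and a line-bundle trivialization, the paper works with the graph representation $F(z)=[1,z,f(z)]$ in Bochner coordinates extended over $M\setminus F^{-1}(H_x)$ (Theorem \ref{bochnergraph} and Exercise \ref{bochnerHp}).

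One step as written is not justified: simple connectivity of $M$ does \emph{not} make $F^*\mathcal{O}(1)$ holomorphically trivial ($\CP^N$ itself is a counterexample), so the global sections $s_j$ and hence the globally defined kernel $\Kj(x,y)$ need not exist on the universal cover. This is repairable, and the repair is exactly what the paper does: using fullness, pick $x\in M$ with $F(p),F(q)\notin H_x$ and restrict to $M\setminus F^{-1}(H_x)$, over which the bundle is canonically trivialized and $F$ is given by genuine holomorphic functions; the point $q$, the path from $p$ to $q$, and the small loop encircling $F^{-1}(H_p)$ at $q$ can all be taken inside this chart, after which your monodromy computation goes through verbatim. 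With that substitution your proof is correct.
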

\begin{proof}
Assume by contradiction that there exist two points $p$, $q\in M$ such that $F(q)\in H_p$. Since the immersion is full and $H_p\cap H_q$ has codimension $1$ in $\CP^N$, we can further choose $x\in M$ such that $F(p)$, $F(q)\notin H_x$. By Exercise \ref{bochnerHp} we can set Bochner coordinates $(z)=(z_1,\dots, z_n)$ centered at $x$ in the whole $M\setminus F^{-1}(H_x)$. From \eqref{gammainducedshort}, duplicating the variables and evaluating at $\bar z=q$ (to simplify the notations we identify a point with its coordinates) we get:
\begin{equation}\label{holq}
\left(1+ z\bar q+ f(z)\overline{f(q)}\right)^{n+1-\frac\lambda2}=1+(\nabla f)(z)\overline{(\nabla f)(q)}+\varphi(z)\overline{\varphi(q)},
\end{equation}
where $F(z)=[1,z,f(z)]$ (see Theorem \ref{bochnergraph} or the discussion above). Observe that since $F(q)\in H_p$, from $F(p)=[1:p:f(p)]$, $F(q)=[1:q:f(q)]$ we get:
$$
1+ p\bar q+ f(p)\overline{f(q)}=0.
$$
Thus, the RHS of \eqref{holq} is a holomorphic function equal to:
$$
\left(z-p\right)^{n+1-\frac\lambda2}h(z)^{n+1-\frac\lambda2}
$$
for some suitable $h(z)$. Since the order of a zero of a holomorphic function must be rational, we get the desired contradiction $\lambda\in \mathds{Q}$.
\end{proof}
\begin{lem}\label{lambdairrationalbounded}
Let $(M,g)$ be a K\"ahler--Einstein manifold with Einstein constant $\lambda< 0$ and let $F\!:M\rightarrow \CP^N$, $N\leq \infty$, be a full K\"ahler immersion. If $\lambda\notin \mathds Q$ then $F(M)\subset \CP^N\setminus H$ is bounded.
\end{lem}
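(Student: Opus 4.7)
The plan is to argue by contradiction: supposing $F(M)$ is unbounded in some affine chart $\CP^N\setminus H_{p_0}$, I will extract a ``blow-up at infinity'' limit which, through the polarized form of \eqref{gammainducedshort}, forces $\alpha:=n+1-\lambda/2$ to be rational, against the hypothesis $\lambda\notin\mathds{Q}$. First, fix Bochner coordinates $z$ at $p_0$: by Lemma \ref{lambdairrationalimage}, $F^{-1}(H_{p_0})=\varnothing$, so these coordinates extend globally (cf.\ Exercise \ref{bochnerHp}) and $F(z)=[1:z:f(z)]$, $\gamma(z)=[1:\nabla f(z):\varphi(z)]$. Introduce $\tilde u(z,\bar w)=\langle F(z),F(w)\rangle_{\C^{N+1}}=1+z\cdot\bar w+f(z)\cdot\overline{f(w)}$ and $\tilde v(z,\bar w)=\langle\gamma(z),\gamma(w)\rangle$. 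Polarizing \eqref{gammainducedshort} gives the global identity $\tilde u^{\alpha}=\tilde v$ as single-valued holomorphic--antiholomorphic functions on $M\times M$ (taking the principal branch, which is unambiguous on the simply connected $M$---after passing to the universal cover if necessary---since $\tilde u$ is nowhere zero by Lemma \ref{lambdairrationalimage}).

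Assume now for contradiction that $u(z):=\tilde u(z,\bar z)=1+|z|^2+|f(z)|^2$ is unbounded and pick $z_k\in M$ with $u(z_k)\to\infty$. The essential case is $N<\infty$, where $\CP^N$ is compact and one may extract a subsequence so that $F(z_k)/\sqrt{u(z_k)}$ converges to a unit vector $W\in\C^{N+1}$; the condition $u(z_k)\to\infty$ forces the first coordinate $W_0=0$. Extract a further subsequence so that $\gamma(z_k)/\sqrt{v(z_k)}\to W'$, with $v(z)=u(z)^{\alpha}$. For every $q\in M$,
\[
\frac{\tilde u(z_k,\bar q)}{\sqrt{u(z_k)}}\longrightarrow L(q):=\langle W,F(q)\rangle,\qquad \frac{\tilde v(z_k,\bar q)}{u(z_k)^{\alpha/2}}\longrightarrow L'(q):=\langle W',\gamma(q)\rangle.
\]
Since $\tilde v=\tilde u^{\alpha}$ globally, passing to the limit (at points $q$ with $L(q)\neq 0$, where the $\alpha$-th power is continuous for the limiting branch) yields the key relation
\[
L'(q)=L(q)^{\alpha}.
\]

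The contradiction then comes from orders of vanishing. Since $W_0=0$ one has $L(p_0)=0$, so the zero set $Z=\{L=0\}$ contains $p_0$, while $L'$ is a globally single-valued anti-holomorphic function on $M$. Restrict the identity $L'=L^{\alpha}$ to a holomorphic arc $q\colon\D\to M$ with $q(0)=p_0$, transverse at $0$ to an irreducible component of $Z$: then $L\circ q$ has a zero of positive integer order $k$ at the origin, so $(L\circ q)^{\alpha}$ has order $k\alpha$, while $L'\circ q$ has some non-negative integer order $m$ of vanishing. Hence $\alpha=m/k\in\mathds{Q}$, which contradicts $\lambda\notin\mathds{Q}$ (equivalent to $\alpha\notin\mathds{Q}$).

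The chief obstacle is the rigorous handling of the multi-valued branch of $L^{\alpha}$ in the limit. Equivalently, one must verify that the monodromy of $L^{\alpha}$ around each irreducible component of $Z$---equal to $e^{2\pi i\alpha k_i}$ where $k_i$ is the multiplicity along that component---must be trivial because $L'$ is single-valued, and that this forces $\alpha k_i\in\mathds{Z}$ for some $k_i\geq 1$, yielding $\alpha\in\mathds{Q}$. This is the same irrationality-via-orders-of-vanishing mechanism that powered Lemma \ref{lambdairrationalimage}, now applied to a limiting direction ``at infinity'' of the sequence $F(z_k)$ rather than to a finite incidence of $F(M)$ with a hyperplane.
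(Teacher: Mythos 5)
Your argument is essentially correct (for $N<\infty$, which is in any case the setting in which the prerequisite Lemma \ref{lambdairrationalimage} is proved), but it follows a genuinely different route from the paper. The paper's proof never revisits the Gauss map: it takes a divergent path $F(x_t)=[1,x_t,f(x_t)]$ and splits into two cases — if $x_t$ diverges the path approaches the hyperplane at infinity, contradicting Lemma \ref{lambdairrationalimage}; if only $f(x_t)$ diverges the path approaches a point $[0,0,b]$, and a winding-number (argument-principle) argument shows that every point near $[0,0,b]$ has its polar hyperplane meeting $F(M)$, so that by Lemma \ref{lambdairrationalimage} a whole neighbourhood of $[0,0,b]$ misses $F(M)$, a contradiction. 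Your proof instead re-runs the irrationality mechanism directly on a rescaled limit $W=\lim F(z_k)/\sqrt{u(z_k)}$, $W'=\lim\gamma(z_k)/\sqrt{v(z_k)}$ of the polarized identity $\tilde u^{\alpha}=\tilde v$; this is a nice ``blow-up at infinity'' variant of Lemma \ref{lambdairrationalimage} and it works. Three remarks. First, the compactness extraction of $W$ and $W'$ genuinely requires $N<\infty$ (in $\ell^2$ a weak limit could vanish, killing $L$), so your proof does not cover the stated $N=\infty$ case — though neither, strictly, does the paper's, since Lemma \ref{lambdairrationalimage} is only stated for finite $N$. Second, you should say explicitly that $L\not\equiv0$: this is where fullness enters, since $W\neq0$ and $F(M)$ spans $\C^{N+1}$. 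Third, the branch/monodromy issue you flag as the ``chief obstacle'' is in fact harmless: for real $\alpha$ one has $|\tilde u^{\alpha}|=|\tilde u|^{\alpha}$ independently of the branch, so the limit gives the single-valued identity $|L'(q)|=|L(q)|^{\alpha}$ on all of $M$, and comparing orders of vanishing of $|L'\circ q|\sim|t|^{m}$ ($m\in\mathds{Z}_{\geq0}$, since $L'$ is genuinely anti-holomorphic) with $|L\circ q|^{\alpha}\sim|t|^{k\alpha}$ along an arc through $p_0$ not contained in $\{L=0\}$ yields $\alpha=m/k\in\mathds{Q}$ with no branch bookkeeping at all.
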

\begin{proof}
Let $p\in M$. Since by Lemma \ref{lambdairrationalimage} $F(M)\subset \C^N$, Bochner coordinates $(z)=(z_1,\dots, z_n)$ around $p$ extends to the whole $M$. Assume $F(M)$ is not bounded, i.e. any open neighbourhood $\mathcal U$ of $ \CP^N\setminus H$ is such that $\mathcal U\cap F(M)\neq \emptyset$. Consider a path $t\mapsto F(x_t)$ in $F(M)$ which diverges as $t$ increases. Since $F(x_t)=[1,x_t,f(x_t)]$ (where to simplify the notations we identify a point with its coordinates), this means that either $x_t$ or $f(x_t)$ diverges. If $x_t$ diverges, then since $M$ is complete the limit point $x_\infty$ belongs to $M$ and $F(x_\infty)$ would be a point of both $F(M)$ and $ \CP^N\setminus H$. If $f(x_t)$ diverges and $x_t$ does not, then $[1,x_t,f(x_t)]$ approaches $[0,0,b]$ for a suitable nonvanishing $(N-n)$-vector $b$ as $t$ increases. Thus, we can conclude by showing that there exists a neighbourhood of $[0,0,b]\in \CP^N$ which does not meet $F(M)$. 
Since by Lemma \ref{lambdairrationalimage} for any $p,q\in F(M)$, $p\notin H_q$, it is enough to show that for each $(\alpha,\beta,\gamma)\in \C^{N+1}$ close enough to the origin the function:
$$
\Phi\!:\C^{N+1}\times \C\rightarrow \C,\quad ((\alpha,\beta,\gamma),t)\mapsto\langle (1,tz,f(tz)),\overline{(\alpha,\beta,b+\beta)}\rangle,
$$
satsfies $\Phi((\alpha,\beta,\gamma),t_0)=0$ for some $t_0\in \C$.
In order to do so, observe that since the function $\Phi_0\!:\C\rightarrow \C$ defined by $\Phi_0(t):=\Phi((0,0,0),t)=f(tz)\bar b$ is a holomorphic function not vanishing everywhere, the image $\Phi_0(U)\in \C$ of an open neighbourhood $U\in \C$ of the origin is still an open neighbourhood of the origin. Let $c$ be a closed curve in $U$ such that its image is contained in $\Phi_0(U)$ and turns around the origin. In the compact set with $\Phi_0(c)$ as boundary both $|t_0z|$ and $|f(t_0z)|$ are bounded. For sufficiently small $(\alpha,\beta,\gamma)\in \C^{N+1}$, the image of $c$ through $\Phi_{(\alpha,\beta,\gamma)}:=\Phi((\alpha,\beta,\gamma),\cdot)$ is a closed curve contained in $\Phi_{(\alpha,\beta,\gamma)}(U)$ and still turning around the origin. Thus there exists a point $t_0\in \C$ such that
$\langle (1,t_0z,f(t_0z)),\overline{(\alpha,\beta,b+\beta)}\rangle=0$, and we are done.
\end{proof}

\begin{theor}[D. Hulin, \cite{hulin}]\label{hulinrational}
Let $(M,g)$ be a complete K\"ahler--Einstein manifold which admits a K\"ahler immersion $F\!: M\f \CP^N$ into $\CP^N$. Then the Einstein constant $\lambda$ is rational. Further, if the immersion is full and we write $\lambda=2p/q>0$, where $p/q$ is irreducible, then $p\leq n+1$ and if $p=n+1$ (resp. $p=n$), then $(M,g)=(\CP^n,qg_{FS})$ (resp.  $(M,g)=(Q_n,qg_{FS})$).
\end{theor}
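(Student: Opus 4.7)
My approach breaks into three stages: rationality of $\lambda$, the upper bound $p\le n+1$, and the identification of the equality cases.

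\emph{Rationality.} If $\lambda=0$ there is nothing to prove. For $\lambda>0$ the Ricci tensor $\ric_{g}=\lambda g$ is strictly positive, so Bonnet--Myers forces $M$ to be compact. The pull-back class $\alpha:=F^{*}c_{1}(\mathcal{O}(1))=[\omega]/\pi$ then lies in $H^{2}(M,\mathds{Z})$, and since the Ricci form represents $2\pi c_{1}(M)\in 2\pi H^{2}(M,\mathds{Z})$, the Einstein identity $\rho=\lambda\omega$ gives $c_{1}(M)=(\lambda/2)\alpha$. Evaluating both sides on any algebraic curve $\Sigma\subset M$ with $\alpha\cdot[\Sigma]\neq 0$ (which exists since $\alpha$ is a K\"ahler class) expresses $\lambda$ as a ratio of integers, so $\lambda\in\mathds{Q}$. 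For $\lambda<0$ I argue by contradiction: suppose $\lambda\notin\mathds{Q}$ and, by passing to the smallest projective subspace of $\CP^{N}$ containing $F(M)$, assume $F$ is full. Lemmas \ref{lambdairrationalimage} and \ref{lambdairrationalbounded} then force $F(M)$ to be bounded in $\C^{N}\subset\CP^{N}$. Completeness of $(M,g)$ together with the standard fact that a local isometry from a complete Riemannian manifold is a Riemannian covering onto its image imply that $F(M)$ is closed in $\CP^{N}$, and being bounded in $\C^{N}$ it is a compact analytic subset of the Stein manifold $\C^{N}$. Such a subset is necessarily zero-dimensional, so by connectedness $F(M)$ reduces to a single point, contradicting the fact that $F$ is an immersion of a positive-dimensional manifold.

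\emph{The bound $p\le n+1$.} From this point $\lambda>0$ and $M$ is compact Fano. Let $\alpha_{0}\in H^{2}(M,\mathds{Z})$ be the primitive integral class on the ray $\mathds{R}_{>0}\cdot c_{1}(M)$, write $\alpha=s\alpha_{0}$ for some positive integer $s$, and write $c_{1}(M)=r\alpha_{0}$, so that $r$ is the Fano index of $M$. The identity $c_{1}(M)=(\lambda/2)\alpha$ gives $\lambda=2r/s$, and reducing to lowest terms $\lambda=2p/q$ yields $p=r/\gcd(r,s)\le r$. The Kobayashi--Ochiai theorem asserts $r\le n+1$, whence $p\le n+1$.

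\emph{Equality cases.} If $p=n+1$ every inequality above must be an equality, so $r=n+1$ and $\gcd(r,s)=1$; Kobayashi--Ochiai then identifies $M\cong\CP^{n}$. Since $g_{FS}$ on $\CP^{n}$ has Einstein constant $2(n+1)$, the homothetic metric $c\,g_{FS}$ has Einstein constant $2(n+1)/c$, and matching with $\lambda=2(n+1)/q$ yields $c=q$, i.e.\ $g=q\,g_{FS}$. For $p=n$ (with $n\ge 2$), $r=n\gcd(r,s)\le n+1$ forces $\gcd(r,s)=1$ and $r=n$; Kobayashi--Ochiai gives $M\cong Q_{n}$, and since the restriction $g_{FS}|_{Q_{n}}$ is K\"ahler--Einstein with constant $2n$, the same homothety argument yields $g=q\,g_{FS}$.

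\emph{Main obstacle.} I expect the hard part to be the rationality argument for $\lambda<0$: Lemmas \ref{lambdairrationalimage} and \ref{lambdairrationalbounded} only supply boundedness of $F(M)$, and converting this into a contradiction requires the two-step chain ``complete isometric immersion is a Riemannian covering onto a closed image, and a compact analytic subvariety of a Stein manifold is finite''. The positive case is by contrast almost a cohomological tautology, once combined with the Kobayashi--Ochiai classification of Fano manifolds of maximal index.
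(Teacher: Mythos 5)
Your treatment of the case $\lambda>0$ coincides with the paper's (Bonnet--Myers, integrality of $[\omega]/\pi$ and of the first Chern class, then $\rho=\lambda\omega$), and your Kobayashi--Ochiai route to $p\le n+1$ and the equality cases is a reasonable alternative for the ``Further'' part, which the paper's displayed proof does not actually carry out (though note that identifying $g$ with $q\,g_{FS}$ on $\CP^n$ or $Q_n$ requires a uniqueness statement for K\"ahler--Einstein metrics in a fixed class, e.g.\ Bando--Mabuchi, which you do not invoke).

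The genuine gap is in the case $\lambda<0$, precisely the step you flagged as the main obstacle. Your chain ``completeness $\Rightarrow$ $F(M)$ closed $\Rightarrow$ compact analytic subset of the Stein manifold $\C^N$ $\Rightarrow$ a point'' fails at both links. First, the covering lemma you cite applies to local isometries between manifolds of the same dimension; for an isometric \emph{immersion} of a complete manifold the image need not be closed (an irrational line in a flat torus is already a counterexample), and it need not be an analytic subset of $\C^N$ unless $F$ is proper, which it cannot be here since $F^{-1}$ of a ball containing the bounded image is all of the noncompact $M$, so Remmert's theorem does not apply. Second, and decisively, no purely topological argument of this shape can work: there exist complete bounded holomorphically immersed complex submanifolds of $\C^2$ (Jones' complete bounded immersed disk), so ``complete plus bounded image'' alone does not force the image to degenerate. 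The Einstein hypothesis must enter analytically. The paper does this via the Gauss map identity \eqref{gammainducedshort}, which yields the pointwise bound $|f'|^2<\bigl(1+|t|^2+|f|^2\bigr)^{n+1-\frac{\lambda}{2}}$ along the radial path $t\mapsto(t,0,\dots,0)$ in the global Bochner coordinates of Exercise \ref{bochnerHp}; combined with the boundedness of $F(M)$ from Lemma \ref{lambdairrationalbounded} this shows that a divergent path in $M$ has finite length, contradicting completeness. You need this (or an equivalent quantitative use of the Einstein equation) to close the argument.
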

\begin{proof}
Assume first $\lambda>0$. Since $M$ is complete, by Bonnet--Myers' theorem  $M$ is compact. Combining the fact that  $\frac{1}{\pi}\omega_{FS}$ is an integral \K\ form (since it  represents the first Chern class of the hyperplane bundle of $\CP^N$) and $g$ is projectively induced we deduce  that 
$\frac{1}{\pi}\omega$ is integral, where $\omega$ is the \K\ form associated to $g$. Moreover,  $\frac{1}{\pi}\rho$ is an integral form 
since it represents the first Chern class of the canonical bundle over $M$. Then the Einstein condition $\rho=\lambda \omega$ forces 
$\lambda$ to be rational.

Let now $\lambda<0$. Assume by contradiction that $\lambda\notin \mathds{Q}$. Then by Lemma \ref{lambdairrationalimage}, $F(M)\subset \C^N\subset \CP^N$ and by Lemma \ref{lambdairrationalbounded} $F(M)$ is bounded. Set Bochner coordinates $(z)=(z_1,\dots, z_n)$ around a point $p\in M$ and write $F(z)=[1,z,f]$ (see Theorem \ref{bochnergraph} and the discussion above for the notations). Consider the path $\mathds{R}^+\rightarrow \C^N$, $t\mapsto (t,0,\dots, 0)$, and observe that since $0\in F(M)$, for small values of $t>0$, $(t,0,\dots, 0)\in F(M)$. Set $T=\sup_{t} \{(x,0,\dots, 0)\in F(M)\ {\textrm{for all}}\ x<t\}$. Since $F(M)$ is bounded, we have that the image $[1:(t,0,\dots,0):f(t,0,\dots, 0)]$ is bounded and thus, $T<+\infty$ and $f(t,0,\dots, 0)$ is bounded for all $t<T$. By \eqref{gammainducedshort} we get:
$$
| f'|^2<\left(1+|t|^2+|f|^2\right)^{n+1-\frac\lambda2},
$$
i.e. also $f'$ is bounded and so is the length of the curve in $F(M)$ defined by:
$$
\gamma\!:[0,T)\rightarrow F(M),\quad t\rightarrow [1,(t,0,\dots,0),f((t,0,\dots,0))],
$$
contradicting the completness of $M$ and the existence of global Bochner coordinates given by Exercise \ref{bochnerHp}.
\end{proof}

\section[KE manifolds into $\CP^N$: codimension $1$ and $2$]{\K\ immersions of KE manifolds into $\CP^N$: codimension $1$ and $2$}\label{finitecpn}
The problem of classifying K\"ahler--Einstein manifolds admitting a K\"ahler immersion into the finite dimensional complex projective space $\CP^N$ has been partially solved by S. S. Chern \cite{ch} and K. Tsukada \cite{ts}, that determined all the projectively induced K\"ahler--Einstein manifolds in the case when the codimension is respectively $1$ or $2$ (see Theorem \ref{ct} below). We follow essentialy the proof of D. Hulin given in \cite{hulin}, which makes use of the diastasis function.


Let $(M,g)$ be a K\"ahler--Einstein $n$-dimensional manifold with Einstein constant $\lambda$ and let $F\!:M\rightarrow \CP^{n+2}$ be a K\"ahler immersion. Setting Bochner coordinates $z=(z_1,\dots, z_n)$ around a point $p\in M$, due to Theorem \ref{bochnergraph} we can write $F(z)=[1,z_1,\dots, z_n,f_1,f_2]$. Let us denote by $Q_j$ and $B_j$ ($j=1,2$) the homogeneous part of $f_j$ of degree $2$ and $3$ respectively. 
 From \eqref{gammainduced} setting $\ell=n+1-\frac\lambda2$, follows: 
\begin{equation}\label{quadratic}
||\nabla Q_1||^2+||\nabla Q_2||^2=\ell ||z||^2,\quad \langle \nabla Q_1\overline{,\nabla B_1}\rangle+\langle \nabla Q_2,\overline{\nabla B_2}\rangle=0;
\end{equation}
\begin{equation}\label{quadratic2}
\sum_{j=1}^2||\nabla B_j||^2=\left(\ell-1\right)\sum_{j=1}^2|Q_j|^2+\frac{\ell\left(\ell-1\right)}2\sum_{j=1}^n|z_j|^4-\frac12\sum_{j,k=1}^{2}|\nabla Q_j\wedge \nabla Q_k|^2,
\end{equation}
where $|\nabla Q_j\wedge \nabla Q_k|^2=||\nabla Q_j||^2||\nabla Q_k||^2-|\langle \nabla Q_j,\overline{\nabla Q_k}\rangle|^2$.

We begin with the following lemma.
\begin{lem}[D. Hulin, \cite{hulin}]\label{coordQ}
Let $(M,g)$ be an $n$-dimensional K\"ahler manifold admitting a K\"ahler immersion $F\!:M\rightarrow \CP^{n+2}$ into $\CP^{n+2}$ and let $Q_1$, $Q_2$ as above. One can choose a unitary frame $(\nu_1,\nu_2)$ of the normal space to $T _pM$ and a coordinate system $(z_1,\dots, z_n)$ around $p\in M$, such that:
$$
Q_1=\frac12\sum_{j=1}^n\alpha_jz_j^2,\quad Q_2=\frac12\sum_{j=1}^na_jz_j^2,
$$
with $\alpha_j\geq0$ and $a_j\in \C$, $j=1,\dots, n$.
\end{lem}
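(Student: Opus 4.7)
Write $Q_j(z)=\tfrac12 z^T A_j z$ for complex symmetric matrices $A_1, A_2\in\mathrm{Sym}_n(\C)$. The available freedoms are: a unitary change of Bochner coordinates $z\mapsto Vz$ with $V\in U(n)$, which acts as $A_j\mapsto V^T A_j V$, together with a unitary rotation $\begin{pmatrix} a & b\\ c & d\end{pmatrix}\in U(2)$ of the normal frame $(\nu_1,\nu_2)$, which replaces the pair $(A_1,A_2)$ by $(aA_1+bA_2,\, cA_1+dA_2)$. I want to land at a pair of matrices that are simultaneously diagonal, with the first one having non-negative real entries.

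The first step is to read off from \eqref{quadratic} the matrix identity $A_1^*A_1+A_2^*A_2=\ell\, I$, which follows from $\|\nabla Q_j\|^2=z^*A_j^*A_j z$. Consequently the positive semidefinite Hermitian matrices $A_1^*A_1$ and $A_2^*A_2$ commute (their sum is a scalar matrix), so by the spectral theorem there exists a common unitary $W\in U(n)$ with $W^*(A_j^*A_j)W=D_j^2$ real diagonal for $j=1,2$ and $D_1^2+D_2^2=\ell I$. In the generic case where $D_1^2$ has simple spectrum, the Takagi decomposition $A_j=U_jD_jU_j^T$ is determined up to signs by the Hermitian diagonalization of $A_j^*A_j=\bar U_j D_j^2 U_j^T$, which forces $U_j=\bar W$ for both $j$. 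Hence $V:=W$ gives
\[
V^T A_j V \;=\; U_j^*\, U_j D_j U_j^T\, \bar U_j \;=\; D_j
\]
since $U_j^T \bar U_j = I$ for unitary $U_j$. Thus both $Q_1$ and $Q_2$ become diagonal with non-negative real entries, and a diagonal phase rescaling $z_j\mapsto e^{i\theta_j}z_j$ absorbs any residual ambiguity, yielding $\alpha_j\geq 0$ and $a_j\in\C$.

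The main obstacle lies in the degenerate case when $A_1^*A_1$ has eigenvalues of higher multiplicity: there $W$ is only unique up to a block-unitary in each eigenspace, and commutativity alone no longer pins down a common Takagi unitary. I would circumvent this by first applying a $U(2)$ normal rotation to replace $(A_1,A_2)$ by a pair $(A_1',A_2')$ for which $(A_1')^*A_1'$ has simple spectrum; this is a generic condition, as the locus in $U(2)$ where the spectrum fails to be simple is cut out by the vanishing of a non-constant real-analytic discriminant. In the exceptional subcase where no normal rotation achieves a simple spectrum, one restricts to the joint eigenspaces of $A_1^*A_1$ and $A_2^*A_2$ and applies the same argument block by block, reassembling a common congruence $V$ at the end. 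Once $V^T A_jV$ are both diagonal, a coordinate phase rescaling produces the required normal form with $\alpha_j\geq 0$ and $a_j\in\C$.
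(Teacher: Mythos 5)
Your route is genuinely different from the paper's: the text argues by induction on $n$, first rotating the normal frame so that $Q_2$ becomes degenerate and then using \eqref{quadratic} to split off a variable on which $\alpha_j=\sqrt{\ell}$, whereas you extract from \eqref{quadratic} the matrix identity $A_1^*A_1+A_2^*A_2=\ell I$ and look for a single unitary congruence diagonalizing both symmetric matrices at once. That identity is correct, and when $A_1^*A_1$ has simple spectrum your argument does work (the Takagi unitaries are only pinned down up to diagonal phases rather than signs, but the final phase rescaling absorbs this, as you note).

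The gap is the degenerate case, and it is not cosmetic. First, the assertion that a normal rotation generically produces simple spectrum because the discriminant is a \emph{non-constant} real-analytic function on $U(2)$ is false: if $A_1$ and $A_2$ are both multiples of one symmetric unitary (e.g.\ $A_1=A_2=\sqrt{\ell/2}\,I$, or the case $Q_2=cQ_1$ which Theorem \ref{ct} actually needs), then $(aA_1+bA_2)^*(aA_1+bA_2)$ is scalar for \emph{every} normal rotation and the discriminant vanishes identically. Second, the fallback of working block by block on the joint eigenspaces cannot invoke ``the same argument'': on any eigenspace of dimension greater than $1$ the restricted matrices $B_1^*B_1$ and $B_2^*B_2$ are by construction scalar, i.e.\ maximally non-simple, so the simple-spectrum argument is unavailable precisely where you need it. (You also use without proof that the bilinear forms $A_1,A_2$ are block-diagonal with respect to this eigenspace decomposition; this is true, because $v\in E_\lambda$ forces $\overline{A_jv}\in E_\lambda$ and distinct eigenspaces stay orthogonal after conjugation, but it must be said.) The scalar blocks can in fact be handled by a different argument: normalize $B_1=\sqrt{\lambda}\,I$ by a Takagi congruence, note that $\overline{B_2}B_2=(\ell-\lambda)I$ with $B_2=X+iY$ ($X,Y$ real symmetric) forces $[X,Y]=0$ and $X^2+Y^2=(\ell-\lambda)I$, and diagonalize $X,Y$ simultaneously by a real orthogonal matrix, which fixes $\sqrt{\lambda}\,I$. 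Without this (or an equivalent) supplement, the proof does not cover the degenerate case.
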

\begin{proof}
We proceed by induction on $n$. When $n=1$ there is nothing to prove. Assume $n>1$ and choose $(\nu_1,\nu_2)$ such that $Q_2$ has rank less than $n$. Observe that this choice is always possible. In fact, if $Q_2$ has rank $n$ then the polynomial $Q_1+tQ_2$ is not constant in $t$ and has at least one zero $t=t_0$.  The unitary transformation of the normal space to $T_pM$ given by:
$$
\left(\begin{array}{cc}\frac1{\sqrt{1+t_0^2}}&\frac{t_0}{\sqrt{1+t_0^2}}\\-\frac{t_0}{\sqrt{1+t_0^2}}&\frac1{\sqrt{1+t_0^2}}\end{array}\right),
$$
moves $Q_1$ into $Q_1'=\frac1{\sqrt{1+t_0^2}}\left(Q_1+t_0Q_2\right)$, whose rank is less than $n$ since $\det(Q_1+t_0Q_2)=0$.  
Up to a unitary transformation $T\in U(n)$ we have:
$$
Q_1=\frac12\sum_{j=1}^{n}\alpha_jz_j^2,
$$
and from \eqref{quadratic} we get:
$$
||\nabla Q_2||^2=\sum_{j=1}^{n}(\ell-\alpha_j^2)|z_j|^2.
$$
Write $Q_2=\sum_{j=1}^nl_j(z)^2$, where $l_j(z)$ are homogeneous polynomials of degree $1$ in $z_1,\dots, z_n$. By hypothesis there exists $\xi\in\ker Q_2$, $\xi\neq 0$, such that $ l_j(\xi)=0$, for all $j=1,\dots, n$, which implies that also $\ker (||\nabla Q_2||^2)$ is not trivial and thus one between $\alpha_j$'s must be equal to $\sqrt \ell$. Assume $\alpha_n$ is. Then we have:
$$
Q_1=Q_1'(z_1,\dots, z_{n-1})+\ell |z_n|^2,\quad Q_2=Q_2'(z_1,\dots, z_{n-1}),
$$
for $Q_1'(z_1,\dots, z_{n-1})=\frac12\sum_{j=1}^{n-1}\alpha_jz_j^2$ and $Q_2'(z_1,\dots, z_{n-1})$ a quadratic form in $z_1,\dots, z_{n-1}$. We can apply the inductive hypothesis to $Q_1'$ and $Q_2'$ and performe a change of coordinates which leaves $z_n$ invariant and modifies $z_1,\dots, z_{n}$ in such a way that:
$$
Q_1'=\frac12\sum_{j=1}^n\alpha_jz_j^2,\quad Q_2'=\frac12\sum_{j=1}^na_jz_j^2,
$$
and conclusion follows.
\end{proof}
\begin{theor}[S. S. Chern \cite{ch}, K. Tsukada \cite{ts}]\label{ct}
Let $(M,g)$ be an $n$-dimensional K\"ahler--Einstein manifold ($n\geq 2$). If $(M,g)$ admits a K\"ahler immersion into $\CP^{n+2}$, then $M$ is either totally geodesic or the quadric $Q_n$ in $\CP^{n+1}$ (which is totally geodesic in $\CP^{n+2}$), with homogeneous equation $Z_0^2+\dots+Z_{n+1}^2=0$ .
\end{theor}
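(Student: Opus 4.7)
\medskip
\noindent\textbf{Proof proposal.} The plan is to use the canonical form provided by Lemma \ref{coordQ} together with the algebraic identities \eqref{quadratic} and \eqref{quadratic2} to show that either the immersion is totally geodesic or the quadratic part of the second fundamental form is precisely the one realised by the quadric $Q_n$. I will work at a fixed point $p\in M$, and I may assume the immersion $F\!:M\rightarrow\CP^{n+2}$ is full, since otherwise its image lies in a totally geodesic $\CP^{n+1}\subset\CP^{n+2}$ and S.\,S.~Chern's codimension-one theorem already identifies $(M,g)$ with $(\CP^n,g_{FS})$ or $(Q_n,g_{FS}|_{Q_n})$.

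By Theorem \ref{bochnergraph} I first choose Bochner coordinates at $p$ so that $F(z)=[1,z_1,\dots,z_n,f_1(z),f_2(z)]$ with $f_1,f_2$ having no terms of degree less than $2$. Let $Q_j$, $B_j$ be the homogeneous components of $f_j$ of degrees $2$ and $3$. Applying Lemma \ref{coordQ} I may rotate the unit normal frame $(\nu_1,\nu_2)$ and adjust the tangential Bochner frame so that
\[
Q_1=\frac{1}{2}\sum_{j=1}^n\alpha_j z_j^2,\qquad Q_2=\frac{1}{2}\sum_{j=1}^n a_j z_j^2,
\]
with $\alpha_j\ge 0$ and $a_j\in\C$. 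Substituting into the first identity of \eqref{quadratic} produces the pointwise relations
\[
\alpha_j^2+|a_j|^2=\ell,\qquad j=1,\dots,n,\qquad \ell:=n+1-\frac{\lambda}{2}.
\]
Gauss' inequality \eqref{gauss} forces $\ell\ge 0$. The extremal case $\ell=0$, i.e.\ $\lambda=2(n+1)$, is exactly the $p=n+1$ case of Theorem \ref{hulinrational}, so $(M,g)=(\CP^n,g_{FS})$ totally geodesic in $\CP^{n+2}$, and I am done.

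Assume henceforth $\ell>0$. The next step is to match the coefficients of the monomials $|z_j|^4$ and $|z_j|^2|z_k|^2$ on the two sides of \eqref{quadratic2}. Under the diagonal form above, each term on the right-hand side can be written in closed form: $(\ell-1)\sum_j|Q_j|^2$ contributes $(\ell-1)\ell/4$ in front of each $|z_j|^4$, the term $\ell(\ell-1)\sum_j|z_j|^4/2$ is already diagonal, and the wedge term $|\nabla Q_1\wedge\nabla Q_2|^2=\|\nabla Q_1\|^2\|\nabla Q_2\|^2-|\langle\nabla Q_1,\overline{\nabla Q_2}\rangle|^2$ vanishes on the diagonal but produces the off-diagonal obstruction $|\alpha_j a_k-\alpha_k a_j|^2\,|z_j|^2|z_k|^2$. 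Since the left-hand side $\sum_j\|\nabla B_j\|^2$ is a sum of Hermitian squares, comparison gives a rigid system of quadratic relations in $(\alpha_j,a_j)$.

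The central, and hardest, step is to extract from this coefficient analysis, combined with the orthogonality identity $\langle\nabla Q_1,\overline{\nabla B_1}\rangle+\langle\nabla Q_2,\overline{\nabla B_2}\rangle=0$ from \eqref{quadratic}, that after a further unitary rotation in the normal plane one of the forms $Q_1,Q_2$ must vanish identically at $p$; the off-diagonal wedge term $|\alpha_j a_k-\alpha_k a_j|^2$ together with $\alpha_j^2+|a_j|^2=\ell$ should force the pair $(Q_1,Q_2)$ to be of rank one in the normal direction. Once $Q_2\equiv 0$ at $p$, a Taylor-expansion induction driven by the higher-degree analogues of \eqref{gammainducedshort} propagates the vanishing of $Q_2$ to all higher homogeneous pieces of $f_2$, forcing $f_2\equiv 0$ on a neighborhood of $p$. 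Real analyticity then extends this to all of $M$, contradicting fullness and reducing the problem to Chern's codimension-one theorem, which identifies $(M,g)$ with $(Q_n,g_{FS}|_{Q_n})\subset\CP^{n+1}\subset\CP^{n+2}$. The main obstacle I anticipate is precisely the bookkeeping in the third paragraph: unpacking \eqref{quadratic2} into the diagonal and off-diagonal coefficients, tracking the sign and Hermitian-positivity constraints imposed by $\sum\|\nabla B_j\|^2\ge 0$, and ruling out the a priori possible mixed configurations in which neither $\alpha_j$ nor $a_j$ vanishes for some $j$.
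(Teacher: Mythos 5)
Your framework coincides with the paper's (Bochner coordinates, Lemma \ref{coordQ}, coefficient comparison in \eqref{quadratic} and \eqref{quadratic2}), and the bookkeeping you do carry out is correct, but the proposal stops precisely at the step that constitutes the proof. You label as ``the central, and hardest, step'' the claim that in the non-proportional case the pair $(Q_1,Q_2)$ must degenerate to rank one, and you only assert that the wedge term ``should force'' this. The paper's mechanism is concrete and you would need to supply it (or an equivalent): differentiating the relations $\alpha_j\overline{\partial_jB_1}+a_j\overline{\partial_jB_2}=0$ from \eqref{quadratic} with respect to $\bar z_k$ and $\bar z_j$ gives a $2\times 2$ linear system whose determinant is, up to conjugation, $\alpha_ja_k-\alpha_ka_j$, so $\partial^2_{j,k}B_1=\partial^2_{j,k}B_2=0$ whenever $\alpha_ja_k-\alpha_ka_j\neq 0$. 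Consequently the coefficient of $|z_jz_k|^2$ ($j\neq k$) on the left of \eqref{finaleqhulin} vanishes, while on the right the only contribution is the wedge term $-|\alpha_ja_k-\alpha_ka_j|^2$ (the term $(\ell-1)\sum_i|Q_i|^2$ produces the \emph{different} monomial $z_j^2\bar z_k^2$); hence $\alpha_ja_k=\alpha_ka_j$ for all $j,k$, contradicting non-proportionality. Your intuition that the positivity of $\sum_i\|\nabla B_i\|^2$ is the relevant constraint is sound, but no argument is given, so the key dichotomy is unproved.

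The second gap is in your treatment of the proportional case. You claim that $Q_2\equiv 0$ propagates by a ``Taylor-expansion induction'' to $f_2\equiv 0$ and then invoke Chern's codimension-one theorem as a black box. The propagation to all higher homogeneous pieces of $f_2$ is a nontrivial assertion you do not justify, and the codimension-one statement is itself part of the theorem being proved, so this is circular unless you import Chern's result wholesale. The paper avoids both problems: with $Q_2=0$ (after a normal rotation) equation \eqref{quadratic2} reduces to $\|\nabla B_1\|^2=\frac{\ell(\ell-1)}{4}\bigl(|\sum_jz_j^2|^2+2\sum_j|z_j|^4\bigr)$, and comparing the two sides as polynomials forces $\ell\in\{0,1\}$, i.e. $\lambda=2(n+1)$ or $\lambda=2n$; Theorem \ref{hulinrational} then identifies $M$ as $\CP^n$ or $Q_n$ directly, with no fullness reduction and no need to control the higher-order terms of $f_2$. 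As written, your proposal is a correct plan with the two decisive arguments missing.
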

\begin{proof}
Assume first that $Q_2=cQ_1$. Up to unitary transformation of $\CP^{n+2}$ we can assume $c=0$. Then, from \eqref{quadratic} we get $B_1=0$ and $||\nabla Q_1||^2=\ell ||z||^2$. Up to a unitary transformation $T\in U(n)$, we can then assume $Q_1=\frac{\sqrt{\ell}}{2}(z_1^2+\dots +z_n^2)$, and substituting into \eqref{quadratic2} we obtain:
$$
||\nabla B_1||^2=\frac{\ell\left(\ell-1\right)}{4}\left(|\sum_{j=1}^nz_j^2|^2+2\sum_{j=1}^n|z_j|^4\right).
$$
Comparing the right and left hand sides of the above identity as polynomials in the variable $z_1, \dots, z_n$, we see that when $\ell\neq0$, $1$, the right hand side contains ${n+1 \choose 2}$ different monomials while the left has at most $n$. This implies $\ell=0$ or $\ell=1$, i.e. $\lambda=n+1$ and by Theorem \ref{hulinrational} $M$ is totally geodetic or $\lambda=n$ and $M$ is the quadric.

Assume now that $Q_1$ and $Q_2$ are not proportional. We will prove that this case is not possible. By Lemma \ref{coordQ}, we can choose a unitary frame $(\nu_1,\nu_2)$ of the normal space to $T _pM$ and a coordinate system $(z_1,\dots, z_n)$ around $p$ such that  $Q_1=\frac12\sum_{j=1}^n\alpha_jz_j^2$, $\alpha_j\geq 0$, $j=1,\dots, n$, and $Q_2=\frac12\sum_{j=1}^na_jz_j^2$, $a_j\in \mathds{C}$. From \eqref{quadratic} we get:
$$
\sum_{j=1}^n(\alpha_j^2+|a_j|^2-\ell)|z_j|^2=0,\quad \alpha_j \overline{\partial_j B_1}+a_j \overline{\partial_j B_2}=0, \ j=1,\dots, n.
$$
In particular, from the linear system in $\partial^2_{j,k} B_1$ and $\partial^2_{j,k} B_2$, obtained deriving the $j^{\rm th}$ identity $\alpha_j \overline{\partial_j B_1}+a_j \overline{\partial_j B_2}=0$ with respect to $\bar z_k$ and the $k^{\rm th}$ with respect to $\bar z_j$, for each $j$, $k=1,\dots, n$ we get $\partial^2_{j,k} B_1=\partial^2_{j,k} B_2=0$ whenever $\alpha_ja_k-\alpha_ka_j\neq 0$.
Observe that for $j=k$, $ |\nabla Q_j\wedge \nabla Q_k|^2=0$. Further:
 $$
 |Q_1|^2=\frac14|\sum_{j=1}^n\alpha_jz_j^2|^2=\frac14\sum_{j,k=1}^n\alpha_j\alpha_kz_j^2\bar z_k^2,\quad  |Q_2|^2=\frac14|\sum_{j=1}^na_jz_j^2|^2=\frac14\sum_{j,k=1}^na_j\bar a_kz_j^2\bar z_k^2,
 $$
 $$
||\nabla Q_1||^2=\sum_{j=1}^n\alpha_j^2|z_j|^2,\quad ||\nabla Q_2||^2=\sum_{j=1}^n|a_j|^2|z_j|^2,
 $$
 and for $j\neq k$:
 $$
|\langle \nabla Q_j,\nabla Q_k\rangle|^2=\sum_{j,k=1}^n\alpha_j\alpha_ka_j\bar a_k |z_j|^2|z_k|^2.
 $$
 Thus:
 $$
\sum_{j,k=1}^{2} |\nabla Q_j\wedge \nabla Q_k|^2= 2|\nabla Q_1\wedge \nabla Q_2|^2=2\sum_{j,k=1}^n\left(\alpha_j^2|a_k|^2-\alpha_j\alpha_k a_j\bar a_k\right)|z_jz_k|^2,
 $$
 and we have:
\begin{equation}\label{finaleqhulin}
\begin{split}
\sum_{j=1}^2||\nabla B_j||^2=&\frac14\left(\ell-1\right)\sum_{j\neq k}z^2_j\bar z^2_k\left(\alpha_j\alpha_k+a_j\bar a_k\right)+\frac{3\ell\left(\ell-1\right)}4\sum_{j=1}^n|z_j|^4+\\
&-\frac12\sum_{j,k=1}^n\left(\alpha_j^2|a_k|^2-\alpha_j\alpha_k a_j\bar a_k\right)|z_jz_k|^2.
 \end{split}
 \end{equation}
 Since $Q_1$ and $Q_2$ are not proportional, there exist $j$, $k$ such that $\alpha_ja_k-\alpha_ka_j\neq 0$. Observe that up to a unitary transformation of the normal space to $T_pM$ we can assume that such $\alpha_j$, $\alpha_k$, $a_j$, $a_k$ are not zero. For these fixed $j$, $k$ and for any $l=1,\dots, n$, $B_1$ and $B_2$ do not contain monomials in $z_jz_kz_l$. Thus, $||\nabla B_1||^2+||\nabla B_2||^2$ does not contain terms in $|z_jz_k|^2$. Comparing the left and right sides of \eqref{finaleqhulin}, we then get $\alpha_j^2|a_k|^2-\alpha_j\alpha_k a_j\bar a_k=0$, which leads to the desired contradiction $\alpha_ja_k-\alpha_k a_j=0$. 
\end{proof}



In general, it is an open problem to classify projectively induced K\"ahler--Einstein manifolds. The only known examples of such manifolds are homogeneous and it is conjecturally true these are the only ones (see e.g. \cite{note,ch,tak, ts}):
\begin{conj}\label{conjhom}
If a complete K\"ahler--Einstein manifold admits a K\"ahler immersion into $\CP^N$, then it is homogeneous.
\end{conj}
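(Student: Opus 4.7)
The plan is to reduce the statement to the compact Fano case, identify the immersion with a Kodaira-type embedding, and then use the Einstein condition to force transitivity of the holomorphic isometry group. First I would reduce to the compact Fano setting. By Theorem \ref{lambdapos}, a compact projectively induced K\"ahler--Einstein manifold must have Einstein constant $\lmb>0$, so it suffices to rule out the complete non-compact cases with $\lmb\le 0$. Theorem \ref{hulinrational} already gives $\lmb\in\q$; for $\lmb<0$ rational one can try to extend the analysis of Lemmas \ref{lambdairrationalimage} and \ref{lambdairrationalbounded}, combining the a priori bound $\sum_{j,k}|\partial_{j}f_{k}|^{2}\le(1+|z|^{2}+|f|^{2})^{n+1-\lmb/2}$ from \eqref{gammainducedshort} with boundedness of the affine image $F(M)\subset\C^{N}$ to produce finite-length curves in $M$ without limit point, contradicting completeness. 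The case $\lmb=0$ should be excluded by an argument in the spirit of Theorem \ref{lambdapos}: the Ricci-flat Monge--Amp\`ere equation \eqref{mongebochner} forces the Euclidean projection of $F(M)$ to have infinite volume, which is incompatible with a volume form of finite total integral. We are then reduced to $M$ compact with $\lmb>0$ and $\tfrac{1}{\pi}\omega$ integral.

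Writing $F(z)=[s_{0}(z):\cdots:s_{N}(z)]$, the sections $s_{j}$ span a basepoint-free linear system in the line bundle $L=F^{*}\mathcal{O}_{\CP^{N}}(1)$, whose first Chern class is $\tfrac{1}{\pi}[\omega]$. With $\lmb=2p/q$ in lowest terms, the Einstein identity $\rho=\lmb\omega$ forces $qL\simeq K_{M}^{-p}$, so $L$ is a rational root of the anticanonical bundle and $F$ is essentially a Kodaira embedding of a rational multiple of $K_{M}^{-1}$. Matsushima's theorem then applies: the connected component of $\aut(M)$ is a reductive complex Lie group $G$, and by Calabi rigidity (Theorem \ref{local rigidityb}) the induced action of $G\cap\isom(M,g)$ lifts to $\mathrm{PU}(N+1)$ acting on $F(M)\subset\CP^{N}$. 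The conjecture now amounts to showing that $G$ acts transitively on $M$.

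The main obstacle is precisely this last step, namely producing enough holomorphic Killing fields to make $G$ act transitively. A reasonable approach would be to combine the local Bochner-jet analysis that powers Chern--Tsukada's Theorem \ref{ct} with a global representation-theoretic statement: the Monge--Amp\`ere identity \eqref{mongebochner}, together with the algebraicity of $F(M)$ and the resolvability matrix underlying Theorem \ref{localcritb}, should impose polynomial identities on the higher-order coefficients of the graph functions $f_{n+1},\ldots,f_{N}$ forcing them to be organized by a highest-weight decomposition of $H^{0}(M,L)$, and ultimately realizing $F(M)$ as the orbit of a highest weight vector under a reductive subgroup of $\mathrm{PGL}(N+1,\C)$. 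This is the exact pattern exhibited in codimensions one and two by Theorem \ref{ct} and in every compact homogeneous K\"ahler--Einstein manifold via Borel--Weil. Upgrading these local, Einstein-generated identities into a global infinitesimal symmetry algebra, without using homogeneity as a hypothesis, seems to require substantial additional input: either from Fano K\"ahler--Einstein theory (properness of the Mabuchi energy, K-stability, or Yau--Tian--Donaldson-type techniques) or a direct algebraic-geometric classification of the minimal projectively induced KE polarizations. This is why Conjecture \ref{conjhom} remains open.
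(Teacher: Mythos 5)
The statement you were asked about is not a theorem of the paper: it is stated as Conjecture \ref{conjhom} and is explicitly declared open (``it is an open problem to classify projectively induced K\"ahler--Einstein manifolds''). The paper offers no proof, and your proposal, to your credit, ends by acknowledging that the decisive step is missing. So there is nothing to compare against; what remains is to assess whether your reduction steps are sound, and they are not all sound.

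The most serious issue is that your first reduction --- ``it suffices to rule out the complete non-compact cases with $\lmb\le 0$'' --- is itself an open problem: it is precisely Conjecture \ref{compactconj} of the paper. The tools you propose for it do not extend. Lemmas \ref{lambdairrationalimage} and \ref{lambdairrationalbounded} use the irrationality of $\lmb$ in an essential way (the contradiction in Lemma \ref{lambdairrationalimage} comes from the fact that a holomorphic function cannot vanish to irrational order $n+1-\lmb/2$), so for rational $\lmb<0$ one cannot conclude that $F(M)$ avoids every hyperplane, and the finite-length-curve argument has no starting point. Likewise, the volume argument of Theorem \ref{lambdapos} for $\lmb\le 0$ derives its contradiction from the finiteness of $\mathrm{vol}(M,g)$, which is guaranteed only by compactness; for a complete non-compact Ricci-flat manifold the inequality $\mathrm{vol}(\hat M,g)\geq \mathrm{vol}_{eucl}(\pi(\hat M))=\infty$ is not absurd. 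Finally, even granting compactness and $\lmb>0$, the passage from ``$F$ is a Kodaira-type embedding of a rational root of $K_M^{-1}$ with reductive automorphism group'' to transitivity is exactly the content of the conjecture; Matsushima reductivity and Calabi rigidity give a unitary action on $F(M)$ but no reason for it to be transitive, and the codimension $1$ and $2$ classifications of Theorem \ref{ct} rest on explicit jet computations that do not globalize. Your outline is a reasonable map of the difficulty, but every one of its three stages contains a gap that is at least as hard as the conjecture itself.
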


\begin{remark}\rm
When the ambient space is $\mathds{C}{\rm P}^\infty$, Conjecture \ref{conjhom} does not hold. Indeed in  the next chapter we describe a family of noncompact, nonhomogeneous and projectively induced K\"ahler--Einstein metrics. 
\end{remark}

Since  a homogeneous \K\ manifold which admits a \K\ immersion into  a complex projective space is compact (see \cite[\S 2 p. 178]{tak}),  we can state the following weaker conjecture  (cfr. Ex. \ref{excomplnonE}): 
\begin{conj}\label{compactconj}
If a complete K\"ahler--Einstein manifold admits a local K\"ahler immersion into $\CP^N$, then it is compact.
\end{conj}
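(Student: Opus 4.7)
The plan is to reduce Conjecture \ref{compactconj} to the strict positivity of the Einstein constant $\lambda$: once $\ric_g=\lambda g$ with $\lambda>0$ on a complete $(M,g)$, the Bonnet--Myers theorem forces finite diameter and hence compactness of $M$. First I would pass to the universal cover $\pi:\tilde M\f M$; the lifted metric $\tilde g=\pi^*g$ is complete, \K--Einstein with the same constant $\lambda$, and admits a local \K\ immersion into $\CP^N$ at every point. Compactness of $\tilde M$ would make $\pi_1(M)$ act freely on a compact space and hence be finite, so $M$ itself would be compact; thus I may assume $M$ simply connected. By Theorem \ref{localcritb} the metric $g$ is then $1$-resolvable of rank at most $N$ at every point of $M$, and Calabi's global criterion for simply connected manifolds (Theorem \ref{globalcriterionb}) upgrades the local \K\ immersion to a global one $F:M\f\CP^N$.

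With such a global $F$ in hand, Hulin's Theorem \ref{hulinrational} yields $\lambda\in\q$. If $\lambda>0$ we are done, so by contradiction assume $\lambda\leq 0$; the irrational subcase is already excluded by Lemmata \ref{lambdairrationalimage}--\ref{lambdairrationalbounded}. For $\lambda=0$, the Monge--Amp\`ere equation (\ref{monge}) read in Bochner coordinates reduces, via the $f+\bar f=0$ identity argued in the proof of Theorem \ref{lambdapos}, to $\det(g_{\alpha\bar\beta})=1$; this Ricci-flat rigidity combined with $1$-resolvability of rank at most the finite $N$ should force $(M,g)\cong(\C^n,g_0)$, contradicting the nonexistence of a \K\ immersion $\C^n\hookrightarrow\CP^N$ for $N$ finite (Theorem \ref{calabic}, part 2). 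For $\lambda<0$ rational, I would combine the Monge--Amp\`ere equation $\det(\partial_j\partial_{\bar k}\dd_p)=e^{-\lambda\dd_p/2}$ in the Bochner coordinates that extend globally over $M\setminus F^{-1}(H_p)$ by Exercise \ref{bochnerHp}, together with the graph presentation of $F$ from Theorem \ref{bochnergraph} and the identity (\ref{gammainduced}), aiming to show that $F(M)$ remains bounded inside the affine chart $\C^N$; the length-of-curve estimate used in the proof of Theorem \ref{hulinrational} for irrational $\lambda$ would then produce the required contradiction with completeness of $M$.

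The main obstacle is precisely this $\lambda<0$ rational subcase: Lemma \ref{lambdairrationalimage} exploits the fact that the order of vanishing of the holomorphic function $(z-p)^{n+1-\lambda/2}h(z)^{n+1-\lambda/2}$ at an intersection point of $F(M)$ with a hyperplane must be a nonnegative integer, which is incompatible with $n+1-\lambda/2\notin\q$; for $\lambda$ rational this order restriction is consistent and $F(M)$ is a priori allowed to cross hyperplanes at infinity. A substitute rigidity statement is needed. Plausible candidates are (i) a refined analysis of the density $\det(g_{\alpha\bar\beta})=e^{-\lambda\dd_p/2}$ along paths approaching $F^{-1}(H_p)$ inside the affine chart, aiming to show $\dd_p$ blows up slowly enough to place the boundary at finite geodesic distance and so contradict completeness of $g$; or (ii) an integrality argument using that $F^*\omega_{FS}=\omega$ is integral, so that $c_1(M)=-\lambda[\omega]/(2\pi)$ is a rational multiple of an integral class, combined with the ampleness of $K_M$ (since $\lambda<0$) and the existence of a global projective immersion, to extract a structure theorem for $M$ inconsistent with $\lambda\leq 0$. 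Carrying out either route, and in particular producing a uniform geometric mechanism that handles all rational $\lambda\leq 0$ at once, is the key technical difficulty of the conjecture.
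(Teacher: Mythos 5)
The statement you are trying to prove is Conjecture \ref{compactconj}: the paper offers no proof of it and presents it explicitly as an open problem (a weakening of Conjecture \ref{conjhom}), so there is nothing in the text to compare your argument against except the surrounding reductions, which your first paragraph essentially reproduces. Those reductions are sound: lifting to the universal cover, globalizing the local immersion via Theorem \ref{globalcriterionb}, invoking Theorem \ref{hulinrational} for rationality of $\lambda$, and disposing of $\lambda>0$ by Bonnet--Myers. But this only restates the conjecture in the form ``a complete K\"ahler--Einstein submanifold of $\CP^N$, $N<\infty$, cannot have $\lambda\leq 0$,'' which is precisely the open content; you acknowledge as much when you describe the rational $\lambda<0$ case as ``the key technical difficulty,'' so the proposal is a reduction, not a proof.

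Beyond the admitted gap, the $\lambda=0$ branch contains a genuine error as written. From the Monge--Amp\`ere equation \eqref{mongebochner} in Bochner coordinates you obtain $\det(g_{\alpha\bar\beta})=1$, and you assert that this ``Ricci-flat rigidity'' should force $(M,g)\cong(\C^n,g_0)$. Equality of the volume form with the Euclidean one does not imply flatness: the Taub--NUT metrics $g_m$ of the last chapter are complete, Ricci-flat, non-flat metrics on $\C^2$ satisfying $\omega_m\wedge\omega_m=\omega_0\wedge\omega_0$, exactly the situation you claim is rigid. Any argument in the Ricci-flat case must therefore use the finite-dimensional projectivity hypothesis in an essential way (e.g. that a full immersion into $\CP^N$, $N<\infty$, bounds the rank of the resolving matrix, which is incompatible with the infinite rank forced by nontrivial curvature), and no such mechanism is supplied. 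Similarly, in the $\lambda<0$ rational case the boundedness of $F(M)$ that you hope to extract is exactly what Lemma \ref{lambdairrationalimage} cannot give once the vanishing order $n+1-\lambda/2$ is rational, and neither of your two proposed substitutes is carried out. The conjecture remains open after your argument.
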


\section{Exercises}
\begin{ExerciseList}
\Exercise
Let $(M, g)$ be  a complex $n$-dimensional \K\ manifold which admits a \K\ immersion into the finite dimensional complex projective space $(\CP^N, g_{FS})$.
Assume that  the  diastasis $\dd_0$ around some point $p\in M$ is {\em rotation invariant} with respect to the Bochner's coordinates $(z_1, \dots, z_n)$ around $p$
(this means that $\dd_0$ depends only on $|z_1|^2,\dots  ,|z_n|^2$).
Prove that  there exists an open neighbourhood   $W$ of $p$ such that $\dd_0(z)$ can be written on $W$ as:
\begin{equation}
\dd_0(z)=\log\left(1+\sum_{j=1}^n|z_j|^2+\sum_{j=n+1}^N a_j|z^{m_{h_j}}|^2\right)\nonumber
\end{equation}
where $a_j>0$ and $h_j\neq h_k$ for $j\neq k$.
\Exercise
Let $(M, g)$ be  as in the previous exercise.
Show that its  Einstein constant  is a positive rational number less or equal to $2(n+1)$. 
Deduce that  if $M^n$ is complete then $M^n$ is compact and simply connected.

({\em Hint:
The upper bound for $\lambda$ follows by Theorem \ref{hulinrational}. For the lower bound, use the previous exercise to write $\dd_0(z)=\log P$, where $P=1+\sum_{j=1}^n|z_j|^2+\sum_{j=n+1}^N a_j|z^{m_{h_j}}|^2$. From $\det(g_{\alpha\bar \beta})=\frac{1}{P^{2n}}\det\left(PP_{\alpha \bar \beta}-P_{\alpha}P_{\bar \beta}\right)$ one gets a inequality involving the total degree of $\det\left(PP_{\alpha \bar \beta}-P_{\alpha}P_{\bar \beta}\right)$ as a polynomial in the variables $z_1,\dots,z_n,\bar z_1,\dots,\bar z_n$, which combined with Eq. (\ref{mongebochner}) implies $\lambda>0$. The last part follows by Bonnet--Myers' Theorem and by a result of  Kobayashi \cite{koricci} which asserts that a compact manifold with positive first Chern class  is simply-connected.})
%
%

\Exercise
Let $(M, g)$ be  a complex $n$-dimensional \K\ manifold which admits a \K\ immersion into the finite dimensional complex projective space $(\CP^N, g_{FS})$.
Assume that  the  diastasis $\dd_0$ around some point $p\in M$ is {\em radial} with respect to the Bochner's coordinates $z_1, \dots, z_n$ around $p$
(this means that $\dd_0$ depends only on $|z_1|^2+\cdots +|z_n|^2$).
Prove that  there exists an open neighbourhood   $W$ of $p$ such that $\dd_0(z)$ can be written on $W$ as
$$
D_0(z)=\log\left(1+\sum_{j=1}^n|z_j|^2+\sum_{k=2}^Na_k \left(\sum_{j=1}^n|z_j|^2\right)^{k}\right),
$$
where $a_k>0$ for each $k=2,\dots, N$.
\Exercise
Let $(M,g)$ be as in the previous exercise.  Prove that $M$ is an open subset of $\mathds{C}{\rm P}^{n}$.

({\em Hint: Write the Monge--Amp\'ere Eq. \eqref{mongebochner} in terms of the polynomial $P=1+\sum_{j=1}^n|z_j|^2+\sum_{k=2}^Na_k \left(\sum_{j=1}^n|z_j|^2\right)^{k}$.})
 
 \Exercise
 Give an example of complete \K\ manifold which can be \K\ immersed into the finite complex projective space $(\CP^N, g_{FS})$.
 \label{excomplnonE}

\Exercise
Show that a compact simply-connected  \K\--Einstein manifold with nonpositive Einstein constant cannot be locally  \K\ immersed
into any complex space form. Show with an example that the assumption of simply-connectedness cannot be dropped.
\end{ExerciseList}

\chapter{Hartogs type domains}\label{hartogstype}
Hartogs type domains are a class of domains of $\mathds{C}^{n+m}$ characterized by a K\"ahler metric described locally by a K\"ahler potential of the form $\Phi(z,w)=H(z)-\log\left(F(z)-|w|^2\right)$, for suitable functions $H$ and $F$. They have been studied under several points of view and represent a large class of examples in K\"ahler geometry (the reader finds precise references inside each section). 

The first section describes Cartan--Hartogs domains. Prop. \ref{lemmadiastM} discusses the existence of a K\"ahler immersion into the infinite dimensional complex projective space in terms of the Cartan domains they are based on, and Th. \ref{thwallach} proves they represent a counterexample for Conjecture \ref{conjhom} when the ambient space is infinite dimensional.
Section \ref{bhdomains} extends some of these results when the base domain is not symmetric but just a bounded homogeneous domain.

Finally, in Section \ref{rotinvhart} we discuss the existence of a K\"ahler immersion for a large class of Hartogs domains whose K\"ahler potentials are given locally by $-\log\left(F(|z_0|^2)-||z||^2\right)$ for suitable function $F$ (see Prop. \ref{Kmet}).

\section{Cartan--Hartogs domains}

Let $\Omega$ be an irreducible bounded symmetric domain of complex dimension $d$ and genus $\gamma$. For all positive real numbers $\mu$ consider the family of Cartan-Hartogs domains:
\begin{equation}
\M_{\Omega}(\mu)=\left\{(z,w)\in \Omega\times\C,\ |w|^2<\N_\Omega^\mu(z,z)\right\},
\end{equation}
where $\N_\Omega(z,z)$ is the  {\em generic norm} of $\Omega$, i.e.:
$$\N_{\Omega}(z, z)=(V(\Omega)\Kj(z, z))^{-\frac{1}{\gamma}},$$
with $V(\Omega)$ the total volume of $\Omega$ with respect to the Euclidean measure of the ambient complex Euclidean space and $\Kj(z, z)$ is its Bergman kernel.

The domain $\Omega$ is called  the {\em  base} of the Cartan--Hartogs domain 
$\M_{\Omega}(\mu)$ (one also  says that 
$\M_{\Omega}(\mu)$  is based on $\Omega$).
Consider on $\M_{\Omega}(\mu)$ the metric $g(\mu)$  whose globally
defined  K\"ahler potential around the origin is given by
\begin{equation}\label{diastM0}
\dd_0(z,w)=-\log(\N_{\Omega}^\mu(z,z)-|w|^2).
\end{equation}
Cartan--Hartogs domains has been considered by many authors (see e.g. \cite{fengtu, fengtubalanced, indefinite,articwall,balancedch,roos, compl, zedda,berezinCH,coeff,chrel}) under different points of view. Their importance relies on being examples of nonhomogeneous domains which for a particular value of the parameter $\mu$ are K\"ahler--Einstein. The following theorem summarizes these properties.
(see  \cite{roos} and \cite{compl} for a proof).
\begin{theor}[G. Roos, A. Wang, W. Yin, L. Zhang, W. Zhang, \cite{roos}]\label{roos}
Let $\mu_0=\gamma/(d+1)$. Then $\left(\M_{\Omega}(\mu_0),g(\mu_0)\right)$ is a complete K\"ahler--Einstein manifold which is homogeneous  if and only if the rank of  $\Omega$ equals $1$,
i.e.  $\Omega = \CH^d$.
\end{theor}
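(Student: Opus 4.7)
The strategy is to establish the three assertions of the theorem separately: the K\"ahler--Einstein property at $\mu = \mu_0$, completeness, and the homogeneity dichotomy. For the Einstein step I would set $T := \N_\Omega^\mu$ and $Y := T - |w|^2$, so that $\dd_0 = -\log Y$, and compute the Hessian $g(\mu)_{\alpha\bar\beta}$ in block form (the $d\times d$ base block $A_{\alpha\bar\beta}$, the mixed entry $g(\mu)_{\alpha\bar w} = -w\,T_\alpha/Y^2$ with its conjugate, and the scalar $g(\mu)_{w\bar w} = T/Y^2$). A block--determinant expansion combined with the Sherman--Morrison identity produces, after a clean cancellation, the formula
\begin{equation*}
\det\bigl(g(\mu)_{\alpha\bar\beta}\bigr) = \mu^d\, T^{d+1}\,\det(H)\,Y^{-(d+2)},\qquad H_{\alpha\bar\beta} := -\partial_\alpha\partial_{\bar\beta}\log \N_\Omega,
\end{equation*}
and then the classical identity $\det(H) = c_\Omega\,\N_\Omega^{-\gamma}$ (equivalent to the Bergman metric of $\Omega$ being K\"ahler--Einstein) gives
\begin{equation*}
\det\bigl(g(\mu)_{\alpha\bar\beta}\bigr) = c_\Omega\,\mu^d\, \N_\Omega^{\mu(d+1)-\gamma}\,(\N_\Omega^\mu - |w|^2)^{-(d+2)}.
\end{equation*}
The Monge--Amp\`ere equation characterizing the Einstein condition forces the $\N_\Omega$--factor in the numerator to be pluriharmonic and hence a nonzero constant: this pins down $\mu(d+1) = \gamma$, i.e.\ $\mu = \mu_0$, and reading off the $Y$--exponent yields the Einstein constant $\lambda = -2(d+2)$.

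For completeness at $\mu = \mu_0$, note that on the slice $w = 0$ the metric restricts to $\mu_0 H$, a positive multiple of the (complete) Bergman metric of $\Omega$, and more generally $A \geq (\mu_0 T/Y)\,H \geq \mu_0 H$ as Hermitian forms, so the projection $\M_\Omega(\mu_0) \to \Omega$ is distance non--increasing on horizontal vectors. Given a divergent curve in $\M_\Omega(\mu_0)$ one then argues by dichotomy: either its projection to $\Omega$ has infinite length already, or it converges to some $z_\infty \in \overline{\Omega}$, in which case $|w|^2 \to \N_\Omega^{\mu_0}(z_\infty,z_\infty)$ and the fibre estimate $g_{w\bar w} = T/(T-|w|^2)^2$ gives the logarithmically divergent integral $\int d|w|/(T - |w|^2) = \infty$.

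For the homogeneity statement the ``if'' direction is immediate: when $\Omega = \CH^d$ we have $\gamma = d+1$, hence $\mu_0 = 1$, and $\N_{\CH^d}(z,z) = 1-\|z\|^2$, so
\begin{equation*}
\M_{\CH^d}(1) = \{(z,w)\in\C^{d+1} : \|z\|^2 + |w|^2 < 1\} = \CH^{d+1},
\end{equation*}
while the potential $-\log(1 - \|z\|^2 - |w|^2)$ is the standard hyperbolic one, so $(\M_{\CH^d}(1),g(1))$ is $\CH^{d+1}$ with $g_{hyp}$, a symmetric hence homogeneous space. For the converse, if $(\M_\Omega(\mu_0), g(\mu_0))$ is homogeneous, Cheng--Yau's uniqueness of complete K\"ahler--Einstein metrics of negative Einstein constant on a bounded pseudoconvex domain, combined with the classical fact that the Bergman metric of any homogeneous bounded domain is itself K\"ahler--Einstein, forces $g(\mu_0)$ to be a positive multiple of the Bergman metric of $\M_\Omega(\mu_0)$. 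The Bergman kernel of $\M_\Omega(\mu_0)$ must therefore agree, up to a pluriharmonic factor, with a single power of $(\N_\Omega^{\mu_0}-|w|^2)$; expanding this kernel by a Forelli--Rudin formula as a series in $|w|^{2k}$ weighted by the weighted Bergman kernels of $\Omega$ of weights $\mu_0(k+1)$, the collapse of the series to one power can occur only when $\Omega$ has rank one, i.e.\ $\Omega = \CH^d$.

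The main obstacle is precisely this final matching step, which is essentially a rigidity statement about weighted Bergman kernels on bounded symmetric domains of higher rank. An arguably cleaner alternative is to compute the holomorphic sectional curvature of $g(\mu_0)$ at $(0,0)$ separately along base and along the $w$--fibre: the base contribution carries the holomorphic sectional curvature of a multiple of the Bergman metric of $\Omega$, which is not constant when the rank of $\Omega$ is $\geq 2$, whereas the fibre contribution is constant; the resulting anisotropy of the curvature tensor at $(0,0)$ is incompatible with the isotropic action of the isotropy group forced by homogeneity of a rank--one symmetric space, thereby ruling out rank $\geq 2$.
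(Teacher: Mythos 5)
The book itself does not prove this theorem: it explicitly defers to \cite{roos} and \cite{compl}, so there is no internal argument to compare yours against and your proposal has to stand on its own. Most of it does. The Monge--Amp\`ere computation is sound: your determinant identity $\det(g(\mu)_{\alpha\bar\beta})=\mu^d\,T^{d+1}\det(H)\,Y^{-(d+2)}$ checks out (for $\Omega=\CH^d$, $\mu=1$ it collapses to $Y^{-(d+2)}$, the hyperbolic metric on $\CH^{d+1}$), the pluriharmonicity constraint correctly forces $\mu(d+1)=\gamma$ and gives $\lambda=-2(d+2)$, and the ``if'' direction of the homogeneity claim is immediate. The completeness sketch is the standard one, though to justify that the projection to $(\Omega,\mu_0 H)$ is distance non-increasing you must absorb the off-diagonal terms $g_{\alpha\bar w}=-w\,T_\alpha/Y^2$ by completing the square (the decomposition $A=\mu(T/Y)H+\frac{|w|^2}{TY^2}T_\alpha T_{\bar\beta}$ alone only controls the horizontal block), and the dichotomy should be phrased as ``the projection either leaves every compact subset of $\Omega$ or has finite length and hence converges \emph{inside} $\Omega$''.

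The genuine gap is the implication ``homogeneous $\Rightarrow$ rank one''. Your Route A is left unfinished by your own admission, and Route B rests on a false principle: homogeneity of a K\"ahler manifold does \emph{not} force the isotropy group at a point to act transitively on complex tangent directions, so non-constancy of the holomorphic sectional curvature at $(0,0)$ is perfectly compatible with homogeneity --- the bidisc $\CH^1\times\CH^1$ is homogeneous while its holomorphic sectional curvature at every point varies strictly between two values. Invoking ``the isotropic action forced by homogeneity of a rank-one symmetric space'' presupposes exactly what you are trying to prove. To close Route A you would need, beyond Cheng--Yau (which does legitimately identify $g(\mu_0)$ with $\frac{1}{d+2}g_B^{\M_\Omega(\mu_0)}$ if the domain were homogeneous, since the Bergman metric of a homogeneous bounded domain is complete and K\"ahler--Einstein), the Forelli--Rudin expansion of the Bergman kernel of $\M_\Omega(\mu_0)$ as a series in $|w|^{2k}$ weighted by the weighted Bergman kernels of $\Omega$, together with the rigidity statement that this series collapses to a single power of $\N_\Omega^{\mu_0}-|w|^2$ only when $\Omega$ has rank one; that last step is where the real content of \cite{compl} lies and it is not supplied by anything in your argument.
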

\begin{remark}\rm\label{rrchimm}
Observe that when $\Omega=\CH^d$, we have $\mu_0=1$, $\M_\Omega(1)=\CH^{d+1}$ and $g(1)=g_{hyp}$.
\end{remark}

The following proposition shows that the existence of a K\"ahler immersion of a Cartan--Hartogs domain into $\mathds{C}{\rm P}^{\infty}$ is completely determined by the base domain $(\Omega, g_B)$,
where $g_B$ is its Bergman metric.
\begin{prop}[A. Loi, M. Zedda, \cite{articwall}]\label{lemmadiastM}
The potential $\dd_0(z,w)$ given by (\ref{diastM0}) is the diastasis
around the origin  of the metric
$g(\mu)$.
Moreover,  $cg(\mu)$ is projectively induced if and only if  $(c+m)\frac{\mu}{\gamma}g_B$ is projectively induced for every integer $m\geq 0$.
\end{prop}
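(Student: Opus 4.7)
The plan is to prove the two assertions of the proposition in turn: first that $\dd_0(z, w)$ is the diastasis of $g(\mu)$ centered at the origin, and then to characterize the projective inducibility of $cg(\mu)$ in terms of that of the Bergman metric of $\Omega$ rescaled appropriately.

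For the first claim I will invoke the characterization in Theorem \ref{chardiast}: it suffices to verify that the analytic continuation of $\dd_0(z, w)$ vanishes whenever the antiholomorphic variables are set to zero. The only nonstandard input is the identity $\Kj(z, 0) = 1/V(\Omega)$, which holds on any circular bounded domain by choosing a homogeneous orthonormal basis of $L^2_{\mathrm{hol}}(\Omega)$, in the spirit of the proof of Proposition \ref{diastdom}. Consequently $\N_\Omega(z, 0) = 1$, and the analytic continuation of $\dd_0$ at $\bar z = \bar w = 0$ evaluates to $-\log(1 - 0) = 0$, as required.

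For the second claim I plan to apply Calabi's criterion (Theorem \ref{localcritb}) to $e^{c\dd_0(z,w)}-1$, exploiting the generalized binomial expansion
\[
e^{c\dd_0(z,w)} = \bigl(\N_\Omega^\mu(z,z) - |w|^2\bigr)^{-c} = \sum_{m=0}^{\infty}\binom{c+m-1}{m}|w|^{2m}\bigl(V(\Omega)\Kj(z,z)\bigr)^{(c+m)\mu/\gamma}.
\]
Since $|w|^{2m} = w^m\bar w^m$ is the only source of $w$- and $\bar w$-monomials, the matrix of coefficients in the variables $(z, w)$ decomposes into blocks indexed by the $w$-degree $m$, the $m$-th block being $\binom{c+m-1}{m}$ times the matrix of coefficients of $\bigl(V(\Omega)\Kj(z,z)\bigr)^{(c+m)\mu/\gamma}$, modulo the ``$-1$'' which alters only the $(0,0)$-entry in the $m=0$ block. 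Since $\log\bigl(V(\Omega)\Kj(z,z)\bigr)^{(c+m)\mu/\gamma}$ is precisely the diastasis of $(c+m)\frac{\mu}{\gamma}g_B$ on $\Omega$ (Proposition \ref{diastdom}), the positive semi-definiteness of each block will mirror Calabi's criterion for $(c+m)\frac{\mu}{\gamma}g_B$ to be projectively induced.

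The main subtlety lies in reconciling the constant-term bookkeeping between the $m=0$ and the $m\geq 1$ blocks. Denoting by $B^{(m)}$ the matrix of coefficients of $\bigl(V(\Omega)\Kj(z,z)\bigr)^{(c+m)\mu/\gamma}-1$ (so that $B^{(m)}$ is exactly the matrix appearing in Calabi's criterion for $(c+m)\frac{\mu}{\gamma}g_B$), the $m=0$ block equals $B^{(0)}$ while for $m\geq 1$ it equals $\binom{c+m-1}{m}(E_{00}+B^{(m)})$, where $E_{00}$ is the rank-one matrix with a single $1$ at the $(0,0)$-entry. By Proposition \ref{diastdom}, the $0$-th row and column of $B^{(m)}$ vanish, hence $E_{00}+B^{(m)}\succeq 0$ if and only if $B^{(m)}\succeq 0$. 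Combining all blocks, positive semi-definiteness of the full matrix is equivalent to $B^{(m)}\succeq 0$ for every $m\geq 0$, which is the desired equivalence.
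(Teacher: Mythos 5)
Your proof is correct and follows the same overall strategy as the paper: reduce both statements to Calabi's machinery (Theorem \ref{chardiast} for the diastasis claim, Theorem \ref{localcritb} for resolvability), and block-diagonalize the coefficient matrix of $e^{c\dd_0(z,w)}-1$ by the $w$-degree $m$, identifying the $m$-th block with the coefficient matrix governing $(c+m)\frac{\mu}{\gamma}g_B$. The differences are in execution, and they are to your advantage. For the first claim, the paper checks $A_{j0}=A_{0j}=0$ by differentiating term by term, whereas you evaluate the analytic continuation directly via $\Kj(z,0)=1/V(\Omega)$ (which indeed follows from circularity exactly as in Proposition \ref{diastdom}); both are valid. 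For the second claim, the paper extracts the blocks by computing derivatives at the origin, arriving at the factor $\frac{(m+c-1)!}{(c-1)!\,m!}\,\N_\Omega^{-\mu(c+m)}$ through a somewhat laborious formula, while your generalized binomial expansion $(\N_\Omega^\mu-|w|^2)^{-c}=\sum_m\binom{c+m-1}{m}|w|^{2m}\N_\Omega^{-\mu(c+m)}$ produces the same blocks transparently. You are also more careful than the paper on one point: the constant terms. The paper's identity (\ref{formulareq1}) carries a ``$-1$'' that does not literally balance, whereas your observation that the $m$-th block for $m\geq 1$ is $\binom{c+m-1}{m}(E_{00}+B^{(m)})$, together with the vanishing of the $0$-th row and column of $B^{(m)}$ (which follows from Proposition \ref{diastdom} and the chain rule), cleanly shows that adding $E_{00}$ does not affect positive semidefiniteness. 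This is exactly the bookkeeping needed to make the equivalence airtight in both directions.
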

\begin{proof}
The power expansion around the origin of $\dd_0(z,w)$ can be written as:
\begin{equation}\label{powexpnot}
\dd_0(z,w)=\sum_{j,k=0}^\infty A_{jk} (zw)^{m_j}(\bar z\bar w)^{m_k},
\end{equation}
where $m_j$ are ordered $(d+1)$-uples of integer and:
$$
(zw)^{m_j}=z_1^{m_{j,1}}\cdots z_d^{m_{j,d}} w^{m_{j,d+1}}.
$$
In order to prove that   $\dd_0(z,w)$ is the diastasis for $g(\mu)$
we need to verify  that $A_{j0}=A_{0j}=0$ (see Theorem \ref{chardiast}).
This is straightforward. Indeed if we take derivatives with respect 
either to $z$ or $\bar z$  is the same as deriving the function $-\log(\N_\Omega^\mu(z,z))=\frac{\mu}{\gamma}\dd_0^\Omega(z)$ that is the diastasis of $(\Omega,\frac{\mu}{\gamma}g_B)$, thus we obtain $0$. If we take derivatives with respect either to $w$ or  $\bar w$  we obtain zero no matter how many times we derive with respect to $z$ or $\bar z$, since $\dd_0(z,w)$ is radial in $w$.

In order to prove the second part of the proposition
take the function:
\begin{equation}\label{funzione}
e^{c\dd_0(z,w)}-1=\frac{1}{(\N_\Omega^\mu(z, z)-|w|^2)^c}-1,
\end{equation}
and using the same notations as in (\ref{powexpnot}) write the power expansion around the origin as:
\begin{equation}\nonumber
e^{c\dd_0(z,w)}-1=\sum_{j,k=0}^\infty B_{jk} (zw)^{m_j}(\bar z\bar w)^{m_k}.
\end{equation}
By Calabi's criterion (Theorem \ref{localcrit}), $cg(\mu)$
is projectively induced  if and only if $B=(B_{jk})$ is positive semidefinite of infinite rank.
The generic entry of $B$ is given by:
\begin{equation}
B_{jk}=\frac{1}{m_j!\cdot m_k!}\frac{\de^{|m_j|+|m_k|}}{\de (zw)^{m_j}\de (\bar z\bar w)^{m_k}}\left(\frac{1}{(\N_\Omega^\mu(z, z)-|w|^2)^c}-1\right)\Bigg|_0,\nonumber
\end{equation}
where $m_j! =m_{j,1}!\cdots m_{j,d+1}!$ and  $\de(zw)^{m_j}=\de z_1^{m_{j,1}}\cdots\de z_d^{m_{j,d}} \de w^{m_{j,d+1}}$.
By Proposition \ref{diastdom} we have:
\begin{equation}\label{condi}
m_{j,1}+\cdots +m_{j,d}\neq m_{k,1}+\cdots +m_{k,d} \Rightarrow B_{jk}=0,
\end{equation}
and since (\ref{funzione}) is radial in $w$ we also have:
\begin{equation}\label{condii}
m_{j,d+1}\neq m_{k,d+1} \Rightarrow B_{jk}=0.
\end{equation}

Thus, $B$ is a $\infty\times\infty$ matrix of the form
\begin{equation}
B=\left(
\begin{array}{cccccc}
0&0&0&0&0&0\\
0&E_1&0&0&0&\dots\\
0&0&E_2&0&0&\dots\\
0&\vdots&0&E_3&0&\dots\\
0&&\vdots&0&\ddots&
\end{array}
\right),\nonumber
\end{equation}
where the generic block $E_i$ contains derivatives
$\de(zw)^{m_j}$$\de(\bar z\bar w)^{m_k}$ of   order $2i$, $i=1,2,\dots$ such that $|m_j|=|m_k|=i$.
We can further write:
\begin{equation}\label{matrixz}
E_i=\left(
\begin{array}{ccc}
F_{z(i)}(0)&0&0\\
0&F_{w(i)}(0)&0\\
0&0&F_{(z,w)(i)}(0)
\end{array}
\right),
\end{equation}
where $F_{z(i)}(0)$ (resp. $F_{w(i)}(0)$, $F_{(z,w)(i)}(0)$) contains derivatives $\de(zw)^{m_j}$
 $\de(\bar z\bar w)^{m_k}$ (of   order $2i$  with $|m_j|=|m_k|=i$) such that $m_{j, d+1}=m_{k, d+1}=0$ (resp. $m_{j, d+1}=m_{k, d+1}=i$, $m_{j, d+1}, m_{k, d+1}\neq 0, i$).
(Notice also  that we have $0$ in all the other entries because of (\ref{condi}) and (\ref{condii})).
Since the derivatives are evaluated at the origin, deriving (\ref{funzione}) with respect to
$\de(zw)^{m_j}$ $\de(\bar z\bar w)^{m_k}$ with $|m_j|=|m_k|=i$ and $m_{j, d+1}=m_{k, d+1}=0$
 is the same as deriving the function:
\begin{equation}\label{funzdominio}
\frac{1}{(\N_\Omega^\mu(z,z))^c}-1=e^{c\frac{\mu}{\gamma}\dd_0^\Omega(z)}-1.
\end{equation}
Thus,  by Calabi's criterion, all the blocks $F_{z(i)}(0)$ are positive semidefinite if and only if $c\frac{\mu}{\gamma} g_B$ is projectively induced.
Observe that  the blocks $F_{w(i)}(0)$ are semipositive definite without extras assumptions.  Indeed 
 if we consider derivatives $\de(zw)^{m_j}$$\de(\bar z\bar w)^{m_k}$ of (\ref{funzione})
with 
$|m_j|=|m_k|=i$ and $m_{j, d+1}=m_{k, d+1}=i$, since $\N_\Omega^\mu(z,z)$ evaluated in $0$ is equal to $1$, it is the same as deriving the function $1/(1-|w|^2)^c-1=\left(\sum_{j=0}^\infty |w|^{2j}\right)^c-1$
and the claim follows.
Finally,  consider  the block $F_{(z,w)(i)}(0)$. It can be written as:
\begin{equation}
F_{(z,w)(i)}(0)=\left(
\begin{array}{cccc}
H_{z(i-1),w(1)}(0)&0&0&0\\
0&H_{z(i-2),w(2)}(0)&0&0\\
\vdots&&\ddots&\\
0&0&0&H_{z(1),w(i-1)}(0)
\end{array}
\right),\nonumber
\end{equation}
where the generic block $H_{z(i-m),w(m)}(0)$, $1\leq m\leq i-1$, 
contains derivatives
$\de(zw)^{m_j}$\linebreak $\de(\bar z\bar w)^{m_k}$ of   order $2i$  such that $|m_j|=|m_k|=i$ and $m_{j, d+1}=m_{k, d+1}=m$ evaluated at zero
(as before, by (\ref{condi}) and (\ref{condii}) all entries outside these blocks are $0$).
Now it is not hard to verify   that  these blocks can be obtained by taking derivatives
$\de(zw)^{m_j}$$\de(\bar z\bar w)^{m_k}$ of order  $2(i-m)$ such that $|m_j|=|m_k|=2(i-m)$ and $m_{j, d+1}=m_{k, d+1}=0$  of  the function
\begin{equation}\label{formulareq1}
\frac{(m+c-1)!}{(c-1)!\; m!\ \N^{\mu(c+m)}_\Omega(z, z)}-1=
e^{(c+m)\frac{\mu}{\gamma}\dd_0^\Omega(z)}-1,
\end{equation}
and evaluating at $z=\bar z =0$.
Thus, again by Calabi's criterion,
$F_{(z,w)(i)}(0)$
is positive semidefinite
iff $(c+m)\frac{\mu}{\gamma}g_B$, $m\geq 1$,
is projectively induced
and this ends the proof of the proposition.
\end{proof}
\begin{remark}{\rm
Proposition \ref{lemmadiastM} can be also  proved for ``general'' Cartan-Hartogs domains with dimension $n=d+r$, namely
\begin{equation}
\M_{\Omega}(\mu)=\left\{(z,w)\in \Omega\times\C^r,\ ||w||^2<\N_\Omega^\mu(z,z)\right\},\nonumber
\end{equation}
where $||w||^2=|w_1|^2+\dots+|w_r|^2$. In that case Equation (\ref{formulareq1}) can be obtained using the following formula
\begin{align}\label{formula}
&\frac{1}{m_1!^2\cdots m_r!^2}\frac{\de^{2m}}{\de w_1^{m_1}\de \bar w_1^{m_1}\cdots\de
w_r^{m_r}\de \bar
w_r^{m_r}}\left(\frac{1}{f(z,\bar z)-||w||^2}\right)^c=\nonumber\\
=&\frac{1}{m_1!^2\cdots m_r!^2}\sum_{k_1=1}^{m_1+1}\cdots\sum_{k_r=1}^{m_r+1}\left[\frac{(\sum_{j=1}^r(k_j)+m+c-r-1)!}{(c-1)!}\right.\cdot\nonumber\\
&\cdot\left.\prod_{i=1}^r\left[{m_i\choose k_i-1}^2(m_i+1-k_i)!(w_i\bar
w_i)^{k_i-1}\right]\frac{1}{\left(f(z,\bar z)-||w||^2\right)^{\sum_{j=1}^r(k_j)+m+c-r}}\right].\nonumber
\end{align}
}
\end{remark}

\medskip

From Theorem \ref{roos}, Prop. \ref{lemmadiastM} and Theorem \ref{wallach} we get the following theorem, which gives a counterexample to Conjecture \ref{conjhom} in the case when the ambient space is infinite dimensional.

 \begin{theor}[A. Loi, M. Zedda, \cite{articwall}]\label{thwallach}
There exists a continuous family of homothetic, complete,   nonhomogeneous and  projectively induced K\"ahler-Einstein metrics on each Cartan--Hartogs domain based on an irreducible bounded symmetric domain of rank $r\neq 1$.
\end{theor}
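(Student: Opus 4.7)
My plan is to fix the Einstein parameter $\mu=\mu_0=\gamma/(d+1)$ of Theorem \ref{roos} and then produce the family by varying an overall homothety constant $c>0$ in the metric $cg(\mu_0)$. First I would note that, by Theorem \ref{roos}, $(M_\Omega(\mu_0),g(\mu_0))$ is a complete K\"ahler--Einstein manifold, and since the hypothesis is that $\Omega$ has rank $r\neq 1$, it is nonhomogeneous. The properties of being complete, K\"ahler--Einstein, and nonhomogeneous are all preserved under multiplication of the metric by a positive constant: completeness is obvious, nonhomogeneity is unaffected since the holomorphic isometry group of $cg(\mu_0)$ coincides with that of $g(\mu_0)$, and $\ric_{cg(\mu_0)}=\ric_{g(\mu_0)}$ (the Ricci form is invariant under a constant rescaling of the metric), so if $\ric_{g(\mu_0)}=\lambda\,\omega(\mu_0)$ then $\ric_{cg(\mu_0)}=(\lambda/c)\,c\,\omega(\mu_0)$, i.e.\ $cg(\mu_0)$ is K\"ahler--Einstein with constant $\lambda/c$.

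Next I would address the projective inducedness of $cg(\mu_0)$. By Proposition \ref{lemmadiastM}, $cg(\mu_0)$ is projectively induced if and only if $(c+m)\tfrac{\mu_0}{\gamma}g_B=\tfrac{c+m}{d+1}g_B$ is projectively induced for every integer $m\geq 0$. By Theorem \ref{wallach}, $\big(\Omega,\tfrac{c+m}{d+1}g_B\big)$ admits a K\"ahler immersion into $\CP^\infty$ if and only if $\tfrac{(c+m)\gamma}{d+1}\in W(\Omega)\setminus\{0\}$. Recalling the explicit description \eqref{wallachset} of the Wallach set, namely
\[
W(\Omega)=\Big\{0,\tfrac{a}{2},\tfrac{2a}{2},\dots,\tfrac{(r-1)a}{2}\Big\}\cup\Big(\tfrac{(r-1)a}{2},+\infty\Big),
\]
it suffices to arrange that every shifted value $\tfrac{(c+m)\gamma}{d+1}$, $m\in\nat$, falls into the continuous part $W_c$ of the Wallach set.

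This immediately suggests the choice
\[
c\in I_\Omega:=\Big(\tfrac{(r-1)a(d+1)}{2\gamma},+\infty\Big),
\]
for which $\tfrac{c\gamma}{d+1}>\tfrac{(r-1)a}{2}$, and therefore
\[
\tfrac{(c+m)\gamma}{d+1}\geq\tfrac{c\gamma}{d+1}>\tfrac{(r-1)a}{2}\quad(\forall\,m\geq 0),
\]
placing every term in $W_c\subset W(\Omega)\setminus\{0\}$. Combining the three ingredients above, for every $c\in I_\Omega$ the metric $cg(\mu_0)$ is a complete, nonhomogeneous, projectively induced K\"ahler--Einstein metric on $M_\Omega(\mu_0)$, and the assignment $c\mapsto cg(\mu_0)$ provides the required continuous family of homothetic metrics, concluding the proof.

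I do not expect any serious obstacle here: all the heavy lifting has already been done in Theorem \ref{roos}, Proposition \ref{lemmadiastM}, and Theorem \ref{wallach}, and the only genuine input is the observation that the hypothesis $r\neq 1$ forces the Wallach set to contain a nontrivial continuous ray, together with the book-keeping that a single lower bound on $c$ simultaneously handles all shifts $c+m$ appearing in Proposition \ref{lemmadiastM}.
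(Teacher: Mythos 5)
Your proposal is correct and follows essentially the same route as the paper: fix $\mu=\mu_0=\gamma/(d+1)$, invoke Theorem \ref{roos} for completeness, the Einstein condition and nonhomogeneity (all preserved under homothety), then combine Proposition \ref{lemmadiastM} with Theorem \ref{wallach} and the description \eqref{wallachset} of the Wallach set to conclude that $cg(\mu_0)$ is projectively induced once $c\geq \frac{(r-1)(d+1)a}{2\gamma}$. The only (immaterial) difference is that the paper allows the endpoint $c=\frac{(r-1)(d+1)a}{2\gamma}$, which also works since $\frac{(r-1)a}{2}$ itself belongs to $W(\Omega)\setminus\{0\}$.
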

\begin{proof}
Take  $\mu=\mu_0=\gamma/(d+1)$ in (\ref{diastM0})  and $\Omega\neq\CH^d$. By Theorem \ref{roos} $\left(\M_\Omega(\mu_0),cg(\mu_0)\right)$ is K\"ahler-Einstein, complete and nonhomogeneous for all positive real numbers $c$.
By Proposition \ref{lemmadiastM}
 $cg(\mu_0)$  
 is projectively induced
if and only if $\frac{c+m}{d+1}g_B$ is projectively induced, for all nonnegative integer $m$.
By Theorem \ref{wallach} this  happens if  $\frac{(c+m)}{d+1}\geq 
\frac{(r-1)a}{2\gamma}$.
Hence  $cg (\mu_0)$ with $c\geq \frac{(r-1)(d+1)a}{2\gamma}$ is the desired family of
projectively induced  K\"ahler-Einstein metrics.
\end{proof}


By applying the same argument with $0<c< \frac{a(d+1)}{2\gamma}$ (and $r\neq 1$) one also gets the following:
\begin{cor}\label{corw2}
There exists a continuous  family of nonhomogeneous, complete, 
 K\"ahler-Einstein metrics which  does not admit a local K\"ahler immersion into $\CP^N$ for any $N\leq \infty$.
\end{cor}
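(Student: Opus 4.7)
My plan is to follow the argument of Theorem~\ref{thwallach} almost verbatim, but in the complementary regime of the parameter $c$ where the Wallach-set constraint fails already at the zeroth level. First I would fix an irreducible bounded symmetric domain $\Omega$ of rank $r\geq 2$, complex dimension $d$ and genus $\gamma$, with Wallach-set parameter $a$ as in \eqref{wallachset}. Setting $\mu_0=\gamma/(d+1)$, I consider the family $\{(\M_\Omega(\mu_0),cg(\mu_0))\}_{c>0}$ of Cartan--Hartogs manifolds. By Theorem~\ref{roos}, each such manifold is complete and K\"ahler--Einstein, and nonhomogeneous because $r\neq 1$. This already produces a continuous family of the desired type; it remains to show that for a suitable range of $c$ no local K\"ahler immersion into $\CP^N$ exists.

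Next I would invoke Proposition~\ref{lemmadiastM}: the metric $cg(\mu_0)$ is $1$-resolvable if and only if $\tfrac{c+m}{d+1}g_B$ is projectively induced for every integer $m\geq 0$. By Theorem~\ref{wallach} the latter condition translates into
\[
\frac{(c+m)\gamma}{d+1}\in W(\Omega)\setminus\{0\}\qquad\text{for all }m\geq 0.
\]
Recalling from \eqref{wallachset} that every nonzero element of $W(\Omega)$ is at least $a/2$, the $m=0$ instance of this condition fails as soon as $\tfrac{c\gamma}{d+1}<\tfrac{a}{2}$, i.e.\ whenever
\[
0<c<\frac{a(d+1)}{2\gamma}.
\]

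For $c$ in this open interval, $cg(\mu_0)$ is not $1$-resolvable at the origin, so Calabi's local criterion (Theorem~\ref{localcritb}) rules out any local K\"ahler immersion into $\CP^N$ for every $N\leq\infty$. Letting $c$ range over $(0,a(d+1)/(2\gamma))$ therefore yields the claimed continuous family of complete, nonhomogeneous K\"ahler--Einstein metrics. I do not foresee any genuine obstacle: all the substantive input (the Einstein/nonhomogeneity statement of Theorem~\ref{roos}, the reduction to the Wallach set in Proposition~\ref{lemmadiastM}, and the explicit description of $W(\Omega)$ in Theorem~\ref{wallach}) is already in place, so the argument reduces to pinpointing the subinterval of the parameter $c$ for which the $m=0$ Wallach-set constraint is violated.
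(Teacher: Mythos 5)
Your argument is correct and is essentially the paper's own proof: the authors obtain Corollary \ref{corw2} by running the proof of Theorem \ref{thwallach} with $0<c<\frac{a(d+1)}{2\gamma}$ and $r\neq 1$, exactly as you do, using Theorem \ref{roos} for completeness/Einstein/nonhomogeneity, Proposition \ref{lemmadiastM} to reduce to the base, and the Wallach set description \eqref{wallachset} to see that the $m=0$ condition already fails since $0<\frac{c\gamma}{d+1}<\frac{a}{2}$. Nothing further is needed.
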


\begin{remark}{\rm
As direct consequence of Corollary \ref{corw2} together with Exercise \ref{constantell2c}, we get that a Cartan-Hartogs domain $\left(\M_\Omega(\mu_0),cg(\mu_0)\right)$ does not admit a K\"ahler immersion into $l^2(\C)$. Further by Theorem \ref{chcn}, it does not admit a K\"ahler immersion into $\CH^\infty$ for any value of $c>0$ either.
}
\end{remark}

We conclude this section with the following lemma which gives an explicit expression of the K\"ahler map of a Cartan-Hartogs domain into $\mathds{C}{\rm P}^\infty$.

\begin{lem}[A. Loi, M. Zedda, \cite{balancedch}]\label{immersion}
If $f\!:M_\Omega(\mu)\f \CP^\infty$ is a holomorphic map such that $f^*\omega_{FS}=\alpha\,\omega(\mu)$ then up to unitary transformation of $\CP^\infty$
it is given by:
\begin{equation}\label{immf0}
f=\left[ 1, s, h_{\frac{\mu\, \alpha}{\gamma}},\dots,\sqrt{\frac{(m+ \alpha-1)!}{(\alpha-1)!m!}}h_{\frac{\mu(\alpha +m)}{\gamma}}w^m,\dots\right],
\end{equation}
where $s=(s_1,\dots, s_m,\dots)$ with: 
$$s_m=\sqrt{\frac{(m+ \alpha-1)!}{(\alpha-1)!m!}}w^m,$$
and $h_k=(h_k^1,\dots,h_k^j,\dots)$ denotes the sequence of holomorphic maps on $\Omega$ such that the immersion $\tilde h_k=(1,h_k^1,\dots, h_k^j,\dots)$, $\tilde h_k\!:\Omega\f\CP^\infty$, satisfies $\tilde h_k^*\omega_{FS}=k \omega_B$, i.e.:
\begin{equation}\label{ie}
1+\sum_{j=1}^{\infty}|h_k^j|^2=\frac{1}{N^{\gamma\, k}}.
\end{equation} 
\end{lem}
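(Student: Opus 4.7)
The plan is to invoke Calabi's rigidity theorem (Theorem \ref{local rigidityb}) to reduce the lemma to the verification that the explicit map in \eqref{immf0} pulls back $\omega_{FS}$ to $\alpha\,\omega(\mu)$; any other holomorphic map with the same pull-back will then differ from it by a unitary transformation of $\CP^\infty$. After normalising so that $f(0)=[1:0:0:\cdots]$, Proposition \ref{induceddiast} together with the explicit diastasis \eqref{diastM0} turns the condition $f^*\omega_{FS}=\alpha\,\omega(\mu)$ into the single scalar identity
\[
1+\sum_{k}|f_k(z,w)|^2=e^{\alpha\dd_0(z,w)}=\bigl(\N_\Omega^\mu(z,z)-|w|^2\bigr)^{-\alpha}.
\]

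The main computation is to expand the right hand side in two stages. First, factor $(\N_\Omega^\mu-|w|^2)^{-\alpha}=\N_\Omega^{-\mu\alpha}\bigl(1-|w|^2/\N_\Omega^\mu\bigr)^{-\alpha}$ and apply the generalised binomial series $(1-t)^{-\alpha}=\sum_{m\geq 0}\tfrac{(m+\alpha-1)!}{(\alpha-1)!\,m!}t^m$ to obtain
\[
\bigl(\N_\Omega^\mu-|w|^2\bigr)^{-\alpha}=\sum_{m=0}^\infty\frac{(m+\alpha-1)!}{(\alpha-1)!\,m!}\,\frac{|w|^{2m}}{\N_\Omega^{\mu(\alpha+m)}}.
\]
Second, for each $m\geq 0$, apply the defining property \eqref{ie} with $k=\mu(\alpha+m)/\gamma$ to replace $\N_\Omega^{-\mu(\alpha+m)}$ by $1+\sum_{j}|h_{\mu(\alpha+m)/\gamma}^{j}|^{2}$. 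Collecting terms, the constant $1$ contributes the leading entry, the $m\geq 1$ constant pieces assemble into $\sum_{m\geq 1}|s_m|^2$ with $s_m=\sqrt{(m+\alpha-1)!/((\alpha-1)!\,m!)}\,w^m$, the $m=0$ sum over $j$ gives $\sum_{j}|h_{\mu\alpha/\gamma}^{j}|^{2}$, and the remaining cross terms for $m\geq 1$, $j\geq 1$ match exactly the squared moduli of $\sqrt{(m+\alpha-1)!/((\alpha-1)!\,m!)}\,h_{\mu(\alpha+m)/\gamma}^{j}\,w^{m}$. This is precisely the component-wise reading of \eqref{immf0}.

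The only subtle point is justifying the use of \eqref{ie} for every $m\geq 0$, which requires the parameters $\mu(\alpha+m)/\gamma$ to lie in the Wallach set of $\Omega$, i.e.\ the $h_k$ to exist as holomorphic isometries of $(\Omega,k\gamma\, g_B)$ into $\CP^\infty$. This is automatic from the existence of $f$: indeed, by Proposition \ref{lemmadiastM} (and its proof), the existence of $f$ with $f^*\omega_{FS}=\alpha\,\omega(\mu)$ forces $(c+m)\tfrac{\mu}{\gamma}g_B$ with $c=\alpha$ to be projectively induced for all integers $m\geq 0$, which is exactly what Theorem \ref{wallach} needs to guarantee the maps producing the $h_{\mu(\alpha+m)/\gamma}$. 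Convergence of the resulting double series and the legitimacy of the rearrangement follow from the real-analyticity of $\dd_0$ near the origin, so no additional estimate is required. Together with Calabi's rigidity, this closes the argument.
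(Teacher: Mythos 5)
Your proposal is correct and follows essentially the same route as the paper: both verify that the explicit map satisfies $1+\sum|f_j|^2=(\N_\Omega^\mu-|w|^2)^{-\alpha}$ by splitting the expansion into the pure $w$, pure $z$, and mixed blocks (your binomial-series factorisation is exactly what the paper's derivative computation at the origin produces) and then conclude by Calabi's rigidity. Your extra remark that the parameters $\mu(\alpha+m)/\gamma$ lie in the Wallach set, via Proposition \ref{lemmadiastM}, makes explicit a point the paper leaves implicit, but it does not change the argument.
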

\begin{proof}
Since the immersion is isometric, by (\ref{diastM0}) we have $f^*\Phi_{FS}=-\alpha\log(N_{\Omega}^\mu(z,z)-|w|^2)$, which is equivalent to:
$$\frac{1}{(N^{\mu}-|w|^2)^\alpha}=\sum_{j=0}^\infty |f_j|^2,$$
for $f=[f_0,\dots, f_j,\dots]$.
If we consider the power expansion around the origin of the left hand side with respect to $w$, $\bar w$, we get:
\begin{equation}
\begin{split}
\sum_{k=1}^\infty \left[\frac{\de^{2k}}{\de w^k \de \bar w^k}\frac{1}{(N^{\mu}-|w|^2)^\alpha}\right]_{0}\frac{|w|^{2k}}{k!^2}=& \sum_{k=1}^\infty \left[\frac{\de^{2k}}{\de w^k \de \bar w^k}\frac{1}{(1-|w|^2)^\alpha}\right]_{0}\frac{|w|^{2k}}{k!^2}\\
=&\frac1{(1-|w|^2)^\alpha}-1.\nonumber
\end{split}
\end{equation}
The power expansion with respect to $z$ and $\bar z$ reads:
\begin{equation}
\begin{split}
\sum_{j,k}\left[\frac{\de^{|m_j|+|m_k|}}{\de z^{m_j} \de \bar z^{m_k}}\frac{1}{(N^{\mu}-|w|^2)^\alpha}\right]_{0}\frac{z^{m_j}\bar z^{m_k}}{m_j!m_k!}=&\sum_{j,k}\left[\frac{\de^{|m_j|+|m_k|}}{\de z^{m_j} \de \bar z^{m_k}}\frac{1}{N^{\mu\alpha}}\right]_{0}\frac{z^{m_j}\bar z^{m_k}}{m_j!m_k!}\\
=&\sum_{j=1}^\infty |h_{\frac{\mu\alpha}{\gamma}}^j|^2,\nonumber
\end{split}
\end{equation}
where the last equality holds since by (\ref{ie}) $\sum_{j=1}^\infty |h_{\frac{\mu\alpha}{\gamma}}^j|^2$ is the power expansion of $\frac{1}{N^{\mu\alpha}}-1$. 

Finally, the power expansion with respect to $z$, $\bar z$, $w$, $\bar w$ reads:
\begin{equation}
\begin{split}
&\sum_{m=1}^\infty\sum_{j,k}\left[\frac{\de^{|m_j|+|m_k|}}{\de z^{m_j} \de \bar z^{m_k}}\frac{\de^{2m}}{\de w^m \de \bar w^m}\frac{1}{(N^{\mu}-|w|^2)^\alpha}\right]_{0}\frac{z^{m_j}\bar z^{m_k}w^m\bar w^m}{m_j!m_k!m!^2}\\
=&\sum_{m=1}^\infty\sum_{j,k}\left[\frac{\de^{|m_j|+|m_k|}}{\de z^{m_j} \de \bar z^{m_k}}\frac{(m+\alpha-1)!}{(\alpha-1)!m!N^{\mu(\alpha+m)}}\right]_{0}\frac{z^{m_j}\bar z^{m_k}}{m_j!m_k!}|w|^{2m}\\
=&\sum_{m=1}^\infty\sum_{j=1}^\infty \frac{(m+\alpha-1)!}{(\alpha-1)!m!}|w|^{2m}|h_{\frac{\mu(\alpha+m)}{\gamma}}^j|^2,\nonumber
\end{split}
\end{equation}
where we are using (\ref{ie}) again. It follows by the previous power series expansions, that the map $f$ given by (\ref{immf0}) is a \K\ immersion  of $(M_{\Omega}(\mu), \alpha g(\mu))$ into $\CP^\infty$. By Calabi's rigidity Theorem \ref{local rigidityb} all other \K\ immersions are given by $U\circ f$, where $U$ is a unitary transformation of $\C P^{\infty}$.
 \end{proof}

\section{Bergman--Hartogs domains}\label{bhdomains}

Bergman--Hartogs domains are a generalization of Cartan-Hartogs domains where the base domain is not required to be symmetric but just homogeneous and endowed with its Bergman metric. To the authors knowledge, they have already been considered in \cite{hao2,hao3}.

For all positive real numbers $\mu$ a {\em Bergman-Hartogs domain} is defined by:
$$M_{\Omega}(\mu)=\left\{(z,w)\in \Omega\times\mathds{C},\ |w|^2<\tilde \Kj(z, z)^{-\mu}\right\},$$
where $\tilde \Kj(z, z)=\frac{\Kj(z,z)\Kj(0,0)}{|\Kj(z,0)|^2}$ with $\Kj$ the Bergman kernel of $\Omega$.
Consider on $M_{\Omega}(\mu)$ the metric $g(\mu)$  whose associated K\"ahler form $\omega(\mu)$ can be described by the (globally defined) K\"ahler potential centered at the origin:
$$
\Phi(z,w)=-\log(\tilde\Kj(z, z)^{-\mu}-|w|^2).
$$
The domain $\Omega$ is called  the {\em  base} of the Bergman--Hartogs domain 
$M_{\Omega}(\mu)$ (one also  says that 
$M_{\Omega}(\mu)$  is based on $\Omega$). 

In the previous section it is proven that when the base domain is symmetric $(M_{\Omega}(\mu),c\,g(\mu))$ admits a K\"ahler immersion into the infinite dimensional complex projective space if and only if $(\Omega, (c+m)\mu g_B)$ does for every integer $m\geq0$. As pointed out in \cite{hao}, a totally similar proof holds also when the base is a homogeneous bounded domain. This fact together with Theorem \ref{loimossaimm} proves that a Bergman--Hartogs domain $(M_{\Omega}(\mu),c\,g(\mu))$ is projectively induced for all large enough values of the constant $c$ multiplying the metric. Further, the immersion can be written explicitely as follows (cfr. Lemma \ref{immersion} in the previous section):
\begin{lem}\label{chimm}
Let $\alpha$ be a positive real number such that the Bergman--Hartogs domain $(M_{\Omega}(\mu),\alpha\, g(\mu))$ is projectively induced. Then, the K\"ahler map $f$ from $(M_{\Omega}(\mu),\alpha\,g(\mu))$ into $\mathds{C}{\rm P}^\infty$, up to unitary transformation of $\mathds{C}{\rm P}^\infty$, is given by:
\begin{equation}\label{immf}
f=\left[ 1, s, h_{\mu\, \alpha},\dots,\sqrt{\frac{(m+ \alpha-1)!}{(\alpha-1)!m!}}h_{\mu(\alpha +m)}w^m,\dots\right],
\end{equation}
where $s=(s_1,\dots, s_m,\dots)$ with
$$s_m=\sqrt{\frac{(m+ \alpha-1)!}{(\alpha-1)!m!}}w^m,$$
and $h_k=(h_k^1,\dots,h_k^j,\dots)$ denotes the sequence of holomorphic maps on $\Omega$ such that the immersion $\tilde h_k=(1,h_k^1,\dots, h_k^j,\dots)$, $\tilde h_k\!:\Omega\rightarrow\mathds{C}{\rm P}^\infty$, satisfies $\tilde h_k^*\omega_{FS}=k \omega_B$, i.e. 
\begin{equation}
1+\sum_{j=1}^{\infty}|h_k^j|^2=\tilde \Kj^{-k}.\nonumber
\end{equation} 
\end{lem}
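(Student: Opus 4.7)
The plan is to follow the same strategy as in the proof of Lemma \ref{immersion} for Cartan--Hartogs domains, simply replacing the generic norm $\N_\Omega^\mu$ by $\tilde\Kj^{-\mu}$ throughout. Since the K\"ahler form $\omega(\mu)$ has the globally defined potential $\Phi=-\log(\tilde\Kj^{-\mu}(z,z)-|w|^2)$, Proposition \ref{induceddiast} applied to $f^*\omega_{FS}=\alpha\,\omega(\mu)$ implies, after a unitary transformation of $\mathds{C}{\rm P}^\infty$ normalizing $f(0)=[1:0:\cdots]$, that the components of $f=[1,f_1,\dots]$ must satisfy
\[
1+\sum_{j\geq 1}|f_j(z,w)|^2=\frac{1}{(\tilde\Kj^{-\mu}(z,z)-|w|^2)^\alpha}.
\]

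The next step is to expand the right-hand side. Using the binomial series in $|w|^2$,
\[
\frac{1}{(\tilde\Kj^{-\mu}-|w|^2)^\alpha}=\sum_{m\geq 0}\frac{(m+\alpha-1)!}{(\alpha-1)!\,m!}|w|^{2m}\,\tilde\Kj^{-\mu(\alpha+m)},
\]
and substituting the defining identity $\tilde\Kj^{-k}=1+\sum_j|h_k^j|^2$ into each factor $\tilde\Kj^{-\mu(\alpha+m)}$, the right-hand side splits into four kinds of summands: the constant $1$; the pure-$w$ contributions $\tfrac{(m+\alpha-1)!}{(\alpha-1)!\,m!}|w|^{2m}$ for $m\geq 1$, which reassemble into the sequence $s$; the $m=0$ remainder $\sum_j|h_{\mu\alpha}^j|^2$, giving the block $h_{\mu\alpha}$; and finally the mixed contributions $\tfrac{(m+\alpha-1)!}{(\alpha-1)!\,m!}|w|^{2m}|h_{\mu(\alpha+m)}^j|^2$, matching the last blocks $\sqrt{\tfrac{(m+\alpha-1)!}{(\alpha-1)!\,m!}}\,h_{\mu(\alpha+m)}w^m$ of \eqref{immf}. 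Uniqueness up to a unitary transformation of $\mathds{C}{\rm P}^\infty$ then follows from Calabi's rigidity (Theorem \ref{local rigidityb}).

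The main hurdle is the bookkeeping of matching each component of \eqref{immf} to its place in the above expansion; no convergence issue requires a separate argument, because the assumption that $(M_\Omega(\mu),\alpha g(\mu))$ is projectively induced already ensures convergence of the resulting $\ell^2(\mathds{C})$-valued series in a sufficiently small neighbourhood of the origin, and the formula then extends globally by analytic continuation.
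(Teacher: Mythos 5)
Your proposal is correct and takes essentially the same route as the paper, which reduces the proof to that of Lemma \ref{immersion} for Cartan--Hartogs domains (same binomial expansion in $|w|^2$, same identification of the constant, pure-$w$, pure-$z$ and mixed blocks, same appeal to Calabi's rigidity to get uniqueness up to a unitary transformation). The only point worth fixing is a sign: the expansion of $(\tilde\Kj^{-\mu}-|w|^2)^{-\alpha}$ produces factors $\tilde\Kj^{+\mu(\alpha+m)}$, which must be matched with $1+\sum_j|h_k^j|^2=\tilde\Kj^{\,k}$ (the exponent $-k$ in the statement is a misprint, since $\log\tilde\Kj$ is the diastasis of $g_B$); you have carried that misprint through consistently, so your argument is unaffected once the convention is fixed uniformly.
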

\begin{proof}
The proof follows essentially that of \cite[Lemma 8]{balancedch} once considered that $\Phi(z,w)=-\log(\tilde \Kj(z, z)^{-\mu}-|w|^2)$ is the diastasis function for $(M_\Omega(\mu), g(\mu))$ as follows readily applying the definition of diastasis \eqref{diastdefinition}.
\end{proof}

Observe that such map is full, as can be easily seen for example by considering that for any $m=1,2,3,\dots,$ the subsequence $\{s_1,\dots, s_m\}$ is composed by linearly independent functions.

\section{Rotation invariant Hartogs domains}\label{rotinvhart}
The class of domains we are about to describe is a very rich class of examples. It has been considered in \cite{englis} in the context of Berezin quantization and in \cite{hartogsloi} in relation to the existence of a K\"ahler immersion into finite dimensional complex space forms (see also \cite{hartogsint, hartogs, hartogsijg, hartogsosaka} for other results on their Riemannian and K\"ahler geometry). 
 
Let $x_0 \in \R^+ \cup \{ + \infty \}$ and let $F: [0, x_0)
\rightarrow (0, + \infty)$ be a decreasing continuous function,
smooth on $(0, x_0)$. The Hartogs domain $D_F\subset
{\C}^{n}$ associated to the function $F$ is defined by:
$$D_F = \{ (z_0, z_1,\dots ,z_{n-1}) \in {\C}^{n} \; | \; |z_0|^2 < x_0, \ ||z||^2  < F(|z_0|^2)
\}, $$
where $||z||^2=|z_1|^2+\dots + |z_{n-1}|^2$.
We shall assume that the natural $(1, 1)$-form on $D_F$  given by:
\begin{equation}\label{omegaf}
\omega_F = \frac{i}{2} \partial \overline{\partial}
\log \left(\frac{1}{F(|z_0|^2) - ||z||^2 }\right),
\end{equation}
 is  a \K\ form on $D_F$. The following proposition gives some
conditions on $D_F$ equivalent to this assumption:
\begin{prop}[A. Loi, F. Zuddas \cite{hartogsosaka}]\label{Kmet}
Let $D_F$ be a Hartogs domain in ${\C}^{n}$. Then the
following conditions are equivalent:
\begin{itemize}
\item [(i)] the $(1, 1)$-form $\omega_F$  given by (\ref{omegaf})
is a \K\ form; \item [(ii)] the function $- \frac{x F'(x)}{F(x)}$
is strictly increasing, namely $-\left( \frac{x F'(x)}{F(x)}\right)' >0$
for every $x \in [0, x_0)$; \item [(iii)] the boundary of $D_F$ is
strongly pseudoconvex at all $z = (z_0, z_1,\dots,z_{n-1})$ with
$|z_0|^2 < x_0$.
\end{itemize}
\end{prop}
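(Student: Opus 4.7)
The plan is to show (i)~$\Leftrightarrow$~(ii) and (iii)~$\Leftrightarrow$~(ii) directly, by reducing both positivity conditions to the same scalar inequality on $F$. Introduce the shorthand $x=|z_0|^2$, $u=F(x)-\|z\|^2>0$ on $D_F$, and the column vector $\vec z = (z_1,\dots,z_{n-1})^T$. A direct differentiation of $\Phi = -\log u$ gives the Hermitian matrix $G=(\Phi_{j\bar k})$ in block form
$$
G=\frac{1}{u^2}\begin{pmatrix} -F''xu - F'u + (F')^2 x & -F'\bar z_0\, \vec z^{\,T} \\[2pt] -F'z_0\, \vec{\bar z} & u I_{n-1} + \vec{\bar z}\,\vec z^{\,T}\end{pmatrix}.
$$
A short computation of $-\bigl(xF'/F\bigr)'$ shows that condition (ii) is equivalent to the scalar inequality
\begin{equation}\label{eq:scalarineq}
-F''xF - F'F + (F')^2 x >0.
\end{equation}

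For (i)~$\Leftrightarrow$~(ii), the lower-right block is automatically positive definite since $u>0$; its inverse is given, by Sherman--Morrison, by $uI-\frac{u}{F}\vec{\bar z}\vec z^{\,T}$ (using $u+\|z\|^2=F$). The Schur complement then simplifies cleanly to the scalar
$$
G_{11}-G_{12}G_{22}^{-1}G_{21}=\frac{1}{uF}\bigl(-F''xF - F'F + (F')^2 x\bigr),
$$
so positivity of $G$ at every point of $D_F$ is equivalent to \eqref{eq:scalarineq} at every $x\in[0,x_0)$, i.e.\ to (ii).

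For (iii)~$\Leftrightarrow$~(ii), take the defining function $\rho(z_0,z)=\|z\|^2 - F(|z_0|^2)$ for the Hartogs-type piece of the boundary. The Levi matrix $(\rho_{j\bar k})$ is the diagonal matrix $\operatorname{diag}\bigl(-F''x-F',\,I_{n-1}\bigr)$; at a boundary point the holomorphic tangent space is the kernel of $\partial\rho$, i.e.\ the hyperplane $\{(v_0,\vec v)\colon \vec{\bar z}^{\,T}\vec v = F'\bar z_0 v_0\}$. Since $\|\vec z\|^2=F>0$ on the boundary, Cauchy--Schwarz applied to the constraint gives $\|\vec v\|^2 \ge (F')^2 x\,|v_0|^2/F$ with equality on the line $\vec v \parallel \vec z$. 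Thus the Levi form restricted to the holomorphic tangent space takes minimum
$$
\bigl(-F''x-F'\bigr)|v_0|^2 + \|\vec v\|^2 \;\ge\; \frac{-F''xF - F'F + (F')^2 x}{F}\,|v_0|^2
$$
(and is trivially positive when $v_0=0$, $\vec v\neq 0$). Strong pseudoconvexity at every boundary point is therefore equivalent to \eqref{eq:scalarineq} for all $x\in[0,x_0)$, i.e.\ to (ii).

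The only real obstacle is carrying out the Schur-complement inversion cleanly; once the rank-one off-diagonal blocks are recognised and Sherman--Morrison is applied with the identity $u+\|z\|^2=F$, the same scalar quantity $-F''xF - F'F + (F')^2 x$ emerges from both the Kähler positivity computation and the Levi-form computation, which is the conceptual content of the proposition.
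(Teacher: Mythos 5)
Your proof is correct. The paper states Proposition \ref{Kmet} without proof (it is quoted from \cite{hartogsosaka}), so there is no in-text argument to compare against; checking your computation directly: the block form of $(\Phi_{j\bar k})$ is right, the Sherman--Morrison inversion of the lower-right block via the identity $u+\|z\|^2=F$ is correct, and the Schur complement does collapse to $\frac{1}{uF}\bigl(-F''xF-F'F+(F')^2x\bigr)=\frac{F}{u}\Bigl(-\bigl(\tfrac{xF'}{F}\bigr)'\Bigr)$, which together with the fact that every $x\in[0,x_0)$ is realised by a point of $D_F$ (e.g.\ $(z_0,0)$) gives (i)$\Leftrightarrow$(ii); since $\omega_F$ is automatically closed, positivity of this matrix is indeed all that ``\K\ form'' requires. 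The Levi-form computation for $\rho=\|z\|^2-F(|z_0|^2)$ is also correct: the complex Hessian is diagonal, the holomorphic tangent hyperplane is well defined because $\|\vec z\|^2=F>0$ on the relevant boundary piece (so $\partial\rho\neq 0$ there), and the Cauchy--Schwarz minimisation over that hyperplane, with equality attained along $\vec v\parallel\vec z$, reduces strong pseudoconvexity to the same scalar inequality, including the degenerate case $v_0=0$. The conceptual point you extract --- that both positivity conditions are governed by the single quantity $-F''xF-F'F+(F')^2x$ --- is exactly the content of the proposition, and your Schur-complement route is a clean way to see it.
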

The \K\ metric $g_F$ associated to the \K\ form $\omega_F$ is the metric we will be dealing with in the present paper. It follows by (\ref{omegaf}) that a K\"ahler potential for this metric is given by:
$$\Phi_F=-\log \left(F(|z_0|^2) - ||z||^2\right).$$
Observe that this is also the diastasis function around the origin for $\omega_F$.
\begin{remark}\rm
It is worth pointing out that an Hartogs domain $(D_F, g_F)$ is either  homogeneous or  Einstein
if and only if $F(x)=1-x$, namely $D_F$ is the complex hyperbolic space equipped with the hyperbolic metric
(see Theorem 1.1 in \cite{hartogsosaka} for a proof).
\end{remark}

 In order to study the existence of a K\"ahler immersion into the complex projective space, we start considering that setting $|z_0|^2=x$ and $|z_j|^2=y_j$, $j=1,\dots, n-1$, we get:
\begin{equation}\label{derf}
\begin{split}
\frac{\partial^{2j}}{\partial z_0^j\partial \bar z_0^j}\frac{\partial^{2k_1}}{\partial z_1^{k_1}\partial \bar z_1^{k_1}}\cdots &\frac{\partial^{2k_{n-1}}}{\partial z_{n-1}^{k_{n-1}}\partial \bar z_{n-1}^{k_{n-1}}}\frac{1}{\left(F(|z_0|^2) - ||z||^2\right)^c}|_0\\
&=j!k_1!\cdots k_{n-1}!\frac{\partial^{j}}{\partial x^j}\frac{\partial^{k_1}}{\partial y_1^{k_1}}\cdots \frac{\partial^{k_{n-1}}}{ \partial y_{n-1}^{k_{n-1}}}\frac{1}{\left(F(x) - ||y||^2\right)^c}|_0\\
&=j!k_1!\cdots k_{n-1}!\frac{\Gamma(c+k_1+\dots+k_{n-1})}{\Gamma (c)}\frac{\partial^{j}}{\partial x^j}\frac{1}{\left(F(x)\right)^{c+k_1+\dots+k_{n-1}}}|_0.
\end{split}
\end{equation}
From Calabi's criterion Theorem \ref{critfb} it follows that a Hartogs domain $(D_F,c\,\omega_F)$ is projectively induced if and only if:
\begin{equation}\label{condhartimm}
\frac{\partial^{j}}{\partial x^j}\frac{1}{\left(F(x)\right)^{c+k}}|_0\geq 0,
\end{equation}
for all integers $j$, $k\geq 0$. This condition is of course strictly related to $F$. The following example, Prop. \ref{rhp} and exercises \ref{springer}, \ref{dnotsufficient} and \ref{notb}, show that there are cases when the immersion exists for all values of $c$, or only for integers values of $c$, or for no value. Observe that since such domains are rotation invariant, when the immersion exists it can be written as:
$$
f\!:D_F\rightarrow \mathds{C}{\rm P}^\infty,\quad f(z)=[\dots,f_{j,k_1,\dots, k_{n-1}},\dots]
$$
where:
$$
f_{j,k_1,\dots, k_{n-1}}= \sqrt{\frac{\Gamma(c+k_1+\dots+k_{n-1})}{j!k_1!\cdots k_{n-1}!\,\Gamma (c)}\frac{\partial^{j}}{\partial x^j}\frac{1}{\left(F(x)\right)^{c+k_1+\dots+k_{n-1}}}|_0}z_0^{j}z_1^{k_1}\cdots z_{n-1}^{k_{n-1}}.
$$
\begin{ex}\label{tp}\rm
Let $F(t)=(1-t)^p$, $p>0$, $x_0=1$ (for $p=1$ we recover $\CH^n$ described in Section \ref{csf}). The Hartogs domain associated to $F$ is given by:
$$D_F=\left\{(z_0,\dots,z_{n-1})\in\C^n\ | \ |z_0|^2+(||z||^2)^{1/p}<1\right\}.$$
Since:
$$
\frac{\partial^{j}}{\partial x^j}\frac{1}{\left(1-x\right)^{p(c+k)}}|_0=\frac{\Gamma(p(c+k)+j)}{\Gamma(p(c+k))},
$$
this domain is infinite projectively induced for any value of $c>0$ and thus (see Ex. \ref{constantell2c}) it also admits a full K\"ahler immersion into $l^2(\mathds{C})$.
%
\end{ex}


The next proposition gives an example of Hartogs domain not admitting a K\"ahler immersion into $\mathds{C}{\rm P}^\infty$ even when the metric is rescaled by a positive constant. 
\begin{prop}\label{rhp}
The Hartogs domain $(D_F,g_F)$ defined by
$$F(x)=\left(x-1\right)\left(x-\frac{11}4\right)\left(x+\frac34\right),$$
and with $x_0=1$ does not admit a K\"ahler immersion into $\mathds{C}{\rm P}^\infty$ even when the metric is rescaled by a positive constant $c$.
\end{prop}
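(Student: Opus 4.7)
The plan is to invoke Calabi's criterion in the form of condition \eqref{condhartimm} to turn the question into a positivity statement about the Taylor coefficients of a one-variable function, and then use a Darboux-type singularity analysis to exhibit infinitely many negative ones, no matter how the metric is rescaled.

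First I would note that, by \eqref{condhartimm}, $(D_F,c\,\omega_F)$ is projectively induced only if
\begin{equation*}
A_{j,k}\;:=\;\frac{d^{j}}{dx^{j}}\frac{1}{F(x)^{c+k}}\bigg|_{x=0}\;\geq\;0\qquad\text{for all }j,k\in\z_{\geq 0}.
\end{equation*}
Hence it is enough, for each fixed $c>0$, to find a single index $j$ with $A_{j,0}<0$. I will therefore concentrate on the Taylor coefficients $a_n = A_{n,0}/n!$ of $F(x)^{-c}$ at the origin.

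Next I would exploit the structure of $F$. Its roots are $-\frac34,\,1,\,\frac{11}{4}$, so the singularity of $F(x)^{-c}$ closest to $0$ is $x=-\frac34$. Factoring $F(x)=\frac34\bigl(1+\frac{4x}{3}\bigr)H(x)$ with $H(x)=(x-1)\bigl(x-\frac{11}{4}\bigr)$ gives
\begin{equation*}
\frac{1}{F(x)^{c}}=\Bigl(\frac34\Bigr)^{-c}\Bigl(1+\frac{4x}{3}\Bigr)^{-c}H(x)^{-c},
\end{equation*}
where $H(x)^{-c}$, with the positive branch fixed by $H(0)=\frac{11}{4}>0$, is holomorphic throughout $|x|<1$ and $H(-\frac34)=\frac{49}{8}>0$. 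Darboux's method (equivalently, the transfer theorem of singularity analysis) then yields
\begin{equation*}
a_n\;\sim\;\Bigl(\frac34\Bigr)^{-c}\Bigl(\frac{49}{8}\Bigr)^{-c}\,\frac{(-4/3)^{n}\,n^{c-1}}{\Gamma(c)}\qquad (n\to\infty),
\end{equation*}
because the two other singularities at $x=1$ and $x=\frac{11}{4}$ lie strictly farther from the origin and contribute only lower-order terms.

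Finally, since the prefactor is strictly positive for every $c>0$, the sign of $a_n$ coincides with that of $(-1)^n$ for all $n$ large enough; in particular infinitely many $a_n$ are strictly negative. For any such large odd $n=j$ one has $A_{j,0}=j!\,a_j<0$, contradicting the necessary condition above, so no rescaling $c>0$ makes $(D_F,c\,\omega_F)$ projectively induced. I expect the main technical point to be the clean justification of the Darboux asymptotics; this is standard, because $H(x)^{-c}$ is genuinely analytic and nowhere vanishing on the closed disk of radius $\frac34$, so the leading singularity at $x=-\frac34$ is neatly separated from the subdominant ones by a contour-deformation argument.
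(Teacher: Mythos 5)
Your proof is correct, but it reaches the conclusion by a genuinely different route from the one in the text. Both arguments reduce, via \eqref{condhartimm} with $k=0$, to showing that for every $c>0$ the Taylor coefficients $a_j=[x^j]F(x)^{-c}$ are negative for infinitely many $j$. The text does this by hand: it expands $F(x)^{-c}$ as the Cauchy product of the three binomial series $(1-x/a_i)^{-c}$, obtaining the explicit alternating triple sum \eqref{akappa}, and then establishes the sign for large odd $j$ through the term-by-term comparison \eqref{akk} followed by a limiting argument. You instead isolate the dominant singularity: since the root $-\tfrac34$ of $F$ has the smallest modulus and $H(x)^{-c}=\bigl((x-1)(x-\tfrac{11}{4})\bigr)^{-c}$ is holomorphic and nonvanishing on $|x|<1$, Darboux/transfer asymptotics give $a_j=C(-1)^j(4/3)^{j}j^{c-1}(1+o(1))$ with $C=(3/4)^{-c}(49/8)^{-c}/\Gamma(c)>0$, so the odd-index coefficients are eventually negative. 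As a consistency check, summing the text's triple sum over $k_1,k_2$ with $k_3=j-k_1-k_2$ reproduces exactly your constant, since $(16/33)^c\,(1+\tfrac34)^{-c}(1+\tfrac{3}{11})^{-c}=(32/147)^c$. Your approach is cleaner and makes the mechanism transparent --- all that matters is that the smallest-modulus root of $F$ is a simple negative real root lying strictly inside the circle through the next root, so the same argument disposes of a whole class of such examples at once --- at the price of invoking singularity analysis, whose justification (which you rightly flag as the one technical point) is standard here because the comparison exponent is $c>0$ and $H(x)^{-c}$ is analytic beyond the circle $|x|=\tfrac34$. One cosmetic remark: $\Phi_F$ differs from the diastasis $\dd_0$ by the additive constant $\log F(0)=\log(33/16)$, so the diagonal entries of the relevant coefficient matrix carry an extra factor $F(0)^{c+k}>0$; this does not affect the sign, and your conclusion stands.
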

\begin{proof}
By definition $F(x)>0$ in $[0,1)$. Further:
$$
F'(x)=3x(x-2)-\frac1{16}\leq -\frac{1}{16}<0.
$$
This domain does not admit a K\"ahler immersion into $\mathds{C}{\rm P}^\infty$ for any value of $c$. In order to prove it, consider that for any polynomial $P(x)=(x-a_1)\cdots(x-a_n)$, one has:
$$
\frac{\partial^{j}}{\partial x^j}P(x)^{-c}|_{0}=\frac{(-1)^jj!}{\Gamma(c)^n}\sum_{k_1+\dots+k_n=j}\frac{\Gamma(c+k_1)\cdots\Gamma(c+k_n)}{k_1!\cdots k_n!}\frac{1}{(-a_1)^{c+k_1}\cdots(-a_n)^{c+k_n}}.
$$
For $n=3$ and $a_1=1$, $a_2=\frac{11}4$, $a_3=-\frac34$, we get:
\begin{equation}\label{akappa}
\frac{\partial^{j}}{\partial x^j}P(x)^{-c}|_{0}=\frac{(-1)^jj!}{\Gamma(c)^3}\sum_{k_1+k_2+k_3=j}(-1)^{k_1+k_2}A(k_1,k_2,k_3),
\end{equation}
where we set:
\begin{equation}\label{ak}
A(k_1,k_2,k_3)=\frac{\Gamma(c+k_1)\Gamma(c+k_2)\Gamma(c+k_3)}{k_1!k_2!k_3!}\left(\frac{4}{11}\right)^{c+k_2}\left(\frac43\right)^{c+k_3}.
\end{equation}
When $j$ is odd and greater than $1$, the sign of $(-1)^{k_1+k_2}A(k_1,k_2,k_3)$ is positive for $k_3$ odd, and negative for $k_3$ even. Thus, in order to prove that for any $c>0$ there exists $j$ such that \eqref{akappa} is negative, it is enough to show that for all $h=1,\dots, j$:
$$
\sum_{k_1+k_2=j-h}A(k_1,k_2,h)>\sum_{k_1+k_2=j-h-1}A(k_1,k_2,h-1).
$$
By \eqref{ak}, this is equivalent to the following quantity being positive:
\begin{equation}\label{akk}
\begin{split}
\sum_{k=0}^{j-h}&\frac{\Gamma(c+k)\Gamma(c+j-h-k)\Gamma(c+h-1)}{k!(j-h-k)!(h-1)!}\left(\frac4{11}\right)^{c+k}\left(\frac43\right)^{c+h-1}\left[\frac43\frac{c+h-1}{h}-\frac{c+j-h-k}{j-h-k+1}\right]+\\
&-\frac{\Gamma(c+j-h+1)\Gamma(c)\Gamma(c+h-1)}{(j-h+1)!(h-1)!}\left(\frac4{11}\right)^{c+j-h+1}\left(\frac43\right)^{c+h-1}.
\end{split}
\end{equation}
We claim that for any fixed $h$, \eqref{akk} is positive as $j\rightarrow +\infty$. In order to prove the claim we observe that the second part of \eqref{akk} goes to zero as $j$ grows, while the first part does as $k$ grows. Thus, the sign of \eqref{akk} as $j$ goes to infinity is determined by finite values of $k$, i.e. those values that do not approach $j$. The claim then follows since for $k$ and $h$ fixed:
$$
\lim_{j\rightarrow +\infty}\left(\frac43\frac{c+h-1}{h}-\frac{c+j-h-k}{j-h-k+1}\right)=\frac43\frac{c+h-1}{h}-1>0.
$$
\end{proof}

The property of being not projectively induced even when rescaled is a not trivial property which we discuss in more details in Section \ref{cigarsection}.

\section{Exercises}

\begin{ExerciseList}
\Exercise\label{springer} Consider the Springer domain, i.e. the rotation invariant Hartogs domain defined by:
$$
D_F=\left\{(z_0,\dots,z_{n-1})\in\C^n\ | \ ||z||^2<e^{-|z_0|^2}\right\},
$$
with $F(t)=e^{-t}$, $x_0=+\infty$, and let $g_F$ the K\"ahler metric defined by the K\"ahler potential $-\log(F(|z_0|^2)-||z||^2)$. Prove that $(D_F,g_F)$ admits a full K\"ahler immersion into $\mathds{C}{\rm P}^\infty$ for any $c>0$, and thus into $l^2(\mathds{C})$.
\Exercise\label{dnotsufficient} Consider the rotation invariant Hartogs domain given by:
$$D_F=\left\{(z_0,\dots,z_{n-1})\in\C^n\ | \ ||z||^2<\frac{\alpha}{|z_0|^2+\alpha}\right\},$$
for $F(x)=\frac{\alpha}{x+\alpha}$, $\alpha>0$, $x_0=+\infty$ and let $g_F$ the K\"ahler metric associated to the K\"ahler potential $-\log(F(|z_0|^2)-||z||^2)$. Prove that $(D_F,c\,g_F)$ is projectively induced if and only if $c$ is a positive integer.
\Exercise\label{notb} Consider the rotation invariant Hartogs domain given by:
$$D_F(p)=\left\{(z_0,\dots,z_{n-1})\in\C^n\ | \ ||z||^2<\frac{1}{(|z_0|^2+1)^p}\right\},$$
for $F(x)=\frac{1}{(x+1)^p}$, $p>0$, $x_0=+\infty$ and let $g_F(p)$ the K\"ahler metric associated to the K\"ahler potential $-\log(F(|z_0|^2)-||z||^2)$. Prove that $(D_F(p),c\,g_F(p))$ is projectively induced if and only if $cp$ is a positive integer.

\Exercise\label{FBHdomains} For any value of $\mu>0$, a Fock--Bargmann--Hartogs domain $D_{n,m}(\mu)$ is a strongly pseudoconvex, nonhomogeneous unbounded domain in $\mathds{C}^{n+m}$ with smooth real-analytic boundary, given by:
$$
D_{n,m}(\mu):=\{(z,w)\in \mathds{C}^{n+m}: ||w||^2< e^{-\mu||z||^2}\}.
$$
One can define a K\"ahler metric $\omega(\mu;\nu)$, $\nu>-1$ on $D_{n,m}(\mu)$ through the globally defined K\"ahler potential:
$$
\Phi(z,w):=\nu\mu||z||^2-\log(e^{-\mu||z||^2}-||w||^2).
$$
Prove that the metric $g(\mu;\nu)$ on the Fock--Bargmann--Hartogs domain $D_{n,m}(\mu)$ admits a full K\"ahler immersion into $l^2(\mathds{C})$ for any value of $\mu>0$ and $\nu>-1$ (see \cite{fbh} for more details and results on these domains).

%

\end{ExerciseList}

\chapter{Relatives}
We say that two K\"ahler manifolds (finite or infinite dimensional) $M_1$ and $M_2$ are relatives if they share a complex K\"ahler submanifold
$S$, i.e. if there exist two K\"ahler immersions $h_1\!:S\rightarrow M_1$ and
$h_2\!:S\rightarrow M_2$. Otherwise, we say that $M_1$ and $M_2$ are not relatives. Further, we say that two K\"ahler manifolds are {\em strongly not relatives} if they are not relatives even when the metric of one of them is rescaled by the multiplication by a positive constant.

This terminology has been introduced in \cite{relatives}, even if the problem of understanding when two K\"ahler manifolds share a K\"ahler submanifold has been firstly considered by M. Umehara  \cite{umeharafinite}, which solves the case of complex space forms with holomorphic sectional curvature of different sign and finite dimension, which we summarize in Section \ref{relumehara}.

In the remaining part of this chapter we pay particular attention to understanding whether or not a K\"ahler manifold $(M,g)$ is relative to a projective K\"ahler manifold, which is by definition a K\"ahler manifold admitting a K\"ahler immersion into a finite dimensional complex projective space $\CP^N$. In Section \ref{relhkmsection} we discuss the case when $(M,g)$ is homogeneous while in Section \ref{relhartogs} $(M,g)$ is a Bergman--Hartogs domain.

\section{Relatives complex space forms}\label{relumehara}

In \cite{umeharafinite} M. Umehara proved that two finite dimensional complex 
space forms with holomorphic sectional curvatures of different signs
do not share a common K\"ahler submanifold. His result should be compared to Bochner's Theorem  \ref{bochnerth} (see also Theorem \ref{bochnerthalt}), which shows that when the ambient space is allowed to be infinite dimensional, the situation is much different (see also \cite{cdsy} for the case when the complex space forms involved have curvatures of same sign).

The following theorems summarize Umehara's (and Bochner's) results. Recall that we denote by $\mathds{C}{\rm P}^{N}_{b}$, $b>0$, the complex projective space of holomorphic sectional curvature $4b$ and by $\mathds{C}{\rm H}^{N}_{b}$, $b<0$, the complex hyperbolic space of holomorphic sectional curvature $4b$ (cfr. Section \ref{csf}).
\begin{theor}\label{cncp}
Any K\"ahler submanifold of $\mathds{C}^N$, $N\leq \infty$, admits a full K\"ahler immersion into $\mathds{C}{\rm P}^{\infty}_b$, for any value of $b>0$. 
\end{theor}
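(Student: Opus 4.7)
The plan is to combine Bochner's Theorem \ref{bochnerth} with the rescaling observation in Remark \ref{constantmulti}. First, if $(M,g)$ is a K\"ahler submanifold of $\C^N$ with $N<\infty$, composing with the canonical isometric inclusion $\C^N\hookrightarrow l^2(\C)$ realizes $(M,g)$ as a K\"ahler submanifold of $l^2(\C)$; for $N=\infty$ there is nothing to do. So I may assume the ambient space is $l^2(\C)$ throughout, with $f=(f_1,f_2,\dots)\!:M\to l^2(\C)$ a K\"ahler immersion and, after a translation, $f(p)=0$ at a fixed reference point $p\in M$.

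To reach $\CP^\infty_b$ for an arbitrary $b>0$, the map $\sqrt{b}\,f\!:M\to l^2(\C)$ realizes the rescaled metric $bg$ (this is the last observation in Remark \ref{constantmulti}). Applying Bochner's Theorem \ref{bochnerth} to $(M,bg)$ yields a K\"ahler immersion into $\CP^\infty=\CP^\infty_1$, and by the first part of Remark \ref{constantmulti} the existence of such an immersion of $(M,bg)$ into $\CP^\infty_1$ is equivalent to the existence of a K\"ahler immersion of $(M,g)$ into $\CP^\infty_b$. Concretely, the identity
\begin{equation*}
\frac{e^{b\,\dd^M_0(z)}-1}{b}=\sum_{j=1}^{\infty}\frac{b^{j-1}}{j!}\Bigl(\sum_k|f_k(z)|^2\Bigr)^{\!j}=\sum_{I}c_I\bigl|m_I(f)(z)\bigr|^2,
\end{equation*}
where the $m_I(f)$ are monomials in the $f_k$'s and $c_I>0$, together with Calabi's criterion (Theorem \ref{globalcriterionb}) produces the desired K\"ahler immersion $\widetilde{\psi}\!:M\to\CP^\infty_b$ whose components are the $\sqrt{c_I}\,m_I(f)$, with convergence on a suitable polycylinder inherited from Lemma \ref{convergence} applied to $\sum_k|f_k|^2$.

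The remaining point is fullness. If $\dim M=0$ fullness is vacuous, so assume $\dim M\geq 1$: then at least one component $f_{k_0}$ is nonconstant, the pure powers $1,f_{k_0},f_{k_0}^2,\dots$ occur among the components of $\widetilde{\psi}$ with nonzero coefficients, and they are linearly independent as holomorphic functions on $M$. Hence the components of $\widetilde{\psi}$ span an infinite-dimensional subspace of $l^2(\C)$; replacing $l^2(\C)$ by the closure of this span (itself a separable infinite-dimensional Hilbert space, hence another copy of $l^2(\C)$) and thus $\CP^\infty_b$ by the corresponding totally geodesic projective subspace yields the required full K\"ahler immersion. The main obstacle I anticipate is not conceptual but rather the bookkeeping needed to identify the potential $\dd^M_0$ inherited from the $l^2(\C)$ immersion with the pullback of the Fubini--Study--type potential of $\CP^\infty_b$ via the series above; this reduces, through the equivalence $b$-resolvability $\Longleftrightarrow$ $1$-resolvability after rescaling, to the content of Bochner's theorem and needs no further input.
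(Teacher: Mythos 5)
Your proposal is correct and follows essentially the same route as the paper: both rest on the expansion $\frac{e^{b\sum_j|f_j|^2}-1}{b}=\sum_k b^{k-1}\sum_{|m|=k}\frac{|f^m|^2}{m!}$, which exhibits the $b$-resolved potential as a sum of squared monomials in the $f_j$'s with positive coefficients, combined with Calabi's criterion and rigidity; your detour through Theorem \ref{bochnerth} and Remark \ref{constantmulti} is just a repackaging of that computation. Your treatment of fullness (linear independence of the pure powers $1,f_{k_0},f_{k_0}^2,\dots$ and passing to the closed span) is in fact slightly more careful than the paper's, which asserts fullness of $\psi$ without comment.
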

\begin{proof}
Let $(M,g)$ be a K\"ahler submanifold of $\mathds{C}^N$, for $N\leq \infty$. Then for any $p\in  M$, there exist a neighbourhood $U$ and Bochner coordinates $(z_1,\dots, z_n)$ centered at $p$, such that $g$ is described by the K\"ahler potential:
$$
\dd_0(z)=\sum_{j=1}^N|f_j(z)|^2,
$$ 
where $f\!:U\rightarrow \mathds{C}^N$, $f(z)=(f_{1}(z),\dots, f_N(z))$ and $f(0)=0$. If $U$ admits a K\"ahler immersion $h\!:U\rightarrow \mathds{C}{P}^{N'}_b$, then we also have:
$$
\dd_0(z)=\frac1b\log\left(1+b\sum_{k=1}^{N'}|h_k(z)|^2\right),
$$ 
and thus:
$$
\sum_{k=1}^{N'}|h_k(z)|^2=\frac{\exp\left(b\sum_{j=1}^N|f_j(z)|^2\right)-1}b.
$$
Since:
$$
\exp\left(b\sum_{j=1}^N|f_j(z)|^2\right)=\sum_{k=1}^\infty \frac{\left(b\sum_{j=1}^N|f_j(z)|^2\right)^k}{k!}=\sum_{k=1}^\infty {b^k}\sum_{|m_j|=k}\frac{1}{m_j!}|f^{m_{j}}(z)|^2,
$$
the map $\psi$ defined by $\psi_j=\sqrt{b^k}f^{m_{j}}/\sqrt{m_j!}$, $|m_j|=k$, is a full holomorphic and isometric map from $U$ to $l^2(\mathds{C})\subset\mathds{C}{\rm P}^\infty$. By Calabi's Rigidity Theorem \ref{local rigidityb} we get $N'=\infty$. 
\end{proof}

\begin{cor}
There are not K\"ahler submanifolds of both the complex Euclidean space $\mathds{C}^{N<\infty}$ and the complex projective space $\mathds{C}{\rm P}^{N'<\infty}_b$.
\end{cor}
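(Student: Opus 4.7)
The plan is to derive the corollary as a direct consequence of Theorem \ref{cncp} combined with Calabi's Rigidity Theorem \ref{local rigidityb}. I argue by contradiction: suppose there exists a K\"ahler manifold $(M,g)$ (of positive dimension) which is simultaneously a K\"ahler submanifold of both $\mathds{C}^{N}$ and $\mathds{C}{\rm P}^{N'}_b$ with $N,N'<\infty$. Everything that follows is local, so fix a point $p\in M$ and work in a sufficiently small neighbourhood $U$ of $p$.

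Since $U$ admits a K\"ahler immersion $f_1\!:U\rightarrow\mathds{C}^{N}$ with $N\leq\infty$, Theorem \ref{cncp} provides a \emph{full} K\"ahler immersion $\psi\!:U\rightarrow\mathds{C}{\rm P}^{\infty}_b$ whose components are (up to scaling) the monomials $f_1^{m_1}\cdots f_N^{m_N}/\sqrt{m!}$, hence genuinely infinite-dimensional as soon as the $f_j$'s are nonconstant. On the other hand, the hypothesized K\"ahler immersion $f_2\!:U\rightarrow\mathds{C}{\rm P}^{N'}_b$ can be promoted to a full immersion by replacing the target with the smallest totally geodesic subspace of $\mathds{C}{\rm P}^{N'}_b$ containing $f_2(U)$: this yields a full K\"ahler immersion $\tilde f_2\!:U\rightarrow\mathds{C}{\rm P}^{N''}_b$ with $N''\leq N'<\infty$.

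At this point Calabi's Rigidity Theorem \ref{local rigidityb} provides the contradiction: if a neighbourhood admits a full K\"ahler immersion into a complex space form $\mathrm{F}(M,b)$, then the ambient dimension $M$ is uniquely determined by the metric $g$. Applying this to the two full immersions $\psi$ and $\tilde f_2$ into space forms of the same curvature $4b$, we must have $N''=\infty$, contradicting $N''\leq N'<\infty$. The only step requiring any care is the reduction of $f_2$ to a full immersion via the smallest totally geodesic subspace containing its image, but this is straightforward since totally geodesic complex submanifolds of $\mathds{C}{\rm P}^{N'}_b$ are precisely the linearly embedded $\mathds{C}{\rm P}^{N''}_b$'s and their intersection remains of this form.
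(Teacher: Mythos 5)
Your proposal is correct and follows essentially the same route as the paper: apply Theorem \ref{cncp} to obtain a full K\"ahler immersion of the common submanifold into $\CP^{\infty}_b$, and then invoke Calabi's Rigidity Theorem \ref{local rigidityb} to contradict the existence of a (full) immersion into a finite-dimensional $\CP^{N''}_b$. The only difference is that you make explicit the reduction of the finite-dimensional immersion to a full one, a step the paper leaves implicit.
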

\begin{proof}
By Theorem \ref{cncp} any K\"ahler submanifold of $\mathds{C}^N$ admits a full K\"ahler immersion into $\CP_b^\infty$. Conclusion follows by Calabi Rigidity's Theorem \ref{local rigidityb}.
\end{proof}
\begin{theor}\label{chcn}
Any K\"ahler submanifold of $\mathds{C}{\rm H}_b^{N\leq \infty}$ admits a full K\"ahler immersion into $l^2(\mathds{C})$. 
\end{theor}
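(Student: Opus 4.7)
The strategy mirrors that of Theorem \ref{cncp}, replacing the exponential expansion of $\dd^0$ by a logarithmic expansion of $\dd^b$ ($b<0$). Let $h\!:M\rightarrow \CH^N_b$ be the given immersion. Fix $p\in M$ and, translating in $\CH^N_b$, choose local coordinates $(z_1,\dots,z_n)$ centered at $p$ such that $h(p)=0$. By Prop. \ref{induceddiast} and \eqref{diastch},
\[
\dd^M_p(z)=\frac{1}{b}\log\!\Big(1+b\sum_{j=1}^{N}|h_j(z)|^2\Big)=-\frac{1}{|b|}\log\!\Big(1-|b|\sum_{j=1}^{N}|h_j(z)|^2\Big),
\]
and since $h$ maps into the hyperbolic ball, $|b|\sum_{j}|h_j(z)|^2<1$ throughout a neighbourhood $U$ of $p$. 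Expanding $-\log(1-u)=\sum_{k\geq 1}u^k/k$ and using the multinomial formula yields
\[
\dd^M_p(z)=\sum_{|m|\geq 1}\frac{(|m|-1)!\,|b|^{|m|-1}}{m!}\,|h^m(z)|^{2},
\]
so that, setting $F_m(z):=\sqrt{(|m|-1)!\,|b|^{|m|-1}/m!}\;h^m(z)$, we obtain $\dd^M_p(z)=\sum_{|m|\geq 1}|F_m(z)|^2$ on $U$.

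This is precisely the sum-of-squares representation required by Calabi's criterion: the matrix $(a_{jk})$ in the power expansion \eqref{powexdiastc} of $\dd^M_p$ factors as $A=B^{*}B$ (with $B$ the infinite matrix whose rows are the coefficient vectors of the $F_m$), hence is positive semidefinite, so by Theorem \ref{localcrit} $(M,g)$ is resolvable at $p$ (of rank $\leq \infty$), and the partial sums are automatically bounded on $U$ by the real-analytic function $\dd^M_p$, fulfilling condition (3) of Def. \ref{lki} via Lemma \ref{convergence}.

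To globalize, I apply Theorem \ref{critcn}: condition (i) that $g$ be real analytic is immediate since $g$ is the pull-back of the real-analytic $g_b$; condition (ii) is exactly the resolvability established above (which propagates from $p$ to all of $M$ by Theorem \ref{gcr}); and condition (iii), the single-valuedness of the analytic extension of each $\dd^M_q$, is free of charge because $(M,g)$ already admits the K\"ahler immersion $h$ into $\CH^N_b$, so Theorem \ref{critfb} applied to $h$ provides exactly this property. This yields the desired global K\"ahler immersion $M\rightarrow \ell^2(\C)$.

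Finally, fullness follows from the infinitude of the rank: assuming $\dim M\geq 1$, at least one component $h_j$ is non-constant at $p$, and the germs $\{h_j^{k}\}_{k\geq 1}$ are linearly independent (their leading-order Taylor terms have distinct degrees), so $\{F_m\}_{|m|\geq 1}$ spans an infinite-dimensional space of holomorphic germs; hence the image of the global immersion cannot lie in any proper closed subspace of $\ell^2(\C)$. I do not anticipate a serious obstacle here --- the only delicate point is verifying that the formal expansion genuinely represents $\dd^M_p$ in a neighbourhood, which is immediate from $|b|\sum_j|h_j|^2<1$ on $U$; everything else is a direct application of the machinery assembled in Chapter 2.
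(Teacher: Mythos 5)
Your proposal is correct and follows essentially the same route as the paper: expand $-\log(1-u)$, apply the multinomial formula to write $\dd^M_p$ as a convergent sum of squares of holomorphic functions (your coefficients $(|m|-1)!\,|b|^{|m|-1}/m!$ are the right ones, matching \eqref{chinell}), and conclude fullness from the infinite rank via Calabi's criterion and rigidity. Your explicit globalization through Theorem \ref{critcn}, with condition (iii) supplied by the hypothesis that $h$ is a global immersion into $\CH^N_b$, is a welcome extra degree of care that the paper's local argument leaves implicit.
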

\begin{proof}
Let $(M,g)$ be a K\"ahler submanifold of $\mathds{C}{\rm H}^N$, for $N<\infty$. Then for any $p\in  M$, there exist a neighbourhood $U$ and Bochner coordinates centered at $p$, such that $g$ is described by the K\"ahler potential:
$$
\dd_0(z)=\frac1b\log\left(1+b\sum_{k=1}^{N}|h_k(z)|^2\right).
$$ 
where $h\!:U\rightarrow \mathds{C}{\rm H}^N$, $h(z)=(h_{1}(z),\dots, h_N(z))$ and we assume $h(0)=0$. If $U$ admits a K\"ahler immersion $f\!:U\rightarrow \mathds{C}^{N'}$, then we also have:
$$
\dd_0(z)=\sum_{j=1}^{N'}|f_j(z)|^2,
$$ 
and thus:
$$
\sum_{k=1}^{N'}|f_k(z)|^2=\frac1{b}\log\left(1-b\sum_{k=1}^{N}|h_k(z)|^2\right)=\sum_{k=1}^\infty \frac{(-b)^{k-1}}{k}\left(\sum_{j=1}^{N}|h_j(z)|^2\right)^k.
$$
Since:
$$
\sum_{k=1}^\infty \frac{(-b)^{k-1}}{k}\left(\sum_{j=1}^{N'}|h_j(z)|^2\right)^k=\sum_{k=1}^\infty \frac{(-b)^{k-1}}{(k-1)!}\sum_{|m_j|=k}\frac{|h^{m_j}|^2}{m_j!}.
$$
It follows that the map $\psi$ defined by $\psi_j=\sqrt{\frac{(-b)^{k-1}}{(k-1)!}}h^{m_{j}}/\sqrt{m_j!}$, $|m_j|=k$, is a full holomorphic and isometric map from $U$ to $l^2(\mathds{C})$. By Calabi's Rigidity Theorem \ref{local rigidity} we get $N'=\infty$. 
\end{proof}

\begin{cor}
There are not K\"ahler submanifolds of both the complex Euclidean space $\mathds{C}^{N<\infty}$ and the complex hyperbolic space $\mathds{C}{\rm H}^{N'<\infty}_b$.
\end{cor}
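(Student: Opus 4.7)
The plan is to deduce this corollary from Theorem \ref{chcn} in exactly the same fashion as the preceding corollary was deduced from Theorem \ref{cncp}. Assume, for contradiction, that $(M,g)$ is a K\"ahler submanifold of both $\mathds{C}^{N}$ and $\mathds{C}{\rm H}^{N'}_b$, with $N, N' < \infty$.

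First I would exploit the immersion $f\!:M\rightarrow \mathds{C}^{N}$ to produce a \emph{full} K\"ahler immersion of (a neighbourhood $U$ of any point $p\in M$) into $\mathds{C}^{N''}$, where $N''\leq N$ is the complex dimension of the smallest affine subspace of $\mathds{C}^N$ containing $f(U)-f(p)$. Crucially, since $N$ is finite, so is $N''$. Second, I would invoke Theorem \ref{chcn} applied to the immersion into $\mathds{C}{\rm H}^{N'}_b$: it yields a \emph{full} K\"ahler immersion of $(M,g)$ into the infinite dimensional $l^2(\mathds{C})$.

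The contradiction then comes from Calabi's Rigidity Theorem \ref{local rigidity}, which asserts that if a neighbourhood of a point admits a full K\"ahler immersion into a complex Euclidean space (finite or infinite dimensional), then the dimension of the target is uniquely determined by the metric $g$. Applying this at the point $p$ to the two full immersions obtained above forces $N''=\infty$, which contradicts $N''\leq N<\infty$.

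There is essentially no obstacle here: the only minor point requiring care is the reduction from an arbitrary K\"ahler immersion into $\mathds{C}^{N}$ to a \emph{full} one into some $\mathds{C}^{N''}$ with $N''\leq N$, which is immediate by projecting onto the affine span of the image. The proof is a verbatim adaptation of the argument used for the analogous corollary in the projective case, with Theorem \ref{cncp} replaced by Theorem \ref{chcn}.
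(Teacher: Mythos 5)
Your argument is correct and is essentially the paper's own proof: the paper likewise applies Theorem \ref{chcn} to obtain a full K\"ahler immersion into $l^2(\C)$ and then concludes by Calabi's Rigidity Theorem \ref{local rigidity}. Your explicit reduction of the immersion into $\C^N$ to a full one into $\C^{N''}$ with $N''\leq N<\infty$ is the (implicit) step the paper leaves to the reader.
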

\begin{proof}
By Theorem \ref{chcn} any K\"ahler submanifold of $\CH^N_b$ admits a full K\"ahler immersion into $\ell^2(\mathds{C})$. Conclusion follows by Calabi Rigidity's Theorem \ref{local rigidity}.
\end{proof}
\begin{cor}
There are not K\"ahler submanifolds of both the complex hyperbolic space $\mathds{C}{\rm H}_b^{N<\infty}$ and the complex projective space $\mathds{C}{\rm P}_{b'}^{N'<\infty}$. 
\end{cor}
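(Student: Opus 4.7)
The plan is to combine the two preceding theorems with Calabi's rigidity to force a dimension contradiction. Suppose, for contradiction, that there exists a K\"ahler manifold $(S,g)$ admitting simultaneously a K\"ahler immersion $j\!:S\hookrightarrow \mathds{C}{\rm H}^N_b$ and a K\"ahler immersion $k\!:S\hookrightarrow \mathds{C}{\rm P}^{N'}_{b'}$, with both $N$ and $N'$ finite. The idea is to show that $(S,g)$ must then admit a full K\"ahler immersion into the \emph{infinite} dimensional $\mathds{C}{\rm P}^\infty_{b'}$, while at the same time the finite dimensional immersion $k$ already provides a full K\"ahler immersion into some $\mathds{C}{\rm P}^{N''}_{b'}$ with $N''\le N'<\infty$; the two immersions will contradict Calabi's rigidity (Theorem \ref{local rigidityb}).

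The first step is to apply Theorem \ref{chcn}, using $j$, to produce a full K\"ahler immersion $\varphi\!:S\to l^2(\mathds{C})$. In particular, $(S,g)$ is realized as a K\"ahler submanifold of the infinite dimensional complex Euclidean space. The second step is to apply Theorem \ref{cncp}, with the specific value $b'>0$ corresponding to the projective ambient space appearing in the hypothesis, to the submanifold $\varphi(S)\subset l^2(\mathds{C})$. This yields a full K\"ahler immersion $\psi\!:S\to \mathds{C}{\rm P}^\infty_{b'}$ into the infinite dimensional complex projective space of holomorphic sectional curvature $4b'$.

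The third step handles the competing finite dimensional immersion: starting from $k\!:S\to \mathds{C}{\rm P}^{N'}_{b'}$, pass if necessary to the smallest linear projective subspace of $\mathds{C}{\rm P}^{N'}_{b'}$ containing $k(S)$, so as to obtain a \emph{full} K\"ahler immersion $\tilde k\!:S\to \mathds{C}{\rm P}^{N''}_{b'}$ with $N''\le N'<\infty$. Now $\psi$ and $\tilde k$ are both full K\"ahler immersions of $(S,g)$ into complex projective spaces of the same holomorphic sectional curvature $4b'$, but into ambient spaces of different dimensions ($\infty$ versus $N''<\infty$). By the rigidity part of Theorem \ref{local rigidityb} the dimension of the ambient space of a full K\"ahler immersion into a projective space of fixed curvature is univocally determined by the metric, so $\infty=N''<\infty$, a contradiction.

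The argument is essentially a two-step bootstrap ($\mathds{C}{\rm H}\leadsto l^2(\mathds{C})\leadsto \mathds{C}{\rm P}^\infty$) followed by rigidity, and no step requires any computation beyond what is already packaged in Theorems \ref{chcn}, \ref{cncp} and \ref{local rigidityb}. The only mildly delicate point, and the one I would pay most attention to, is the reduction to a \emph{full} immersion $\tilde k$ when $k$ is not already full: one must check that the smallest linear projective subspace of $\mathds{C}{\rm P}^{N'}_{b'}$ containing $k(S)$ is itself totally geodesic and is isometric, up to rescaling that does not occur here since the curvature is preserved, to some $\mathds{C}{\rm P}^{N''}_{b'}$. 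This is standard (linear subspaces of projective space are totally geodesic and inherit the induced Fubini--Study metric with the same curvature), but it is the only place where one could slip.
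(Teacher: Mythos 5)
Your proof is correct and follows essentially the same route as the paper's: bootstrap via Theorem \ref{chcn} into $l^2(\C)$, then via Theorem \ref{cncp} into $\CP^\infty_{b'}$, and conclude by Calabi's rigidity (Theorem \ref{local rigidityb}). The only difference is that you spell out the reduction of the finite-dimensional immersion to a full one into a totally geodesic $\CP^{N''}_{b'}$, a step the paper leaves implicit.
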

\begin{proof}
By Theorem \ref{cncp} and Theorem \ref{chcn} follows that any K\"ahler submanifold of $\mathds{C}{\rm H}^N_b$, $N\leq \infty$, admits a full K\"ahler immersion into $\mathds{C}{\rm P}^{\infty}_{b'}$. Conclusion follows by Calabi Rigidity's Theorem \ref{local rigidityb}.
\end{proof}

\section[H.K.m. are not relative to projective ones]{Homogeneous K\"ahler manifolds are not relative to projective ones}\label{relhkmsection}

In this section we discuss when a homogeneous K\"ahler manifold is relative to a projective one. Recall that a projective K\"ahler manifold is (by definition) a K\"ahler manifold which admits a K\"ahler immersion into a finite dimensional complex projective space $\CP^N$.  
 We begin with the following theorem.

\begin{theor}[A. J. Di Scala, A. Loi, \cite{relatives}]\label{bergmanrel}
A bounded domain $D$ of $\mathds{C}^n$ endowed with its Bergman metric and a projective K\"ahler manifold are not relatives.
\end{theor}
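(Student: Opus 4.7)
The plan is to argue by contradiction using Calabi's rigidity together with the fact that the Bergman space $L^2_{hol}(D)$ itself produces a full K\"ahler embedding of $D$ into $\CP^\infty$. Suppose $(S, g_S)$ admits K\"ahler immersions $h_1 : S \to (D, g_B)$ and $h_2 : S \to M$, where $M$ admits a K\"ahler immersion $\iota : M \to \CP^N$ with $N < \infty$. As recalled in Section \ref{bergmansection}, an orthonormal basis $\{\varphi_j\}_{j\geq 0}$ of $L^2_{hol}(D)$ produces a full K\"ahler immersion $\varphi : (D, g_B) \to \CP^\infty$, $\varphi(z) = [\varphi_0(z) : \varphi_1(z) : \dots]$. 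This yields two K\"ahler immersions of $S$ into projective space inducing the same metric $g_S$: the map $\Phi := \varphi \circ h_1 : S \to \CP^\infty$ and the map $\Psi := \iota \circ h_2 : S \to \CP^N \hookrightarrow \CP^\infty$.

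Next I would let $V \subset \CP^\infty$ be the smallest projective linear subspace containing $\Phi(S)$ and define $V' \subset \CP^N$ analogously for $\Psi(S)$. Then $\Phi : S \to V$ and $\Psi : S \to V'$ are full K\"ahler immersions inducing the same metric, so Calabi's rigidity (Theorem \ref{local rigidityb}) gives $\dim V = \dim V' \leq N < \infty$. Equivalently, the $\C$-linear span $W := \mathrm{span}_{\C} \{ \varphi_j \circ h_1 \}_{j \geq 0}$ inside the space $\ol(S)$ of holomorphic functions on $S$ is finite dimensional, and therefore closed in the compact-open topology on $\ol(S)$.

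The last step is to derive a contradiction by exploiting the boundedness of $D$. Every holomorphic polynomial $P \in \C[z_1, \dots, z_n]$ is bounded on $D$, hence lies in $L^2_{hol}(D)$, and so admits a Bergman expansion $P = \sum_j c_j \varphi_j$ converging in $L^2$-norm and, by the reproducing property, uniformly on compact subsets of $D$. Pulling back by $h_1$ gives $P \circ h_1 = \lim_k \sum_{j \leq k} c_j (\varphi_j \circ h_1)$ uniformly on compact subsets of $S$. Each partial sum lies in $W$ and $W$ is closed, so $P \circ h_1 \in W$ for every polynomial $P$. But $h_1$ is an immersion, so for any $q \in S$ we may choose $\C$-linear combinations of the coordinate projections $z_i \circ h_1$ that form local holomorphic coordinates on $S$ near $q$; the pullbacks of the monomials $z^\alpha$ then fill out every finite jet at $q$, and hence span an infinite dimensional subspace of $\ol(S)$. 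This contradicts $\dim W < \infty$.

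The key difficulty I expect is the subtle topological point in the third paragraph: one must distinguish the algebraic $\C$-span $W$ of the $\varphi_j \circ h_1$ from the Hilbert-space completion in $L^2_{hol}(D)$. This is resolved precisely by the closedness of finite-dimensional subspaces of $\ol(S)$ together with the uniform-on-compacts convergence of Bergman expansions, which is why we need $D$ to be \emph{bounded} rather than merely a domain carrying some reproducing kernel, and why the argument works for the Bergman metric specifically.
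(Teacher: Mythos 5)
Your proof is correct, and its overall strategy coincides with the paper's: both use the full K\"ahler immersion $\varphi:(D,g_B)\to\CP^\infty$ coming from an orthonormal basis of $L^2_{hol}(D)$, invoke Calabi's rigidity to force a dimension clash, and exploit boundedness of $D$ to feed polynomials into $L^2_{hol}(D)$. Where you genuinely diverge is in how the contradiction is extracted. The paper pre-arranges the orthonormal basis so that it contains functions $\lambda_k z_1^k$ (more precisely, one should Gram--Schmidt the powers of $z_1$, since they need not be pairwise orthogonal in $L^2_{hol}(D)$ for a general bounded domain), and then reads off fullness of $F\circ\alpha$ directly from the linear independence of $\{\lambda_k\alpha_1^k\}$, concluding $N=\infty$ by rigidity. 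You instead keep an arbitrary basis, deduce from rigidity that the span $W$ of $\{\varphi_j\circ h_1\}$ in $\ol(S)$ is finite dimensional, and then show $W$ must contain the pullback of every polynomial via the uniform-on-compacts convergence of Bergman expansions together with the closedness of finite-dimensional subspaces of $\ol(S)$. Your route costs a small functional-analytic lemma but buys robustness: it does not require any special structure of the orthonormal basis, and it makes explicit exactly where boundedness of $D$ enters (polynomials lie in $L^2_{hol}(D)$ and their expansions converge locally uniformly). Both arguments are sound; yours is a clean alternative presentation of the same underlying mechanism.
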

\begin{proof}
Observe first that by Prop. \ref{induceddiast}, it is enough to show that $(D,g_B)$ is not relative to $\mathds{C}{\rm P}^N$ for any finite $N$.
Since $D$ is bounded, $L_{hol}^2(D)$ contains all polynomials in the variables $z_1,\dots, z_n$. In particular, we can choose an orthonormal basis containing $\lambda_kz_1^k$, for any $k=0,1\dots$ and some suitable constants $\lambda_k$ and a full K\"ahler immersion $F\!:D\rightarrow\mathds{C}{\rm P}^\infty$ is given by $[\lambda_0,\lambda_1z_1,\dots, \lambda_kz_1^k,\dots, \tilde F]$, where $\tilde F$ is the sequence of holomorphic functions which complete $\{\lambda_kz_1^k\}$ as orthonormal basis of $L_{hol}^2(D)$.

Assume by contradiction that $S$ is a $1$-dimensional common K\"ahler submanifold of $\mathds{C}{\rm P}^N$ and $(D,g_B)$ and denote by $\alpha\!:S\rightarrow D$ and $\beta\!:S\rightarrow \mathds{C}{\rm P}^N$ the K\"ahler immersion. By Prop. \ref{induceddiast} and Theorem \ref{local rigidityb}, it is enough to show that $F\circ\alpha$ is a full K\"ahler map from $S$ to $\mathds{C}{\rm P}^\infty$. Let $\alpha=(\alpha_1,\dots, \alpha_n)$. Since the role of $z_1$ in the above construction can be switched to any other $z_j$, we can assume without loss of generality  that $\frac{\partial \alpha_1}{\partial \xi}\neq 0$, where $\xi$ is the coordinate on $S$. Conclusion follows since $\{\lambda_k\alpha_1^k\}$ is a subsequence of $\{(F\circ \alpha)_j\}$ composed by linear independent functions.
\end{proof}

Observe that such result does not hold for multiples of the Bergman metric, since it makes use of the existence of a full K\"ahler immersion of $(D,g_B)$ into $\mathds{C}{\rm P}^\infty$, which is in general not guaranteed when one multiplies $g_B$ by a positive constant $c$. 
As we see in a moment, an improvement in this direction can be achieved by adding the assumption of $D$ to be symmetric or homogeneous.

Dealing with the symmetric case, observe that since a Hermitian symmetric space of compact type with integral K\"ahler form admits a K\"ahler immersion into some finite dimensional complex projective space, then due to theorems \ref{cncp} and \ref{chcn} and their corollaries it does not share a common K\"ahler submanifold with either the complex flat space or the complex hyperbolic space of finite dimensions. It is still an open question if a Hermitian symmetric space of compact type is relative to $l^2(\mathds{C})$ or to $\mathds{C}{\rm H}^\infty$ and what happens when the metric is rescaled to be not integral.

Consider now a bounded symmetric domain $\Omega$ of $\mathds{C}^n$ and let $g_B$ denote its Bergman metric. The complex flat space and $(\Omega, g_B)$ are not relatives due to a result by X. Huang and Y. Yuan \cite{relct}. Further, due to Theorem \ref{bergmanrel} nor are $(\Omega, g_B)$ and a projective K\"ahler manifold. Observe that when we deal with flat spaces we can forget about rescaling the metric by a positive constant. Although, the situation is different dealing with projective spaces. 
We ask what happens when the metric is rescaled by the multiplication to a positive constant $c$. The following theorem answers this question.
\begin{theor}[A. J. Di Scala, A. Loi, \cite{relatives}]\label{symmrel}
A bounded symmetric domain $(\Omega,cg_B)$ endowed with a positive multiple of its Bergman metric is not relative to any projective K\"ahler manifold.
\end{theor}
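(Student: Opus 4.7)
The strategy is to reduce to the situation of Theorem \ref{bergmanrel} after rescaling both metrics by a suitable positive integer, and then invoke Theorem \ref{wallach} to produce a full K\"ahler immersion of $(\Omega, kcg_B)$ into $\CP^\infty$ whose components include all monomials in the affine coordinates of $\Omega$.

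Assume by contradiction that there exists a (positive-dimensional) common K\"ahler submanifold of $(\Omega, cg_B)$ and of a projective K\"ahler manifold $M$; by restricting to a $1$-dimensional complex submanifold we may suppose $\dim_{\mathds{C}}S = 1$, with K\"ahler immersions $\alpha\!:S\to\Omega$ and $\beta\!:S\to M$, and a K\"ahler immersion $\iota\!:M\to \CP^{N}$, $N<\infty$. Let $\gamma$ be the genus of $\Omega$, $r$ its rank, $a$ its characteristic multiplicity, so that the Wallach set of $\Omega$ contains the half-line $\bigl((r-1)a/2,\infty\bigr)$. Choose a positive integer $k$ so large that $kc\gamma>(r-1)a/2$, i.e.\ $kc\gamma$ lies in the \emph{continuous} part $W_c(\Omega)$ of the Wallach set. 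The Veronese-type embedding $v_k\!:\CP^N\hookrightarrow \CP^{\binom{N+k}{k}-1}$ of point 3 of Theorem \ref{calabic} satisfies $v_k^*g_{FS}=kg_{FS}$, so $v_k\circ\iota\circ\beta$ is a K\"ahler immersion of $\bigl(S,kg_S\bigr)$ into a \emph{finite} dimensional complex projective space, where $g_S:=\alpha^*(cg_B)$.

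On the other hand, by Theorem \ref{wallach} the bounded symmetric domain $(\Omega, kcg_B)$ admits a full K\"ahler immersion $\varphi\!:\Omega\to\CP^{\infty}$ whose homogeneous coordinate functions form an orthonormal basis $\{\psi_j\}_{j\geq 0}$ of the weighted Bergman Hilbert space $\hilb_{kc\gamma}$ with reproducing kernel $\Kj^{kc}$. Because $kc\gamma$ lies in the continuous part of the Wallach set, $\hilb_{kc\gamma}$ is a weighted Bergman space on $\Omega$ and therefore contains all holomorphic polynomials in the ambient affine coordinates $z=(z_1,\dots,z_n)$ of $\Omega\subset\C^n$; in particular we may orthonormalise so that the family $\{\lambda_m z_1^{m}\}_{m\geq 0}$ appears as a subsequence of $\{\psi_j\}$ for some positive constants $\lambda_m$. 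Exactly as in the proof of Theorem \ref{bergmanrel}, writing $\alpha=(\alpha_1,\dots,\alpha_n)$ and choosing a holomorphic coordinate $\xi$ on $S$ together with (up to relabelling) the coordinate $z_1$ on $\Omega$ for which $\partial\alpha_1/\partial\xi\not\equiv 0$, the functions $\alpha_1^{m}$, $m=0,1,2,\dots$, are linearly independent over $\C$. Hence the map $\varphi\circ\alpha\!:S\to\CP^{\infty}$ has among its homogeneous components the infinite linearly independent family $\lambda_m\alpha_1^m$, so it is a \emph{full} K\"ahler immersion of $(S, kg_S)$ into $\CP^{\infty}$.

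We thus exhibit two K\"ahler immersions of $(S, kg_S)$ into complex projective spaces, one into $\CP^{\infty}$ which is full, and one into a finite dimensional $\CP^{M}$, which by passing to the linear span of its image can be reduced to a full K\"ahler immersion into some $\CP^{N'}$, $N'\leq M<\infty$. This contradicts the rigidity Theorem \ref{local rigidityb}, according to which the ambient dimension of a full K\"ahler immersion is uniquely determined by the metric. The main obstacle in executing this plan is verifying that $\hilb_{kc\gamma}$ genuinely contains all holomorphic polynomials when $kc\gamma\in W_c(\Omega)$; this is the standard weighted Bergman space description of the analytic continuation of the holomorphic discrete series, but it needs to be invoked carefully when $\Omega$ is of higher rank and the polynomials have to be compared with the orthogonal decomposition of $\hilb_{kc\gamma}$ into Peter--Weyl $K$-types, as in the Faraut--Kor\'anyi theory used in Theorem \ref{wallach}.
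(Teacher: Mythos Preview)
Your argument is correct and follows the same core strategy as the paper: rescale by a positive integer so that the Wallach set guarantees a full immersion into $\CP^\infty$, match this on the projective side via the degree-$k$ Veronese, and then contradict Calabi rigidity by showing the $\CP^\infty$ immersion cannot factor through any finite $\CP^{N'}$.

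The packaging differs. The paper deduces Theorem~\ref{symmrel} from Theorem~\ref{bergmanrel} (the $c=1$ case, where one uses that $L^2_{hol}(\Omega)$ contains the monomials $z_1^m$) together with the abstract rescaling Lemma~\ref{consequences}; the fullness of $F\circ f$ is obtained there \emph{indirectly}, by appealing back to the hypothesis that $(\Omega,g_B)$ is already known not to be relative to any $\CP^n$. You instead argue directly at the level $kc$: you identify $\hilb_{kc\gamma}$ as a weighted Bergman space, exhibit the orthonormal subfamily $\{\lambda_m z_1^m\}$, and show $\varphi\circ\alpha$ has infinitely many linearly independent components. This is closer in spirit to the later Theorem~\ref{mossarel}/Theorem~\ref{trfull} (``transversal fullness''), though you do not pass through Lemma~\ref{consequences} at all. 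The gain is a self-contained argument that does not rely on the somewhat delicate step in the proof of Lemma~\ref{consequences}; the cost is that your version is specific to bounded symmetric domains, whereas Lemma~\ref{consequences} is stated for general $(M,g)$.

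Two small remarks. First, your ``main obstacle'' is easier than you suggest: simply enlarge $k$ further so that $kc\gamma$ lies beyond the genus threshold where $\hilb_{kc\gamma}$ is a genuine weighted Bergman space $L^2_{hol}(\Omega,N_\Omega^{\lambda}\,dm)$ on the bounded domain $\Omega$ with bounded weight; then all polynomials are trivially square-integrable and no Faraut--Kor\'anyi machinery is needed. Second, the claim that $\{\lambda_m z_1^m\}$ can be taken as a \emph{subsequence} of an orthonormal basis uses that the $z_1^m$ are mutually orthogonal in $\hilb_{kc\gamma}$; this follows immediately from the circularity of $\Omega$ (the weight $N_\Omega^\lambda$ is invariant under $z\mapsto e^{i\theta}z$), which you should make explicit.
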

 This result
 follows from Theorem \ref{bergmanrel} and the following general lemma, which proves that when the K\"ahler manifold considered is {\em regular}, in the sense that it is projectively induced when rescaled by a great enough constant, the property of not being relative to any projective K\"ahler manifold is invariant by the multiplication of the metric by a positive constant.
 \begin{lem}[M. Zedda \cite{chrel}]\label{consequences}
Assume that $(M,\beta g)$ is infinite projectively induced for any $\beta>\beta_0\geq 0$. Then, if $(M,g)$ and $\mathds{C}{\rm P}^n$ are not relatives for any $n<\infty$, then the same holds for $(M,cg)$, for any $c>0$.
\end{lem}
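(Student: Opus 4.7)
First, I would argue by contradiction on the contrapositive. Suppose $(M,cg)$ is relative to some $\CP^n$ with $n<\infty$, so there is a K\"ahler manifold $S$ together with K\"ahler immersions $h_1\colon S\to(M,cg)$ and $h_2\colon S\to\CP^n$ realizing the same induced metric $g_S:=h_1^*(cg)=h_2^*g_{FS}$. In particular, viewing $h_1$ as a map into $(M,g)$, it pulls back $g$ to $g_S/c$. The goal is to build a K\"ahler submanifold of $(M,g)$ that also admits a K\"ahler immersion into some $\CP^{n'}$ with $n'<\infty$, contradicting the standing hypothesis.

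The core step is to combine a Veronese construction with Calabi's rigidity at an integer multiple of $c$ lying above the threshold $\beta_0$. Pick a positive integer $k$ with $kc>\beta_0$ and set $\beta:=kc$. By the regularity hypothesis there is a K\"ahler immersion $F_\beta\colon(M,\beta g)\to\CP^\infty$. Since $h_1^*(\beta g)=kg_S$, the composition $F_\beta\circ h_1\colon(S,kg_S)\to\CP^\infty$ is a K\"ahler immersion. On the other hand, the $k$-th Veronese-type embedding $v_k\colon\CP^n\hookrightarrow\CP^{N_k}$ with $N_k=\binom{n+k}{k}-1$, satisfying $v_k^*g_{FS}=kg_{FS}$, yields a second K\"ahler immersion $v_k\circ h_2\colon(S,kg_S)\to\CP^{N_k}$ into a \emph{finite}-dimensional projective space. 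After restricting to the linear spans of the two images (so that both maps become full), Calabi's Rigidity Theorem \ref{local rigidityb} forces these two full K\"ahler immersions of $(S,kg_S)$ to have equal ambient dimension and to differ by a unitary transformation. Hence the image $F_\beta\circ h_1(S)$ lies, up to a unitary transformation of $\CP^\infty$, in a finite-dimensional projective subspace $\CP^{M_k}\subset\CP^\infty$ with $M_k\leq N_k$; equivalently, $F_\beta$ restricted to $h_1(S)$ factors through the finite $\CP^{M_k}$.

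The final and hardest step is to transfer this information from $(M,\beta g)$ back to $(M,g)$. The submanifold $h_1(S)$ is automatically a K\"ahler submanifold of $(M,g)$ with induced metric $g|_{h_1(S)}$, but the factorization above realizes it inside $\CP^{M_k}$ only with the \emph{rescaled} metric $\beta g|_{h_1(S)}=kc\,g|_{h_1(S)}$; rescaling $g_{FS}$ on a finite $\CP^{M_k}$ by the non-integer factor $1/(kc)$ in general takes one outside the class of projective K\"ahler manifolds, by Theorem \ref{calabic}(3). To overcome this obstacle, my plan is to exploit the \emph{continuous} family $\{F_\beta\}_{\beta>\beta_0}$ rather than a single value: running the rigidity step above at every integer multiple $\beta=kc>\beta_0$ produces a coherent tower of finite-dimensional projective subspaces through which the restrictions $F_\beta|_{h_1(S)}$ factor, and a continuity/density argument in $\beta$ then yields a K\"ahler submanifold of $(M,g)$ (namely $h_1(S)$ or a suitable enlargement $F_\beta^{-1}(\CP^{M_k})$) sitting inside a finite $\CP^{n'}$ with matching induced metrics, contradicting the hypothesis that $(M,g)$ is not relative to any $\CP^{n'}$ with $n'<\infty$.
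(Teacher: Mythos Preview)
Your setup through the rigidity step matches the paper's: choose a positive integer $k$ with $kc>\beta_0$, take the full immersion $F_\beta:(M,kcg)\to\CP^\infty$, and compare $F_\beta\circ h_1$ with the Veronese $v_k\circ h_2$. You also correctly isolate the obstacle: rigidity only controls the scale $kc$, not $1$. But your proposed fix---a ``continuity/density argument in $\beta$''---is not an argument. Each integer $k$ yields information only at scale $kc$, the set of such scales never contains $1$ for generic $c$, and no limiting process converts finite-rank projective factorizations at scales $kc$ into one at scale $1$. The ``coherent tower'' is undefined and would not descend even if it were.

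The paper reverses the logic at the rigidity step. Rather than concluding that $F_\beta\circ h_1$ lands in a finite $\CP^{M_k}$, it first argues that $F_\beta\circ h_1$ must be \emph{full} into $\CP^\infty$: otherwise, it asserts, $S$ (locally) would be a common K\"ahler submanifold of $(M,g)$ and a finite projective space, contradicting the standing hypothesis on $(M,g)$. With fullness in hand, rigidity against the finite-dimensional $v_k\circ h_2$ yields $N_k=\infty$, the desired contradiction. So the hypothesis on $(M,g)$ is spent on forcing fullness of the composed map, not on absorbing a conclusion passed down from scale $kc$.
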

\begin{proof}
For any $c>0$, we can choose a positive integer $\alpha$ such that  $c\alpha>\beta_0$. Denote by $\omega$ the K\"ahler form associated to $g$. Let $F\!:M\rightarrow \mathds{C}{\rm P}^\infty$ be a full K\"ahler map such that $F^*\omega_{FS}=c\alpha\, \omega$. Then $\sqrt{\alpha}F$ is a K\"ahler map of $(M,c\, g)$ into $ \mathds{C}{\rm P}^\infty_\alpha$.
 Let $S$ be a $1$-dimensional common K\"ahler submanifold of $(M,c\,g)$ and $\mathds{C}{\rm P}^n$. Then by Theorem \ref{induceddiast} for any $p\in S$ there exist a neighbourhood $U$ and two holomorhic maps $f\!:U\rightarrow M$ and $h\!:U\rightarrow \mathds{C}{\rm P}^n$, such that $f^*(c\omega)|_U=(\sqrt\alpha F\circ f)^*\omega_{FS}|_U=h^*\omega_{FS}|_U$.
 
Thus, by \eqref{diastcp} one has:
$$
\log\left(1+\sum_{j=1}^n|h_j|^2\right)=\frac1\alpha\log\left(1+\sum_{j=1}^\infty|(F\circ f)_j)|^2\right).
$$
i.e.:
\begin{equation}\label{contradiction}
\alpha\log\left(1+\sum_{j=1}^n|h_j|^2\right)=\log\left(1+\sum_{j=1}^\infty|(F\circ f)_j)|^2\right).
\end{equation}
If this last equality holds, then $U$ is a common K\"ahler submanifold of both $\mathds{C}{\rm P}^\infty$ and $\mathds{C}{\rm P}^n_{1/\alpha}$. This is a contradiction, for $F\circ f$ is full, since otherwise $U$ would be a K\"ahler submanifold of both $(M,g)$ and a finite dimensional complex projective space, and by Calabi rigidity Theorem \ref{local rigidityb}, from \eqref{contradiction} since $\alpha$ is integer we get $n=\infty$.
\end{proof}

Observe that in general there are not reasons for a K\"ahler manifold which is not relative to another K\"ahler manifold to remain so when its metric is rescaled. For example, consider that the complex projective space $(\mathds{C}{\rm P}^2,c\,g_{FS})$ where $g_{FS}$ is the Fubini--Study metric, for $c=\frac23$ is not relative to $\mathds{C}{\rm P}^2$, while for positive integer values of $c$ it is (see \cite{cdsy} for a proof).

 We refer the reader to \cite{relatives} for a proof that Hermitian symmetric spaces of compact and noncompact type are not relatives to each others.

We conclude this section with the following result due to R. Mossa \cite{mossarel}, which generalises Theorem \ref{symmrel} to bounded homogeneous domains:
\begin{theor}[R. Mossa, \cite{mossarel}]\label{mossarel}
A bounded homogeneous domain $(\Omega,g)$ and a projective K\"ahler manifold are not relatives.
\end{theor}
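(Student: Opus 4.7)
The plan is to reduce the problem to a variant of Theorem \ref{bergmanrel} by choosing a suitable multiple of $g$ for which the \K\ immersion of $\Omega$ into $\CP^{\infty}$ is given by an orthonormal basis of a Hilbert space of holomorphic functions containing all polynomials, and then transferring the conclusion to the original metric via Lemma \ref{consequences}. Since a bounded homogeneous domain is contractible, $\omega$ is exact (hence integral) and $\Omega$ is simply-connected. Theorem \ref{loimossaimm} therefore produces $\lambda > 0$ such that $(\Omega, \lambda g)$ is projectively induced via a map $f(z) = [s_0(z), s_1(z), \dots]$, where $\{s_j\}$ is an orthonormal basis of a Hilbert space $\hilb_{\lambda, h}$ of $L^{2}$ holomorphic sections of $L^{\lambda}$; since $L$ is trivial on the contractible $\Omega$, this space identifies with a weighted Bergman-type space of holomorphic functions on $\Omega$.

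The key technical step is to show that, possibly after enlarging $\lambda$, every holomorphic polynomial on $\Omega$ belongs to $\hilb_{\lambda, h}$. Using the Dorfmeister global potential $\log \Psi$ from the proof of Theorem \ref{thmsmall} and the boundedness of $\Omega$, this reduces to the integrability of the weight $\Psi^{-\lambda}\,\omega^n/n!$ against each monomial; since $\Omega$ is bounded, every monomial is uniformly bounded on $\Omega$, so it is enough to verify that $1 \in \hilb_{\lambda, h}$. The latter is precisely the Piatetskii-Shapiro--Ishi condition $\lambda \underline{\gamma} \in \mathfrak{X}$ appearing in the proof of Theorem \ref{thmsmall}, and it holds once $\lambda$ is large enough. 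Granted this, a Gram--Schmidt procedure applied to $\{1, z_1, z_1^2, \dots\}$ produces an orthonormal sequence $\{c_k z_1^k\}_{k \geq 0}$ in $\hilb_{\lambda, h}$, which I complete to a full orthonormal basis so that the monomials $c_k z_1^k$ appear as components of $f$.

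Now I would repeat the argument of Theorem \ref{bergmanrel}: suppose $(\Omega, \lambda g)$ were relative to $\CP^N$ for some $N < \infty$ through a $1$-dimensional common \K\ submanifold $S$ with \K\ immersions $\alpha : S \to \Omega$ and $\beta : S \to \CP^N$. Relabeling the coordinates on $\Omega$ if necessary, I may assume $\partial \alpha_1/\partial \xi \not\equiv 0$ on $S$. The components of $f \circ \alpha : S \to \CP^{\infty}$ then contain the linearly independent subsequence $\{c_k \alpha_1^k\}_{k \geq 0}$, so $f \circ \alpha$ is a \emph{full} \K\ immersion, which together with the existence of the finite-dimensional \K\ immersion $\beta$ contradicts Calabi's Rigidity Theorem \ref{local rigidityb}. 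Hence $(\Omega, \lambda g)$ is not relative to any $\CP^{N<\infty}$. Finally, Lemma \ref{consequences} applied to the metric $\lambda g$ (whose regularity hypothesis is supplied by Theorem \ref{loimossaimm}) transfers this non-relativity to $(\Omega, c\lambda g)$ for every $c > 0$; taking $c = 1/\lambda$ yields $(\Omega, g)$. Because every projective \K\ manifold is, by definition, a \K\ submanifold of some $\CP^{N<\infty}$, this completes the proof.

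The main obstacle I anticipate is the polynomial-richness of $\hilb_{\lambda, h}$: while the Rosenberg--Vergne theorem guarantees $\hilb_{\lambda, h} \neq \{0\}$ for large $\lambda$, concluding that every monomial $z^{m}$ lies in it requires an explicit integrability estimate for the Dorfmeister weight $\Psi^{-\lambda}$. This reduces to controlling the boundary behaviour of $\Psi$ using the Piatetskii-Shapiro root-space decomposition $\gs = \gs(0) \oplus \gs(1/2) \oplus \gs(1)$ and verifying the Wallach-type inequalities $q_k < 2 \lambda \gamma_k$ for all $k = 1, \dots, r$ simultaneously, which is in essence the same combinatorial analysis that underlies Theorem \ref{thmsmall}.
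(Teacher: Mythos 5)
Your proof follows essentially the same route as the paper's: reduce to a projectively induced multiple of $g$ via Theorem \ref{loimossaimm}, use the boundedness of $\Omega$ to see that the relevant Hilbert space of holomorphic functions contains all powers of $z_1$, so that the immersion composed with any one-dimensional submanifold is full, then invoke Calabi's rigidity exactly as in Theorem \ref{bergmanrel} and transfer the conclusion back to $g$ via Lemma \ref{consequences}. The paper packages the Hilbert space as the weighted space $\mathcal H_{\Phi}$ with weight $e^{-\Phi}$ and cites \cite{loimossaber,mossarel} for the balanced/reproducing-kernel facts that you re-derive from the proof of Theorem \ref{thmsmall}, but the argument is the same.
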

\begin{proof}
Observe first that by Prop. \ref{induceddiast}, it is enough to show that $(\Omega,g)$ is not relative to $\mathds{C}{\rm P}^N$ for any finite $N$.
Let $(\Omega,g)$ be a homogeneous bounded domain of $\mathds{C}^n$. Then by Th. \ref{loimossaimm}, there exists $\beta_0>0$ such that for any $\beta \geq \beta_0$, $\beta g$ is projectively induced. Denote $\tilde g=\beta_0 g$.  Due to Th. \ref{consequences}, it is enough to show that $\tilde g$ is not relative to $\mathds{C}{\rm P}^N$ to get the same assertion for a homogeneous metric not necessarily projectively induced. Since $(\Omega,\tilde g)$ is pseudoconvex, the K\"ahler metric $\tilde g$ admits a globally defined K\"ahler potential $\Phi$, i.e. $\tilde \omega=\frac i2\partial\bar \partial \Phi$, where we denote by $\tilde \omega$ the K\"ahler form associated to $g$. Denote by $\mathcal H_{\Phi}$ the weighted Hilbert space of square integrable holomorphic functions on $\Omega$, with weight $e^{-\Phi}$:
$$
\mathcal H_{\Phi}=\left\{ f\in \mathcal O(\Omega)|\, \int_\Omega e^{-\Phi}|f|^2\frac{\tilde\omega^n}{n!}<\infty\right\}.
$$
In \cite{loimossaber,mossarel} it is proven that $\mathcal H_{\Phi}\neq \{0\}$ and a K\"ahler immersion $F\!:\Omega\rightarrow \mathds{C}{\rm P}^\infty$, $F^*\omega_{FS}=\tilde \omega$ is given through one of its orthonormal bases (a K\"ahler metric satisfying such property is called {\em balanced}, see Remark \ref{balanced} for references). Since $\Omega$ is bounded, $\mathcal H_{\Phi}$ contains all the monomials $\{\lambda_kz_j^k\}$ for $j=1,\dots, n$ and $k=0,1,2,\dots$. Thus, a orthonormal basis of $\mathcal H_{\Phi}$ and hence the K\"ahler map $F$, can be written as $F=[P(z_1) ,f]$, where $P=[\dots, \lambda_kz_1^k,\dots]$ and $f$ is a sequence obtained by deleting from the basis $\{F_j\}$ the sequence $\{\lambda_kz_j^k\}$.
The proof is now totally similar to that of Th. \ref{bergmanrel}. Namely, assume by contradiction that $S$ is a $1$-dimensional common K\"ahler submanifold of $\mathds{C}{\rm P}^N$ and $(\Omega,\tilde g)$ and denote by $\alpha\!:S\rightarrow \Omega$ and $\beta\!:S\rightarrow \mathds{C}{\rm P}^N$ the K\"ahler immersion. By Prop. \ref{induceddiast} and Th. \ref{local rigidityb}, it is enough to show that $F\circ\alpha$ is a full K\"ahler map from $S$ to $\mathds{C}{\rm P}^\infty$. Let $\alpha=(\alpha_1,\dots, \alpha_n)$. Since the role of $z_1$ in the above construction can be switched to any other $z_j$, we can assume without loss of generality  that $\frac{\partial \alpha_1}{\partial \xi}\neq 0$, where $\xi$ is the coordinate on $S$. Conclusion follows since $\{\lambda_k\alpha_1^k\}$ is a subsequence of $\{(F\circ \alpha)_j\}$ composed by linear independent functions.
\end{proof}

\section[BH domains are not relative to a projective K\"ahler manifold]{Bergman--Hartogs domains are not relative to a projective K\"ahler manifold}\label{relhartogs}
We begin this section with a general result which somehow generalizes the peculiarity of the K\"ahler maps described in theorems \ref{bergmanrel} and \ref{mossarel}. In order to state it, consider a $d$-dimensional K\"ahler manifold $(M,g)$ which admits global coordinates $\{z_1,\dots, z_d\}$ and denote by $M_{j}$ the $1$-dimensional submanifold of $M$ defined by:
$$
M_j=\{ z\in M|\, z_1=\dots=z_{j-1}=z_{j+1}=\dots=z_d=0\}.
$$
When exists, a K\"ahler immersion $f\!:M\rightarrow \mathds{C}{\rm P}^\infty$ is said to be {\em transversally full} when for any $j=1,\dots, d$, the immersion restricted to $M_{j}$ is full into $\mathds{C}{\rm P}^\infty$.

\begin{theor}[M. Zedda \cite{chrel}]\label{trfull}
Let $(M, g)$ be a K\"ahler manifold infinite projectively induced through a transversally full map. If for any $\alpha\geq \alpha_0>0$, $(M,\alpha\, g)$ is infinite projectively induced then $(M,g)$ is strongly not relative to any projective K\"ahler manifold.
\end{theor}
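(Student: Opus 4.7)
The plan is first to reduce the ``strong'' conclusion to a non-strong one via the regularity hypothesis, and then to mimic the arguments of Theorems \ref{bergmanrel} and \ref{mossarel}. Since $(M,\alpha g)$ is infinite projectively induced for every $\alpha\ge \alpha_0$, Lemma \ref{consequences} tells us that strong non-relativeness of $(M,g)$ to every projective K\"ahler manifold will follow from the ordinary statement that $(M,g)$ and $\CP^n$ share no positive-dimensional K\"ahler submanifold for any $n<\infty$. Hence I would focus on this latter claim.

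Suppose, for a contradiction, that such a common K\"ahler submanifold exists; after passing to a complex curve inside it we may assume it is a Riemann surface $S$ with K\"ahler immersions $\alpha:S\to M$ and $\beta:S\to \CP^n$ satisfying $\alpha^*g=\beta^*g_{FS}$. Let $F:M\to \CP^\infty$ be the transversally full K\"ahler immersion given by hypothesis. By Proposition \ref{induceddiast} the composition $F\circ\alpha:S\to \CP^\infty$ is again a K\"ahler immersion, inducing on $S$ the same metric as $\beta$. If one can show that $F\circ\alpha$ is full, Calabi's rigidity (Theorem \ref{local rigidityb}) applied to the two full K\"ahler immersions $\beta$ and $F\circ\alpha$ would force their ambient complex projective spaces to have the same dimension, contradicting $n<\infty$ and completing the proof.

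Accordingly, the crux is fullness of $F\circ\alpha$, and this is where the transversally full hypothesis enters. Writing $\alpha=(\alpha_1,\dots,\alpha_d)$ in the global coordinates of $M$, some component $\alpha_j$ is non-constant because $\alpha$ is a holomorphic immersion of a curve. Following the model of Theorems \ref{bergmanrel} and \ref{mossarel}, I would use that fullness of $F|_{M_j}:M_j\to \CP^\infty$ means the components of $F$ restricted to $M_j$ span an infinite-dimensional subspace of holomorphic functions in the single variable $z_j$; after a unitary transformation of $\CP^\infty$ one may arrange that a subsequence of the components of $F$ takes the form $\lambda_k z_j^k$, $k=1,2,\dots$, with $\lambda_k>0$. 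Composing with $\alpha$ then yields a subsequence $\{\lambda_k\alpha_j^k\}$ of components of $F\circ\alpha$, and since $\alpha_j$ is non-constant its distinct powers are linearly independent, so $F\circ\alpha$ is full.

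The hard part will be the middle step just above: extracting a unitary presentation of $F$ in which pure monomials $z_j^k$ appear as components, purely from the transversally full hypothesis. In the bounded domain settings of Theorems \ref{bergmanrel} and \ref{mossarel} these monomials automatically lie in the relevant square-integrable Hilbert space, so an orthonormal basis containing them is produced by Gram--Schmidt. In the present generality the structural fact that the components of $F$ (built as in the proof of Theorem \ref{loimossaimm}) can be so normalised is the step most in need of careful justification; with it in hand the rest of the argument proceeds as sketched.
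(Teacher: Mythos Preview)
Your overall architecture matches the paper's exactly: reduce via Lemma \ref{consequences} to showing $(M,g)$ is not relative to $\CP^n$, assume a common curve $S$ with immersions $\alpha,\beta$, and obtain a contradiction by showing $F\circ\alpha$ is full and invoking Calabi's rigidity (Theorem \ref{local rigidityb}).

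Where you diverge is at the fullness step, and here the paper takes a simpler route than you propose. It does \emph{not} attempt to extract monomial components $\lambda_k z_j^k$ via a unitary change. Instead it uses transversal fullness directly: since $F|_{M_j}$ is full, for every $m$ one can find components $f_{j_1},\dots,f_{j_m}$ of $F$ whose restrictions to $M_j$ are linearly independent holomorphic functions of the single variable $z_j$. Because $\alpha_j$ is non-constant, composition with $\alpha_j$ preserves linear independence (a linear relation among the $f_{j_k}|_{M_j}\circ\alpha_j$ would, by the open mapping theorem and analytic continuation, give one among the $f_{j_k}|_{M_j}$). So the ``hard part'' you flag --- producing monomials via Gram--Schmidt in an abstract Hilbert space --- is simply bypassed.

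That said, your caution is not entirely misplaced, only misdirected. The paper's argument establishes that the functions $f_{j_k}|_{M_j}(\alpha_j(\xi))=f_{j_k}(\alpha_j(\xi),0,\dots,0)$ are linearly independent, whereas the components of $F\circ\alpha$ are $f_{j_k}(\alpha_1(\xi),\dots,\alpha_d(\xi))$; the passage from the former to the latter is not spelled out. Your monomial idea would close this gap cleanly (a component equal to $\lambda_k z_j^k$ on all of $M$ composes with $\alpha$ to give $\lambda_k\alpha_j^k$, genuinely a component of $F\circ\alpha$), and in the paper's actual applications --- notably Corollary \ref{chrel}, via the explicit map \eqref{immf} --- such monomial components are visibly present. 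So the monomial viewpoint is the right one for the applications, even though the paper's stated proof of Theorem \ref{trfull} phrases things in terms of abstract linear independence.
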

\begin{proof}
Due to Lemma \ref{consequences} and Theorem \ref{induceddiast} we need only to prove that a if a K\"ahler manifold is infinite projectively induced through a  transversally full immersion then it is not relative to $\mathds{C}{\rm P}^n$ for any $n$. 
Assume that $S$ is a $1$-dimensional K\"ahler submanifold of both $\mathds{C}{\rm P}^n$ and $(M, g)$.
Then around each point $p\in S$ there exist an open neighbourhood $U$ and  two holomorphic maps $\psi\!:U\rightarrow \mathds{C}{\rm P}^n$ and $\varphi\!:U\rightarrow M$, $\varphi(\xi)=(\varphi_1(\xi),\dots,\varphi_d(\xi))$ where $\xi$ are coordinates on $U$, such that $\psi^*\omega_{FS}|_U=\varphi^*(c\omega)|_U$. Without loss of generality we can assume $\frac{\partial\varphi_1(\xi)}{\partial \xi}(0)\neq 0$.
 Let $f\!:M\rightarrow \mathds{C}{\rm P}^\infty$ be a K\"ahler map from $(M, g)$ into $\mathds{C}{\rm P}^\infty$. Since by assumption $f$ is transversally full, $f=[f_0,\dots, f_j,\dots]$ contains for any $m=1,2,3,\dots$, a subsequence $\left\{f_{j_1},\dots, f_{j_m}\right\}$ of functions which restricted to $M_1$ are linearly independent.
The map $f\circ\varphi\!:U\rightarrow \mathds{C}{\rm P}^\infty$ is full, in fact $f|_{M_1}\circ \varphi$ is full since $\varphi_1(\xi)$ is not constant and for any $m=1,2,3,\dots$, $\left\{f_{j_1}(\varphi_1(\xi)),\dots, f_{j_m}(\varphi_1(\xi))\right\}$ is a subsequence of $\{f|_{M_1}\circ \varphi\}$ of linearly independent functions.  Conclusion follows by Calabi's rigidity Theorem \ref{local rigidityb}.
\end{proof}

Combining  Theorems \ref{bergmanrel} and \ref{trfull} with Lemmata \ref{chimm} and \ref{consequences}, we get the following:
\begin{cor}\label{chrel}
For any $\mu>0$, a Bergman--Hartogs domain $(M_\Omega(\mu), g(\mu))$ is strongly not relative to any projective manifold. 
\end{cor}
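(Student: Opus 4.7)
The plan is to verify directly the hypotheses of Theorem \ref{trfull} applied to $(M_\Omega(\mu), g(\mu))$. By the paragraph preceding Lemma \ref{chimm} (combining the extension of the Cartan--Hartogs analysis to bounded homogeneous bases with Theorem \ref{loimossaimm}), there exists $\alpha_0 > 0$ such that $\alpha\, g(\mu)$ is infinite projectively induced for every $\alpha \geq \alpha_0$. Fix any such $\alpha$ and let $f\colon M_\Omega(\mu) \to \CP^\infty$ be the explicit immersion provided by Lemma \ref{chimm}. It remains only to check that $f$ is transversally full with respect to the natural product coordinates $(z_1, \dots, z_d, w)$ of $\Omega \times \C \supset M_\Omega(\mu)$; Theorem \ref{trfull} then gives the conclusion.

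First, I would consider the restriction of $f$ to the $w$-axis, i.e.\ $z = 0$. Since $\tilde\Kj(0,0)=1$ forces $h_k(0)=0$ for every $k$, only the $s$-block of $f$ survives, so $f(0,w)$ reduces, after discarding vanishing entries, to $[1,\, s_1(w),\, s_2(w),\, \dots]$ with $s_m = \sqrt{(m+\alpha-1)!/((\alpha-1)!\,m!)}\, w^m$. The functions $\{1, w, w^2, \dots\}$ are linearly independent on the $w$-disc, so this restriction is plainly a full map into $\CP^\infty$.

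Next, I would consider the restriction to any $z_j$-axis (i.e.\ $w=0$ and $z_k=0$ for $k\ne j$). The only non-vanishing entries come from the $h_{\mu\alpha}$-block, so the restricted map is $[1,\, h_{\mu\alpha}(0,\dots,z_j,\dots,0)]$. Fullness here is the same transversal-fullness observation used in the proofs of Theorems \ref{bergmanrel} and \ref{mossarel}: because $\Omega$ is a bounded homogeneous domain, the weighted Hilbert space underlying the $h_{\mu\alpha}$-immersion contains all polynomials in $z_j$, so one can arrange an orthonormal basis whose members include $\lambda_{j,k}\, z_j^k$ with $\lambda_{j,k}\ne 0$ for every $k\ge 1$. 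By Calabi's Rigidity Theorem \ref{local rigidityb}, fullness in each coordinate direction is then an intrinsic property of $f$, independent of the particular basis chosen.

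The main technical point to pin down is the last sentence: verifying that monomials in each $z_j$, with nonzero coefficients, really appear in the basis tied to $h_{\mu\alpha}$. This is precisely the mechanism underlying the proofs of Theorems \ref{bergmanrel} and \ref{mossarel}, transported to the weight $\tilde\Kj^{-\mu\alpha}$. Lemma \ref{consequences} is not invoked directly along this route, but it is the tool that powers Theorem \ref{trfull}'s upgrade from ``projectively induced for all $\alpha \geq \alpha_0$'' to the strong non-relative statement across every positive rescaling of $g(\mu)$.
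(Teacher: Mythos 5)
Your argument is essentially correct, but it follows a genuinely different route from the one in the text, and the difference is instructive. You apply Theorem \ref{trfull} verbatim, which obliges you to verify transversal fullness of the map \eqref{immf} in \emph{every} coordinate direction: the $w$-axis (immediate, since only the block $s_m=c_m w^m$ survives at $z=0$) and each $z_j$-axis (where only the block $h_{\mu\alpha}$ survives at $w=0$). The second verification is exactly the step you flag as ``the main technical point'', and it requires transporting the polynomial argument of Theorems \ref{bergmanrel} and \ref{mossarel} to the kernel $\tilde\Kj^{\mu\alpha}$; note also that one cannot literally choose an orthonormal basis containing the monomials $\lambda_{j,k}z_j^k$ unless they happen to be mutually orthogonal --- the correct statement is that Gram--Schmidt applied to $1,z_j,z_j^2,\dots$ produces orthonormal polynomials in $z_j$ of strictly increasing degree, whose restrictions to the $z_j$-axis are still linearly independent, and these can then be completed to a basis. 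The proof in the text avoids this step entirely: it first observes, via Lemma \ref{consequences} together with Theorem \ref{bergmanrel}, that a common submanifold $S$ of $(M_\Omega(\mu),\alpha g(\mu))$ and $\CP^n$ cannot lie in the slice $\Omega\times\{0\}$, because there the metric restricts to a positive multiple of the Bergman metric of $\Omega$, which is strongly not relative to any projective manifold. Having excluded that case, only transversal fullness with respect to the single coordinate $w$ needs to be checked, and this is read off directly from the $s$-block of \eqref{immf}. So your approach buys a self-contained application of Theorem \ref{trfull} at the cost of an extra (fillable) fullness verification along the base directions, while the paper's approach trades that verification for a reduction through the already-established non-relativity of Bergman metrics.
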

\begin{proof}
Observe first that due to Theorem \ref{induceddiast} it is enough to prove that $(M_\Omega(\mu),\alpha g(\mu))$ is not relative to $\mathds{C}{\rm P}^n$ for any finite $n$. Further, by Lemma \ref{consequences} and Theorem \ref{bergmanrel}, a common submanifold $S$ of both $(M_\Omega(\mu),\alpha g(\mu))$ and $\mathds{C}{\rm P}^n$ is not contained into $(\Omega,\alpha g(\mu)|_\Omega)$, since $\alpha g(\mu)|_\Omega=\frac{\alpha\mu}\gamma g_B$ is a multiple of the Bergman metric on $\Omega$. Thus, due to arguments totally similar to those in the proof of Th. \ref{trfull}, it is enough to check that the K\"ahler immersion $f\!:M_\Omega(\mu)\rightarrow \mathds{C}{\rm P}^\infty$ is transversally full with respect to the $w$ coordinate. Conclusion follows then by \eqref{immf}. 
\end{proof}


\section{Exercises}
\begin{ExerciseList}
\Exercise Prove that for any integer $m>0$ the Hartogs domain $(D_F,m\,g_F)$ described in Exercise \ref{dnotsufficient} is relative to $\CP^1$.
\Exercise Consider the Hartogs domain $(D_F(p),c\,g_F(p))$ described in Exercise \ref{notb}. Prove that if  both $p$ and $c$ are positive integers then $(D_F(p),c\,g_F(p))$ is relative to $\CP^1$.
\Exercise Prove that the Hartogs domain described in Exercise \ref{springer} is strongly not relative to any projective K\"ahler manifold.
\Exercise Prove that for any $\mu$, $\alpha>0$, $\nu>-1$, a Fock--Bargmann--Hartogs domain (see Ex. \ref{FBHdomains} for a definition) $\left(D_{n,m}(\mu),\alpha\,\omega(\mu;\nu)\right)$ admits a transversally full K\"ahler immersion into $\mathds{C}{\rm P}^\infty$. 
\Exercise Prove that for any $\mu>0$, a Fock--Bargmann--Hartogs domain (see Ex. \ref{FBHdomains} for a definition) $\left(D_{n,m}(\mu),\omega(\mu;\nu)\right)$ is strongly not relative to any projective manifold. 
\end{ExerciseList}

\chapter{Further examples and open problems}
In this chapter we describe three K\"ahler manifolds with interesting properties. The first section summarizes the results in \cite{cigar} showing that the complex plane $\C$ endowed with the Cigar metric does not admit a local K\"ahler immersion into any complex space form even when the metric is rescaled by a positive constant. The importance of this example relies on the fact that there are not topological and geometrical obstructions for the existence of such an immersion.
In the second section we describe a complete and not locally homogeneous metric introduced by Calabi in \cite{calabi}. The diastasis function associated to this metric is not explicitely given and it makes very difficult to say something about the existence of a K\"ahler immersion into complex space forms.
Finally, in the third and last section we discuss a $1$-parameter family of nontrivial Ricci--flat metrics on $\C^2$, called Taub-NUT metrics. The diastasis associated to these metrics is rotation invariant, i.e. depends only on the module of the variables, but it is not explicitely given and it is still unknown whether or not they are projectively induced for small values of the parameter.
\section{The Cigar metric on $\mathds{C}$} \label{cigarsection}
The metric we describe in this section is an example of K\"ahler metric whose associated diastasis is globally defined and nonnegative but nevertheless it does not admit a K\"ahler immersion into any complex space form. 

The Cigar metric $g$ on $\mathds{C}$ has been introduced by Hamilton in \cite{Hamilton} as first example of K\"ahler--Ricci soliton on non-compact manifolds. It is defined by:
$$
g=\frac{dz\otimes d\bar z}{1+|z|^2}.
$$
A (globally defined) K\"ahler potential for this metric is given by (see also \cite{Suzuki}):
$$
D_0(|z|^2)=\int_{0}^{|z|^2}\frac{\log(1+s)}{s}ds,
$$
whose power series expansion around the origin reads:
\begin{equation}\label{expdiast}
D_0(|z|^2)= \sum_{j=1}^\infty (-1)^{j+1}\frac{|z|^{2j}}{j^2}.
\end{equation}
By duplicating the variable in this last expression, by the very definition of diastasis function \eqref{diastdefinition} we get:
\begin{equation}\label{diastdef}
D^g(z,w)=\sum_{j=1}^\infty\frac{(-1)^{j+1}}{j^2}\left(|z|^{2j}+|w|^{2j}-(z\bar w)^{2j}-(w\bar z)^{2j}\right).
\end{equation}
The following lemma proves that $D^g(z,w)$ is everywhere nonnegative and globally defined on $\mathds{C}\times\C$ (the fact that $D^g(z,w)$ is globally defined was also observed in \cite{Suzuki}).
\begin{lem}[A. Loi, M. Zedda, \cite{cigar}]\label{pos}
The diastasis function \eqref{diastdef} of the Cigar metric is globally defined and nonnegative.
\end{lem}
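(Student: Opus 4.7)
The plan is to obtain an integral representation of $D^g(z,w)$ in which both global well-definedness and nonnegativity become manifest. The key first move is to rewrite the building-block function $F(x):=\int_0^x\log(1+s)/s\,ds$ (so that $D_0(|z|^2)=F(|z|^2)$) by inserting the identity $\log(1+s)/s=\int_0^1du/(1+us)$ and exchanging the order of integration:
$$F(x)\;=\;\int_0^1\frac{\log(1+ux)}{u}\,du,$$
which, viewed in the complex variable, realises $F$ as a holomorphic function on $\mathds{C}\setminus(-\infty,-1]$.

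Substituting this into $D^g(z,w)=F(|z|^2)+F(|w|^2)-F(z\bar w)-F(\bar z w)$ and combining the four logarithms under a single integral yields, wherever all terms are a priori defined,
$$D^g(z,w)\;=\;\int_0^1\frac{1}{u}\,\log\frac{(1+u|z|^2)(1+u|w|^2)}{|1+uz\bar w|^2}\,du.$$
A one-line algebraic identity
$$(1+u|z|^2)(1+u|w|^2)\;-\;|1+uz\bar w|^2\;=\;u\,|z-w|^2$$
then makes the argument of the logarithm at least $1$ for every $u\in[0,1]$, so the integrand is pointwise nonnegative. This produces $D^g\geq 0$ at one stroke, together with strict positivity off the diagonal as a byproduct.

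For globality I would verify two things. First, the integral on the right-hand side converges for every $(z,w)\in\mathds{C}\times\mathds{C}$: the only delicate case is when $z\bar w\in(-\infty,-1]$, where $|1+uz\bar w|^2$ vanishes at a single $u_0=-1/(z\bar w)\in(0,1]$ to order two, producing only an integrable logarithmic singularity of the form $-2\log|u-u_0|/u$ in the integrand. Second, a termwise expansion of $\log(1+uc)$ for small $|c|$ followed by integration against $du/u$ recovers the series \eqref{diastdef}, so the integral coincides with the diastasis on a neighbourhood of the diagonal and, by analytic continuation, provides the unique real-analytic extension of $D^g$ to all of $\mathds{C}\times\mathds{C}$.

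The part I expect to be most delicate is confirming single-valuedness of this extension across the branch locus $z\bar w\in(-\infty,-1]$, since the individual functions $F(z\bar w)$ and $F(\bar z w)$ each pick up monodromy there. The saving observation is that the branch discontinuity of the dilogarithm is purely imaginary, so the imaginary jumps of $F(z\bar w)$ and of its conjugate $F(\bar z w)=\overline{F(z\bar w)}$ cancel in the combination $F(z\bar w)+F(\bar z w)=\int_0^1\log|1+uz\bar w|^2/u\,du$; the resulting expression is real-analytic in $(z,\bar z,w,\bar w)$ away from the isolated singular point, and both assertions of the lemma drop out of the integral formula.
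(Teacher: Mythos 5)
Your proof is correct, and it takes a genuinely different and substantially cleaner route than the paper's. The paper rewrites $D^g$ in terms of the dilogarithm, invokes Kummer's integral formula for $\mathrm{Re}\,\mathrm{Li}_2(\rho e^{i\alpha})$, bounds below by minimizing over the angle (reducing to $\alpha=\pi$), and then runs a three-case analysis on $\rho_1\rho_2\le 1$, $1<\rho_1\rho_2\le 2$, $\rho_1\rho_2>2$, the last case requiring an asymptotic limit computation. You instead fold all four terms into the single representation
\begin{equation}
D^g(z,w)=\int_0^1\frac{1}{u}\,\log\frac{(1+u|z|^2)(1+u|w|^2)}{|1+uz\bar w|^2}\,du,\nonumber
\end{equation}
and the identity $(1+u|z|^2)(1+u|w|^2)-|1+uz\bar w|^2=u|z-w|^2$ --- which is just the Cauchy--Schwarz identity for the vectors $(1,\sqrt{u}\,z)$ and $(1,\sqrt{u}\,w)$ in $\C^2$ --- makes the integrand pointwise nonnegative with no case distinctions. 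Your approach buys, in addition to brevity, strict positivity of $D^g$ off the diagonal as an immediate byproduct, and it handles global definedness and the cancellation of the dilogarithm's monodromy in the combination $F(z\bar w)+F(\bar z w)$ at least as carefully as the paper does (the paper essentially defers global definedness to a citation). The only point to keep an eye on is real-analyticity of the extension on the exceptional set $\{z\bar w\in(-\infty,-1]\}$, where the integrand has an interior logarithmic singularity; but this issue is present in identical form in the paper's Kummer-formula representation, so your argument is not weaker on this score.
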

\begin{proof}
If we denote by ${\rm Li}_n(z)$ the polylogarithm function, defined for $|z|<1$ by:
$$
{\rm Li}_n(z)=\sum_{j=1}^\infty\frac{z^j}{j^n},
$$
and by analytic continuation otherwise, from \eqref{diastdef} we can write $D^g(z,w)$ as:
$$
D(z,w)=-{\rm Li}_2(-|z|^2)-{\rm Li}_2(-|w|^2)+{\rm Li}_2(-z\bar w)+{\rm Li}_2(-w\bar z).
$$
Write $z=\rho_1e^{i\theta_1}$ and $w=\rho_2e^{\theta_2}$ and let $\alpha=\theta_1-\theta_2$. Then:
\begin{equation}
\begin{split}\label{diastcigarli}
D(z,w)=&-{\rm Li}_2(-\rho_1^2)-{\rm Li}_2(-\rho_2^2)+{\rm Li}_2(-\rho_1\rho_2\,e^{i\alpha})+{\rm Li}_2(-\rho_1\rho_2\,e^{-i\alpha})\\
=&-{\rm Li}_2(-\rho_1^2)-{\rm Li}_2(-\rho_2^2)+2{\rm ReLi}_2(-\rho_1\rho_2\,e^{i\alpha}),
\end{split}
\end{equation}
where we are allowed to take the real parts since $D^g(z,w)$ is real. 
In order to simplify the term ${\rm ReLi}_2(-\rho_1\rho_2\,e^{i\alpha})$, we recall the following formula due to Kummer (see \cite{kummer} or \cite[p.15]{lewin}):
\begin{equation}
\begin{split}
{\rm ReLi}_2(\rho e^{i\theta})=&\frac12\left({\rm Li}_2(\rho e^{i\theta})+\overline{{\rm Li}_2(\rho e^{i\theta})}\right)\\
=&-\frac12\left(\int_0^\rho\frac{\log(1-y e^{i\theta})}{y}dy+\int_0^\rho\frac{\log(1-y e^{-i\theta})}{y}dy\right)\\
=&-\frac12\int_0^{\rho} \frac{\log(1-2y\cos\theta+y^2)}{y}dy
\end{split}\nonumber
\end{equation}
i.e.:
$$
{\rm ReLi}_2(-\rho e^{i\alpha})=-\frac12\int_0^{\rho} \frac{\log(1+2y\cos(\alpha)+y^2)}{y}dy.
$$ 
Since $1+2y\cos(\alpha)+y^2$ is decreasing for $0<\alpha<\pi$ and increasing for $\pi<\alpha<2\pi$, $\alpha=\pi$ is a minimum. Thus:
\begin{equation}\label{reli}
{\rm ReLi}_2(-\rho e^{i\alpha})\geq-\int_0^{\rho} \frac{\log(|1-y|)}{y}dy,
\end{equation}
where:
$$
-\int_0^{\rho} \frac{\log(|1-y|)}{y}dy=\begin{cases}{\rm Li}_2(\rho)& \textrm{if}\ \rho\leq 1\\
\frac{\pi^2}{6}-{\rm Li}_2(1-\rho)-\ln(\rho-1)\ln(\rho)& \textrm{otherwise}.
\end{cases}
$$
Thus, when $\rho_1\rho_2\leq1$ from \eqref{diastcigarli} and \eqref{reli}, we get:
$$
D(z,w)\geq-{\rm Li}_2(-\rho_1^2)-{\rm Li}_2(-\rho_2^2)+2{\rm Li}_2(\rho_1\rho_2)\geq 0,
$$
where the last equality follows since all the factors in the sum are positive. When $\rho_1\rho_2>1$, from \eqref{diastcigarli} and \eqref{reli}, we get:
\begin{equation}\label{lastd}
D(z,w)\geq-{\rm Li}_2(-\rho_1^2)-{\rm Li}_2(-\rho_2^2)+\frac{\pi^2}{3}-2{\rm Li}_2(1-\rho_1\rho_2)-2\ln(\rho_1\rho_2-1)\ln(\rho_1\rho_2).
\end{equation}
The RHS is positive for $1<\rho_1\rho_2\leq2$ since it is sum of positive factors. When $\rho_1\rho_2>2$, since all the factors are monotonic, it is enough to consider the limit as $\rho_1$ goes to $+\infty$ of $-D(z,w)/{\rm Li}_2(-\rho_1^2)$. By \eqref{lastd} above we get:
$$
\lim_{\rho_1\rightarrow+\infty}\frac{D(z,w)}{-{\rm Li}_2(-\rho_1^2)}\geq\frac52,
$$
and we are done.
\end{proof}

In the previous chapters we have seen how the multiplication of a K\"ahler metric by a positive constant $c$ affects its  being projectively induced. The interest of the Cigar metric relies on the fact that it does not admit a K\"ahler immersion into $\mathds{C}{\rm P}^\infty$ for any value of $c$ (and thus due to theorems \ref{bochnerth} and \ref{bochnerthalt} into any other complex space form).
Observe that Calabi himself provides in \cite{Cal} examples of metrics which have the same property.
\begin{ex}\label{negativex}\rm
Consider on $\mathds{C}$ the metric $g$ whose associate K\"ahler form $\omega$ is given by:
$\omega=\left(4\cos(z-\bar z)-1\right)dz\wedge d\bar z$. The associated (globally defined) diastasis:
$$
D(p,q)=4\left[\cos(p-\bar p)+\cos(q-\bar q)-\cos(p-\bar q)-\cos(q-\bar p)\right]-|p-q|^2,
$$
takes negative values, e.g. for $q=p+2\pi$.
\end{ex}
\begin{ex}\rm
Consider the product $\CP^1\times\CP^1$ endowed with the metric $g=b_1 g_{FS}\oplus b_2 g_{FS}$, with $b_1$, $b_2$ positive real numbers such that $b_2/b_1$ is irrational. Then $(\CP^1\times\CP^1,cg)$ does not admit a K\"ahler immersion into $\CP^\infty$ for any value of $c$.
In fact, in \cite[Theorem 13]{Cal}, Calabi proves that $(\CP^n,cg_{FS})$ admits a K\"ahler immersion into $\CP^\infty$ iff $1/c$ is a positive integer, and this property cannot be fulfilled by both $1/cb_1$ and $1/cb_2$.
\end{ex} 
Althought, both those metrics present geometrical obstructions to the existence of a K\"ahler immersion into $\CP^\infty$ that put aside the role of $c$. More precisely, in the first example
the diastasis associated to $g$ is negative at some points, while in the second one  the  \K\ form  $\omega$ associated to $g$ is not integral. In this sense, the Cigar metric is important not only because it cannot be  \K\ immersed into any (finite or infinite dimensional) complex space form for any $c>0$ but also since its associated K\"ahler form is integral and its diastasis is globally defined on $\mathds{C}\times \mathds{C}$ and positive (cfr. Lemma \ref{pos} above).

In order to prove the nonexistence of a K\"ahler immersion of $(\mathds{C},c\,g)$ into $\mathds{C}{\rm P}^\infty$ we need the following definition and properties of Bell polynomials. The partial Bell polynomials $B_{n,k}(x):=B_{n,k}(x_1,\dots, x_{n-k+1})$ of degree $n$ and weight $k$ are defined by (see e.g. \cite[p. 133]{comtet}):
\begin{equation}\label{bjkdef}
B_{n,k}(x_1,\dots, x_{n-k+1})=\sum_{\pi(k)}\frac{n!}{s_1!\dots s_{n-k+1}!}\left(\frac{x_1}{1!}\right)^{s_1}\left(\frac{x_2}{2!}\right)^{s_2}\cdots \left(\frac{x_{n-k+1}}{(n-k+1)!}\right)^{s_{n-k+1}},
\end{equation}
where the sum is taken over the integers solutions of:
$$
\begin{cases}s_1+2s_2+\dots+ks_{n-k+1}=n\\ s_1+\dots+s_{n-k+1}=k.\end{cases}
$$
Bell polynomials satisfy the following equalities (the second one has been firstly pointed out in \cite{dc}):
\begin{equation}\label{propr}
B_{n,k}(trx_1,tr^2x_2,\dots,tr^{n-k+1}x_{n-k+1})=t^kr^nB_{n,k}(x_1,\dots, x_{n-k+1}).
\end{equation}
\begin{equation}\label{prop2}
\begin{split}
B_{n,k+1}(x)=\frac{1}{(k+1)!}\sum_{\alpha_1=k}^{n-1}\sum_{\alpha_2=k-1}^{\alpha_1-1}\cdots \sum_{\alpha_k=1}^{\alpha_{k-1}-1}&{n\choose \alpha_1}{\alpha_1\choose \alpha_2}\cdots{\alpha_{k-1}\choose \alpha_k} \cdot\\
&\cdot x_{n-\alpha_1}x_{\alpha_1-\alpha_2}\cdots x_{\alpha_{k-1}-\alpha_k}x_{\alpha_k}.
\end{split}
\end{equation}

The complete Bell polynomials are given by:
$$
Y_n(x_1,\dots, x_n)=\sum_{k=1}^nB_{n,k}(x),\quad Y_0:=0,
$$
and the role they play in our context is given by the following formula \cite[Eq. 3b, p.134]{comtet}:
\begin{equation}\label{exp}
\frac{d^n}{dx^n}\left(\exp\left(\sum_{j=1}^\infty a_j\frac{x^j}{j!}\right)\right)|_0=Y_n(a_1,\dots, a_n).
\end{equation}
Observe that from \eqref{propr} it follows:
\begin{equation}\label{proprY}
Y_n(rx_1,r^2x_2,\dots,r^{n}x_{n})=r^nY_n(x_1,\dots, x_{n}).
\end{equation}

\begin{theor}\label{maincigar}
Let $g=\frac{1}{1+|z|^2}dz\otimes d\bar z$ be the Cigar metric on $\C$. Then the diastasis function of the metric  $g$ is globally defined and positive  on $\C\times \C$ and  $(\C, c g)$ cannot be (locally) \K\ immersed   into any complex space form for any $c>0$.
\end{theor}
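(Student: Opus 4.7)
The first claim is Lemma \ref{pos}. For the second, my plan is to reduce the problem to the nonexistence of a K\"ahler immersion into $\CP^{\infty}$ and then apply the Pringsheim--Vivanti theorem to the Taylor coefficients arising in Calabi's criterion.

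First I would reduce the ambient space. By Theorem \ref{cncp} every K\"ahler submanifold of $\C^N$ admits a full K\"ahler immersion into $\CP^{\infty}$; by Theorem \ref{chcn} every K\"ahler submanifold of $\CH^N$ admits one into $l^2(\C)$; and by Theorem \ref{bochnerth} every K\"ahler submanifold of $l^2(\C)$ admits one into $\CP^{\infty}$. Thus, to rule out a local K\"ahler immersion into any complex space form (finite or infinite dimensional), it is enough to rule out the existence of a local K\"ahler immersion of $(\C,cg)$ into $\CP^{\infty}$ for every $c>0$. By Calabi's criterion (Theorem \ref{localcritb} with $b=1$) this amounts to the positive semidefiniteness of the coefficient matrix $(b_{jk})$ of the expansion $e^{c\dd_0(|z|^2)}-1=\sum b_{jk}\,z^j\bar z^k$. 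Since $\dd_0$ is rotation invariant, the argument used in Proposition \ref{diastdom} forces $b_{jk}=0$ whenever $j\neq k$, so the matrix is diagonal and the requirement collapses to $b_{nn}\geq 0$ for every $n\geq 1$. Setting $x=|z|^2$ and $\varphi(x):=e^{c\dd_0(x)}=\sum_{n\geq 0}c_n x^n$, we have $b_{nn}=c_n$, and the whole problem reduces to showing that for every $c>0$ some $c_n$ is strictly negative.

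The key step is a singularity analysis based on the identity $\dd_0(x)=-\mathrm{Li}_2(-x)$, visible from \eqref{expdiast}. Since the dilogarithm $\mathrm{Li}_2$ has its branch cut along $[1,+\infty)$, the function $\varphi(x)=e^{-c\,\mathrm{Li}_2(-x)}$ extends holomorphically to the slit plane $\C\setminus(-\infty,-1]$; in particular $\varphi$ is analytic at $x=1$. On the other hand, although $\dd_0(-1)=-\pi^2/6$ is finite, the derivative $\dd_0'(x)=\log(1+x)/x$ diverges as $x\to -1^{+}$, so $\varphi'=c\,\dd_0'\,\varphi$ blows up at $x=-1$ and $\varphi$ has a genuine (logarithmic) singularity there. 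Together these two facts pin down the radius of convergence of the Taylor series of $\varphi$ at the origin to be exactly $R=1$.

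The Pringsheim--Vivanti theorem then closes the argument: a power series with real nonnegative coefficients and finite radius of convergence $R$ must have $x=R$ as a singular point of its analytic continuation. If all $c_n$ were nonnegative this would force $x=1$ to be singular for $\varphi$, contradicting the analyticity established above. Hence some $c_n$ is strictly negative, $(b_{jk})$ fails to be positive semidefinite, and no such immersion exists for any $c>0$. The main (mild) obstacle is rigorously justifying the precise singularity structure of $\varphi$ at $x=\pm 1$ from the classical properties of $\mathrm{Li}_2$; once these are in hand, the combinatorial bookkeeping in Calabi's criterion and the invocation of Pringsheim's theorem conclude the argument uniformly in $c$.
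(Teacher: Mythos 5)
Your proposal is correct, and it takes a genuinely different route from the paper's proof of the second assertion. The paper also reduces to ruling out a local K\"ahler immersion of $(\C,cg)$ into $\CP^{\infty}$ via Theorems \ref{bochnerth} and \ref{bochnerthalt}, and also exploits rotation invariance to reduce Calabi's criterion to the sign of the Taylor coefficients of $e^{cD_0(x)}$ at $x=0$; but from there it proceeds by brute force: it writes $\frac{d^n}{dx^n}e^{cD_0}\big|_0$ in terms of complete Bell polynomials $Y_n(\tilde a_1,\dots,\tilde a_n)$ with $\tilde a_j=-c\,j!/j^2$, and shows by an asymptotic analysis of the partial Bell polynomials that the relevant alternating sum converges to $1-e^{-c\pi^2/6}<1$, forcing a negative coefficient for $n$ large. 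Your argument replaces this computation with a singularity/Pringsheim argument: $\varphi(x)=e^{-c\,\mathrm{Li}_2(-x)}$ is holomorphic on $\C\setminus(-\infty,-1]$, hence regular at $x=1$, yet singular at $x=-1$ because $D_0'(x)=\log(1+x)/x$ blows up there while $\varphi(-1)=e^{-c\pi^2/6}\neq 0$; so the Taylor radius is exactly $1$, and Pringsheim--Vivanti forbids all coefficients from being nonnegative. This is cleaner and uniform in $c$, at the price of being non-constructive (it does not exhibit which coefficient is negative, whereas the paper's estimate in principle does), and it rests on correctly locating the branch point of $\mathrm{Li}_2$ --- which you do: the only points needing explicit justification are that the power-series sum agrees with $\varphi$ on the open unit disc (identity theorem), so that analyticity of $\varphi$ at $x=1$ really contradicts Pringsheim, and that the blow-up of $\varphi'=c\,D_0'\varphi$ along $x\to-1^{+}$ precludes a holomorphic extension near $-1$. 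It is a pleasant coincidence worth noting that the constant $e^{-c\pi^2/6}$, which appears in the paper as the limit of the Bell-polynomial sum, reappears in your argument as the value $\varphi(-1)$.
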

\begin{proof}

Observe first that if $(M,cg)$ does not admit a K\"ahler immersion into $\CP^\infty$ for any value of $c>0$, then it does not either into any other space form. 
In fact, if $(M,cg)$ admits a K\"ahler immersion into $l^2(\mathds{C})$ then by Th. \ref{bochnerth}, it also does into $\CP^\infty$, and in particular since the multiplication by $c$ is harmless when one considers K\"ahler immersion into flat spaces, it does for any value of $c>0$. Further, Th. \ref{bochnerthalt} implies that it does not admit a K\"ahler immersion into $\CH^\infty$ either.
Thus, it is enough to show that $(\mathds{C},cg)$ does not admit a K\"ahler immersion into $\CP^\infty$ for any $c>0$. 

Further, the diastasis function associated to the Cigar metric is globally defined and positive by Lemma \ref{pos} above. 

Then, by Calabi's criterions, it remains only to show that there exists $n$ such that:
$$
\frac{\partial^{2n}\exp\left(cD_0(|z|^2)\right)}{\partial z^n\partial \bar z^n}|_0< 0,
$$
where $D_0(|z|^2)$ is the K\"ahler potential defined in \eqref{expdiast}.
Observe first that setting:
\begin{equation}\label{aj}
\tilde a_j:=-c\frac{j!}{j^2},
\end{equation}
by \eqref{exp} and \eqref{proprY} we get:
\begin{equation}
\begin{split}
\frac{\partial^{2n}\exp\left(cD_0(|z|^2)\right)}{\partial z^n\partial \bar z^n}|_0=&\frac{1}{n!}\frac{d^{n}\exp\left(cD_0(x)\right)}{dx^n}|_0\\
=&\frac{1}{n!}Y_n\left(-\tilde a_1,(-1)^2\tilde a_2,\dots, (-1)^n\tilde a_n\right)\\
=&\frac{(-1)^n}{n!}Y_n\left(\tilde a_1,\dots, \tilde a_n\right).
\end{split}\nonumber
\end{equation}
We wish to prove that for any $c>0$ there exists $n$ big enough such that:
$$
Y_{2n}\left(a_1,\dots, a_{2n}\right)<0.
$$
Observe first that since $\tilde a_j=-c\, a_j$ with $a_j=j!/j^2$, we get:
\begin{equation}
\begin{split}
{Y_{2n}(\tilde a)}=&\sum_{k=1}^{2n}(-1)^{k}c^kB_{2n,k}( a)\\
=&\,\frac{(2n)!c}{(2n)^2}\left(-1+\frac{c(2n)^2B_{2n,2}( a)}{(2n)!}-\frac{c^2(2n)^2B_{2n,3}( a)}{(2n)!}+\dots+\frac{c^{2n-1}(2n)^2}{(2n)!}\right).
\end{split}\nonumber
\end{equation}
Thus, we need to prove that for any value of $c$ there exists $n$ large enough such that the following inequality holds:
\begin{equation}\label{eqtobeproven}
\frac{c(2n)^2B_{2n,2}( a)}{(2n)!}-\frac{c^2(2n)^2B_{2n,3}( a)}{(2n)!}+\dots+\frac{c^{2n-1}(2n)^2}{(2n)!}<1.
\end{equation}
Since (see \cite[Lemma 4]{cigar}):
$$
\lim_{n\rightarrow\infty}\frac{(2n)^2B_{2n,k+1}(a)}{(2n)!}=\frac{k+1}{(k+1)!}\sum_{j_1=1}^\infty\frac{1}{j_1^2}\cdots\sum_{j_{k-1}=1}^\infty\frac{1}{j_{k}^2},
$$
then:
$$
\lim_{n\rightarrow+\infty}\frac{(2n)^2B_{2n,k+1}( a)}{(2n)!}=\frac{k+1}{(k+1)!}\sum_{j_1=1}^\infty\frac{1}{j_1^2}\sum_{j_2=1}^\infty\frac{1}{j_2^2}\cdots\sum_{j_k=1}^\infty\frac{1}{j_k^2}.
$$
Further, by:
$$
\sum_{j=1}^\infty\frac{1}{j^2}=\frac{\pi^2}{6},
$$
we get:
$$
\lim_{n\rightarrow+\infty}\frac{(2n)^2B_{2n,k+1}( a)}{(2n)!}=\frac{1}{k!}\left(\frac{\pi^2}{6}\right)^k.
$$
Plugging this into \eqref{eqtobeproven}, we get that as $n$ goes to infinity the left hand side converge to:
$$
\sum_{k=1}^\infty\frac{(-1)^{k+1}c^k}{k!}\left(\frac{\pi^2}{6}\right)^k=1-e^{-\frac{c\,\pi^2}{6}},
$$
and conclusion follows by observing that $1-e^{-\frac{c\,\pi^2}{6}}$ is strictly increasing as a function on $c$ and its limit value as $c$ grows is $1$.
\end{proof}
\begin{remark}\rm
It is worth pointing out that the cigar metric has positive sectional curvature. Hence, in view of Theorem \ref{maincigar},  it is  interesting to see if there exist examples  
of  negatively curved real analytic \K\ manifolds $(M ,g)$  with  globally  defined diastasis function which is positive and  such that 
$(M, cg)$ cannot be locally \K\ immersed into any complex space form for all $c>0$. 
\end{remark}

\section{Calabi's complete and not locally homogeneous metric}

Consider the complex tubular domain $M_n=\frac{1}{2}D\oplus i\R^n\subset\C^n$, where $D$ denotes any connected, open subset of $\R^n$. Let $g_n$ be the metric on $M_n$ whose associated K\"ahler form is given by:
\begin{equation}
\omega_n=\frac{i}{2}\de\bar\de F(z),\nonumber
\end{equation}
with:
\begin{equation}
F(z)=f(z_1+\bar z_1, \dots,z_n+\bar z_n),\nonumber
\end{equation}
where $f\!:D\f \R$ is a radial function $f(x_1,\dots,x_n)=y(r)$, with $r=(\sum_{j=1}^nx_j^2)^{1/2}$, satisfying the differential equation:
\begin{equation}\label{basic}
\left(\frac{y'}{r}\right)^{n-1}y''=e^y,
\end{equation}
with initial conditions:
\begin{equation}\label{incond}
y'(0)=0,\ y''(0)=e^{y(0)/n}.
\end{equation}
This metric introduced by Calabi \cite{calabi} is the first example of complete and not locally homogeneous K\"ahler--Einstein metric. In \cite{wolf} J. A. Wolf gives a stronger more straight-forward version of Calabi's result, namely if $n\geq 2$ and $g_n$ is an $E(n)$-invariant K\"ahler metric on $M_n$, where $E(n)=\R^n\cdot\text{SO}(n)$, then $(M_n,g_n)$ cannot be both complete and locally homogeneous. Moreover, $E(n)$ is the largest connected group of holomorphic isometries of $(M_n,g_n)$.

It is still an open question if this metric admits or not a K\"ahler immersion into some complex space form (except with the case exposed in Exercise \ref{nlhcm}). Although, the following lemma guarantees that the metric is smooth around the origin and we are able to write its diastasis function.
\begin{lem}
If $y(r)$ is a function satisfying (\ref{basic}), then $y(r)$ is smooth at $r=0$.
\end{lem}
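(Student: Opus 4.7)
The plan is to convert the singular ODE (\ref{basic}) into an integral equation whose right-hand side is manifestly smooth, and then to bootstrap the regularity of $y$.

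The first step is to observe that (\ref{basic}) can be written as $(y')^{n-1}y''=r^{n-1}e^{y}$ and that the left-hand side equals $\frac{1}{n}\frac{d}{dr}(y')^{n}$; integrating from $0$ to $r$ and using $y'(0)=0$ from (\ref{incond}) gives
\[
(y'(r))^{n}=n\int_{0}^{r}t^{n-1}e^{y(t)}\,dt.
\]
The substitution $t=ru$ factors out the expected power of $r$, producing
\[
(y'(r))^{n}=r^{n}G(r),\qquad G(r):=n\int_{0}^{1}u^{n-1}e^{y(ru)}\,du.
\]
The second initial condition forces $y''(0)=e^{y(0)/n}>0$, hence $y'(r)>0$ for all small $r>0$, while $G(0)=e^{y(0)}>0$ keeps $G$ strictly positive on some interval $[0,\epsilon)$. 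Taking the positive $n$-th root therefore yields
\[
y'(r)=r\,G(r)^{1/n}.
\]

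The smoothness of $y$ now follows by induction on $k\ge 2$: assuming $y\in C^{k}([0,\epsilon))$, differentiation under the integral sign is legitimate (the $u$-domain is the compact interval $[0,1]$) and shows $G\in C^{k}$; since $G>0$, the $n$-th root $G^{1/n}$ is also $C^{k}$; thus $y'(r)=r\,G(r)^{1/n}$ is of class $C^{k}$ and $y$ is of class $C^{k+1}$. The base case $y\in C^{2}([0,\epsilon))$ is guaranteed directly by (\ref{incond}), and iterating yields $y\in C^{\infty}$ in a neighbourhood of $r=0$.

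The main point requiring care is simply the start of the bootstrap: one must verify that $y$ genuinely extends as a $C^{2}$ function up to $r=0$ and that $y'(r)$ does not vanish for small $r>0$, so that the positive $n$-th root used above is a legitimate smooth operation. Both facts are inherited from the asymptotic behaviour $y'(r)=e^{y(0)/n}r+o(r)$ prescribed by (\ref{incond}). Once these verifications are in hand, everything else reduces to a standard application of differentiation under the integral sign, and no further structural input about the ODE is required.
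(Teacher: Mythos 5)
Your proof is correct and follows essentially the same route as the paper: integrate $((y')^n)'=n r^{n-1}e^{y}$ using $y'(0)=0$, rescale the integral by $t=ru$ to extract the factor $r^n$, and bootstrap regularity from the identity $y'(r)=r\bigl(n\int_0^1 u^{n-1}e^{y(ru)}\,du\bigr)^{1/n}$ starting from the $C^2$ regularity supplied by (\ref{incond}). Your version is slightly more careful than the paper's in making the induction explicit and in justifying the choice of the positive $n$-th root via $y'(r)>0$ for small $r>0$, but no new idea is involved.
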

\begin{proof}
Let $y(r)$ be a solution of (\ref{basic}). From $((y')^n)'=n(y')^{n-1}y''$ and (\ref{basic}) we get:
\begin{equation}
y'(r)=\left(n\int_0^rt^{n-1}e^{y(t)}dt\right)^{1/n},\nonumber
\end{equation}
and by substituting $t=rs$ we have:
\begin{equation}
y'(r)=r\left(n\int_0^1s^{n-1}e^{y(rs)}ds\right)^{1/n}.\nonumber
\end{equation}
Since $\int_0^1s^{n-1}e^{y(rs)}ds$ is not zero at $r=0$, the last equation implies that $y'(r)\in C^k(0)$ whenever $y(r)\in C^k(0)$. By (\ref{incond}), $y(r)\in C^2(0)$ and we are done.
\end{proof}
By recursion from (\ref{basic}) and (\ref{incond}), one obtains that for all $m\in\N$:
$$
y^{(2m+1)}(0)=0,
$$
thus the power expansion of $y(r)$ around the origin is of the form:
\begin{equation}
y(r)=y(0)+\frac{y''(0)}{2}\, 
r^2+\frac{y^{(4)}(0)}{4!}\, r^4+\frac{y^{(6)}(0)}{6!}\, r^6+\dots,\nonumber
\end{equation}
and we have:
\begin{equation}
\begin{split}
F(z,\bar z)=F(0,0)+\,&\frac{y''(0)}{2}\,\sum_{j=1}^n (z_j+\bar z_j)^2+\frac{y^{(4)}(0)}{4!}\,\left(\sum_{j=1}^n (z_j+\bar z_j)^2\right)^2+\\
+\,&\frac{y^{(6)}(0)}{6!}\left(\sum_{j=1}^n (z_j+\bar z_j)^2\right)^3+\dots.
\end{split}\nonumber
\end{equation}
Thus by \eqref{diastdefinition}, we have:
\begin{equation}
D_0(z)=F(z,\bar z)+F(0,0)-F(0,\bar z)-F(z,0),\nonumber
\end{equation}
i.e.:
\begin{equation}
\begin{split}
D_0(z)=&\, \frac{y''(0)}{2}\,\left(\sum_{j=1}^n (z_j+\bar z_j)^2-\sum_{j=1}^n\bar z_j^2-\sum_{j=1}^n z_j^2\right)+\\
&+\frac{y^{(4)}(0)}{4!}\,\left(\left(\sum_{j=1}^n (z_j+\bar z_j)^2\right)^2-\left(\sum_{j=1}^n\bar z_j^2\right)^2-\left(\sum_{j=1}^n z_j^2\right)^2\right)+\\
&+\frac{y^{(6)}(0)}{6!}\,\left(\left(\sum_{j=1}^n (z_j+\bar z_j)^2\right)^3-\left(\sum_{j=1}^n\bar z_j^2\right)^3-\left(\sum_{j=1}^n z_j^2\right)^3\right)+\\
&+\dots\\
&+\frac{y^{(2k)}(0)}{(2k)!}\left(\left(\sum_{j=1}^n (z_j+\bar z_j)^2\right)^k-\left(\sum_{j=1}^n\bar z_j^2\right)^k-\left(\sum_{j=1}^n z_j^2\right)^k\right)+\\
&+\dots.
\end{split}\nonumber
\end{equation}
Observe that we can assume $y''(0)=1$ and the coefficients $\frac{y^{(2k)}(0)}{(2k)!}$ can be computed from \eqref{basic} and \eqref{incond}. Although, the matrices of coefficients in the power expansions \eqref{ddconv}, \eqref{powexdiastcp} and \eqref{powexdiastch} are not diagonal and it is not easy to find a negative eigenvalue or to prove they are positive semidefinite.

\section{Taub-NUT metric on $\mathds{C}^2$}
In \cite{lebrun} C. Lebrun constructs the following family of K\"ahler forms on $\C^2$ defined by $\omega_m=\frac{i}{2}\de\bar\de \Phi_m$, where:
\begin{equation}\label{Phim}
\Phi_m(u,v)=u^2+v^2+m(u^4+v^4),\ \textrm{for}\ m\geq 0,
\end{equation}
and $u$ and $v$ are implicitly defined by:
\begin{equation}\label{x1z1}
|z_1|=e^{m(u^2-v^2)}u,\ |z_2|=e^{m(v^2-u^2)}v.
\end{equation}
For $m=0$ one gets the flat metric, while for $m>0$ each of the metrics of this family represents the first example of complete Ricci--flat (non-flat) metric on $\C^2$ having the same volume form of the flat metric $\omega_0$, i.e. $\omega_m\wedge\omega_m=\omega_0\wedge\omega_0$. Moreover, for $m>0$, these metrics are isometric (up to dilation and rescaling) to the Taub-NUT metric.
\begin{lem}[A. Loi, M. Zedda, F. Zuddas, \cite{taubnut}]\label{pind}
Let $m\geq 0$,   $g_m$ be the  Taub--NUT metric on $\C^2$ and $\alpha$ be a positive real number. Then  $\alpha g_m$   is not projectively induced for   $m>\frac{\alpha}{2}$.
\end{lem}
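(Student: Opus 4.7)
The strategy is a direct application of Calabi's criterion (Theorem \ref{localcritb}) via an explicit low-order Taylor expansion: I will pin down the coefficient of $|z_1|^4$ in the power series of $e^{\alpha D_0}-1$ and show that it becomes negative as soon as $m>\alpha/2$, which forces the matrix $(b_{jk})$ from (\ref{powexdiastcp}) to fail positive semidefiniteness.

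First I would verify that $\Phi_m$ is in fact the diastasis $D_0$ of $g_m$ at the origin. Equations (\ref{x1z1}) determine $(u,v)$ implicitly as real-analytic functions of $(|z_1|,|z_2|)$, so $\Phi_m$ depends only on $|z_1|^2$ and $|z_2|^2$ and vanishes at $0$. Such a potential contains no pure (anti-)holomorphic terms, so by the characterisation Theorem \ref{chardiast}, $\Phi_m=D_0$, and hence $\alpha\Phi_m$ is the diastasis of $\alpha g_m$. Because $\alpha\Phi_m$ is a function of $x:=|z_1|^2$ and $y:=|z_2|^2$ alone, every entry $b_{jk}$ of (\ref{powexdiastcp}) with $m_j\neq m_k$ vanishes; the matrix is diagonal in the monomial basis and its diagonal entries coincide with the coefficients of $x^ay^b$ in the power expansion of $e^{\alpha\Phi_m}-1$. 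To disprove resolvability it therefore suffices to exhibit one negative such coefficient.

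The core calculation is an expansion of $\Phi_m$ up to quadratic order in $(x,y)$. Setting $U=u^2$, $V=v^2$ and $t=U-V$, relations (\ref{x1z1}) read $U=xe^{-2mt}$, $V=ye^{2mt}$, with $t$ determined by the implicit equation $t=xe^{-2mt}-ye^{2mt}$. Solving this order by order gives $t=(x-y)-2m(x^2-y^2)+O(3)$, and substituting into $\Phi_m=U+V+m(U^2+V^2)$ yields
\begin{equation*}
\Phi_m = x+y - mx^2 + 4mxy - my^2 + O(3).
\end{equation*}
Exponentiating and collecting terms through degree two gives
\begin{equation*}
e^{\alpha\Phi_m}-1 = \alpha(x+y) + \tfrac{\alpha(\alpha-2m)}{2}(x^2+y^2) + \alpha(\alpha+4m)\,xy + O(3).
\end{equation*}
The diagonal entry of $(b_{jk})$ corresponding to the monomial $|z_1|^4$ is therefore $\tfrac{\alpha(\alpha-2m)}{2}$, which is strictly negative as soon as $m>\alpha/2$. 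Calabi's criterion is thus violated and $\alpha g_m$ cannot be projectively induced.

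The only mildly delicate step is the implicit inversion of (\ref{x1z1}) to quadratic order, since a computational slip there would propagate into the sign of the decisive coefficient; however once $t$ is expanded correctly the rest is pure bookkeeping. Everything else is essentially automatic: the rotation invariance of $\Phi_m$ in $z_1$ and $z_2$ separately trivialises the off-diagonal structure of $(b_{jk})$, so a single negative diagonal entry is enough to conclude.
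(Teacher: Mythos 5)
Your proof is correct and follows essentially the same route as the paper: both apply Calabi's criterion and show that the coefficient of $|z_1|^4$ in the expansion of $e^{\alpha\Phi_m}-1$ equals $\tfrac{\alpha(\alpha-2m)}{2}$, which is negative for $m>\alpha/2$. The only difference is presentational — the paper first restricts to the one-dimensional slice $z_2=0$ (where the potential becomes $u^2+mu^4$ with $|z|^2=e^{2mu^2}u^2$) so the implicit inversion is one-variable, while you expand the full two-variable potential; your expansion and the resulting coefficients check out.
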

\begin{proof}
Assume by contradiction that $\alpha g_m$ is projectively induced, namely that  there exists $N\leq\infty$ and   a \K\  immersion
of $(\C^2, \alpha g_m)$ into $\C P^N$. Then, it does exist also a \K\ immersion into $\CP^N$
of  the   \K\ submanifold of $(\C^2, \omega_m)$  defined by $z_2 = 0$, $z_1 = z$, endowed with the induced metric, having potential $\tilde \Phi_m = u^2 + m u^4$, where $u$ is defined implicitly by $z\bar{z} = e^{2 m u^2} u^2$. Observe that $\tilde \Phi_m$ is the diastasis function \eqref{diastdefinition} for this metric, since it is a rotation invariant potential centered at the origin.

Consider the power expansion around the origin of the function $e^{\alpha\tilde \Phi_m}-1$, that, by (\ref{Phim}) and (\ref{x1z1}), reads:
\begin{equation}
e^{\alpha\tilde\Phi_m}-1=\alpha |z|^2+\frac{\alpha}{2}(\alpha-2 m)|z|^4+\dots.\nonumber
\end{equation}
Since $\alpha-2 m\geq0$ if and only if $m\leq\frac{\alpha}{2}$, it follows by Calabi's criterion Th. \ref{localcritb} that $\alpha g_m$ can not admit a K\"ahler immersion into  $\CP^N$ for any $m>\frac{\alpha}{2}$.
\end{proof}

In \cite{taubnut} the authors state the following conjecture:
\begin{conj}
The Taub--NUT metric $\alpha g_m$ on $\mathds{C}^2$ is not projectively induced for any $m>0$.
\end{conj}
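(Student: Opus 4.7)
\emph{Plan.} The plan is to restrict the metric to a one-dimensional slice and apply a transfer theorem from singularity analysis. First I would restrict to the complex submanifold $S = \{z_2 = 0\} \subset \C^2$; the induced metric on $S \cong \C$ is rotation-invariant with diastasis $\tilde\Phi_m(t) = u^2 + mu^4$, where $t = |z_1|^2$ and $u$ is defined implicitly by $t = u^2 e^{2mu^2}$ (cf.\ the proof of Lemma \ref{pind}). Since a \K\ immersion of $(\C^2, \alpha g_m)$ into any complex space form restricts to one of the submanifold, and by Theorems \ref{bochnerth}--\ref{bochnerthalt} it suffices to rule out a \K\ immersion into $\CP^{\infty}$, Calabi's criterion for rotation-invariant metrics reduces the problem to exhibiting, for every fixed $\alpha, m > 0$, an integer $n \geq 1$ with
\[
c_n := [t^n]\!\left(e^{\alpha\tilde\Phi_m(t)} - 1\right) < 0.
\]
Lemma \ref{pind} handles $m > \alpha/2$ via $c_2$; I would prove the conjecture by producing a negative $c_n$ asymptotically as $n \to \infty$ in the remaining regime.

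The second step is to locate and expand the dominant singularity of $F_\alpha(t) := e^{\alpha\tilde\Phi_m(t)}$. The map $s \mapsto s\,e^{2ms}$ has a unique critical point at $s^* = -\tfrac{1}{2m}$ with critical value $t^* = -\tfrac{1}{2me}$, so its inverse $s(t)$ (a rescaled Lambert $W$) is single-valued and holomorphic on $\C \setminus (-\infty, t^*]$ with a square-root branch point at $t^*$. Because $\tilde\Phi_m(s) = s + ms^2$ has its vertex exactly at $s^*$, the \emph{exact} identity $\tilde\Phi_m = -\tfrac{1}{4m} + m(s - s^*)^2$ holds. Inverting the relation $t - t^* = \tfrac{2m}{e}(s-s^*)^2 + \tfrac{4m^2}{3e}(s-s^*)^3 + O((s-s^*)^4)$ and substituting yields
\[
\tilde\Phi_m(t) = -\frac{1}{4m} + \frac{e}{2}(t-t^*) - \frac{e\sqrt{2em}}{6}(t-t^*)^{3/2} + O\!\big((t - t^*)^2\big).
\]
Consequently $F_\alpha$ has a genuine $(t-t^*)^{3/2}$ singularity at $t^*$ with nonzero leading coefficient $-\alpha\,e^{-\alpha/(4m)}\cdot\tfrac{e\sqrt{2em}}{6}$ for every $m>0$. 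Since $t^*$ is the only singularity of $F_\alpha$ on its circle of convergence, the Flajolet--Odlyzko transfer theorem produces
\[
c_n \sim K \cdot (-1)^n (2me)^n\, n^{-5/2}, \qquad n \to \infty,
\]
for a nonzero constant $K = K(\alpha, m)$, the factor $(-1)^n$ arising because $t^* < 0$. Thus $c_n$ alternates in sign for large $n$ and infinitely many $c_n$ are negative, contradicting Calabi's criterion.

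The main obstacle is in verifying the hypotheses of the transfer theorem. Concretely one must check that $s(t)$, and therefore $\tilde\Phi_m(t)$ and $F_\alpha(t)$, extends holomorphically to a \emph{camembert} $\Delta$-domain at $t^*$ (i.e.\ a domain of the form $\{t \in \C : |t| < \tfrac{1}{2me} + \eta,\ |\arg(t - t^*)| > \phi\}$ for some $\eta > 0$ and $0 < \phi < \pi/2$), and that the $(t-t^*)^{3/2}$ expansion above is uniformly valid in that domain, not merely as a formal Puiseux expansion along one real half-line. Both statements ultimately rest on the global fact that $t^*$ is the only finite critical value of $s \mapsto s\,e^{2ms}$, so that $s(t)$ continues analytically to the whole slit plane $\C \setminus (-\infty, t^*]$; but carrying this through carefully and tracking the leading singular coefficient through the exponentiation is the technical heart of the argument.
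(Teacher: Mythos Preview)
This statement appears in the paper as an open \emph{conjecture}: the only result actually proved there is Lemma~\ref{pind}, which handles the range $m>\alpha/2$ by exhibiting the single negative coefficient $c_2$. So there is no paper proof to compare against; your proposal is an attempt to settle the conjecture in full.

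Your strategy is sound and, with the details cleaned up, does resolve it. The crucial structural observation---that $\tilde\Phi_m(s)=s+ms^2$ and $g(s)=se^{2ms}$ share the same critical point $s^*=-1/(2m)$ because $g'(s)=e^{2ms}\,\tilde\Phi_m'(s)$---is exactly what kills the $(t-t^*)^{1/2}$ term in $\tilde\Phi_m(t)$ and leaves a genuine $(t-t^*)^{3/2}$ branch with nonzero coefficient for every $m>0$. Since $s(t)=W(2mt)/(2m)$ with $W$ the principal Lambert function, $F_\alpha$ is analytic on all of $\C\setminus(-\infty,t^*]$; this set already contains every $\Delta$-domain at $t^*$, so the Flajolet--Odlyzko hypotheses are routine rather than the ``main obstacle'' you flag. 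The transfer theorem then gives $c_n\sim K\,(-1)^n(2me)^n n^{-5/2}$ with $K\neq 0$, hence infinitely many negative $c_n$, and Calabi's criterion (which for a rotation-invariant diastasis on $\C$ reduces to $c_n\geq 0$ for all $n$) fails for every $\alpha>0$.

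Two small corrections. First, your local constants are off: one computes $g''(s^*)=2m/e$, so $t-t^*=\tfrac{m}{e}(s-s^*)^2+\tfrac{4m^2}{3e}(s-s^*)^3+\cdots$ and
\[
\tilde\Phi_m(t)=-\tfrac{1}{4m}+e\,(t-t^*)-\tfrac{4\,e^{3/2}\sqrt{m}}{3}\,(t-t^*)^{3/2}+O\!\big((t-t^*)^2\big),
\]
not the coefficients $\tfrac{2m}{e}$, $\tfrac{e}{2}$, $\tfrac{e\sqrt{2em}}{6}$ you wrote. This is harmless since only the \emph{nonvanishing} of the $(t-t^*)^{3/2}$ coefficient is used. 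Second, your $\Delta$-domain is misoriented: with $t^*<0$ the condition $|\arg(t-t^*)|>\phi$ actually \emph{contains} the branch cut $(-\infty,t^*)$, where $\arg(t-t^*)=\pi$. The correct sector condition is $|\arg(t/t^*-1)|>\phi$, which excludes a wedge around the outward ray through $t^*$; with that fix the domain sits inside the slit plane and analyticity of $F_\alpha$ there is immediate from the known domain of $W$.
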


\section{Exercises}
\begin{ExerciseList}
\Exercise\label{nlhcm} Prove that for $n=2$, Calabi's complete not locally homogeneous metric $(M_2,\omega_2)$ does not admit a K\"ahler immersion into $\CH^N$, $N\leq \infty$.
\Exercise Verify that the Taub--NUT metric $(\C^2, \alpha g_m)$ cannot be \K\ immersed into the complex hyperbolic space $\C H^N$, $N\leq \infty$.
\Exercise Prove that if the Taub-NUT metric $(\C^2,  g_m)$ admits a \K\ immersion into
$\C^N$, $N\leq\infty$, then $m=0$.
\end{ExerciseList}

\backmatter
\addcontentsline{toc}{chapter}{Bibliography}

\end{document}